\numberwithin{equation}{section}
\newcommand{\B}{\mathbb{B}}
\newcommand{\N}{\mathbb{N}}
\newcommand{\R}{\mathbb{R}}
\newcommand{\restr}[1]{\lower3pt\hbox{\(|_{#1}\)}}
\newcommand{\nchi}{{\raise.3ex\hbox{\(\chi\)}}}
\newcommand{\1}{\mathbbm 1}
\newcommand{\fr}{\penalty-20\null\hfill\(\blacksquare\)}
\newcommand{\X}{{\rm X}}
\newcommand{\mm}{\mathfrak m}
\newtheorem{theorem}{Theorem}[section]
\newtheorem{corollary}[theorem]{Corollary}
\newtheorem{lemma}[theorem]{Lemma}
\newtheorem{proposition}[theorem]{Proposition}
\newtheorem{definition}[theorem]{Definition}
\newtheorem{example}[theorem]{Example}
\newtheorem{remark}[theorem]{Remark}
\title{Tensor products of measurable Banach bundles}
\author{Milica Cakovi\'{c}}
\address{Department of Mathematics and Informatics, Faculty of Sciences,
University of Novi Sad, Trg D.\ Obradovi\'{c}a 4, 21000 Novi Sad, Serbia}
\email{milica.lucic@dmi.uns.ac.rs}
\author{Danka Lu\v{c}i\'{c}}
\address{Department of Mathematics and Statistics,
P.O.\ Box 35 (MaD), FI-40014 University of Jyvaskyla}
\email{danka.d.lucic@jyu.fi}
\author{Enrico Pasqualetto}
\address{Department of Mathematics and Statistics,
P.O.\ Box 35 (MaD), FI-40014 University of Jyvaskyla}
\email{enrico.e.pasqualetto@jyu.fi}
\begin{document}
\date{\today} 
\keywords{Measurable Banach bundle; \(L^\infty\)-Banach \(L^\infty\)-module; injective tensor product;
projective tensor product; von Neumann lifting}
\subjclass[2020]{46M05, 47A80, 18F15, 53C23, 28A51, 46G15}
\begin{abstract}
We study injective and projective tensor products of measurable Banach bundles.
More precisely, given two separable measurable Banach bundles ${\bf E}$, ${\bf F}$ defined over a probability
space $({\rm X},\Sigma,\mathfrak m)$, we construct two measurable Banach bundles ${\bf E}\hat\otimes_\varepsilon{\bf F}$
and ${\bf E}\hat\otimes_\pi{\bf F}$ over $({\rm X},\Sigma,\mathfrak m)$ such that
$\Gamma({\bf E}\hat\otimes_\varepsilon{\bf F})\cong\Gamma({\bf E})\hat\otimes_\varepsilon\Gamma({\bf F})$
and $\Gamma({\bf E}\hat\otimes_\pi{\bf F})\cong\Gamma({\bf E})\hat\otimes_\pi\Gamma({\bf F})$,
where ${\bf G}\mapsto\Gamma({\bf G})$ is the map assigning to a measurable Banach bundle ${\bf G}$ its space
of $L^\infty(\mathfrak m)$-sections, while $\Gamma({\bf E})\hat\otimes_\varepsilon\Gamma({\bf F})$ and
$\Gamma({\bf E})\hat\otimes_\pi\Gamma({\bf F})$ denote the injective and projective tensor products,
respectively, of $\Gamma({\bf E})$ and $\Gamma({\bf F})$ in the sense of $L^\infty(\mathfrak m)$-Banach $L^\infty(\mathfrak m)$-modules.
In combination with previous results, this provides a fiberwise representation of the injective tensor product
$\mathscr M\hat\otimes_\varepsilon\mathscr N$ and the projective tensor product $\mathscr M\hat\otimes_\pi\mathscr N$
of two countably-generated $L^\infty(\mathfrak m)$-Banach $L^\infty(\mathfrak m)$-modules $\mathscr M$, $\mathscr N$.
\end{abstract}
\maketitle
\tableofcontents
\section{Introduction}
The goal of this paper is to study tensor products of measurable Banach bundles and of their section spaces.
In accordance with \cite[Definition 4.1]{DMLP25}, by a \emph{separable Banach \(\mathbb U\)-bundle} we mean
a weakly measurable multivalued map \({\bf E}\colon\X\twoheadrightarrow\mathbb U\) whose values are closed
vector subspaces of \(\mathbb U\). Here, \((\X,\Sigma,\mm)\) is a given \(\sigma\)-finite measure space, while
the ambient target space \(\mathbb U\) is some fixed separable Banach space containing isometric copies of
every separable Banach space. We denote by \(\Gamma({\bf E})\) the collection of all bounded measurable sections
of \({\bf E}\), quotiented up to \(\mm\)-a.e.\ equality. Besides its Banach space structure with respect to
the supremum norm, the section space \(\Gamma({\bf E})\) inherits some additional operations from the pointwise
nature of its elements:
\begin{itemize}
\item A pointwise-norm operator \(|\cdot|\colon\Gamma({\bf E})\to L^\infty(\mm)^+\), which assigns to each element
\(v\in\Gamma({\bf E})\) the \(\mm\)-a.e.\ defined function \(\X\ni x\mapsto|v|(x)\coloneqq\|v(x)\|_{{\bf E}(x)}\).
\item A pointwise multiplication operation \(L^\infty(\mm)\times\Gamma({\bf E})\ni(f,v)\mapsto f\cdot v\in\Gamma({\bf E})\),
which is given by \((f\cdot v)(x)\coloneqq f(x)v(x)\) for \(\mm\)-a.e.\ \(x\in\X\).
\end{itemize}
In other words, the space \(\Gamma({\bf E})\) is a (countably-generated) \emph{\(L^\infty(\mm)\)-Banach \(L^\infty(\mm)\)-module},
in the sense of \cite{Gigli14}. Remarkably, the converse holds as well, as it was proved in \cite{DMLP25} (after
\cite{LP18}): every countably-generated \(L^\infty(\mm)\)-Banach \(L^\infty(\mm)\)-module \(\mathscr M\) is isomorphic
to the section space \(\Gamma({\bf E})\) of some separable Banach \(\mathbb U\)-bundle \({\bf E}\). Furthermore, the functor
\({\bf E}\mapsto\Gamma({\bf E})\) is an equivalence of categories, in the spirit of the classical Serre--Swan theorem \cite{Serre55,Swan62}.
It is also worth pointing out that if \({\bf E}\) is a constant bundle (i.e.\ for a separable Banach space \(\B\) we have
that \({\bf E}(x)\cong\B\) for every \(x\in\X\)), then \(\Gamma({\bf E})\) coincides with the \emph{Lebesgue--Bochner space}
\(L^\infty(\mm;\B)\).
\medskip

Let us describe the contents of this paper. Fix two separable Banach \(\mathbb U\)-bundles \({\bf E}\) and \({\bf F}\).
Thanks to the results of \cite{Pas23}, it is possible to construct different kinds of tensor products of their section
spaces \(\Gamma({\bf E})\) and \(\Gamma({\bf F})\). We shall focus on the \emph{injective tensor product}
\(\Gamma({\bf E})\hat\otimes_\varepsilon\Gamma({\bf F})\) and the \emph{projective tensor product}
\(\Gamma({\bf E})\hat\otimes_\pi\Gamma({\bf F})\). We highlight that here we are considering tensor products
in the sense of \(L^\infty(\mm)\)-Banach \(L^\infty(\mm)\)-modules, which is essential in order to capture
the pointwise behaviour of the sections of \({\bf E}\) and \({\bf F}\); the classical tensor products of Banach spaces
do not serve this purpose, as we will discuss in Examples \ref{ex:diff_inj_prod} and \ref{ex:diff_proj_prod}.
Since \(\Gamma({\bf E})\) and \(\Gamma({\bf F})\) are countably generated, it can be readily checked that also
\(\Gamma({\bf E})\hat\otimes_\varepsilon\Gamma({\bf F})\) and \(\Gamma({\bf E})\hat\otimes_\pi\Gamma({\bf F})\)
are countably generated. It follows that there exist (essentially) uniquely-determined separable Banach \(\mathbb U\)-bundles
\({\bf E}\hat\otimes_\varepsilon{\bf F}\) and \({\bf E}\hat\otimes_\pi{\bf F}\) on \((\X,\Sigma,\mm)\) such that
\[
\Gamma({\bf E}\hat\otimes_\varepsilon{\bf F})\cong\Gamma({\bf E})\hat\otimes_\varepsilon\Gamma({\bf F}),
\qquad\Gamma({\bf E}\hat\otimes_\pi{\bf F})\cong\Gamma({\bf E})\hat\otimes_\pi\Gamma({\bf F}).
\]
Our main goal is to characterise \({\bf E}\hat\otimes_\varepsilon{\bf F}\) and \({\bf E}\hat\otimes_\pi{\bf F}\).
In this respect, our results are the following.
\begin{itemize}
\item For injective tensor products, the best-case scenario occurs: as we prove in Section \ref{sec:inj_mod},
\[
({\bf E}\hat\otimes_\varepsilon{\bf F})(x)\cong{\bf E}(x)\hat\otimes_\varepsilon{\bf F}(x)\quad\text{ for }\mm\text{-a.e.\ }x\in\X,
\]
where \({\bf E}(x)\hat\otimes_\varepsilon{\bf F}(x)\) denotes the injective tensor product of Banach spaces.
Roughly speaking, this nice feature is ultimately due to the fact that injective tensor products of Banach spaces
respect subspaces; cf.\ Remark \ref{rmk:inj_tensor_products_subspaces}.
\item For projective tensor products, the situation is much more delicate. One of the reasons is that projective tensor
products (typically) do not respect subspaces; cf.\ Remark \ref{rmk:proj_tensor_products_subspaces}. To provide a description
of the fibers of \({\bf E}\hat\otimes_\pi{\bf F}\), we leverage the machinery of \emph{liftings} of \(L^\infty(\mm)\)-Banach
\(L^\infty(\mm)\)-modules, which we will discuss more in detail below. For the moment, we only mention that the
lifting \(\ell\mathscr M\) of an \(L^\infty(\mm)\)-Banach \(L^\infty(\mm)\)-module \(\mathscr M\) allows us to
select an everywhere-defined representative \(\ell v\in\ell\mathscr M\) of any element \(v\in\mathscr M\),
whose pointwise value \(\ell v_x\) at \(x\in\X\) belongs to a Banach space \(\ell\mathscr M_x\) called the \emph{fiber}
of \(\ell\mathscr M\) at \(x\). Shortly said, we show that for \(\mm\)-a.e.\ point \(x\in\X\) the fibers \({\bf E}(x)\)
and \({\bf F}(x)\) can be identified with closed subspaces of \(\ell{\bf E}_x\coloneqq\ell\Gamma({\bf E})_x\) and
\(\ell{\bf F}_x\coloneqq\ell\Gamma({\bf F})_x\), respectively; cf.\ Lemma \ref{lem:fibers_bundle_embed_lift}.
Under this identification and with some efforts, one can also prove that
\[
({\bf E}\hat\otimes_\pi{\bf F})(x)\cong{\rm cl}_{\ell{\bf E}_x\hat\otimes_\pi\ell{\bf F}_x}({\bf E}(x)\otimes{\bf F}(x))
\quad\text{ for }\mm\text{-a.e.\ }x\in\X;
\]
cf.\ Remark \ref{rmk:fibers_proj_bundle} and Theorem \ref{thm:proj_mod}. However, since \({\bf E}(x)\) and
\(\ell{\bf E}_x\) may not coincide (see Remark \ref{rmk:E_and_ellE} and Proposition \ref{prop:psi_x_not_surj}), it is not clear
whether \(({\bf E}\hat\otimes_\pi{\bf F})(x)\cong{\bf E}(x)\hat\otimes_\pi{\bf F}(x)\) holds for \(\mm\)-a.e.\ \(x\in\X\)
when \({\bf E}\) and \({\bf F}\) are generic Banach bundles. Sufficient conditions for this to hold are that
\({\bf E}(x)\) and \({\bf F}(x)\) are Hilbert for \(\mm\)-a.e.\ \(x\in\X\) (see Theorem \ref{thm:suff_ident_proj_fibers}),
or that \({\bf E}\) and \({\bf F}\) are constant bundles (see Lemma \ref{lem:proj_prod_const_bundles}).
\end{itemize}
The above-mentioned lifting theory for \(L^\infty(\mm)\)-Banach \(L^\infty(\mm)\)-modules, which was introduced by the
second and third named authors together with Di Marino in \cite{DMLP25}, relies on the existence of a von Neumann lifting
\(\ell\colon L^\infty(\mm)\to\mathcal L^\infty(\Sigma)\) (where \(\mathcal L^\infty(\Sigma)\) denotes the Banach algebra
of bounded measurable functions on \(\X\)) of the reference measure \(\mm\), and as such it requires a rather strong form
of the Axiom of Choice. The purpose of module liftings is to supply a consistent selection of representatives. Namely,
given any \(L^\infty(\mm)\)-Banach \(L^\infty(\mm)\)-module \(\mathscr M\), there exist a canonical
\(\mathcal L^\infty(\Sigma)\)-Banach \(\mathcal L^\infty(\Sigma)\)-module \(\ell\mathscr M\) and a linear embedding map
\(\ell\colon\mathscr M\to\ell\mathscr M\) satisfying
\[
|\ell v|=\ell|v|,\qquad\ell(f\cdot v)=(\ell f)\cdot(\ell v)\quad\text{ for every }v\in\mathscr M\text{ and }f\in L^\infty(\mm).
\]
It is well known that \(L^\infty(\mm)\) is the right functional space for the von Neumann theory of liftings; this
is the reason why we consider \(L^\infty(\mm)\)-Banach \(L^\infty(\mm)\)-modules instead of more well-established variants,
as \(L^p(\mm)\)-Banach \(L^\infty(\mm)\)-modules for \(p\in(1,\infty)\) or \(L^0(\mm)\)-Banach \(L^0(\mm)\)-modules.
It is also worth pointing out that, in order to extend the lifting theory to the section spaces \(\Gamma({\bf E})\)
of separable Banach \(\mathbb U\)-bundles \({\bf E}\), it is truly essential to leverage the technology of
\(L^\infty(\mm)\)-Banach \(L^\infty(\mm)\)-modules, because -- in general -- it is not possible to construct
liftings directly at the level of Banach bundles (roughly speaking, because the fibers \(\{\ell{\bf E}_x:x\in\X\}\)
of \(\ell\Gamma({\bf E})\) cannot be embedded into a common Banach space); cf.\ the discussion in \cite[Appendix C]{LP23}.
\medskip

We conclude the introduction by illustrating how our results contribute to the existing literature.
In the context of continuous bundles of Banach spaces over locally compact Hausdorff spaces, results similar to ours
for injective and projective tensor products were obtained by Kitchen and Robbins in \cite{KR81} (after \cite{KR82}).
In this paper, we consider measurable bundles of (separable) Banach spaces instead, in the sense of \cite{DMLP25};
other related notions of measurable Banach bundles were studied e.g.\ by Gutman in \cite{Gutman93}, and by the second
and third named authors together with Gigli in \cite{GLP22}. The axiomatisation we have chosen here is due to its
strong connections with the theory of countably-generated \(L^\infty(\mm)\)-Banach \(L^\infty(\mm)\)-modules,
as we discussed above. Indeed, our motivation mostly comes from the analysis of metric measure spaces: Gigli introduced
the language of \(L^p(\mm)\)-normed \(L^\infty(\mm)\)-modules in \cite{Gigli14} (and \cite{Gigli17}) with the aim of
developing an effective vector calculus for (possibly infinite-dimensional) non-smooth structures. In this regard,
\(L^p(\mm)\)-normed \(L^\infty(\mm)\)-modules were used to define suitable spaces of \(p\)-integrable \emph{\(1\)-forms},
\emph{vector fields} and other differential objects. Several constructions in the category of \(L^p(\mm)\)-Banach
\(L^\infty(\mm)\)-modules -- such as \emph{duals}, \emph{pullbacks} and \emph{tensor products of Hilbertian modules}
-- have had a fundamental r\^{o}le in many recent results about the analytic and geometric properties of metric measure
spaces. Fiberwise representations of \(L^p(\mm)\)-Banach \(L^\infty(\mm)\)-modules were proved to be extremely useful
in providing more explicit characterisations of duals and pullbacks; see \cite{GLP22}. In a sense, the results of this
paper complement those of \cite{GLP22}, as they provide fiberwise representations of the various tensor products of
\(L^\infty(\mm)\)-Banach \(L^\infty(\mm)\)-modules introduced by the third named author in \cite{Pas23}.

Let us also mention that, prior to the introduction of \(L^p(\mm)\)-normed \(L^\infty(\mm)\)-modules on metric measure
spaces, some strictly related concepts were already well established in the literature. For example, Guo developed in
\cite{Guo89,Guo92} the language of \emph{random normed modules} (see also \cite{Guo99}), after the work of Schweizer
and Sklar \cite{Schweizer} on probabilistic metric spaces. This theory, which has been investigated thoroughly by Guo
and his coauthors in a long series of works (see e.g.\ the survey \cite{Guo11} and the references therein, as well as \cite{GuoMuTu} or \cite{GuoWaXuYuCh} for more recent developments),
has applications in finance optimisation problems and in the modelling of conditional risk measures \cite{FilKuVo}.
Furthermore, Haydon, Levy and Raynaud applied the machinery of \emph{randomly normed \(L^0\)-modules} in \cite{HLR91}
to investigate ultraproducts of Lebesgue--Bochner spaces. Since random normed modules are -- in view of their applications
-- defined over finite (or \(\sigma\)-finite) measure spaces, in this paper we consider \(L^\infty(\mm)\)-Banach
\(L^\infty(\mm)\)-modules over \(\sigma\)-finite measure spaces (not just over metric measure spaces).
\medskip

\noindent\textbf{Acknowledgements.}
M.C.\ gratefully acknowledges the financial support of the Ministry of Science, Technological Development and Innovation of the Republic of Serbia
(Grants No.\ 451-03-137/2025-03/200125 \textup{\&} 451-03-136/2025-03/200125). D.L.\ was supported by the Research Council of Finland grant 362689.
E.P.\ was supported by the Research Council of Finland grant 362898.
\section{Preliminaries}
Let us fix some general notations and conventions, which will be used throughout the whole paper. Given a non-empty index set \(\mathcal I\)
and a family \((\X_i)_{i\in\mathcal I}\) of non-empty sets, we shall sometimes regard an element \(x_\star=(x_i)_{i\in\mathcal I}\) of
the Cartesian product \(\prod_{i\in\mathcal I}\X_i\) as a map \(x\colon\mathcal I\to\bigsqcup_{i\in\mathcal I}\X_i\) satisfying \(x(i)\in\X_i\)
for every \(i\in\mathcal I\). Moreover, we write \(O=O[A,B,\ldots]\) when we want to specify that an object \(O\) depends on some given data \(A,B,\ldots\)
\subsection{Measure theory}
Let us recall some notions and terminology in measure theory; cf.\ \cite{Bogachev07,Fremlin3}.
\subsubsection*{Lebesgue spaces}
Let \((\X,\Sigma,\mathcal N)\) be an \textbf{enhanced measurable space}, i.e.\ \((\X,\Sigma)\) is a measurable space
and \(\mathcal N\subseteq\Sigma\) is a \(\sigma\)-ideal. Letting \(\mathcal L^\infty(\Sigma)\) be the set of all
bounded measurable functions from \((\X,\Sigma)\) to \(\R\), we introduce the following equivalence relation
\(\sim_{\mathcal N}\) on \(\mathcal L^\infty(\Sigma)\): given \(f,g\in\mathcal L^\infty(\Sigma)\), we declare that
\(f\sim_{\mathcal N}g\) if and only if \(\{f\neq g\}\coloneqq\{x\in\X:f(x)\neq g(x)\}\in\mathcal N\). Then we define
\[
L^\infty(\mathcal N)\coloneqq\mathcal L^\infty(\Sigma)/\sim_{\mathcal N}.
\]
The space \(L^\infty(\mathcal N)\) is a unital, associative, commutative algebra with respect to the usual pointwise
operations. In addition, \(L^\infty(\mathcal N)\) is a Riesz space if endowed with the following partial order: given any
\(f,g\in L^\infty(\mathcal N)\), we declare that \(f\leq g\) if and only if \(f\leq g\) holds \textbf{\(\mathcal N\)-a.e.},
which means that \(\{\bar f>\bar g\}\in\mathcal N\) for some (thus, any) choice of representatives
\(\bar f,\bar g\in\mathcal L^\infty(\Sigma)\) of \(f\) and \(g\), respectively.
The positive cone of the Riesz space \(L^\infty(\mathcal N)\) will be denoted by \(L^\infty(\mathcal N)^+\coloneqq\{f\in L^\infty(\mathcal N):f\geq 0\}\).
Finally, the space \(L^\infty(\mathcal N)\) is a Banach space provided it is endowed with the following norm:
\[
\|f\|_{L^\infty(\mathcal N)}\coloneqq\inf_{\bar f\in\pi_{\mathcal N}^{-1}(f)}\sup_{x\in\X}|\bar f(x)|\quad\text{ for every }f\in L^\infty(\mathcal N),
\]
where \(\pi_{\mathcal N}\colon\mathcal L^\infty(\Sigma)\to L^\infty(\mathcal N)\) denotes the canonical projection map. Two important examples:
\begin{itemize}
\item If \(\mathcal N=\{\varnothing\}\), then \(L^\infty(\mathcal N)=\mathcal L^\infty(\Sigma)\)
and \(\|f\|_{\mathcal L^\infty(\Sigma)}=\sup_{x\in\X}|f(x)|\) for every \(f\in\mathcal L^\infty(\Sigma)\).
\item If \(\mm\geq 0\) is a given measure on \((\X,\Sigma)\), then \(\mathcal N_\mm\coloneqq\{N\in\Sigma:\mm(N)=0\}\) is a \(\sigma\)-ideal
and \(L^\infty(\mm)\coloneqq L^\infty(\mathcal N_\mm)\) is the usual \(\infty\)-Lebesgue space over the measure space \((\X,\Sigma,\mm)\).
\end{itemize}
For any set \(E\in\Sigma\), we denote by \(\1_E\in\mathcal L^\infty(\Sigma)\) its characteristic function and by \(\1_E^\mm\in L^\infty(\mm)\)
the \(\mm\)-a.e.\ equivalence class of \(\1_E\).
\begin{remark}\label{rmk:wlog_m_fin_complete}{\rm
For any \(\sigma\)-finite measure space \((\X,\Sigma,\mm)\), one can easily construct a probability measure \(\mu\) on \((\X,\Sigma)\)
such that \(\mm\ll\mu\ll\mm\). Letting \((\X,\bar\Sigma,\bar\mu)\) be the completion of the measure space \((\X,\Sigma,\mu)\), we then
have that \(L^\infty(\bar\mu)\) can be canonically identified with \(L^\infty(\mm)\). Due to this reason, in the sequel we shall often
assume without loss of generality that the measure spaces under consideration are complete probability spaces.
\fr}\end{remark}
If \((\X,\Sigma,\mm)\) is a \(\sigma\)-finite measure space, then for any order-bounded subset \(\{f_i\}_{i\in\mathcal I}\) of \(L^\infty(\mm)\)
(meaning that there exists \(g\in L^\infty(\mm)^+\) such that \(|f_i|\leq g\) for every \(i\in\mathcal I\)) there exists a countable family
\(\mathcal C\subseteq\mathcal I\) such that \(\sup_{i\in\mathcal C}f_i=\bigvee_{i\in\mathcal I}f_i\) and \(\inf_{i\in\mathcal C}f_i=\bigwedge_{i\in\mathcal I}f_i\).
In other words, the Riesz space \(L^\infty(\mm)\) is Dedekind complete and it has the countable supremum and infimum properties.
\subsubsection*{Liftings of measures}
Let \((\X,\Sigma,\mm)\) be a complete probability space. The von Neumann--Maharam lifting theorem \cite{Fremlin3} ensures the existence
of a linear map \(\ell\colon L^\infty(\mm)\to\mathcal L^\infty(\Sigma)\) satisfying
\[\begin{split}
\ell(fg)=(\ell f)(\ell g)&\quad\text{ for every }f,g\in L^\infty(\mm),\\
\|\ell f\|_{\mathcal L^\infty(\Sigma)}=\|f\|_{L^\infty(\mm)}&\quad\text{ for every }f\in L^\infty(\mm),\\
\pi_\mm(\ell f)=f&\quad\text{ for every }f\in L^\infty(\mm),\\
\ell(c\1_\X^\mm)=c\1_\X &\quad\text{ for every }c\in\R,
\end{split}\]
where we set \(\pi_\mm\coloneqq\pi_{\mathcal N_\mm}\colon\mathcal L^\infty(\Sigma)\to L^\infty(\mm)\).  
We say that \(\ell\) is a \textbf{lifting} of \(\mm\). If \(f,g\in L^\infty(\mm)\) satisfy \(f\leq g\), then \(\ell f\leq\ell g\).
Given any \(E\in\Sigma\), there exists a (unique) set \(\ell E\in\Sigma\) such that
\[
\ell\1_E^\mm=\1_{\ell E}.
\]
The resulting map \(\ell\colon\Sigma\to\Sigma\)
is a Boolean homomorphism such that
\[\begin{split}
\mm(E\Delta\ell E)=0&\quad\text{ for every }E\in\Sigma,\\
\ell N=\varnothing&\quad\text{ for every }N\in\mathcal N_\mm.
\end{split}\]
\subsubsection*{Atoms}
Let \((\X,\Sigma,\mm)\) be a complete probability space. A given set \(A\in\Sigma\) is said to be an \textbf{atom} of the measure \(\mm\) provided it holds that \(\mm(A)>0\) and 
\[
B\in\Sigma,\;B\subseteq A\quad\Longrightarrow\quad\text{either }\mm(B)=0\text{ or }\mm(A\setminus B)=0.
\]
Given any \(A,\tilde A\in \Sigma\) with \(\mm(A\Delta \tilde A)=0\), it holds that \(A\) is an atom of \(\mm\) if and only if \(\tilde A\) is an atom of \(\mm\),
thus there are at most countably many atoms of \(\mm\) that are not mutually \(\mm\)-a.e.\ equivalent.
\medskip

Assuming \(\ell\) is a lifting of \(\mm\), we denote by \(\mathscr A(\ell)\) the family of all \textbf{lifted atoms} of \(\ell\), i.e.\ we set
\[
\mathscr A(\ell)\coloneqq\{\ell A\;|\;A\in\Sigma\text{ is an atom of }\mm\}\subseteq\Sigma\setminus\mathcal N_\mm.
\]
The above-discussed properties of atoms ensure that the family \(\mathscr A(\ell)\) is at most countable, so that
\[
{\rm A}(\ell)\coloneqq\bigcup_{A\in\mathscr A(\ell)}A\in\Sigma.
\]
Moreover, it can be readily checked that \(\mathscr A(\ell)\) coincides with the collection of all minimal elements of the
poset \((\{\ell E:E\in\Sigma\setminus\mathcal N_\mm\},\subseteq)\). The following result was proved in \cite[Lemma 4.7]{GLP22}.
\begin{lemma}\label{lem:lifted_atoms}
Let \((\X,\Sigma,\mm)\) be a complete probability space. Let \(\ell\) be a lifting of \(\mm\). Then:
\begin{itemize}
\item[{\rm i)}] \(\{x\}\in\Sigma\) for every \(x\in\X\setminus{\rm A}(\ell)\). 
\item[{\rm ii)}] Given any \(E\in\Sigma\) and \(A\in\mathscr A(\ell)\), we have that either \(A\subseteq\ell E\) or \(A\cap\ell E=\varnothing\).
\item[{\rm iii)}] Given any \(f\in L^\infty(\mm)\) and \(A\in\mathscr A(\ell)\), the function \(\ell f\) is constant on \(A\).
\end{itemize}
\end{lemma}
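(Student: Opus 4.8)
We prove the three items in order, each building on the structural properties of the lifting map $\ell$ and of lifted atoms recalled above.

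For item i), fix $x\in\X\setminus{\rm A}(\ell)$. The plan is to exploit the fact that $\mm$ is a probability measure on a complete space together with the fact that $\mathscr A(\ell)$ is at most countable. First I would separate two cases according to whether $x$ belongs to the support of $\mm$ in the measure-algebra sense. If there is a set $E\in\Sigma$ with $x\in E$ and $\mm(E)=0$, then completeness of $(\X,\Sigma,\mm)$ gives $\{x\}\subseteq E\in\mathcal N_\mm\subseteq\Sigma$ and we are done. Otherwise every measurable set containing $x$ has positive measure; then I would consider the family $\mathcal F\coloneqq\{\ell E: E\in\Sigma\setminus\mathcal N_\mm,\ x\in\ell E\}$, which is nonempty (e.g.\ $\ell\X=\X$ since $\ell\1_\X^\mm=\1_\X$) and closed under the Boolean operations inherited from $\ell$ being a homomorphism, in particular under finite intersections up to shrinking. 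Using the countable infimum property of $L^\infty(\mm)$ together with Dedekind completeness, I would extract a countable subfamily whose intersection $F_x\coloneqq\bigcap_n \ell E_n$ still contains $x$ and is minimal, i.e.\ $F_x\in\mathscr A(\ell)$ or $F_x$ has measure zero. If $\mm(F_x)=0$ then $F_x\in\mathcal N_\mm$ and $\ell F_x=\varnothing$, which would contradict $x\in\ell F_x$ (using $\ell\circ\ell=\ell$); so $F_x$ is a lifted atom and hence $x\in{\rm A}(\ell)$, contradicting the choice of $x$. Thus the first case must occur, and $\{x\}\in\Sigma$.

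For item ii), fix $E\in\Sigma$ and $A\in\mathscr A(\ell)$, say $A=\ell A_0$ with $A_0$ an atom of $\mm$. The idea is purely Boolean. Since $\ell$ is a Boolean homomorphism, $A=\ell A_0=\ell(A_0\cap E)\sqcup\ell(A_0\setminus E)=(A\cap\ell E)\sqcup(A\cap(\X\setminus\ell E))$. Because $A_0$ is an atom of $\mm$, one of $A_0\cap E$, $A_0\setminus E$ lies in $\mathcal N_\mm$; applying $\ell$ and using $\ell N=\varnothing$ for $N\in\mathcal N_\mm$, one of the two sets $\ell(A_0\cap E)=A\cap\ell E$ and $\ell(A_0\setminus E)=A\setminus\ell E$ is empty, which is exactly the dichotomy claimed.

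For item iii), fix $f\in L^\infty(\mm)$ and $A\in\mathscr A(\ell)$. The plan is to reduce to item ii) via approximation of $\ell f$ by simple functions. Since $A_0$ is an atom, $f$ is $\mm$-a.e.\ equal to a constant $c$ on $A_0$; equivalently $\1_{A_0}^\mm\cdot f=c\,\1_{A_0}^\mm$ in $L^\infty(\mm)$. Applying $\ell$ and using its multiplicativity together with $\ell(c\1_\X^\mm)=c\1_\X$ and $\ell\1_{A_0}^\mm=\1_{\ell A_0}=\1_A$, we get $\1_A\cdot\ell f=c\,\1_A$ as elements of $\mathcal L^\infty(\Sigma)$, i.e.\ $\ell f\equiv c$ on $A$. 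Alternatively, if one prefers to avoid multiplying module elements directly: approximate $f$ uniformly by simple functions $\sum_k c_k\1_{E_k}^\mm$, apply $\ell$ to get $\sum_k c_k\1_{\ell E_k}$, which by item ii) is constant on $A$, and pass to the uniform limit using $\|\ell g\|_{\mathcal L^\infty(\Sigma)}=\|g\|_{L^\infty(\mm)}$.

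The main obstacle is item i): the passage from "$x$ lies in every $\ell E$ of a suitable filtering family" to "the intersection of a countable subfamily is a lifted atom or null" requires care, since lifted atoms were characterised as the minimal elements of $(\{\ell E:E\in\Sigma\setminus\mathcal N_\mm\},\subseteq)$ and one must verify that the countable subfamily extracted via the countable infimum property of $L^\infty(\mm)$ genuinely realises the infimum at the level of sets (not merely $\mm$-a.e.), which is where the idempotency $\ell\circ\ell=\ell$ and the sharp identity $\ell N=\varnothing$ for null $N$ are essential. Items ii) and iii) are then routine Boolean-algebra and approximation arguments.
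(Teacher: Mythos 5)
The paper does not prove this lemma at all: it is imported verbatim from \cite[Lemma 4.7]{GLP22}, so there is no internal proof to compare your plan against. Judged on its own, your treatment of items ii) and iii) is correct and complete. Item ii) is exactly the Boolean-homomorphism dichotomy $\ell A_0=\ell(A_0\cap E)\sqcup\ell(A_0\setminus E)$ together with $\ell N=\varnothing$ for $N\in\mathcal N_\mm$, and item iii) follows cleanly from the multiplicativity of $\ell$ applied to $\1_{A_0}^\mm f=c\,\1_{A_0}^\mm$ (the fact that $f$ is $\mm$-a.e.\ constant on an atom is standard and worth one line), or alternatively by your simple-function approximation.

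Item i) contains a genuine gap, which you partly acknowledge. Two concrete problems. First, the assertion that $F_x\coloneqq\bigcap_n\ell E_n$ ``is minimal, i.e.\ $F_x\in\mathscr A(\ell)$ or has measure zero'' is the entire content of the step and is never argued; nothing in the extraction of a countable subfamily by itself makes $F_x$ an atom. Second, the justification ``$x\in\ell F_x$ using $\ell\circ\ell=\ell$'' fails: $F_x$ is a \emph{countable} intersection of lifted sets, and $\ell$ is only a finitely additive Boolean homomorphism, so $\ell F_x$ need not equal $F_x$; one only gets $\ell F_x\subseteq F_x$ with $\mm(F_x\setminus\ell F_x)=0$. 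The correct route to $x\in\ell F_x$ is your Case 2 hypothesis (no null set contains $x$, so $x\notin F_x\setminus\ell F_x$); note also that in Case 2 the sub-case $\mm(F_x)=0$ is vacuous, since $F_x$ is a measurable set containing $x$.

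The gap can be closed along the lines you hint at. Apply the countable infimum property to $\{\1_{\ell E}^\mm:\ell E\in\mathcal F\}$ to get a decreasing sequence $(\ell E_n)_n\subseteq\mathcal F$ with $\1_{F_x}^\mm\leq\1_{\ell E}^\mm$, hence $\ell F_x\subseteq\ell E$, for \emph{every} $\ell E\in\mathcal F$. Then prove that $\ell F_x$ is a minimal element of the poset $(\{\ell E:E\in\Sigma\setminus\mathcal N_\mm\},\subseteq)$: given $E\in\Sigma\setminus\mathcal N_\mm$ with $\ell E\subseteq\ell F_x$, either $x\in\ell E$, in which case $\ell E\in\mathcal F$ and thus $\ell F_x\subseteq\ell E$, so $\ell E=\ell F_x$; or $x\in\ell(\X\setminus E)$, in which case $x\in\ell F_x\cap\ell(\X\setminus E)=\ell(F_x\setminus E)$, so $\ell(F_x\setminus E)\in\mathcal F$ and therefore $\ell F_x\subseteq\ell(F_x\setminus E)=\ell F_x\setminus\ell E$, forcing $\ell E=\varnothing$ and contradicting $\mm(E)>0$. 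Since the paper identifies $\mathscr A(\ell)$ with these minimal elements, $\ell F_x\in\mathscr A(\ell)$ and $x\in{\rm A}(\ell)$, the desired contradiction. With this inserted, your plan becomes a complete proof.
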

As a consequence of the above lemma, we have that the elements of \(\mathscr A(\ell)\) are pairwise disjoint.
The proof of the next result is essentially contained in the one of \cite[Lemma C.1]{LP23}.
\begin{lemma}\label{lem:point_missing_union_liftings}
Let \((\X,\Sigma,\mm)\) be a complete probability space. Assume \(\mm\) is atomless, i.e.\ it has no atoms.
Let \(x\in\X\) be given. Then there exists a partition \((E_n)_{n\in\N}\subseteq\Sigma\setminus \mathscr N_\mm\) of \(\X\) such that
\[
x\notin\bigcup_{n\in\N}\ell E_n.
\]
\end{lemma}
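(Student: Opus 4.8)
The plan is to exploit the atomlessness of $\mm$ in order to split $\X$ into countably many pieces of positive measure, each chosen so that $x$ falls outside its lifted version. First I would recall the key mechanism: for any $E\in\Sigma$ with $\mm(E)>0$, the lifted set $\ell E$ satisfies $\mm(E\Delta\ell E)=0$, and $\ell E$ is a genuine measurable set; crucially, if $x\notin\ell E$ we have a ``hole'' at $x$. The basic building block I would establish is the following claim: given any $F\in\Sigma$ with $\mm(F)>0$, there exists $E\in\Sigma$ with $E\subseteq F$, $\mm(E)>0$, $\mm(F\setminus E)>0$, and $x\notin\ell E$. Indeed, since $\mm$ is atomless and $\mm(F)>0$, we may split $F=F_1\sqcup F_2$ with $\mm(F_1),\mm(F_2)>0$; then by Lemma \ref{lem:lifted_atoms}(ii) applied with the trivial atom structure — or more simply, since $\ell$ is a Boolean homomorphism with $\ell F_1\cap\ell F_2=\ell(F_1\cap F_2)=\ell\varnothing=\varnothing$ — the point $x$ can belong to at most one of $\ell F_1$, $\ell F_2$; choosing the other one as $E$ gives the claim (and if $x$ belongs to neither, either choice works, but then we further split to keep $\mm(F\setminus E)>0$).

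Next I would iterate this claim to build the partition. Set $F_0\coloneqq\X$, which has $\mm(F_0)=1>0$. Inductively, given $F_n\in\Sigma$ with $\mm(F_n)>0$, apply the claim to obtain $E_n\subseteq F_n$ with $\mm(E_n)>0$, $\mm(F_n\setminus E_n)>0$, and $x\notin\ell E_n$; then set $F_{n+1}\coloneqq F_n\setminus E_n$, which has positive measure so the induction continues. By construction the sets $(E_n)_{n\in\N}$ are pairwise disjoint, each has positive measure (so $E_n\in\Sigma\setminus\mathcal N_\mm$), and $x\notin\bigcup_{n\in\N}\ell E_n$. It remains only to arrange that they form a partition of $\X$, i.e.\ that $\bigcup_n E_n=\X$ rather than merely exhausting a subset; for this I would, at stage $n$, instead of taking $F_{n+1}=F_n\setminus E_n$ wholesale, be a little more careful: enumerate a countable generating algebra or simply note that since $\mm$ is a probability measure and we are free to choose, we can instead split so that $\mm(E_n)\geq \tfrac12\mm(F_n)$, forcing $\mm(F_{n+1})\leq\tfrac12\mm(F_n)\to 0$; then $\mm\big(\X\setminus\bigcup_n E_n\big)=\lim_n\mm(F_n)=0$, and absorbing this null set into $E_0$ (which does not change $\ell E_0$) yields an honest partition with $x\notin\bigcup_n\ell E_n$ still holding, since the null modification leaves every $\ell E_n$ unchanged.

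The main obstacle I anticipate is the dichotomy step: ensuring that at each stage we can genuinely peel off a \emph{positive-measure} chunk $E_n$ whose lift misses $x$, while still leaving a positive-measure remainder $F_{n+1}$ to continue the recursion. This is where atomlessness is indispensable — it guarantees we always have room to split $F_n$ into two positive-measure halves — and where the Boolean-homomorphism property of $\ell$ (disjointness is preserved, so $x$ lies in at most one lift) does the real work. Once that step is in hand, the rest is bookkeeping: disjointness is automatic from the $F_n\setminus E_n$ construction, positivity of measure is built in, and the ``partition'' clause is secured by the halving argument together with the observation that modifying a set by a null set does not change its lift.
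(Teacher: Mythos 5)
Your proposal is correct and follows essentially the same route as the paper's proof: exploit atomlessness to peel off positive-measure pieces, use the fact that $\ell$ is a Boolean homomorphism (so lifts of disjoint sets are disjoint and $x$ lies in at most one of them) to ensure each chosen piece has its lift missing $x$, force geometric decay of the remainder so the leftover is $\mm$-null, and dispose of that null set using that $\ell$ is insensitive to null modifications. Your absorption of the null remainder into $E_0$ is in fact a slightly cleaner way to meet the requirement that all the $E_n$ be non-negligible than the paper's own final step.
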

\begin{proof}
Given that \(\mm\) is atomless, for any \(E\in\Sigma\) and \(\lambda\in(0,\mm(E))\) there exists a set \(F\in\Sigma\)
such that \(F\subseteq E\) and \(\mm(F)=\lambda\); see e.g.\ \cite[Theorem B.3]{Bon:Pas:Raj:20}. By repeatedly making
use of this fact, we shall construct the sought sequence \((E_n)_{n\in\N}\) in a recursive manner. First of all, fix
any set \(E_1\in\Sigma\) such that \(\mm(E_1)=\frac{1}{2}\). Since \(\ell E_1\cup\ell(\X\setminus E_1)=\ell\X=\X\) and
\(\ell E_1\cap\ell(\X\setminus E_1)=\ell\varnothing=\varnothing\), we can assume (up to replacing \(E_1\) with
\(\X\setminus E_1\)) that \(x\notin\ell E_1\). Now, assume that we have already chosen pairwise disjoint sets
\(E_1,\ldots,E_n\in\Sigma\), for some \(n\in\N\) with \(n\geq 1\), in such a way that \(\mm(E_i)=2^{-i}\) for every
\(i=1,\ldots,n\) and \(x\notin\ell E_1\cup\ldots\cup\ell E_n\). Since
\(\mm\big(\X\setminus\bigcup_{i=1}^n E_i\big)=2^{-n}\), by arguing as above we can find a subset \(E_{n+1}\in\Sigma\)
of \(\X\setminus\bigcup_{i=1}^n E_i\), such that \(\mm(E_{n+1})=2^{-(n+1)}\) and \(x\notin \ell E_{n+1}\).
This way, we obtain a sequence \((E_n)_{n=1}^\infty\subseteq\Sigma\) of pairwise disjoint sets satisfying \(x\notin\bigcup_{n=1}^\infty\ell E_n\)
and \(\mm\big(\bigcup_{n=1}^\infty E_n\big)=\sum_{n=1}^\infty 2^{-n}=1\). In particular, we have that
\(E_0\coloneqq\X\setminus\bigcup_{n=1}^\infty E_n\) is \(\mm\)-null, so that \(\ell E_0=\varnothing\). Therefore,
\((E_n)_{n\in\N}\) is a partition of \(\X\) such that \(x\notin\bigcup_{n\in\N}\ell E_n\), thus accordingly the
statement is achieved.
\end{proof}
\subsection{Algebraic tensor products of modules}
Let us recall the notion of (algebraic) tensor product of modules \cite{Bourbaki48} (see also \cite{Conrad18}).
Let \(R\) be a commutative ring with a multiplicative identity. Let \(M\) and \(N\) be \(R\)-modules. Then there
exists a unique couple \((M\otimes N,\otimes)\), where \(M\otimes N\) is an \(R\)-module (called the \textbf{algebraic
tensor product} of \(M\) and \(N\)) and \(\otimes\colon M\times N\to M\otimes N\) is an \(R\)-bilinear map, satisfying
the following universal property: given any \(R\)-module \(Q\) and any \(R\)-bilinear map \(b\colon M\times N\to Q\),
there exists a unique \(R\)-linear map \(\tilde b\colon M\otimes N\to Q\) such that
\[\begin{tikzcd}
M\times N \arrow[d,swap,"\otimes"] \arrow[r," b"] & Q \\
M\otimes N \arrow[ur,swap,"\tilde b"]
\end{tikzcd}\]
is a commutative diagram. We say that \(\tilde b\) is the \textbf{\(R\)-linearisation} of \(b\).
The pair \((M\otimes N,\otimes)\) is unique up to a unique isomorphism: given any pair \((W,\tilde\otimes)\) having the same
properties, there exists a unique isomorphism of \(R\)-modules \(\Phi\colon M\otimes N\to W\) such that the diagram
\[\begin{tikzcd}
M\times N \arrow[r,"\otimes"] \arrow[dr,swap," \tilde\otimes"] & M\otimes N \arrow[d,"\Phi"]\\
& W
\end{tikzcd}\]
commutes. The elements of \(M\otimes N\) are called \textbf{tensors}. Those tensors of the form \(v\otimes w\)
(i.e.\ the elements of the image of \(\otimes\colon M\times N\to M\otimes N\)) are called \textbf{elementary tensors}.
Each \(\alpha\in M\otimes N\) is a sum of elementary tensors, i.e.\ \(\alpha=\sum_{i=1}^n v_i\otimes w_i\)
for some \(v_1,\ldots,v_n\in M\) and \(w_1,\ldots,w_n\in N\).
In the particular case where \(R\) is the real field \(\R\) (so that the \(R\)-modules are the real vector spaces),
with the above construction one recovers the algebraic tensor product of real vector spaces.
\medskip

Let us recall a useful criterion to detect null tensors: given any \(\alpha=\sum_{i=1}^n v_i\otimes w_i\in M\otimes N\),
we have that \(\alpha=0\) if and only if
\begin{equation}\label{eq:alg_criterion_null_tensor}
\sum_{i=1}^n b(v_i,w_i)=0\quad\text{ for every }R\text{-module }Q\text{ and every }R\text{-bilinear map }b\colon M\times N\to Q.
\end{equation}
\subsection{Banach spaces}
Let \(\B\) be a Banach space. We denote by \(\mathbb D_\B\) the unit disc of \(\B\), i.e.\ we set
\[
\mathbb D_\B\coloneqq\{v\in\B:\|v\|_\B\leq 1\}.
\]
We denote by \(\B'\) its dual Banach space and by \(\langle\cdot,\cdot\rangle\colon\B'\times\B\to\R\) the duality pairing
\[
\B'\times\B\ni(\omega,v)\mapsto\langle\omega,v\rangle\coloneqq\omega(v)\in\R.
\]
Given another Banach space \(\mathbb V\), we say that \(\B\) and \(\mathbb V\) are \textbf{isomorphic} (as Banach spaces),
and we write \(\B\cong\mathbb V\), provided there exists a linear isometric bijection \(\Phi\colon\B\to\mathbb V\). In this case,
many authors would write `isometrically isomorphic' instead.
\medskip

Given any vector subspace \(V\) of \(\B'\), it holds that
\begin{equation}\label{eq:equiv_w*_dense}
V\text{ is weakly\(^*\) dense in }\B'\quad\Longleftrightarrow\quad V\text{ separates the points of }\B.
\end{equation}
See e.g.\ \cite[Corollary 5.108]{AliprantisBorder99} for a proof of the above fact. Later, we will need the following standard result,
which we also prove for the reader's usefulness.
\begin{lemma}\label{lem:separating_points}
Let \(\B\) be a Banach space. Let \(\{v_i\}_{i\in I}\subseteq\B\) be dense in \(\B\). Assume \(\{\omega_i\}_{i\in I}\subseteq\B'\)
satisfies \(\|\omega_i\|_{\B'}=1\) and \(\langle\omega_i,v_i\rangle=\|v_i\|_\B\) for every \(i\in I\). Then \(\{\omega_i\}_{i\in I}\)
separates the points of \(\B\). In particular, the linear span of \(\{\omega_i\}_{i\in I}\) is weakly\(^*\) dense in \(\B'\).
\end{lemma}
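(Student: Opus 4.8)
The plan is to show that $\{\omega_i\}_{i\in I}$ separates the points of $\B$ and then invoke \eqref{eq:equiv_w*_dense} to conclude that its linear span is weakly$^*$ dense in $\B'$. So the only real content is the separation claim: given $v\in\B$ with $\langle\omega_i,v\rangle=0$ for all $i\in I$, I must prove $v=0$.

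First I would fix such a $v$ and let $\eps>0$. Since $\{v_i\}_{i\in I}$ is dense in $\B$, pick an index $i\in I$ with $\|v-v_i\|_\B\leq\eps$. Now estimate $\|v_i\|_\B=\langle\omega_i,v_i\rangle=\langle\omega_i,v_i-v\rangle+\langle\omega_i,v\rangle=\langle\omega_i,v_i-v\rangle$, where the last equality uses the assumption $\langle\omega_i,v\rangle=0$. Since $\|\omega_i\|_{\B'}=1$, this gives $\|v_i\|_\B=|\langle\omega_i,v_i-v\rangle|\leq\|v_i-v\|_\B\leq\eps$. Then by the triangle inequality $\|v\|_\B\leq\|v-v_i\|_\B+\|v_i\|_\B\leq 2\eps$. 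As $\eps>0$ was arbitrary, $\|v\|_\B=0$, i.e.\ $v=0$. Hence $\{\omega_i\}_{i\in I}$ separates the points of $\B$.

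For the final sentence, let $V$ denote the linear span of $\{\omega_i\}_{i\in I}$ inside $\B'$. Since $\{\omega_i\}_{i\in I}$ separates points of $\B$, so does $V$ (any linear combination test reduces to the individual functionals — in fact $V\supseteq\{\omega_i\}_{i\in I}$ already separates points, and enlarging a separating family keeps it separating). Applying the equivalence \eqref{eq:equiv_w*_dense} with this subspace $V$ yields that $V$ is weakly$^*$ dense in $\B'$, which is exactly the claim.

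There is essentially no obstacle here; the one point requiring a little care is making sure the chosen index $i$ in the density step can simultaneously serve in the norm identity $\langle\omega_i,v_i\rangle=\|v_i\|_\B$ — but that identity holds for \emph{every} $i\in I$ by hypothesis, so any $i$ with $\|v-v_i\|_\B\leq\eps$ works, and the argument goes through without needing a uniform choice. The rest is just the triangle inequality and the definition of weak$^*$ density via point separation.
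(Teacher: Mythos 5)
Your proof is correct and uses essentially the same idea as the paper's: approximate an allegedly annihilated $v$ by some $v_i$ and use $\langle\omega_i,v\rangle=0$ together with $\langle\omega_i,v_i\rangle=\|v_i\|_\B$ and $\|\omega_i\|_{\B'}=1$ to bound $\|v_i\|_\B$ by $\|v_i-v\|_\B$. The only (cosmetic) difference is that you run a direct $\eps$-argument for arbitrary $v$, whereas the paper normalises $\|v\|_\B=1$ and argues by contradiction with the specific choice $\eps=\|v\|_\B/2$; both are fine.
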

\begin{proof}
We aim to show that if \(v\in\B\), \(\|v\|_\B=1\) satisfies \(\langle\omega_i,v\rangle=0\) for every \(i\in I\), then \(v=0\).
We argue by contradiction: assume \(v\neq 0\). Then we can find \(i_0\in I\) such that \(\|v_{i_0}-v\|_\B<\frac{\|v\|_\B}{2}\). Then
\[
\frac{1}{2}=\frac{1}{\|v\|_\B}\bigg(\|v\|_\B-\frac{\|v\|_\B}{2}\bigg)<\frac{\|v_{i_0}\|_\B}{\|v\|_\B}
=\frac{\langle\omega_{i_0},v_{i_0}\rangle}{\|v\|_\B}=\frac{\langle\omega_{i_0},v_{i_0}-v\rangle}{\|v\|_\B}
\leq\frac{\|\omega_{i_0}\|_{\B'}\|v_{i_0}-v\|_\B}{\|v\|_\B}<\frac{1}{2},
\]
which leads to a contradiction. Therefore, we have shown that \(\{\omega_i\}_{i\in I}\) separates the points of \(\B\).
The last part of the statement then follows from \eqref{eq:equiv_w*_dense}.
\end{proof}

Given a vector space \(\X\) and a seminorm \(p\) on \(\X\), it holds that \(\{p=0\}\coloneqq\{v\in\X:p(v)=0\}\) is a closed vector subspace of \(\X\).
Then the quotient \(\X/p\coloneqq\X/\{p=0\}\) is a vector space and
\[
\|v\|_{\X/p}\coloneqq p(\bar v)\quad\text{ for some (thus, any) representative }\bar v\in\X\text{ of }v\in\X/p
\]
defines a norm \(\|\cdot\|_{\X/p}\) on \(\X/p\). If the seminorm \(p\) is complete, then \(\X/p\) is a Banach space.
\subsubsection*{Tensor products of Banach spaces}
We fix some notations and terminology about tensor products of Banach spaces; cf.\ the monograph \cite{Ryan02} and the references
therein for more on this topic.
\medskip

Given Banach spaces \(\B\) and \(\mathbb V\), we can consider their algebraic tensor product \(\B\otimes\mathbb V\) as vector spaces.
The following result provides a criterion to detect when a given tensor \(\alpha\in\B\otimes\mathbb V\) is null.
\begin{lemma}[Null tensors on Banach spaces]\label{lem:null_tensor_Banach}
Let \(\B\), \(\mathbb V\) be Banach spaces. Let \(S_\B\) and \(S_{\mathbb V}\) be weakly\(^*\) dense subsets of \(\B'\)
and \(\mathbb V'\), respectively. Let \(\alpha=\sum_{i=1}^n v_i\otimes w_i\in\B\otimes\mathbb V\)
be given. Then it holds that \(\alpha=0\) if and only if
\[
\sum_{i=1}^n\langle\omega,v_i\rangle\langle\eta,w_i\rangle=0\quad\text{ for every }\omega\in S_\B\text{ and }\eta\in S_{\mathbb V}.
\]
\end{lemma}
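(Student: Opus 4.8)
The plan is to reduce the statement to the elementary criterion for null tensors in an abstract module, namely \eqref{eq:alg_criterion_null_tensor}, and then to promote the quantifier over all bilinear maps into $Q$ to the quantifier over pairs $(\omega,\eta)$ with $\omega\in S_\B$, $\eta\in S_{\mathbb V}$. The forward implication is immediate: if $\alpha=0$ in $\B\otimes\mathbb V$, then for any $\omega\in\B'$ and $\eta\in\mathbb V'$ the map $(v,w)\mapsto\langle\omega,v\rangle\langle\eta,w\rangle$ is bilinear into $\R$, so its linearisation kills $\alpha$, giving $\sum_i\langle\omega,v_i\rangle\langle\eta,w_i\rangle=0$; in particular this holds for $\omega\in S_\B$ and $\eta\in S_{\mathbb V}$.

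For the converse, suppose $\sum_{i=1}^n\langle\omega,v_i\rangle\langle\eta,w_i\rangle=0$ for all $\omega\in S_\B$ and $\eta\in S_{\mathbb V}$. The first step is to upgrade this from $S_\B,S_{\mathbb V}$ to all of $\B',\mathbb V'$. Fix $\eta\in S_{\mathbb V}$ and consider the functional $\B'\ni\omega\mapsto\sum_i\langle\omega,v_i\rangle\langle\eta,w_i\rangle$; this is exactly $\omega\mapsto\langle\omega,\sum_i\langle\eta,w_i\rangle v_i\rangle$, i.e.\ evaluation at the fixed vector $u_\eta\coloneqq\sum_i\langle\eta,w_i\rangle v_i\in\B$, which is weakly$^*$ continuous on $\B'$. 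Since it vanishes on the weakly$^*$ dense set $S_\B$, it vanishes on all of $\B'$; as $\B'$ separates points of $\B$, this forces $u_\eta=0$, i.e.\ $\sum_i\langle\eta,w_i\rangle v_i=0$ in $\B$ for every $\eta\in S_{\mathbb V}$. Now run the symmetric argument in the second variable: for a fixed $\omega\in\B'$, the map $\mathbb V'\ni\eta\mapsto\sum_i\langle\omega,v_i\rangle\langle\eta,w_i\rangle=\langle\eta,\sum_i\langle\omega,v_i\rangle w_i\rangle$ is weakly$^*$ continuous and, by what we just proved, vanishes on the weakly$^*$ dense set $S_{\mathbb V}$ (here one uses that $\sum_i\langle\eta,w_i\rangle v_i=0$ for $\eta\in S_{\mathbb V}$ implies, after pairing with $\omega$, that $\sum_i\langle\omega,v_i\rangle\langle\eta,w_i\rangle=0$ for $\eta\in S_{\mathbb V}$, for every $\omega\in\B'$). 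Hence $\sum_i\langle\omega,v_i\rangle\langle\eta,w_i\rangle=0$ for all $\omega\in\B'$ and all $\eta\in\mathbb V'$.

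With the full-dual version in hand, the second step is to deduce $\alpha=0$ via \eqref{eq:alg_criterion_null_tensor} applied with $R=\R$, $M=\B$, $N=\mathbb V$: we must check that $\sum_i b(v_i,w_i)=0$ for every real vector space $Q$ and every bilinear $b\colon\B\times\mathbb V\to Q$. Fix such a $b$ and an arbitrary linear functional $\lambda\in Q^*$ (the algebraic dual). Then $(v,w)\mapsto\lambda(b(v,w))$ is an $\R$-bilinear scalar map; by a standard fact it factors as a finite combination $\sum_{k}\langle\omega_k,\cdot\rangle\langle\eta_k,\cdot\rangle$? — more cleanly, I would instead argue directly: the previous step shows that for every $\eta\in\mathbb V'$ the vector $\sum_i\langle\eta,w_i\rangle v_i$ is $0$ in $\B$. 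Since finite-dimensional considerations let us write each $w_i$'s contribution through a basis of $\mathrm{span}\{w_1,\dots,w_n\}$, choosing $\eta$ to run over a dual basis of that finite-dimensional subspace (extended to $\mathbb V'$ by Hahn--Banach) shows that, after a change of basis $w_i=\sum_j c_{ij}f_j$ with $\{f_j\}$ linearly independent, one has $\sum_i c_{ij}v_i=0$ for each $j$; therefore $\alpha=\sum_i v_i\otimes w_i=\sum_j\bigl(\sum_i c_{ij}v_i\bigr)\otimes f_j=0$ already in the algebraic tensor product. This finite-dimensional reduction is the cleanest route and avoids invoking \eqref{eq:alg_criterion_null_tensor} at all, though one could phrase it that way too.

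The only genuinely delicate point — the ``main obstacle'' — is the passage from the hypothesis on the weakly$^*$ dense sets $S_\B,S_{\mathbb V}$ to the conclusion on all of $\B',\mathbb V'$, and in particular being careful that after the first variable is handled one still has a valid (weakly$^*$ continuous) functional in the second variable for \emph{every} $\omega\in\B'$, not just $\omega\in S_\B$; this is what the two-step argument above arranges. Everything else is the bookkeeping of reducing to the finite-dimensional subspaces $\mathrm{span}\{v_i\}$ and $\mathrm{span}\{w_i\}$ and using Hahn--Banach to realise dual bases as restrictions of elements of $\B',\mathbb V'$, together with the equivalence \eqref{eq:equiv_w*_dense} and Lemma~\ref{lem:separating_points}'s underlying principle that $\B'$ separates the points of $\B$.
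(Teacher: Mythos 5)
Your proof is correct. The paper states Lemma~\ref{lem:null_tensor_Banach} without proof (treating it as a standard fact), and your argument is exactly the expected one: the two-step upgrade from $S_\B\times S_{\mathbb V}$ to $\B'\times\mathbb V'$ via weak\(^*\) continuity of the evaluation functionals $\omega\mapsto\langle\omega,\sum_i\langle\eta,w_i\rangle v_i\rangle$ and $\eta\mapsto\langle\eta,\sum_i\langle\omega,v_i\rangle w_i\rangle$, followed by the finite-dimensional reduction with a Hahn--Banach-extended dual basis of $\mathrm{span}\{w_1,\dots,w_n\}$. The only blemish is the half-sentence where you start to invoke a factorisation of $\lambda\circ b$ and then abandon it; in a final write-up simply delete that and keep the direct basis argument, which is complete as you give it.
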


Following \cite[Section 3.1]{Ryan02}, we denote by \(\B\otimes_\varepsilon\mathbb V\coloneqq(\B\otimes\mathbb V,\|\cdot\|_\varepsilon)\)
the normed space obtained by equipping the algebraic tensor product \(\B\otimes\mathbb V\) with the injective norm \(\|\cdot\|_\varepsilon\),
while \(\B\hat\otimes_\varepsilon\mathbb V\) denotes the \textbf{injective tensor product} of \(\B\) and \(\mathbb V\), which is defined
as the completion of \(\B\otimes_\varepsilon\mathbb V\).
\begin{remark}[Injective tensor products of subspaces]\label{rmk:inj_tensor_products_subspaces}{\rm
The injective tensor product respects subspaces, meaning that if \(\mathbb B\), \(\mathbb V\) are given Banach spaces
and \(\tilde\B\), \(\tilde{\mathbb V}\) are closed vector subspaces of \(\B\) and \(\mathbb V\), respectively, then the
norm of a tensor \(\alpha\in\tilde\B\otimes\tilde{\mathbb V}\) computed in \(\tilde\B\hat\otimes_\varepsilon\tilde{\mathbb V}\)
coincides with its norm in \(\B\hat\otimes_\varepsilon\mathbb V\). In particular, we have that
\(\tilde\B\hat\otimes_\varepsilon\tilde{\mathbb V}\cong{\rm cl}_{\B\hat\otimes_\varepsilon\mathbb V}(\B\otimes\mathbb V)\) as Banach spaces.
\fr}\end{remark}

Following \cite[Section 2.1]{Ryan02}, we denote by \(\B\otimes_\pi\mathbb V\coloneqq(\B\otimes\mathbb V,\|\cdot\|_\pi)\)
the normed space obtained by equipping the algebraic tensor product \(\B\otimes\mathbb V\) with the projective norm \(\|\cdot\|_\pi\),
while \(\B\hat\otimes_\pi\mathbb V\) denotes the \textbf{projective tensor product} of \(\B\) and \(\mathbb V\), which is defined
as the completion of \(\B\otimes_\pi\mathbb V\).
\begin{remark}[Projective tensor products of subspaces]\label{rmk:proj_tensor_products_subspaces}{\rm
Differently from injective tensor products,
the projective tensor product does not respect subspaces. Namely, if \(\mathbb B\), \(\mathbb V\) are given Banach spaces
and \(\tilde\B\), \(\tilde{\mathbb V}\) are closed vector subspaces of \(\B\) and \(\mathbb V\), respectively, then the
norm of a tensor \(\alpha\in\tilde\B\otimes\tilde{\mathbb V}\) computed in \(\tilde B\hat\otimes_\pi\tilde{\mathbb V}\)
can differ from its norm as an element of \(\B\hat\otimes_\pi\mathbb V\).
However, it is known (see \cite[Proposition 2.11]{Ryan02}) that \emph{\(\tilde\B\hat\otimes_\pi\tilde{\mathbb V}\)
is a subspace of \(\B\hat\otimes_\pi\mathbb V\) if and only if every bounded bilinear form on
\(\tilde\B\times\tilde{\mathbb V}\) can be extended to a bounded bilinear form on \(\B\times\mathbb V\) having the
same operator norm.} As a consequence, the following result holds (cf.\ \cite[Proposition 2.4]{Ryan02}):
\emph{if \(\tilde\B\), \(\tilde{\mathbb V}\) are \(1\)-complemented subspaces of \(\B\) and \(\mathbb V\), respectively,
then \(\tilde\B\hat\otimes_\pi\tilde{\mathbb V}\) is a subspace of \(\B\hat\otimes_\pi\mathbb V\).}
Let us recall that a \textbf{projection} \(T\colon\B\to\B\) is a bounded linear operator satisfying \(T\circ T=T\),
while \(\tilde\B\) is said to be a \textbf{\(1\)-complemented} subspace of \(\B\) provided \(\tilde\B=T(\B)\) for
some projection \(T\colon\B\to\B\) of operator norm \(1\). We also recall that every closed vector subspace of a
Hilbert space is \(1\)-complemented. In fact, Hilbert spaces are the only Banach spaces with the property that all
its subspaces are \(1\)-complemented; see e.g.\ \cite[Theorem 3.1]{Ran:01}.
\fr}\end{remark}
\begin{remark}[Tensor products of separable Banach spaces]\label{rmk:tensor_Banach_separable}{\rm
Let \(\B_1\), \(\B_2\) be given Banach spaces. Let \(\mathbb V_1\), \(\mathbb V_2\) be vector subspaces of
\(\B_1\) and \(\B_2\), respectively. Assume \(D_1\), \(D_2\) are dense \(\mathbb Q\)-vector subspaces
of \(\mathbb V_1\) and \(\mathbb V_2\), respectively. Then it can be readily checked that
\[\begin{split}
{\rm cl}_{\B_1\hat\otimes_\varepsilon\B_2}\bigg(\bigg\{\sum_{i=1}^n v_i\otimes w_i\;\bigg|\;n\in\N,\,(v_i)_{i=1}^n\subseteq D_1,\,(w_i)_{i=1}^n\subseteq D_2\bigg\}\bigg)
&={\rm cl}_{\B_1\hat\otimes_\varepsilon\B_2}(\mathbb V_1\otimes\mathbb V_2),\\
{\rm cl}_{\B_1\hat\otimes_\pi\B_2}\bigg(\bigg\{\sum_{i=1}^n v_i\otimes w_i\;\bigg|\;n\in\N,\,(v_i)_{i=1}^n\subseteq D_1,\,(w_i)_{i=1}^n\subseteq D_2\bigg\}\bigg)
&={\rm cl}_{\B_1\hat\otimes_\pi\B_2}(\mathbb V_1\otimes\mathbb V_2).
\end{split}\]
In particular, if \(\B_1\) and \(\B_2\) are separable, then \(\B_1\hat\otimes_\varepsilon\B_2\)
and \(\B_1\hat\otimes_\pi\B_2\) are separable.
\fr}\end{remark}
\subsubsection*{Bounded multilinear operators}
Let \(\B_1,\ldots,\B_k,\mathbb V\) be given Banach spaces, for some \(k\in\N\). Then we denote by
\({\rm M}_k(\B_1,\ldots,\B_k;\mathbb V)\) the space of all bounded multilinear operators
\(\varphi\colon\B_1\times\ldots\times\B_k\to\mathbb V\). We recall that \({\rm M}_k(\B_1,\ldots,\B_k;\mathbb V)\)
is a Banach space with respect to the operator norm
\[
\|\varphi\|_{{\rm M}_k(\B_1,\ldots,\B_k;\mathbb V)}\coloneqq\sup\big\{|\varphi(v_1,\ldots,v_k)|\;\big|\;
v_1\in\mathbb D_{\B_1},\ldots,v_k\in\mathbb D_{\B_k}\big\}.
\]
In the case \(k=2\), we write \({\rm B}(\B_1,\B_2;\mathbb V)\coloneqq{\rm M}_2(\B_1,\B_2;\mathbb V)\). We denote
\(\textsc{Hom}(\B_1;\mathbb V)\coloneqq{\rm M}_1(\B_1;\mathbb V)\), thus in particular \(\B'_1=\textsc{Hom}(\B_1;\R)\).
\subsection{Separable Banach bundles}
We recall that a Banach space \(\mathbb U\) is said to be a \textbf{universal separable Banach space} if
it is a separable Banach space where any separable Banach space can be embedded linearly and isometrically.
The Banach--Mazur theorem states that \(C([0,1])\) is a universal separable Banach space, cf.\ \cite{BP75}.
Following \cite[Definition 4.1]{DMLP25} (and \cite[Definition 2.15]{LP23}), by a \textbf{separable Banach
\(\mathbb U\)-bundle} over a given measurable space \((\X,\Sigma)\) we mean a multivalued map
\({\bf E}\colon\X\twoheadrightarrow\mathbb U\) having these two properties:
\begin{itemize}
\item \({\bf E}\) is \textbf{weakly measurable} in the sense of \cite[Definition 18.1]{AliprantisBorder99},
i.e.
\[
\{x\in\X\;|\;{\bf E}(x)\cap U\neq\varnothing\}\in\Sigma\quad\text{ for every open set }U\subseteq\mathbb U.
\]
\item \({\bf E}(x)\) is a closed vector subspace of \(\mathbb U\) for every \(x\in\X\).
\end{itemize}
The \textbf{section space} of \(\bf E\) is defined as the set \(\bar\Gamma({\bf E})\) of all
bounded measurable maps \(v\colon\X\to\mathbb U\) that satisfy \(v(x)\in{\bf E}(x)\) for every \(x\in\X\).
In particular, given any \(v\in\bar\Gamma({\bf E})\), we have that \(\X\ni x\mapsto\|v(x)\|_{{\bf E}(x)}\)
is measurable. We say that a set \(S\subseteq\bar\Gamma({\bf E})\) is \textbf{fiberwise dense} in \(\bar\Gamma({\bf E})\) if
\[
{\rm cl}_{\mathbb U}(\{v(x)\,|\,v\in S\})={\bf E}(x)\quad\text{ for every }x\in\X.
\]
Note that \(\bar\Gamma({\bf E})\) is an \(\mathcal L^\infty(\Sigma)\)-module with respect to the usual pointwise operations,
as well as a Banach space if equipped with the norm \(\|\cdot\|_{\bar\Gamma({\bf E})}\), which we define as
\[
\|v\|_{\bar\Gamma({\bf E})}\coloneqq\big\|\|v(\star)\|_{{\bf E}(\star)}\big\|_{\mathcal L^\infty(\Sigma)}
\quad\text{ for every }v\in\bar\Gamma({\bf E}).
\]
We report the following useful characterisation of separable Banach \(\mathbb U\)-bundles, whose proof can
be found in \cite[Proposition 4.4]{DMLP25} and relies upon the Kuratowski--Ryll-Nardzewski selection theorem.
\begin{proposition}\label{prop:select_dense}
Let \((\X,\Sigma)\) be a measurable space and \(\mathbb U\) a universal separable Banach space.
Then a multivalued map \({\bf E}\colon\X\twoheadrightarrow\mathbb U\) is a separable Banach \(\mathbb U\)-bundle
if and only if there exists a sequence \(\{v_n\}_{n\in\N}\) of measurable maps \(v_n\colon\X\to\mathbb U\) such that
\({\rm cl}_\B(\{v_n(x):n\in\N\})={\bf E}(x)\) holds for every \(x\in\X\). In this case, there
exists a countable \(\mathbb Q\)-vector subspace \(\mathcal C\) of \(\bar\Gamma({\bf E})\)
that is fiberwise dense in \(\bar\Gamma({\bf E})\).
\end{proposition}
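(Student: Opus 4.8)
The plan is to recognise this proposition as an instance of the Castaing representation theorem for closed-valued measurable multifunctions with values in a Polish space, and to read off the last assertion by a truncation and rational-span argument. Throughout I use that \(\mathbb U\) is separable and that each fiber \({\bf E}(x)\), being a closed vector subspace, contains \(0\) and is Polish in its own right.

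The (almost) formal implication goes first. Suppose a sequence \(\{v_n\}_{n\in\N}\) of measurable maps with \({\rm cl}_{\mathbb U}(\{v_n(x):n\in\N\})={\bf E}(x)\) for every \(x\in\X\) is given; granting that the values \({\bf E}(x)\) are closed vector subspaces of \(\mathbb U\) (this being part of the definition of a bundle), it remains only to verify weak measurability of \({\bf E}\). This is immediate, since a subset of \(\mathbb U\) meets a fixed open set \(U\) if and only if its closure does, whence \(\{x\in\X:{\bf E}(x)\cap U\neq\varnothing\}=\bigcup_{n\in\N}v_n^{-1}(U)\in\Sigma\).

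The substantive implication is the converse. Assuming \({\bf E}\) is a separable Banach \(\mathbb U\)-bundle, I would fix a countable dense set \(Q\subseteq\mathbb U\) and, for every \(q\in Q\) and every rational \(r>0\), set \(A_{q,r}\coloneqq\{x\in\X:{\bf E}(x)\cap B(q,r)\neq\varnothing\}\), which lies in \(\Sigma\) by weak measurability of \({\bf E}\) (here \(B(q,r)\subseteq\mathbb U\) denotes the open ball of radius \(r\) about \(q\)). The key observation is that the auxiliary multifunction \(x\mapsto{\rm cl}_{\mathbb U}({\bf E}(x)\cap B(q,r))\) is again weakly measurable — for open \(U\) its preimage equals \(\{x\in\X:{\bf E}(x)\cap(B(q,r)\cap U)\neq\varnothing\}\in\Sigma\), since \(B(q,r)\cap U\) is open and passing to fiberwise closures does not alter such preimages — and that over the measurable subspace \(A_{q,r}\) it has nonempty closed values. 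The Kuratowski--Ryll-Nardzewski selection theorem then yields a measurable selection \(\sigma_{q,r}\colon A_{q,r}\to\mathbb U\) with \(\sigma_{q,r}(x)\in{\bf E}(x)\) and \(\|\sigma_{q,r}(x)-q\|_{\mathbb U}\leq r\); extending it by \(0\) off \(A_{q,r}\) gives a bounded measurable selection \(v_{q,r}\in\bar\Gamma({\bf E})\). A routine \(\eps\)-argument — given \(w\in{\bf E}(x)\) and \(\eps>0\), pick \(q\in Q\) with \(\|w-q\|_{\mathbb U}<\eps/4\) and a rational \(r\) with \(\|w-q\|_{\mathbb U}<r<\eps/2\), so that \(x\in A_{q,r}\) and \(\|v_{q,r}(x)-w\|_{\mathbb U}<2r<\eps\) — shows that the countable family \(\{v_{q,r}\}\) is fiberwise dense in \(\bar\Gamma({\bf E})\), which in particular provides the required sequence.

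For the final assertion I would take \(\mathcal C\) to be the \(\mathbb Q\)-linear span of the family \(\{v_{q,r}\}\) (or, if one starts from an arbitrary Castaing sequence \(\{v_n\}\), first replace each \(v_n\) by the truncations \(\1_{\{k-1\leq\|v_n(\cdot)\|_{\mathbb U}<k\}}\cdot v_n\), \(k\in\N\), which are bounded measurable selections with the same fiberwise closures). Finite \(\mathbb Q\)-combinations of bounded measurable selections are again bounded measurable selections, because each \({\bf E}(x)\) is a vector subspace; hence \(\mathcal C\) is a countable \(\mathbb Q\)-vector subspace of \(\bar\Gamma({\bf E})\), and it is fiberwise dense since it already contains a fiberwise-dense family. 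I expect the only real obstacle to be the weak-measurability check for the intersected multifunctions \(x\mapsto{\rm cl}_{\mathbb U}({\bf E}(x)\cap B(q,r))\) together with the bookkeeping on the exceptional sets \(A_{q,r}\); everything downstream of the selection theorem is elementary.
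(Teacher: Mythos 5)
Your proof is correct and follows essentially the same route as the paper's (which is cited from \cite{DMLP25} and, as the paper notes, rests on the Kuratowski--Ryll-Nardzewski selection theorem): the forward implication is the standard Castaing-representation argument, selecting on the sets \(A_{q,r}\) from the intersected multifunctions \(x\mapsto{\rm cl}_{\mathbb U}({\bf E}(x)\cap B(q,r))\), and the converse and the construction of \(\mathcal C\) via truncations and \(\mathbb Q\)-linear spans are exactly the elementary steps one expects. The one point worth making explicit is that extending the selections by \(0\) off \(A_{q,r}\) uses that each fiber \({\bf E}(x)\) is a vector subspace (so contains \(0\)), which you do invoke; no gaps.
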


Next, let us fix a measure \(\mm\) on \((\X,\Sigma)\). Then we define \(\Gamma({\bf E})\) as the quotient of
\(\bar\Gamma({\bf E})\) up to \(\mm\)-a.e.\ equality, i.e.\ we identify \(v,w\in\bar\Gamma({\bf E})\) if and
only if \(\|v(\star)-w(\star)\|_{{\bf E}(\star)}=0\) holds \(\mm\)-a.e.\ on \(\X\). We denote by \(\pi_\mm\colon\bar\Gamma({\bf E})\to\Gamma({\bf E})\)
the canonical projection map. Note that \(\Gamma({\bf E})\) is an \(L^\infty(\mm)\)-module with respect to the usual
\(\mm\)-a.e.\ pointwise operations, as well as a Banach space if equipped with the quotient norm \(\|\cdot\|_{\Gamma({\bf E})}\),
which is given by
\[
\|v\|_{\Gamma({\bf E})}\coloneqq\inf\big\{\|\bar v\|_{\bar\Gamma({\bf E})}\;\big|\;\bar v\in\bar\Gamma({\bf E}),
\,\pi_\mm(\bar v)=v\big\}\quad\text{ for every }v\in\Gamma({\bf E}).
\]
When \({\bf E}\) is a \textbf{constant bundle} (i.e.\ for some separable Banach space \(\B\)
we have that \({\bf E}(x)=\B\) for every \(x\in\X\)), the section space \(\Gamma({\bf E})\) coincides with
the \textbf{Lebesgue--Bochner space} \(L^\infty(\mm;\B)\).
\medskip

In the sequel, we shall need the following notion, which is taken from \cite[Definition 4.5]{DMLP25}.
\begin{definition}[Measurable collection of separable Banach spaces]
Let \((\X,\Sigma)\) be a measurable space. Then we say that \(\mathfrak E=\big(\{{\rm E}(x)\}_{x\in\X},\{v_n\}_{n\in\N},\{\omega_n\}_{n\in\N}\big)\)
is a \textbf{measurable collection of separable Banach spaces} over \((\X,\Sigma)\) provided the following conditions are satisfied:
\begin{itemize}
\item[\(\rm i)\)] \({\rm E}(x)\) is a separable Banach space for every \(x\in\X\).
\item[\(\rm ii)\)] \(\{v_n\}_{n\in\N}\subseteq\prod_{x\in\X}{\rm E}(x)\) satisfy \({\rm cl}_{{\rm E}(x)}(\{v_n(x):n\in\N\})={\rm E}(x)\) for every \(x\in\X\).
\item[\(\rm iii)\)] \(\{\omega_n\}_{n\in\N}\subseteq\prod_{x\in\X}{\rm E}(x)'\) satisfy
\[
\|\omega_n(x)\|_{{\rm E}(x)'}=\1_{\{v_n=0\}}(x),\qquad\langle\omega_n(x),v_n(x)\rangle=\|v_n(x)\|_{{\rm E}(x)}
\]
for every \(n\in\N\) and \(x\in\X\).
\item[\(\rm iv)\)] The function \(\X\ni x\mapsto\langle\omega_n(x),v_m(x)\rangle\in\R\) is measurable for every \(n,m\in\N\).
\end{itemize}
\end{definition}

Each measurable collection of separable Banach spaces induces a separable Banach \(\mathbb U\)-bundle, as it is shown by the following
result, which was proved in \cite[Theorem 4.6]{DMLP25}.
\begin{theorem}\label{thm:embed_meas_coll}
Let \((\X,\Sigma)\) be a measurable space and let \(\mathbb U\) be a universal separable Banach space.
Let \(\mathfrak E=\big(\{{\rm E}(x)\}_{x\in\X},\{v_n\}_{n\in\N},\{\omega_n\}_{n\in\N}\big)\) be a measurable collection of separable Banach spaces
over \((\X,\Sigma)\). Then there exists a family \({\rm I}_\star=\{{\rm I}_x\}_{x\in\X}\) of linear isometric embeddings
\({\rm I}_x\colon{\rm E}(x)\to\mathbb U\) such that the map \(\X\ni x\mapsto{\rm I}_x(v_n(x))\in\mathbb U\) is measurable
for every \(n\in\N\). We say that \({\rm I}_\star\) is a \textbf{measurable collection of linear isometric embeddings}
associated with \(\mathfrak E\). Moreover, the multivalued map \(\X\ni x\mapsto{\rm I}_x({\rm E}(x))\subseteq\mathbb U\)
is a separable Banach \(\mathbb U\)-bundle, which we call the \textbf{separable Banach \(\mathbb U\)-bundle induced by \({\rm I}_\star\)}.
\end{theorem}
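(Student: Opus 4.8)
The plan is to construct the sought family $\{{\rm I}_x\}_{x\in\X}$ explicitly from the Hahn--Banach data encoded in $\mathfrak E$, following the classical recipe for embedding a separable Banach space into a universal one. First I would fix, once and for all, a sequence $\{\lambda_k\}_{k\in\N}$ of linear isometric embeddings $\lambda_k\colon\B_k\to\mathbb U$ of \emph{all} separable Banach spaces into $\mathbb U$ (this is where universality is used); more efficiently, since every separable Banach space is a subspace of $C([0,1])$ by Banach--Mazur, and a separable Banach space is determined up to isometry by countably many real parameters, it suffices to embed $C([0,1])$. The actual construction, however, is fiberwise and uses the functionals $\{\omega_n\}$: for each $x\in\X$, the family $\{\omega_n(x)\}_{n\in\N}\subseteq{\rm E}(x)'$ separates the points of ${\rm E}(x)$ by Lemma \ref{lem:separating_points} (applied to the dense sequence $\{v_n(x)\}$ and the normalized norming functionals $\omega_n(x)$, noting that on $\{v_n(x)\neq 0\}$ we have $\|\omega_n(x)\|=1$ and $\langle\omega_n(x),v_n(x)\rangle=\|v_n(x)\|$, while the degenerate fibers are handled by discarding those indices). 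Hence the map ${\rm J}_x\colon{\rm E}(x)\to\ell^\infty(\N)$, ${\rm J}_x(u)\coloneqq(\langle\omega_n(x),u\rangle)_{n\in\N}$, is injective and $1$-Lipschitz, but in general not isometric.

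To upgrade ${\rm J}_x$ to an isometry I would use a more careful choice of functionals at the construction stage, or post-compose with a renorming. The standard device: enumerate a countable dense $\mathbb Q$-subspace $D_x$ of ${\rm E}(x)$ built from the $v_n(x)$, and for each element pick a norming functional; since in $\mathfrak E$ the data $\{\omega_n\}$ already provides, for each $v_n$, a functional attaining the norm at $v_n(x)$, one shows that $\sup_n|\langle\omega_n(x),u\rangle|=\|u\|_{{\rm E}(x)}$ for every $u\in{\rm E}(x)$ by a density/continuity argument (given $u$ and $\eps>0$, pick $v_n$ with $\|u-v_n(x)\|<\eps$, then $|\langle\omega_n(x),u\rangle|\ge\|v_n(x)\|-\eps\ge\|u\|-2\eps$). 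Thus ${\rm J}_x$ \emph{is} a linear isometric embedding of ${\rm E}(x)$ into $\ell^\infty(\N)$. Finally I would fix, once and for all, a linear isometric embedding $\iota\colon\ell^\infty(\N)\to\mathbb U$... but $\ell^\infty(\N)$ is not separable; so instead I compose ${\rm J}_x$ with the observation that its image lies in the separable subspace generated by $\{{\rm J}_x(v_n(x))\}_n$, and use universality there — or, cleanest, embed the closed separable subspace $\overline{{\rm span}}\{({\langle\omega_n(x),v_m(x)\rangle})_n : m\in\N\}$ of $\ell^\infty(\N)$ into $\mathbb U$ via a \emph{fixed} universal embedding $\iota\colon C([0,1])\to\mathbb U$ after representing it inside $C([0,1])$ in a way depending measurably on $x$. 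Set ${\rm I}_x\coloneqq\iota\circ(\text{this representation})\circ{\rm J}_x$.

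With $\{{\rm I}_x\}$ in hand, the measurability of $x\mapsto{\rm I}_x(v_n(x))$ reduces, by the definition of the norm of $\mathbb U$ (or by testing against a countable separating family of functionals on $\mathbb U$), to the measurability of the coordinate functions $x\mapsto\langle\omega_n(x),v_m(x)\rangle$, which is exactly hypothesis iv) of the definition of a measurable collection. Concretely, $\|{\rm I}_x(v_n(x))-{\rm I}_x(v_m(x))\|_{\mathbb U}=\|v_n(x)-v_m(x)\|_{{\rm E}(x)}=\sup_k|\langle\omega_k(x),v_n(x)-v_m(x)\rangle|$ is a countable supremum of measurable functions, hence measurable; combined with separability of $\mathbb U$ and a standard argument (a $\mathbb U$-valued map whose composition with a countable point-separating family of continuous functionals is measurable, and which takes values in a fixed separable subspace, is Borel measurable) this yields measurability of $x\mapsto{\rm I}_x(v_n(x))$. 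For the last assertion, set ${\bf E}(x)\coloneqq{\rm I}_x({\rm E}(x))={\rm cl}_{\mathbb U}(\{{\rm I}_x(v_n(x)):n\in\N\})$; since each $x\mapsto{\rm I}_x(v_n(x))$ is a measurable map into $\mathbb U$ and ${\rm E}(x)$ is the fiberwise closure of these, Proposition \ref{prop:select_dense} (the ``if'' direction) immediately shows that $x\mapsto{\bf E}(x)$ is a separable Banach $\mathbb U$-bundle.

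The main obstacle is the tension between the \emph{pointwise/everywhere} nature of the construction and the need for \emph{measurable} dependence on $x$: one cannot simply invoke Hahn--Banach fiberwise and hope for measurability. The key technical point — and the heart of the argument in \cite[Theorem 4.6]{DMLP25} — is to perform the embedding ${\rm E}(x)\hookrightarrow\mathbb U$ in a way that is \emph{uniform} across fibers, routing everything through the fixed universal space and the fixed countable coordinate system $\{(\langle\omega_n(x),v_m(x)\rangle)_n\}_m$, so that every verification of measurability collapses to hypothesis iv). I expect the bookkeeping needed to make the map $x\mapsto({\rm I}_x,{\rm E}(x))$ simultaneously well-defined (a genuine isometry onto its image), measurable, and compatible with the chosen generators $v_n$ to be the delicate part; the Banach-space functional analysis (Lemma \ref{lem:separating_points}, the norming-functional computation, the Banach--Mazur embedding) is otherwise routine.
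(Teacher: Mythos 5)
Your first and last steps are sound: the computation \(\sup_n|\langle\omega_n(x),u\rangle|=\|u\|_{{\rm E}(x)}\) (using that \(\omega_n(x)\) norms \(v_n(x)\) and \(\{v_n(x)\}_n\) is dense) does show that \({\rm J}_x(u)\coloneqq(\langle\omega_n(x),u\rangle)_n\) is a linear isometric embedding of \({\rm E}(x)\) into \(\ell^\infty(\N)\), and once a suitable family \(\{{\rm I}_x\}\) with measurable \(x\mapsto{\rm I}_x(v_n(x))\) is in hand, Proposition \ref{prop:select_dense} does immediately yield that \(x\mapsto{\rm I}_x({\rm E}(x))\) is a separable Banach \(\mathbb U\)-bundle. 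But the proof has a genuine gap exactly where you flag the ``delicate part'': the passage from the fiberwise isometry into the \emph{non-separable} space \(\ell^\infty(\N)\) to a family of isometries into the separable space \(\mathbb U\) that varies measurably in \(x\). The phrase ``after representing it inside \(C([0,1])\) in a way depending measurably on \(x\)'' is not a construction; it is a restatement of the theorem. There is no canonical, choice-free isometric embedding of an arbitrary separable subspace of \(\ell^\infty(\N)\) into \(C([0,1])\): the classical route goes through a continuous surjection from the Cantor set onto a weak\(^*\)-compact norming subset of the dual (or some equivalent device), and every such recipe involves choices that must be shown to depend measurably on \(x\). This is precisely the content of \cite[Theorem 4.6]{DMLP25}, and it is resolved there by a genuine argument (of measurable-selection type, in the spirit of the Kuratowski--Ryll-Nardzewski theorem already invoked for Proposition \ref{prop:select_dense}), not by fiat.

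Relatedly, your measurability argument in the third paragraph is circular. Showing that \(x\mapsto\|{\rm I}_x(v_n(x))-{\rm I}_x(v_m(x))\|_{\mathbb U}=\sup_k|\langle\omega_k(x),v_n(x)-v_m(x)\rangle|\) is measurable only controls the mutual distances; composing a fixed construction with an arbitrarily wild \(x\)-dependent family of isometries of \(\mathbb U\) preserves all these distances while destroying measurability. The ``standard argument'' you then invoke (Pettis-type measurability) requires knowing that \(x\mapsto\langle\xi,{\rm I}_x(v_n(x))\rangle\) is measurable for a countable separating family \(\xi\in\mathbb U'\), and this cannot be verified until \({\rm I}_x\) has actually been constructed -- which is the missing step. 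In short: the functional-analytic bookkeeping you call routine is indeed routine, but the one step you defer is the theorem.
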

\section{The theory of \texorpdfstring{\(L^\infty(\mathcal N)\)}{Linfty(N)}-Banach \texorpdfstring{\(L^\infty(\mathcal N)\)}{Linfty(N)}-modules}
Let us recall the concept of \(L^\infty(\mathcal N)\)-Banach \(L^\infty(\mathcal N)\)-module, which has been
introduced in \cite[Definition 4.3]{GLP22} as a generalisation of Gigli's notion of \(L^\infty(\mm)\)-Banach \(L^\infty(\mm)\)-module \cite{Gigli14}.
\begin{definition}[\(L^\infty(\mathcal N)\)-Banach \(L^\infty(\mathcal N)\)-module]
Let \((\X,\Sigma,\mathcal N)\) be an enhanced measurable space and \(\mathscr M\) an \(L^\infty(\mathcal N)\)-module.
Then we say that \(\mathscr M\) is an \textbf{\(L^\infty(\mathcal N)\)-normed \(L^\infty(\mathcal N)\)-module} if it is endowed with a map
\(|\cdot|\colon\mathscr M\to L^\infty(\mathcal N)^+\), called a \textbf{pointwise norm} on \(\mathscr M\), such that:
\begin{itemize}
\item[\(\rm i)\)] \(|v|\geq 0\) for every \(v\in\mathscr M\), with equality if and only if \(v=0\).
\item[\(\rm ii)\)] \(|v+w|\leq|v|+|w|\) for every \(v,w\in\mathscr M\).
\item[\(\rm iii)\)] \(|f\cdot v|=|f||v|\) for every \(f\in L^\infty(\mathcal N)\) and \(v\in\mathscr M\).
\item[\(\rm iv)\)] The \textbf{glueing property} is verified, i.e.\ whenever \(\{E_n\}_{n\in\N}\subseteq\Sigma\) is a partition
of \(\X\) and \(\{v_n\}_{n\in\N}\subseteq\mathscr M\) satisfies
\(\sup_{n\in\N}\|\pi_{\mathcal N}(\1_{E_n})|v_n|\|_{L^\infty(\mathcal N)}<+\infty\), there exists \(v\in\mathscr M\) such that
\begin{equation}\label{eq:glueing_property}
\pi_{\mathcal N}(\1_{E_n})\cdot v=\pi_{\mathcal N}(\1_{E_n})\cdot v_n\quad\text{ for every }n\in\N.
\end{equation}
The element \(v\), which is uniquely determined by \eqref{eq:glueing_property}, is denoted by
\(\sum_{n\in\N}\pi_{\mathcal N}(\1_{E_n})\cdot v_n\).
\end{itemize}
Whenever the norm \(\mathscr M\ni v\mapsto\||v|\|_{L^\infty(\mathcal N)}\) is complete,
we say that \(\mathscr M\) is an \textbf{\(L^\infty(\mathcal N)\)-Banach \(L^\infty(\mathcal N)\)-module}.
\end{definition}

The space \(L^\infty(\mathcal N)\) itself is an example of \(L^\infty(\mathcal N)\)-Banach \(L^\infty(\mathcal N)\)-module.
Other examples are the spaces of sections of a Banach bundle: if \({\bf E}\colon\X\twoheadrightarrow\mathbb U\)
is a separable Banach \(\mathbb U\)-bundle, then \(\bar\Gamma({\bf E})\) is an \(\mathcal L^\infty(\Sigma)\)-Banach \(\mathcal L^\infty(\Sigma)\)-module,
while \(\Gamma({\bf E})\) is an \(L^\infty(\mm)\)-Banach \(L^\infty(\mm)\)-module.
\begin{remark}\label{rmk:when_Linfty_is_R}{\rm
The theory of \(L^\infty(\mathcal N)\)-Banach \(L^\infty(\mathcal N)\)-modules is in fact an extension of the one of Banach spaces.
Indeed, if \((\X,\Sigma,\mm_A)\) is a probability space such that \(\X\) is an atom of \(\mm_A\), then \(L^\infty(\mm_A)\) can be
canonically identified with \(\R\), thus in particular the concept of \(L^\infty(\mm_A)\)-Banach \(L^\infty(\mm_A)\)-module
coincides with the one of Banach space.
We also point out that \((L^\infty(\mathcal N),L^\infty(\mathcal N),L^\infty(\mathcal N))\) is a metric
\(f\)-structure in the sense of \cite[Definition 2.23]{LP24} (cf.\ \cite[Section 4.2]{LP24}), thus we are in a position
to apply the results of \cite{LP24}.
\fr}\end{remark}

Two given \(L^\infty(\mathcal N)\)-Banach \(L^\infty(\mathcal N)\)-modules \(\mathscr M\) and \(\mathscr N\) are \textbf{isomorphic}
(as \(L^\infty(\mathcal N)\)-Banach \(L^\infty(\mathcal N)\)-modules), and we write \(\mathscr M\cong\mathscr N\), if there exists an
\(L^\infty(\mathcal N)\)-linear bijection \(\Phi\colon\mathscr M\to\mathscr N\) such that \(|\Phi(v)|=|v|\) for every \(v\in\mathscr M\).
Following \cite[Definition 3.5]{LP24}, we say that an \(L^\infty(\mathcal N)\)-Banach \(L^\infty(\mathcal N)\)-module \(\mathscr H\) is \textbf{Hilbertian}
provided it holds that
\begin{equation}\label{eq:ptwse_parall_law}
|v+w|^2+|v-w|^2=2|v|^2+2|w|^2\quad\text{ for every }v,w\in\mathscr H.
\end{equation}
We refer to \eqref{eq:ptwse_parall_law} as the \textbf{pointwise parallelogram law}. Note that, typically, a Hilbertian \(L^\infty(\mathcal N)\)-Banach
\(L^\infty(\mathcal N)\)-module is \emph{not} a Hilbert space; the Hilbertianity expressed by \eqref{eq:ptwse_parall_law} is a `fiberwise' condition
instead, cf.\ \cite[Theorem 3.1]{LPV22}. We define the \textbf{pointwise scalar product} in \(\mathscr H\) as
\[
v\cdot w\coloneqq\frac{|v+w|^2-|v|^2-|w|^2}{2}\in L^\infty(\mathcal N)\quad\text{ for every }v,w\in\mathscr H.
\]
It holds that \(\mathscr H\times\mathscr H\ni(v,w)\mapsto v\cdot w\in L^\infty(\mathcal N)\) is an \(L^\infty(\mathcal N)\)-bilinear map (and, in fact,
its \(L^\infty(\mathcal N)\)-bilinearity is equivalent to the Hilbertianity of \(\mathscr H\)).
\medskip

Let \((\X,\Sigma,\mm)\) be a \(\sigma\)-finite measure space and \(\mathscr M\) an \(\mathcal L^\infty(\Sigma)\)-Banach
\(\mathcal L^\infty(\Sigma)\)-module. Given any \(v,w\in\mathscr M\), we declare that \(v\sim w\) if and only if
\(|v-w|=0\) holds \(\mm\)-a.e.\ on \(\X\). Then the quotient
\begin{equation}\label{eq:quotient_mod}
\Pi_\mm(\mathscr M)\coloneqq\mathscr M/\sim
\end{equation}
inherits a natural structure of \(L^\infty(\mm)\)-Banach \(L^\infty(\mm)\)-module.
We denote by \(\pi_\mm\colon\mathscr M\to\Pi_\mm(\mathscr M)\) the canonical projection map.
\subsubsection*{Completion of a normed module}
Every \(L^\infty(\mathcal N)\)-normed \(L^\infty(\mathcal N)\)-module \(\mathscr M\) can be completed in a canonical way:
there exists a unique pair \((\bar{\mathscr M},\iota)\), where \(\bar{\mathscr M}\) is an \(L^\infty(\mathcal N)\)-Banach \(L^\infty(\mathcal N)\)-module
and \(\iota\colon\mathscr M\to\bar{\mathscr M}\) is an \(L^\infty(\mathcal N)\)-linear map with dense image such that \(|\iota(v)|=|v|\) for every \(v\in\mathscr M\).
Uniqueness is intended up a unique isomorphism, thanks to the validity of the following universal property: if \(\mathscr N\) is an \(L^\infty(\mathcal N)\)-Banach
\(L^\infty(\mathcal N)\)-module and \(T\colon\mathscr M\to\mathscr N\) is an \(L^\infty(\mathcal N)\)-linear map such that \(|T(v)|\leq|v|\) for every \(v\in\mathscr M\),
then there exists a unique \(L^\infty(\mathcal N)\)-linear map \(\bar T\colon\bar{\mathscr M}\to\mathscr N\) with \(|\bar T(v)|\leq|v|\) for every \(v\in\bar{\mathscr M}\) such that
\[\begin{tikzcd}
\mathscr M \arrow[r,"\iota"] \arrow[dr,swap,"T"] & \bar{\mathscr M} \arrow[d,"\bar T"] \\
& \mathscr N
\end{tikzcd}\]
is a commutative diagram. We say that \((\bar{\mathscr M},\iota)\) (or \(\bar{\mathscr M}\)) is the \textbf{module completion} of \(\mathscr M\).
We refer to \cite[Theorem 3.28]{LP24} for a proof of this statement.
\subsubsection*{Dual of a Banach module}
Let \((\X,\Sigma,\mm)\) be a \(\sigma\)-finite measure space and \(\mathscr M\) an \(L^\infty(\mm)\)-Banach \(L^\infty(\mm)\)-module.
Note that \(\mathbb D_{\mathscr M}=\{v\in\mathscr M:|v|\leq 1\}\). Then we denote by \(\mathscr M^*\) the space of all \(L^\infty(\mm)\)-linear
operators \(T\colon\mathscr M\to L^\infty(\mm)\) for which there exists a function \(g\in L^\infty(\mm)^+\) such that
\begin{equation}\label{eq:def_dual_mod}
|T(v)|\leq g|v|\quad\text{ for every }v\in\mathscr M.
\end{equation}
Given any element \(T\in\mathscr M^*\), we define its dual pointwise norm \(|T|\in L^\infty(\mm)^+\) as
\[
|T|\coloneqq\bigvee_{v\in\mathbb D_{\mathscr M}}|T(v)|=\bigwedge\big\{g\in L^\infty(\mm)^+\;\big|\;\eqref{eq:def_dual_mod}\text{ holds}\big\}.
\]
We have that the pair \((\mathscr M^*,|\cdot|)\) is an \(L^\infty(\mm)\)-Banach \(L^\infty(\mm)\)-module, which is called the \textbf{module dual}
of \(\mathscr M\). Typically, the module dual \(\mathscr M^*\) is not isomorphic (as a Banach space) to the dual Banach space \(\mathscr M'\).
For example, consider the Lebesgue space \(L^\infty(\mm)\) associated with an atomless probability space \((\X,\Sigma,\mm)\): it can be readily
checked that the module dual \(L^\infty(\mm)^*\) is isomorphic to \(L^\infty(\mm)\) itself, while the dual Banach space \(L^\infty(\mm)'\) is not.
We also point out that the above notion of module dual, which was introduced in \cite[Definition 4.16]{GLP22}, differs from the original one introduced
in \cite[Definition 1.2.6]{Gigli14}. Our choice is due to the fact that liftings of Banach modules can be defined only
for \(L^\infty(\mm)\)-Banach \(L^\infty(\mm)\)-modules (as we will see in Section \ref{sec:lifting}), thus it is convenient to consider a notion of
module dual that gives an \(L^\infty(\mm)\)-Banach \(L^\infty(\mm)\)-module.
\medskip

The pointwise duality pairing between \(\omega\in\mathscr M^*\) and \(v\in\mathscr M\) will be denoted by
\[
\langle\omega,v\rangle\coloneqq\omega(v)\in L^\infty(\mm).
\]
To any given element \(v\in\mathscr M\), we associate the set \({\rm Dual}(v)\subseteq\mathscr M^*\), which we define as
\[
{\rm Dual}(v)\coloneqq\left\{\omega\in\mathscr M^*\;\middle|\;|\omega|=\1_{\{|v|>0\}}^\mm,\;\langle\omega,v\rangle=|v|\right\}.
\]
A useful result that we will use many times is the following corollary of the Hahn--Banach extension theorem for \(L^\infty(\mm)\)-Banach \(L^\infty(\mm)\)-modules.
\begin{theorem}\label{thm:Hahn-Banach}
Let \((\X,\Sigma,\mm)\) be a \(\sigma\)-finite measure space and let \(\mathscr M\) be an \(L^\infty(\mm)\)-Banach
\(L^\infty(\mm)\)-module. Then it holds that \({\rm Dual}(v)\neq\varnothing\) for every \(v\in\mathscr M\).
\end{theorem}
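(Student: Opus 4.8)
The plan is to deduce Theorem \ref{thm:Hahn-Banach} from a Hahn--Banach extension theorem for $L^\infty(\mm)$-Banach $L^\infty(\mm)$-modules, which should either be available from the literature cited in the excerpt (\cite{LP24}, given the remark that $(L^\infty(\mathcal N),L^\infty(\mathcal N),L^\infty(\mathcal N))$ is a metric $f$-structure) or provable directly. Concretely, given $v\in\mathscr M$, I would first handle the trivial case $v=0$, where any $\omega=0$ works since $\{|v|>0\}=\varnothing$ up to $\mm$-null sets, so $\1_{\{|v|>0\}}^\mm=0$ and $\langle 0,v\rangle=0=|v|$. So assume $v\neq 0$.

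Next I would consider the $L^\infty(\mm)$-submodule $\mathscr L\coloneqq L^\infty(\mm)\cdot v\subseteq\mathscr M$ generated by $v$, and define a candidate functional on it. The natural guess is to set $\omega_0(f\cdot v)\coloneqq f\,|v|$ for $f\in L^\infty(\mm)$; this is well-posed precisely because $|f\cdot v|=|f||v|$, so if $f\cdot v=g\cdot v$ then $|f-g||v|=0$, which forces $(f-g)\1_{\{|v|>0\}}^\mm=0$, hence $f|v|=g|v|$. One checks $\omega_0$ is $L^\infty(\mm)$-linear and that $|\omega_0(f\cdot v)|=|f||v|=|f\cdot v|$, so $\omega_0\in\mathscr L^*$ with $|\omega_0|\le\1_\X^\mm$ (in fact $|\omega_0|=\1_{\{|v|>0\}}^\mm$), and moreover $\langle\omega_0,v\rangle=|v|$. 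Thus on $\mathscr L$ we already have an element of $\mathrm{Dual}(v)$ relative to $\mathscr L$, with pointwise operator norm dominated by the function $g\coloneqq\1_{\{|v|>0\}}^\mm$.

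The main step is then to extend $\omega_0$ to all of $\mathscr M$ without increasing the pointwise norm: I would invoke the module Hahn--Banach theorem to obtain $\omega\in\mathscr M^*$ with $\omega|_{\mathscr L}=\omega_0$ and $|\omega(w)|\le g|w|$ for every $w\in\mathscr M$, i.e. $|\omega|\le\1_{\{|v|>0\}}^\mm$. Since $\langle\omega,v\rangle=\langle\omega_0,v\rangle=|v|$, on the set $\{|v|>0\}$ we get $|\omega|\ge|\langle\omega,v\rangle|/|v|=1$ (after restricting the pointwise norm inequality $|\langle\omega,v\rangle|\le|\omega||v|$ to $\{|v|>0\}$ and dividing by $|v|$), so $|\omega|=\1_{\{|v|>0\}}^\mm$, which shows $\omega\in\mathrm{Dual}(v)$ and completes the proof. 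The only genuine obstacle is ensuring the availability of the module-theoretic Hahn--Banach extension in the exact form needed (seminorm bounds by a fixed function $g\in L^\infty(\mm)^+$ rather than by a constant); this is precisely the content of the Hahn--Banach theorem for $L^\infty(\mm)$-Banach $L^\infty(\mm)$-modules, and I expect it to follow from \cite{LP24} via the metric $f$-structure formalism, or alternatively by a Zorn's lemma argument exploiting the glueing property to pass through the transfinite induction (the glueing property is what replaces the classical "one-dimensional extension" step by allowing countable gluings of locally defined extensions).
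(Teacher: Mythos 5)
Your argument is correct and is exactly the route the paper intends: the paper gives no proof of its own but describes the statement as ``a corollary of the Hahn--Banach extension theorem for \(L^\infty(\mm)\)-Banach \(L^\infty(\mm)\)-modules'' and cites \cite[Theorem 3.30]{LP24} for it, and your construction of \(\omega_0\) on the cyclic submodule \(L^\infty(\mm)\cdot v\) followed by a norm-preserving extension is the standard derivation of that corollary. The only point worth noting is that the version of the extension theorem with bound by a constant already suffices, since one may extend with \(|\omega|\le\1_\X^\mm\) and then multiply by \(\1_{\{|v|>0\}}^\mm\), which does not alter the restriction to \(L^\infty(\mm)\cdot v\).
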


Random generalisations of the Hahn--Banach separation theorem were obtained in \cite{Guo-1995}. In the context of \(L^p(\mm)\)-Banach \(L^\infty(\mm)\)-modules
for \(p<\infty\), the analogue of Theorem \ref{thm:Hahn-Banach} appeared implicitly in \cite{Gigli14}. For a proof of the statement in
Theorem \ref{thm:Hahn-Banach}, we refer to \cite[Theorem 3.30]{LP24}.
\subsubsection*{Representation of Banach modules as section spaces}
Let \((\X,\Sigma,\mathcal N)\) be an enhanced measurable space and let \(\mathscr M\) be an \(L^\infty(\mathcal N)\)-Banach \(L^\infty(\mathcal N)\)-module.
A set \(S\subseteq\mathscr M\) is said to \textbf{generate} \(\mathscr M\) (in the sense of \(L^\infty(\mathcal N)\)-Banach \(L^\infty(\mathcal N)\)-modules)
if the vector span of the elements of \(\mathscr M\) of the form \(\sum_{n\in\N}\pi_{\mathcal N}(\1_{E_n})\cdot v_n\), with
\(v_n\in S\), is dense in \(\mathscr M\). For example, \(L^\infty(\mathcal N)\) is generated by \(\{\pi_{\mathcal N}(\1_\X)\}\),
as it readily follows from the fact that measurable simple functions are dense in \(\mathcal L^\infty(\Sigma)\).
\medskip

When \(\mathscr M\) is generated by a countable set, we say that \(\mathscr M\) is \textbf{countably generated}. For example, if \(\mathbb U\) is a universal separable Banach
space and \({\bf E}\) is a separable Banach \(\mathbb U\)-bundle over \((\X,\Sigma)\), then \(\Gamma({\bf E})\) is countably generated for every \(\sigma\)-finite measure
\(\mm\) on \((\X,\Sigma)\). One of the main result of \cite{DMLP25} states that the converse holds as well: each countably-generated \(L^\infty(\mm)\)-Banach \(L^\infty(\mm)\)-module
is (isomorphic in the sense of \(L^\infty(\mm)\)-Banach \(L^\infty(\mm)\)-modules to) \(\Gamma({\bf E})\) for some separable Banach \(\mathbb U\)-bundle \({\bf E}\) over \((\X,\Sigma)\).
The precise statement, which is a consequence of \cite[Theorem 4.13]{DMLP25}, reads as follows.
\begin{theorem}[Representation theorem]
Let \((\X,\Sigma,\mm)\) be a \(\sigma\)-finite measure space and \(\mathbb U\) a universal separable Banach space. Let \(\mathscr M\) be a countably-generated \(L^\infty(\mm)\)-Banach
\(L^\infty(\mm)\)-module. Then there exists a separable Banach \(\mathbb U\)-bundle \({\bf E}\colon\X\twoheadrightarrow\mathbb U\) such that \(\Gamma({\bf E})\cong\mathscr M\).
\end{theorem}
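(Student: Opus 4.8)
The strategy is to build a \emph{measurable collection of separable Banach spaces} \(\mathfrak E\) out of the module \(\mathscr M\), invoke Theorem \ref{thm:embed_meas_coll} to obtain a separable Banach \(\mathbb U\)-bundle \({\bf E}\), and finally exhibit the isomorphism \(\Gamma({\bf E})\cong\mathscr M\). By Remark \ref{rmk:wlog_m_fin_complete} we may assume \((\X,\Sigma,\mm)\) is a complete probability space. Fix a countable generating set \(\{w_n\}_{n\in\N}\subseteq\mathscr M\); by the glueing property we may enlarge it to a countable \(\mathbb Q\)-vector subspace \(\mathcal D\) of \(\mathscr M\) that is still countable and still generates, and moreover is stable under multiplication by \(\pi_\mm(\1_E)\) for \(E\) ranging in a suitable countable subalgebra of \(\Sigma\). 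For each element of a chosen enumeration \(\{v_n\}_{n\in\N}\) of a countable generating family I use the module Hahn--Banach theorem, Theorem \ref{thm:Hahn-Banach}, to pick \(\eta_n\in{\rm Dual}(v_n)\subseteq\mathscr M^*\), so that \(|\eta_n|=\1_{\{|v_n|>0\}}^\mm\) and \(\langle\eta_n,v_n\rangle=|v_n|\).

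The key step is to pass from these global objects to genuine fiberwise data. Choose everywhere-defined representatives: for each \(n\), a function \(\bar{|v_n|}\in\mathcal L^\infty(\Sigma)\) of \(|v_n|\), and for each pair \((n,m)\) a function in \(\mathcal L^\infty(\Sigma)\) representing \(\langle\eta_n,v_m\rangle\in L^\infty(\mm)\); since everything is countable, these representatives can be chosen coherently so that all the pointwise algebraic identities that hold \(\mm\)-a.e.\ in \(\mathscr M\) (\(\mathbb Q\)-linearity relations among the \(v_n\)'s, the defining relations \(|v_n(x)|=\) value of the chosen representative, compatibility of the pairings, the fact that \(|\eta_n|=\1_{\{|v_n|>0\}}\)) hold \emph{for every} \(x\in\X\) after discarding a single \(\mm\)-null set and redefining there. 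For each \(x\in\X\) I then define \({\rm E}(x)\) to be the completion of the quotient of the \(\mathbb Q\)-vector space \(\{v_n\}\)-span by the seminorm \(x\mapsto\) (chosen representative of \(|\cdot|\) evaluated at \(x\)); the coherently chosen pairings \(\langle\eta_n(\cdot),\cdot\rangle\) descend to functionals \(\omega_n(x)\in{\rm E}(x)'\), and by Lemma \ref{lem:separating_points} the \(\omega_n(x)\) separate points of \({\rm E}(x)\) provided the \(v_n(x)\) were arranged to be dense, which one secures by throwing into the generating family all \(\mathbb Q\)-combinations. One checks conditions i)--iv) in the definition of measurable collection: separability and the density requirement ii) are built in, iii) is exactly the normalisation from Hahn--Banach transported pointwise, and iv) is the measurability of \(x\mapsto\langle\eta_n,v_m\rangle\), which holds because \(\langle\eta_n,v_m\rangle\in L^\infty(\mm)\). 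Theorem \ref{thm:embed_meas_coll} now yields a measurable collection of isometric embeddings \({\rm I}_x\colon{\rm E}(x)\to\mathbb U\) and the bundle \({\bf E}(x)\coloneqq{\rm I}_x({\rm E}(x))\).

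It remains to produce the isomorphism \(\Phi\colon\mathscr M\to\Gamma({\bf E})\). On the generating elements set \(\Phi(v_n)\) to be the \(\mm\)-class of the section \(x\mapsto{\rm I}_x(v_n(x))\), extend \(\mathbb Q\)-linearly, check that \(|\Phi(v)|=|v|\) holds \(\mm\)-a.e.\ (immediate from the construction of the pointwise norm on \({\rm E}(x)\)), hence \(\Phi\) is well defined and pointwise-isometric on \(\mathcal D\); then extend by continuity and by the glueing property to all of \(\mathscr M\), using that \(\mathcal D\) generates \(\mathscr M\) to get density of the image, and using \(L^\infty(\mm)\)-linearity of \(|\cdot|\) to see \(\Phi\) is \(L^\infty(\mm)\)-linear. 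Surjectivity onto \(\Gamma({\bf E})\) follows because \(\{v_n\}\) is fiberwise dense in the bundle by construction, so the image of \(\Phi\) is a closed \(L^\infty(\mm)\)-submodule of \(\Gamma({\bf E})\) that is fiberwise dense, hence everything; injectivity is the pointwise isometry. The main obstacle is the bookkeeping in the second paragraph: arranging a \emph{single} \(\mm\)-null exceptional set outside which \emph{all} the countably many a.e.\ identities simultaneously become everywhere-true identities, so that the fiberwise spaces \({\rm E}(x)\) and functionals \(\omega_n(x)\) are genuinely well defined at every point — this is where the countable-generation hypothesis is essential and where one must be careful, but it is exactly the content of \cite[Theorem 4.13]{DMLP25}, which we may invoke.
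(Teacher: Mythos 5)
Your proposal is correct and follows essentially the same route as the paper, which does not reprove this theorem but simply quotes it as a consequence of \cite[Theorem 4.13]{DMLP25}; your outline reconstructs the expected argument behind that citation using exactly the machinery the paper recalls for this purpose (measurable collections of separable Banach spaces, Theorem \ref{thm:embed_meas_coll}, the module Hahn--Banach theorem), and you ultimately defer the delicate simultaneous choice of representatives to the same reference. No further comparison is needed.
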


Furthermore, the section functor \({\bf E}\mapsto\Gamma({\bf E})\) is an equivalence of categories between the category of separable Banach \(\mathbb U\)-bundles and the one of
countably-generated \(L^\infty(\mm)\)-Banach \(L^\infty(\mm)\)-modules. This result, which is reminiscent of the Serre--Swan theorem, was proved in \cite[Theorem 4.15]{DMLP25}.
Similar previous results for `locally finitely-generated modules' were obtained in \cite{LP18}.
\subsubsection*{Dual of a section space}
Let \((\X,\Sigma,\mm)\) be a \(\sigma\)-finite measure space. Let \(\mathbb U\) be a universal separable Banach space and \({\bf E}\) a separable Banach \(\mathbb U\)-bundle
over \((\X,\Sigma)\). Then it is possible to characterise the module dual \(\Gamma({\bf E})^*\) of \(\Gamma({\bf E})\) in a fiberwise manner, as we are going to describe.
First, we denote by \(\bar\Gamma({\bf E}'_{w^*})\) the set of all \(\omega(\star)\in\prod_{x\in\X}{\bf E}(x)'\) such that \(\sup_{x\in\X}\|\omega(x)\|_{{\bf E}(x)'}<+\infty\) and
\begin{equation}\label{eq:def_E_wstar}
\X\ni x\mapsto\langle\omega(x),v(x)\rangle\in\R\quad\text{ is measurable for every }v\in\bar\Gamma({\bf E}).
\end{equation}
It follows from \eqref{eq:def_E_wstar} that \(\|\omega(\star)\|_{{\bf E}(\star)'}\in\mathcal L^\infty(\Sigma)\)
for every \(\omega\in\bar\Gamma({\bf E}'_{w^*})\). Next, we introduce an equivalence relation \(\sim\) on
\(\bar\Gamma({\bf E}'_{w^*})\) by declaring, for any given \(\omega,\eta\in\bar\Gamma({\bf E}'_{w^*})\), that
\[
\omega\sim\eta\quad\Longleftrightarrow\quad\omega(x)=\eta(x)\text{ for }\mm\text{-a.e.\ }x\in\X.
\]
Finally, we define \(\Gamma({\bf E}'_{w^*})\) as the quotient space
\[
\Gamma({\bf E}'_{w^*})\coloneqq\bar\Gamma({\bf E}'_{w^*})/\sim.
\]
It holds that \(\Gamma({\bf E}'_{w^*})\) is an \(L^\infty(\mm)\)-Banach \(L^\infty(\mm)\)-module with respect to the natural pointwise operations
and the following pointwise norm:
\[
|\omega|\coloneqq\pi_\mm\big(\|\bar\omega(\star)\|_{{\bf E}(\star)'}\big)\quad\text{ for every }\omega\in\Gamma({\bf E}'_{w^*}),
\]
where \(\bar\omega\in\bar\Gamma({\bf E}'_{w^*})\) is any representative of \(\omega\); this definition is well posed,
as it is independent of the choice of \(\bar\omega\). The space \(\Gamma({\bf E}'_{w^*})\) constitutes an explicit
description of the module dual of \(\Gamma({\bf E})\):
\begin{theorem}[\(\Gamma({\bf E})^*\cong\Gamma({\bf E}'_{w^*})\)]\label{thm:dual_section_space}
Let \((\X,\Sigma,\mm)\) be a \(\sigma\)-finite measure space. Let \(\mathbb U\) be a universal separable Banach space
and \({\bf E}\) a separable Banach \(\mathbb U\)-bundle over \((\X,\Sigma)\). Then \(\Gamma({\bf E})^*\) and
\(\Gamma({\bf E}'_{w^*})\) are isomorphic as \(L^\infty(\mm)\)-Banach \(L^\infty(\mm)\)-modules, through the
isomorphism \(\Phi\colon\Gamma({\bf E}'_{w^*})\to\Gamma({\bf E})^*\) that is defined as follows:
\[
\langle\Phi(\omega),v\rangle\coloneqq\pi_\mm\big(\langle\bar\omega(\star),\bar v(\star)\rangle\big)
\quad\text{ for every }\omega\in\Gamma({\bf E}'_{w*})\text{ and }v\in\Gamma({\bf E}),
\]
where \(\bar\omega\in\bar\Gamma({\bf E}'_{w^*})\) and \(\bar v\in\bar\Gamma({\bf E})\) are arbitrarily
chosen representatives of \(\omega\) and \(v\), respectively.
\end{theorem}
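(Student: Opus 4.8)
The plan is to verify that the explicitly defined map $\Phi\colon\Gamma({\bf E}'_{w^*})\to\Gamma({\bf E})^*$ is well posed, $L^\infty(\mm)$-linear, pointwise-norm preserving, and bijective. First I would check well-posedness: given $\omega\in\Gamma({\bf E}'_{w^*})$ and a representative $\bar\omega\in\bar\Gamma({\bf E}'_{w^*})$, the quantity $\langle\bar\omega(\star),\bar v(\star)\rangle$ is measurable and bounded by $\|\bar\omega(\star)\|_{{\bf E}(\star)'}\,\|\bar v(\star)\|_{{\bf E}(\star)}$ by \eqref{eq:def_E_wstar} and the definition of the ambient dual norm; changing the representative of $v$ or of $\omega$ alters it only on an $\mm$-null set, so $\Phi(\omega)(v)\in L^\infty(\mm)$ is well defined, and the pointwise bound $|\Phi(\omega)(v)|\le|\omega|\,|v|$ shows $\Phi(\omega)\in\Gamma({\bf E})^*$ with $|\Phi(\omega)|\le|\omega|$. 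That $\Phi$ is $L^\infty(\mm)$-linear is immediate from the pointwise definition.

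Next I would establish that $\Phi$ is pointwise-norm preserving, i.e. $|\Phi(\omega)|=|\omega|$, which also gives injectivity. The inequality $|\Phi(\omega)|\le|\omega|$ is already in hand; for the reverse, fix a representative $\bar\omega$ and a countable $\mathbb Q$-vector subspace $\mathcal C\subseteq\bar\Gamma({\bf E})$ that is fiberwise dense in $\bar\Gamma({\bf E})$, as provided by Proposition \ref{prop:select_dense}. For each $v\in\mathcal C$ the function $\langle\bar\omega(\star),\bar v(\star)\rangle$ is measurable, and by fiberwise density $\|\bar\omega(x)\|_{{\bf E}(x)'}=\sup_{v\in\mathcal C,\,|v|(x)\le 1}\langle\bar\omega(x),v(x)\rangle$ pointwise on $\X$; taking the countable supremum and passing to $\mm$-a.e.\ equivalence classes yields $|\omega|\le|\Phi(\omega)|$, using that $L^\infty(\mm)$ has the countable supremum property. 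Hence $|\Phi(\omega)|=|\omega|$, so $\Phi$ is injective.

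The main obstacle is surjectivity: given $T\in\Gamma({\bf E})^*$, I must produce a genuine fiberwise representative $\bar\omega\in\bar\Gamma({\bf E}'_{w^*})$ with $\Phi(\pi_\mm\bar\omega)=T$. The strategy is to fix a sequence $\{v_n\}_{n\in\N}\subseteq\bar\Gamma({\bf E})$ that is fiberwise dense, regard the bounded linear functional $T$ on the countably-generated module $\Gamma({\bf E})$, and use that $T$ is determined by the functions $f_n\coloneqq T(\pi_\mm v_n)\in L^\infty(\mm)$ together with the $L^\infty(\mm)$-linearity and the pointwise bound $|T(\pi_\mm v_n)|\le|T|\,|v_n|$. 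Choosing fixed representatives $\bar f_n$ of $f_n$ and a fixed representative $g$ of $|T|$, one defines on each fiber ${\bf E}(x)$ a functional on the $\mathbb Q$-linear span of $\{v_n(x)\}$ by $v_n(x)\mapsto\bar f_n(x)$ — this is consistent and $g(x)$-Lipschitz for $x$ outside a single $\mm$-null set, precisely because null-tensor/null-combination relations among the $v_n$ translate into $\mm$-a.e.\ identities among the $f_n$ (invoking the glueing property and the $\sigma$-finite countable-exhaustion structure to reduce to countably many such relations). Extending each fiber functional by density and homogeneity gives $\bar\omega(x)\in{\bf E}(x)'$ with $\|\bar\omega(x)\|\le g(x)$, and measurability of $x\mapsto\langle\bar\omega(x),v(x)\rangle$ for all $v\in\bar\Gamma({\bf E})$ follows by approximating $v$ fiberwise by elements of $\{v_n\}$ and taking limits of measurable functions. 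Finally, $\Phi(\pi_\mm\bar\omega)$ agrees with $T$ on each $\pi_\mm v_n$, hence on their $L^\infty(\mm)$-span, hence — by continuity and the fact that this span generates $\Gamma({\bf E})$ — on all of $\Gamma({\bf E})$. Since $\Phi$ is an injective, $L^\infty(\mm)$-linear, pointwise-norm preserving surjection, it is the desired isomorphism. Alternatively, one can shortcut much of this by citing the representation theorem to reduce to the case where $\Gamma({\bf E})$ is presented concretely, or by noting that the same statement for $\bar\Gamma({\bf E})$ over $(\X,\Sigma,\{\varnothing\})$ is essentially \cite[Theorem 4.6]{DMLP25}-type machinery and then descending along the quotient $\pi_\mm$; I would present whichever route keeps the measurable-selection bookkeeping lightest.
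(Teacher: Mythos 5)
Your proposal is correct, but note that the paper itself does not prove Theorem \ref{thm:dual_section_space}: it only remarks that the statement follows by adapting \cite[Theorem 3.8]{LPV22} (with \cite[Remarks 3.6 and 3.7]{LPV22}), so you have in effect written out the proof the paper delegates to the literature. Your three steps are exactly the right ones. Well-posedness and the bound \(|\Phi(\omega)|\le|\omega|\) are immediate as you say. For the reverse inequality, the displayed formula \(\|\bar\omega(x)\|_{{\bf E}(x)'}=\sup\{\langle\bar\omega(x),v(x)\rangle : v\in\mathcal C,\,|v|(x)\le1\}\) is slightly awkward because the index set depends on \(x\); the cleaner (and equivalent) route, which you also sketch, is to write \(\langle\Phi(\omega),\pi_\mm(v)\rangle\le|\Phi(\omega)|\,|v|\) for each of the countably many \(v\in\mathcal C\), discard a single null set, and use density of \(\{v(x):v\in\mathcal C\}\) in \({\bf E}(x)\) to conclude \(\|\bar\omega(x)\|_{{\bf E}(x)'}\le g(x)\) a.e.\ for a representative \(g\) of \(|\Phi(\omega)|\). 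For surjectivity, the essential points are all present: the Lipschitz bound \(|T(w)|\le|T|\,|w|\) applied to the countably many rational combinations \(w=\sum q_iv_{n_i}\) yields, off one null set, both consistency and the \(g(x)\)-Lipschitz property of the fiber functional on the \(\mathbb Q\)-span (the glueing property is not actually needed here, only countability of the relations), and the weak\(^*\)-measurability of the resulting \(\bar\omega\) requires the measurable-selection device \(n_k(x)=\min\{n:\|v(x)-v_n(x)\|<1/k\}\), which you correctly invoke implicitly. Agreement of \(\Phi(\pi_\mm\bar\omega)\) with \(T\) on the generating set \(\{\pi_\mm(v_n)\}\) then propagates by \(L^\infty(\mm)\)-linearity and continuity. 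No genuine gap; only the two presentational points above would need tightening in a written-out version.
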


The above statements were originally obtained (for the space of \(L^0(\mm)\)-sections of \(\bf E\)) in \cite{LPV22}
(see also \cite{GLP22} for related results). More precisely, Theorem \ref{thm:dual_section_space} can be proved by
suitably adapting the proof of \cite[Theorem 3.8]{LPV22}, by taking also \cite[Remarks 3.6 and 3.7]{LPV22} into account.
\subsection{Bounded \texorpdfstring{\(L^\infty(\mathcal N)\)}{Linfty(N)}-multilinear operators}
Let us now introduce the space of bounded \(L^\infty(\mathcal N)\)-multilinear operators between
\(L^\infty(\mathcal N)\)-normed \(L^\infty(\mathcal N)\)-modules.
\begin{definition}[Bounded \(L^\infty(\mathcal N)\)-multilinear operators]\label{def:bounded_Linf_multilinear}
Let \((\X,\Sigma,\mathcal N)\) be an enhanced measurable space and \(k\in\N\). Let \(\mathscr M_1,\ldots,\mathscr M_k,\mathscr N\) be \(L^\infty(\mathcal N)\)-normed
\(L^\infty(\mathcal N)\)-modules. Then we denote by \({\rm M}_k(\mathscr M_1,\ldots,\mathscr M_k;\mathscr N)\) the space
of all \(L^\infty(\mathcal N)\)-multilinear operators \(\Phi\colon\mathscr M_1\times\ldots\times\mathscr M_k\to\mathscr N\)
for which there exists a function \(g\in L^\infty(\mathcal N)^+\) such that
\begin{equation}\label{eq:def_bdd_Linfty_lin}
|\Phi(v_1,\ldots,v_k)|\leq g|v_1|\ldots|v_k|\quad\text{ for every }(v_1,\ldots,v_k)\in\mathscr M_1\times\ldots\times\mathscr M_k.
\end{equation}
The elements of \({\rm M}_k(\mathscr M_1,\ldots,\mathscr M_k;\mathscr N)\) are said to be
\textbf{bounded \(L^\infty(\mathcal N)\)-multilinear operators}.
\end{definition}
The notation \({\rm M}_k(\mathscr M_1,\ldots,\mathscr M_k;\mathscr N)\) is ambiguous, as it does not specify whether
\(\mathscr M_1,\ldots,\mathscr M_k,\mathscr N\) are considered as \(L^\infty(\mathcal N)\)-Banach \(L^\infty(\mathcal N)\)-modules
or as Banach spaces. Nevertheless, in order to keep the notation as simple as possible, we still write \({\rm M}_k(\mathscr M_1,\ldots,\mathscr M_k;\mathscr N)\)
to mean the space we introduced in Definition \ref{def:bounded_Linf_multilinear} whenever it is clear from the context that
\(\mathscr M_1,\ldots,\mathscr M_k,\mathscr N\) are regarded as \(L^\infty(\mathcal N)\)-Banach \(L^\infty(\mathcal N)\)-modules.
\medskip

In fact, we will be concerned with only two classes of bounded \(L^\infty(\mathcal N)\)-multilinear operators:
\begin{itemize}
\item For modules over \(L^\infty(\mm)\), where \((\X,\Sigma,\mm)\) is a \(\sigma\)-finite measure space.
In this case, we set
\[
|\Phi|\coloneqq\bigvee_{(v_1,\ldots,v_k)\in\mathbb D_{\mathscr M_1}\times\ldots\times\mathbb D_{\mathscr M_k}}|\Phi(v_1,\ldots,v_k)|\in L^\infty(\mm)^+
\]
for every \(\Phi\in{\rm M}_k(\mathscr M_1,\ldots,\mathscr M_k;\mathscr N)\). Note that \(|\Phi|=\bigwedge\{g\in L^\infty(\mm)^+:\eqref{eq:def_bdd_Linfty_lin}\text{ holds}\}\).
We have that \(\big({\rm M}_k(\mathscr M_1,\ldots,\mathscr M_k;\mathscr N),|\cdot|\big)\) is an \(L^\infty(\mm)\)-normed
\(L^\infty(\mm)\)-module. Moreover, if \(\mathscr N\) is an \(L^\infty(\mm)\)-Banach
\(L^\infty(\mm)\)-module, then \({\rm M}_k(\mathscr M_1,\ldots,\mathscr M_k;\mathscr N)\) is an \(L^\infty(\mm)\)-Banach
\(L^\infty(\mm)\)-module as well.
\item For modules over \(\mathcal L^\infty(\Sigma)\). In this case, for any given
\(\Phi\in{\rm M}_k(\mathscr M_1,\ldots,\mathscr M_k;\mathscr N)\) we set
\[
|\Phi|(x)\coloneqq\sup_{(v_1,\ldots,v_k)\in\mathbb D_{\mathscr M_1}\times\ldots\times\mathbb D_{\mathscr M_k}}|\Phi(v_1,\ldots,v_k)|(x)
\quad\text{ for every }x\in\X.
\]
Note that \(|\Phi|\) is not necessarily an element of \(\mathcal L^\infty(\Sigma)\), since it might be non-measurable.
\end{itemize}
We will focus mostly on the cases \(k=1\) and \(k=2\), where we use the following notations:
\[
\textsc{Hom}(\mathscr M_1;\mathscr N)\coloneqq{\rm M}_1(\mathscr M_1;\mathscr N),\qquad
{\rm B}(\mathscr M_1,\mathscr M_2;\mathscr N)\coloneqq{\rm M}_2(\mathscr M_1,\mathscr M_2;\mathscr N).
\]
Moreover, we use the shorthand notation \({\rm B}(\mathscr M_1,\mathscr M_2)\coloneqq{\rm B}(\mathscr M_1,\mathscr M_2;L^\infty(\mathcal N))\).
Note also that, when \(\mathcal N=\mathcal N_\mm\) for some \(\sigma\)-finite measure space \((\X,\Sigma,\mm)\),
we have \(\mathscr M_1^*=\textsc{Hom}(\mathscr M_1;L^\infty(\mm))\).
\medskip

The following result can be obtained by suitably adapting e.g.\ the proof of \cite[Theorem 3.16]{LP24}.
\begin{proposition}\label{prop:extension_bdd_multilin}
Let \((\X,\Sigma,\mathcal N)\) be an enhanced measurable space. Let \(\mathscr M_1,\ldots,\mathscr M_k,\mathscr N\) be
\(L^\infty(\mathcal N)\)-Banach \(L^\infty(\mathcal N)\)-modules. Let \(V_i\) be a generating \(\mathbb Q\)-vector subspace of \(\mathscr M_i\)
for any \(i=1,\ldots,k\). Assume \(\varphi\colon V_1\times\ldots\times V_k\to\mathscr N\) is a \(\mathbb Q\)-multilinear map for which
there exists a function \(g\in L^\infty(\mathcal N)^+\) such that
\[
|\varphi(v_1,\ldots,v_k)|\leq g|v_1|\ldots|v_k|\quad\text{ for every }(v_1,\ldots,v_k)\in V_1\times\ldots\times V_k.
\]
Then there exists a unique extension \(\Phi\in{\rm M}_k(\mathscr M_1,\ldots,\mathscr M_k;\mathscr N)\) of \(\varphi\). Moreover, it holds that
\[
|\Phi(v_1,\ldots,v_k)|\leq g|v_1|\ldots|v_k|\quad\text{ for every }(v_1,\ldots,v_k)\in\mathscr M_1\times\ldots\times\mathscr M_k.
\]
\end{proposition}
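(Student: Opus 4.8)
The plan is to extend $\varphi$ in two stages: first from the $\mathbb Q$-multilinear map on $V_1\times\dots\times V_k$ to a $\mathbb Q$-multilinear map on $\mathscr M_1\times\dots\times\mathscr M_k$ that is still controlled by $g$, and then to upgrade $\mathbb Q$-multilinearity to $L^\infty(\mathcal N)$-multilinearity, exploiting the glueing property and the density of simple functions.

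\medskip

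First I would reduce to the case $k=1$ in spirit, handling one slot at a time. Fix $i$ and freeze the other arguments in $V_j$ ($j\neq i$). The resulting map $V_i\to\mathscr N$ is $\mathbb Q$-linear with $|\varphi(v_1,\dots,v_k)|\leq g\,|v_1|\cdots|v_k|$; since $g\,\prod_{j\neq i}|v_j|\in L^\infty(\mathcal N)^+$ is a fixed function, and since $V_i$ is dense in $\mathscr M_i$ with respect to the norm $\|\,|\cdot|\,\|_{L^\infty(\mathcal N)}$ (density in the norm follows from $V_i$ being generating together with the glueing property — one first closes under countable glueings, then under the vector span, then takes the norm closure), the map admits a unique continuous $\mathbb Q$-linear extension to $\mathscr M_i$ valued in the Banach module $\mathscr N$, and the pointwise bound persists by taking limits (pointwise-norm inequalities are preserved under $L^\infty(\mathcal N)$-norm limits). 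Iterating over $i=1,\dots,k$ produces a $\mathbb Q$-multilinear map $\Phi_0\colon\mathscr M_1\times\dots\times\mathscr M_k\to\mathscr N$ satisfying $|\Phi_0(v_1,\dots,v_k)|\leq g\,|v_1|\cdots|v_k|$ for all arguments. Some care is needed to check that the successive extensions remain multilinear in the already-extended slots — but this is routine, since at each stage the bound guarantees separate continuity.

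\medskip

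Next I would promote $\Phi_0$ to an $L^\infty(\mathcal N)$-multilinear map. It suffices to prove $L^\infty(\mathcal N)$-homogeneity in each slot, i.e. $\Phi_0(\dots,f\cdot v_i,\dots)=f\cdot\Phi_0(\dots,v_i,\dots)$ for every $f\in L^\infty(\mathcal N)$. By $\mathbb Q$-linearity this holds for $f$ a $\mathbb Q$-valued simple function $\sum_j q_j\,\pi_{\mathcal N}(\1_{E_j})$, after first checking the case $f=\pi_{\mathcal N}(\1_E)$: here one uses the pointwise bound to see that $\Phi_0$ is local, namely $\pi_{\mathcal N}(\1_E)\cdot\Phi_0(\dots,v_i,\dots)=\pi_{\mathcal N}(\1_E)\cdot\Phi_0(\dots,\pi_{\mathcal N}(\1_E)\cdot v_i,\dots)$, which follows by applying the bound to $v_i-\pi_{\mathcal N}(\1_E)\cdot v_i$ on $E$. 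Then a general $f$ is approximated in $\|\cdot\|_{L^\infty(\mathcal N)}$ by $\mathbb Q$-valued simple functions, and the pointwise bound $|\Phi_0(\dots,f_1\cdot v_i,\dots)-\Phi_0(\dots,f_2\cdot v_i,\dots)|\leq g\,|f_1-f_2|\,|v_i|\prod_{j\neq i}|v_j|$ lets us pass to the limit. This gives $\Phi\coloneqq\Phi_0\in{\rm M}_k(\mathscr M_1,\dots,\mathscr M_k;\mathscr N)$ with the stated bound.

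\medskip

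For uniqueness, if $\Psi$ is another such extension, then $\Phi-\Psi$ is a bounded $L^\infty(\mathcal N)$-multilinear operator vanishing on $V_1\times\dots\times V_k$; since each $V_i$ generates $\mathscr M_i$ and $\Phi-\Psi$ is $L^\infty(\mathcal N)$-multilinear, it vanishes on all tuples built from $V_i$ by vector span and countable glueings, and then by the pointwise bound it vanishes on the norm closure, i.e. everywhere. The main obstacle I anticipate is the bookkeeping in the iterated-extension step: one must verify that after extending in slot $1$ the map is genuinely $\mathbb Q$-multilinear in the remaining slots on all of $\mathscr M_1\times V_2\times\dots\times V_k$ before proceeding, and that the final $\mathbb Q$-multilinear $\Phi_0$ is uniquely pinned down so that the homogeneity-promotion argument is unambiguous — none of this is deep, but it is where the proof of \cite[Theorem 3.16]{LP24} has to be adapted with attention rather than copied verbatim.
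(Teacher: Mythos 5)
There is a genuine gap at the very first step: you assert that \(V_i\) is dense in \(\mathscr M_i\) for the norm \(v\mapsto\||v|\|_{L^\infty(\mathcal N)}\) because \(V_i\) is generating, and you then extend \(\varphi\) slot by slot \emph{by continuity} from \(V_i\) to \(\mathscr M_i\). But ``generating'' does not mean dense: it means that the vector span of the \emph{glueings} \(\sum_{n\in\N}\pi_{\mathcal N}(\1_{E_n})\cdot v_n\) with \(v_n\in V_i\) is dense. The set \(V_i\) itself can be far from dense. A concrete counterexample is \(\mathscr M_i=L^\infty(\mathcal N)\) with \(V_i=\mathbb Q\,\pi_{\mathcal N}(\1_\X)\): this is a generating \(\mathbb Q\)-vector subspace (the paper notes that \(\{\pi_{\mathcal N}(\1_\X)\}\) generates \(L^\infty(\mathcal N)\)), yet its norm closure is only \(\R\,\pi_{\mathcal N}(\1_\X)\). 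So the continuous \(\mathbb Q\)-linear extension you invoke only reaches \({\rm cl}(V_i)\), not \(\mathscr M_i\), and \(\Phi_0\) is never actually defined on all of \(\mathscr M_1\times\ldots\times\mathscr M_k\). The parenthetical ``one first closes under countable glueings, then under the vector span, then takes the norm closure'' correctly describes why the \emph{larger} set is dense, but that is not a justification for density of \(V_i\).

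The fix is to reorder your two stages, and all the ingredients you need are already in your write-up. First use the pointwise bound to establish locality of \(\varphi\) (as you do in your homogeneity step: \(|\varphi(\ldots,v,\ldots)-\varphi(\ldots,w,\ldots)|\leq g|v-w|\prod_{j\neq i}|v_j|\) forces agreement on the set where \(v=w\)). Locality lets you define \(\Phi\) unambiguously on glueings, \(\Phi(\ldots,\sum_n\pi_{\mathcal N}(\1_{E_n})\cdot v_n,\ldots)\coloneqq\sum_n\pi_{\mathcal N}(\1_{E_n})\cdot\varphi(\ldots,v_n,\ldots)\), with the same bound; this already encodes \(L^\infty\)-homogeneity for simple functions. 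Only \emph{then} do you pass to the \(\mathbb Q\)-linear span and finally to the norm closure, which by the generating hypothesis is all of \(\mathscr M_i\); general \(f\in L^\infty(\mathcal N)\) is handled by simple-function approximation exactly as you propose. Your uniqueness argument is fine as stated. Note that the paper itself does not spell out a proof of this proposition (it defers to an adaptation of \cite[Theorem 3.16]{LP24}), so the comparison is with the intended argument rather than a written one; but as written, your proposal's first stage does not go through.
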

\subsection{Tensor products of \texorpdfstring{\(L^\infty(\mm)\)}{Linfty(m)}-Banach \texorpdfstring{\(L^\infty(\mm)\)}{Linfty(m)}-modules}
In the paper \cite{Pas23}, the third named author introduced and studied tensor products of
\(L^0(\mm)\)-Banach \(L^0(\mm)\)-modules. In this section, we will formulate several concepts and results
concerning tensor products in the framework of \(L^\infty(\mm)\)-Banach \(L^\infty(\mm)\)-modules instead.
We shall not spend many words neither on the well posedness of the definitions nor on the proofs of the
stated results, since they can be shown by repeating almost verbatim the arguments presented in \cite{Pas23}.
\begin{theorem}[Injective pointwise norm]
Let \((\X,\Sigma,\mm)\) be a \(\sigma\)-finite measure space. Let \(\mathscr M\) and \(\mathscr N\) be \(L^\infty(\mm)\)-Banach
\(L^\infty(\mm)\)-modules. Then \(\mathscr M\otimes_\varepsilon\mathscr N=(\mathscr M\otimes\mathscr N,|\cdot|_\varepsilon)\)
is an \(L^\infty(\mm)\)-normed \(L^\infty(\mm)\)-module, where the \textbf{injective pointwise norm} \(|\cdot|_\varepsilon\)
is defined as
\[
|\alpha|_\varepsilon\coloneqq\bigvee\bigg\{\sum_{i=1}^n\langle\omega,v_i\rangle\langle\eta,w_i\rangle\;\bigg|
\;\omega\in\mathbb D_{\mathscr M^*},\,\eta\in\mathbb D_{\mathscr N^*}\bigg\}
\]
for every \(\alpha=\sum_{i=1}^n v_i\otimes w_i\in\mathscr M\otimes\mathscr N\).
\end{theorem}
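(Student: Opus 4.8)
The plan is to show that $|\cdot|_\varepsilon$ is well defined and then to verify, one by one, the four defining properties of an $L^\infty(\mm)$-normed $L^\infty(\mm)$-module. For the well-posedness, I would first observe that, for each fixed $\omega\in\mathscr M^*$ and $\eta\in\mathscr N^*$, the map $\mathscr M\times\mathscr N\ni(v,w)\mapsto\langle\omega,v\rangle\langle\eta,w\rangle\in L^\infty(\mm)$ is $L^\infty(\mm)$-bilinear, so by the universal property of the algebraic tensor product it factors as an $L^\infty(\mm)$-linear map $\tilde b_{\omega,\eta}\colon\mathscr M\otimes\mathscr N\to L^\infty(\mm)$; in particular $\sum_{i=1}^n\langle\omega,v_i\rangle\langle\eta,w_i\rangle=\tilde b_{\omega,\eta}(\alpha)$ is independent of the chosen representation of $\alpha$. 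Since $|\langle\omega,v_i\rangle|\leq|\omega||v_i|\leq|v_i|$ for $\omega\in\mathbb D_{\mathscr M^*}$, and symmetrically for $\eta$, one gets $|\tilde b_{\omega,\eta}(\alpha)|\leq\sum_{i=1}^n|v_i||w_i|=:g_\alpha\in L^\infty(\mm)^+$ uniformly over $\omega\in\mathbb D_{\mathscr M^*}$, $\eta\in\mathbb D_{\mathscr N^*}$. As $L^\infty(\mm)$ is Dedekind complete, the supremum defining $|\alpha|_\varepsilon$ exists in $L^\infty(\mm)$ and $0\leq|\alpha|_\varepsilon\leq g_\alpha$ (the lower bound because $\tilde b_{0,0}(\alpha)=0$ lies in the family), so $|\alpha|_\varepsilon\in L^\infty(\mm)^+$.

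Properties (ii) and (iii) should then be quick. Subadditivity follows by taking the supremum over $(\omega,\eta)$ in $\tilde b_{\omega,\eta}(\alpha+\beta)=\tilde b_{\omega,\eta}(\alpha)+\tilde b_{\omega,\eta}(\beta)\leq|\alpha|_\varepsilon+|\beta|_\varepsilon$. For (iii), since $f\cdot(v\otimes w)=(f\cdot v)\otimes w$ one has $\tilde b_{\omega,\eta}(f\cdot\alpha)=f\,\tilde b_{\omega,\eta}(\alpha)$, so the family over which the supremum for $|f\cdot\alpha|_\varepsilon$ is taken equals $f\cdot S_\alpha$, where $S_\alpha\coloneqq\{\tilde b_{\omega,\eta}(\alpha)\,:\,\omega\in\mathbb D_{\mathscr M^*},\,\eta\in\mathbb D_{\mathscr N^*}\}$. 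One checks that $S_\alpha=-S_\alpha$ (replace $\omega$ by $-\omega$) and that $S_\alpha$ is stable under gluing two of its members along any $E\in\Sigma$ (glue $\omega_1,\omega_2$ in $\mathbb D_{\mathscr M^*}$ and $\eta_1,\eta_2$ in $\mathbb D_{\mathscr N^*}$, the mixed terms vanishing by disjointness); hence $f\cdot S_\alpha=|f|\cdot S_\alpha$ as subsets of $L^\infty(\mm)$, and since multiplication by $|f|\geq 0$ commutes with order-bounded suprema in $L^\infty(\mm)$ this yields $|f\cdot\alpha|_\varepsilon=|f|\,|\alpha|_\varepsilon$.

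The substantive points are the non-degeneracy in (i) and the gluing property (iv), which I expect to be the main obstacles. For (i), $|\alpha|_\varepsilon\geq\tilde b_{0,0}(\alpha)=0$ is immediate, and if $|\alpha|_\varepsilon=0$ then, using $-\omega$ in place of $\omega$, we get $0\leq\pm\tilde b_{\omega,\eta}(\alpha)\leq|\alpha|_\varepsilon=0$ for all $\omega\in\mathbb D_{\mathscr M^*}$, $\eta\in\mathbb D_{\mathscr N^*}$, which by $L^\infty(\mm)$-homogeneity upgrades to $\tilde b_{\omega,\eta}(\alpha)=0$ for all $\omega\in\mathscr M^*$, $\eta\in\mathscr N^*$; since by Theorem \ref{thm:Hahn-Banach} the sets ${\rm Dual}(v)$ are nonempty, $\mathscr M^*$ separates the points of $\mathscr M$ and $\mathscr N^*$ those of $\mathscr N$, and the module-theoretic counterpart of Lemma \ref{lem:null_tensor_Banach} — proved as in \cite{Pas23} — then forces $\alpha=0$. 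The delicate part here is that this null-tensor criterion is not the purely algebraic one of \eqref{eq:alg_criterion_null_tensor}: its proof has to replace the classical basis-extraction argument by a measurable, localized one, rewriting $\alpha$ so that the second factors are $L^\infty(\mm)$-linearly independent on each piece of a partition of $\X$. Finally, (iv) requires care, because the set of finite algebraic tensors need not be stable under countable gluing (already for $\mathscr M=\mathscr N=L^\infty(\mm;\ell^2)$ one can exhibit gluing data whose glued element would have unbounded pointwise rank); following \cite{Pas23}, one therefore checks the gluing property for $\mathscr M\otimes\mathscr N$ in the form in which it is actually constructed there, using once more the compatibility of $|\cdot|_\varepsilon$ with order-bounded suprema in $L^\infty(\mm)$.
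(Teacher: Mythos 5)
The paper offers no proof of this theorem: the whole subsection is explicitly deferred to the arguments of \cite{Pas23}, transposed from \(L^0\)- to \(L^\infty\)-modules, so there is nothing to compare line by line. Your outline is consistent with that route and the routine parts are correct: well-posedness via the universal property of the algebraic tensor product over the ring \(L^\infty(\mm)\), the order bound \(|\tilde b_{\omega,\eta}(\alpha)|\leq\sum_i|v_i||w_i|\) plus Dedekind completeness, and the verifications of (ii) and (iii) (including the glueing trick giving \(S_\alpha=-S_\alpha\) and the compatibility of nonnegative multipliers with countable suprema, which in \(L^\infty(\mm)\) one justifies via the countable supremum property). You also correctly isolate the one genuinely nontrivial step in (i): that \(\mathscr M^*\times\mathscr N^*\) detects null tensors, i.e.\ the module analogue of Lemma \ref{lem:null_tensor_Banach}, which is not the purely algebraic criterion \eqref{eq:alg_criterion_null_tensor} and is proved in \cite{Pas23} by exactly the localized linear-independence decomposition you describe; you leave this as a citation rather than an argument, but so does the paper. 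Finally, your remark on (iv) is a genuine and correct point that the statement glosses over: with \(\mathscr M=\mathscr N=L^\infty(\mm;\ell^2)\), glueing \(\alpha_n=\sum_{i=1}^n e_i\otimes e_i\) along a partition \((E_n)_{n\in\N}\) is admissible (each \(|\alpha_n|_\varepsilon=\1_\X^\mm\)), yet any single tensor \(\sum_{j=1}^N u_j\otimes z_j\) agreeing with \(\alpha_n\) on \(E_n\) would have to induce, \(\mm\)-a.e.\ on \(E_n\), the same fiberwise bilinear form as the rank-\(n\) projection, which is impossible for \(n>N\). So the algebraic tensor product satisfies (i)--(iii) but not the countable glueing axiom as literally written; the theorem should be read with that caveat, the glueing property being recovered only at the level of the completion \(\mathscr M\hat\otimes_\varepsilon\mathscr N\). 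This is an imprecision of the statement (inherited from the literature), not a defect of your argument.
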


It holds that \(|v\otimes w|_\varepsilon=|v||w|\) for every \(v\in\mathscr M\) and \(w\in\mathscr N\).
\begin{definition}[Injective tensor products of \(L^\infty(\mm)\)-Banach \(L^\infty(\mm)\)-modules]
Let \((\X,\Sigma,\mm)\) be a \(\sigma\)-finite measure space. Then we define the \(L^\infty(\mm)\)-Banach
\(L^\infty(\mm)\)-module \(\mathscr M\hat\otimes_\varepsilon\mathscr N\) as the module completion of
\(\mathscr M\otimes_\varepsilon\mathscr N\). We say that \(\mathscr M\hat\otimes_\varepsilon\mathscr N\)
is the \textbf{injective tensor product} of \(\mathscr M\) and \(\mathscr N\).
\end{definition}

Given that \(L^\infty(\mm)\)-Banach \(L^\infty(\mm)\)-modules are in particular Banach spaces, one should specify whether the
injective tensor product under consideration is the one \(\mathscr M\hat\otimes_\varepsilon^{L^\infty(\mm)}\mathscr N\)
in the sense of \(L^\infty(\mm)\)-Banach \(L^\infty(\mm)\)-modules or the one \(\mathscr M\hat\otimes_\varepsilon^\R\mathscr N\) in the sense
of Banach spaces. In fact, as Example \ref{ex:diff_inj_prod} will show, it can happen that \(\mathscr M\hat\otimes_\varepsilon^{L^\infty(\mm)}\mathscr N\)
(as a Banach space) differs from \(\mathscr M\hat\otimes_\varepsilon^\R\mathscr N\). Nevertheless, to avoid a cumbersome notation,
we just write \(\mathscr M\hat\otimes_\varepsilon\mathscr N\coloneqq\mathscr M\hat\otimes_\varepsilon^{L^\infty(\mm)}\mathscr N\)
whenever it is tacitly understood that \(\mathscr M\) and \(\mathscr N\) are regarded as \(L^\infty(\mm)\)-Banach \(L^\infty(\mm)\)-modules.
\begin{example}\label{ex:diff_inj_prod}{\rm
It can be readily verified that \(L^\infty(\mm)\hat\otimes_\varepsilon L^\infty(\mm)=L^\infty(\mm)\otimes_\varepsilon L^\infty(\mm)\cong L^\infty(\mm)\)
in the sense of \(L^\infty(\mm)\)-Banach \(L^\infty(\mm)\)-modules, via the isomorphism
\[
L^\infty(\mm)\otimes_\varepsilon L^\infty(\mm)\ni\sum_{i=1}^n f_i\otimes g_i\mapsto\sum_{i=1}^n f_i g_i\in L^\infty(\mm).
\]
We now consider the particular case where \((\X,\Sigma)\coloneqq(\N,2^\N)\) and \(\mm\coloneqq\sum_{n\in\N}\delta_n\),
thus \(L^\infty(\mm)=\ell^\infty\). It is known that \(\ell^\infty\hat\otimes_\varepsilon^\R\ell^\infty\) and \(\ell^\infty\)
are not linearly isomorphic. We observed that \(\ell^\infty\hat\otimes_\varepsilon^{\ell^\infty}\ell^\infty\cong\ell^\infty\),
thus we can conclude that \(\ell^\infty\hat\otimes_\varepsilon^\R\ell^\infty\) and \(\ell^\infty\hat\otimes_\varepsilon^{\ell^\infty}\ell^\infty\)
are not linearly isomorphic (so that, a fortiori, they are not isomorphic as Banach spaces).
\fr}\end{example}
\begin{remark}\label{rmk:equiv_inj_tens_norm}{\rm
Given any \(\alpha=\sum_{i=1}^n v_i\otimes w_i\in\mathscr M\otimes_\varepsilon\mathscr N\) and \(\delta>0\), there exist \(\omega\in\mathbb D_{\mathscr M^*}\) and \(\eta\in\mathbb D_{\mathscr N^*}\) such that
\begin{equation}\label{eq:equiv_inj_tens_norm}
\sum_{i=1}^n\langle\omega,v_i\rangle\langle\eta,w_i\rangle\geq|\alpha|_\varepsilon-\delta.
\end{equation}
Indeed, we can find a partition \((E_k)_{k\in\N}\subseteq\Sigma\) of \(\X\) and a sequence \(((\omega_k,\eta_k))_{k\in\N}\subseteq\mathbb D_{\mathscr M^*}\times\mathbb D_{\mathscr N^*}\)
such that \(\1_{E_k}^\mm\sum_{i=1}^n\langle\omega_k,v_i\rangle\langle\eta_k,w_i\rangle\geq\1_{E_k}^\mm(|\alpha|_\varepsilon-\delta)\) holds for every \(k\in\N\). It then follows that the elements
\(\omega\coloneqq\sum_{k\in\N}\1_{E_k}^\mm\cdot\omega_k\in\mathbb D_{\mathscr M^*}\) and \(\eta\coloneqq\sum_{k\in\N}\1_{E_k}^\mm\cdot\eta_k\in\mathbb D_{\mathscr N^*}\)
verify \eqref{eq:equiv_inj_tens_norm}.
\fr}\end{remark}
\begin{theorem}[Projective pointwise norm]
Let \((\X,\Sigma,\mm)\) be a \(\sigma\)-finite measure space. Let \(\mathscr M\) and \(\mathscr N\) be \(L^\infty(\mm)\)-Banach
\(L^\infty(\mm)\)-modules. Then \(\mathscr M\otimes_\pi\mathscr N=(\mathscr M\otimes\mathscr N,|\cdot|_\pi)\)
is an \(L^\infty(\mm)\)-normed \(L^\infty(\mm)\)-module, where the \textbf{projective pointwise norm} \(|\cdot|_\pi\)
is defined as
\[
|\alpha|_\pi\coloneqq\bigwedge\left\{\sum_{i=1}^n|v_i||w_i|\;\middle|\;(v_i)_{i=1}^n\subseteq\mathscr M,\,
(w_i)_{i=1}^n\subseteq\mathscr N,\,\alpha=\sum_{i=1}^n v_i\otimes w_i\right\}
\]
for every \(\alpha\in\mathscr M\otimes\mathscr N\).
\end{theorem}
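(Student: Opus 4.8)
The plan is to check, following the \(L^0\)-arguments of \cite{Pas23}, that \(|\cdot|_\pi\) is well defined and that it satisfies the four axioms of a pointwise norm. For well-posedness, observe that for every \(\alpha\in\mathscr M\otimes\mathscr N\) the set \(\mathcal R(\alpha)\) consisting of all \(\sum_{i=1}^n|v_i||w_i|\in L^\infty(\mm)^+\), where \(\alpha=\sum_{i=1}^n v_i\otimes w_i\) ranges over the finite representations of \(\alpha\), is non-empty (every tensor is a finite sum of elementary tensors) and bounded below by \(0\); hence its infimum \(|\alpha|_\pi\in L^\infty(\mm)^+\) exists by the Dedekind completeness of \(L^\infty(\mm)\). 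Moreover \(\mathcal R(\alpha)\) is downward directed: given two representations of \(\alpha\) with values \(s,s'\), one can paste them — using the first on \(\{s\le s'\}\) and the second on \(\{s>s'\}\) — into a (still finite) representation of \(\alpha\) whose value is \(s\wedge s'\). By the countable infimum property of \(L^\infty(\mm)\) there is then a decreasing sequence \((s_k)_{k\in\N}\subseteq\mathcal R(\alpha)\) with \(\bigwedge_{k\in\N}s_k=|\alpha|_\pi\), which will be convenient in the estimates below. The representation \(0=0\otimes 0\) gives \(|0|_\pi=0\), and the elementary representation \(v\otimes w\) gives \(|v\otimes w|_\pi\le|v||w|\).

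The cornerstone, which I would establish first, is the pointwise inequality \(|\alpha|_\varepsilon\le|\alpha|_\pi\). Indeed, for any representation \(\alpha=\sum_{i=1}^n v_i\otimes w_i\) and any \(\omega\in\mathbb D_{\mathscr M^*}\), \(\eta\in\mathbb D_{\mathscr N^*}\) one has, \(\mm\)-a.e.,
\[
\sum_{i=1}^n\langle\omega,v_i\rangle\langle\eta,w_i\rangle\le\sum_{i=1}^n|\langle\omega,v_i\rangle|\,|\langle\eta,w_i\rangle|\le\sum_{i=1}^n|\omega|\,|v_i|\,|\eta|\,|w_i|\le\sum_{i=1}^n|v_i||w_i|;
\]
taking the supremum over \(\omega,\eta\) on the left and the infimum over representations of \(\alpha\) on the right yields \(|\alpha|_\varepsilon\le|\alpha|_\pi\). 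Since the Injective pointwise norm theorem already tells us that \(|\alpha|_\varepsilon=0\) holds if and only if \(\alpha=0\), this proves axiom (i); and, combined with the identity \(|v\otimes w|_\varepsilon=|v||w|\), it upgrades the above bound to \(|v\otimes w|_\pi=|v||w|\).

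Axiom (ii) is immediate: concatenating a representation of \(\alpha\) with one of \(\beta\) yields a representation of \(\alpha+\beta\), so \(|\alpha+\beta|_\pi\le|\alpha|_\pi+|\beta|_\pi\) after passing to infima (using translation invariance of infima in the Riesz space \(L^\infty(\mm)\)). For the \(L^\infty(\mm)\)-homogeneity (iii), the bound \(|f\cdot\alpha|_\pi\le|f||\alpha|_\pi\) follows by multiplying a decreasing minimizing sequence through: \(f\cdot\alpha=\sum_i(f\cdot v_i^{(k)})\otimes w_i^{(k)}\) gives \(|f\cdot\alpha|_\pi\le|f|s_k\) for all \(k\), and \(\bigwedge_k|f|s_k=|f|\bigwedge_k s_k=|f||\alpha|_\pi\) by order-continuity of multiplication by the fixed non-negative element \(|f|\). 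From this together with (ii) one deduces the locality identity \(\1_E^\mm|\alpha|_\pi=\1_E^\mm|\1_E^\mm\cdot\alpha|_\pi\) for every \(E\in\Sigma\), by splitting \(\alpha=\1_E^\mm\cdot\alpha+\1_{\X\setminus E}^\mm\cdot\alpha\) and noting \(\1_E^\mm|\1_{\X\setminus E}^\mm\cdot\alpha|_\pi\le\1_E^\mm\1_{\X\setminus E}^\mm|\alpha|_\pi=0\). The reverse inequality \(|f||\alpha|_\pi\le|f\cdot\alpha|_\pi\) is the main obstacle, and I would argue it by localisation: on \(E_k\coloneqq\{|f|\ge 1/k\}\) the reciprocal of \(f\) defines an element \(g_k\in L^\infty(\mm)\) with \(g_k f=\1_{E_k}^\mm\) and \(|g_k|=\1_{E_k}^\mm/|f|\), so that
\[
\1_{E_k}^\mm|\alpha|_\pi=\1_{E_k}^\mm\big|\1_{E_k}^\mm\cdot\alpha\big|_\pi=\1_{E_k}^\mm\big|g_k\cdot(f\cdot\alpha)\big|_\pi\le\1_{E_k}^\mm|g_k|\,|f\cdot\alpha|_\pi=\frac{\1_{E_k}^\mm}{|f|}\,|f\cdot\alpha|_\pi,
\]
hence \(\1_{E_k}^\mm|f||\alpha|_\pi\le|f\cdot\alpha|_\pi\). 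Since \(E_k\uparrow\{|f|>0\}\), and both \(|f||\alpha|_\pi\) and \(|f\cdot\alpha|_\pi\) vanish on \(\{|f|=0\}\) (the latter by the bound already proved), taking the supremum over \(k\in\N\) gives \(|f||\alpha|_\pi\le|f\cdot\alpha|_\pi\), and thus equality. Finally, the glueing property (iv) is verified exactly as in the corresponding statement of \cite{Pas23}. I expect the reverse homogeneity inequality, together with the use of the injective--projective comparison to rule out degeneracy, to be the parts requiring the most care.
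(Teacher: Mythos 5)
Your verification of axioms (i)--(iii) is correct and is essentially the argument the paper intends: the paper itself gives no proof, deferring to the \(L^0\)-module arguments of \cite{Pas23}, and your steps -- downward-directedness of the set of representations via pasting along \(\{s\le s'\}\), the comparison \(|\alpha|_\varepsilon\le|\alpha|_\pi\) to rule out degeneracy, and the localisation to \(\{|f|\ge 1/k\}\) for the reverse homogeneity inequality -- are precisely the natural \(L^\infty\) transcriptions of those arguments. The auxiliary order-theoretic facts you invoke (countable infimum property, translation invariance of infima, order-continuity of multiplication by the fixed element \(|f|\) along a decreasing sequence, locality of \(|\cdot|_\pi\)) all hold in \(L^\infty(\mm)\) and are used correctly.

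The one step that does not go through as written is the final sentence: the glueing property (iv) cannot be ``verified exactly as in \cite{Pas23}'', and in fact it fails for the algebraic tensor product in general. In the \(L^0\) framework of \cite{Pas23} glueing is essentially subsumed by completeness (the partial sums \(\sum_{n\le N}\1_{E_n}^\mm\cdot v_n\) are Cauchy for the \(L^0\) distance), so it never needs to be checked for the non-complete module \(\mathscr M\otimes_\pi\mathscr N\); in the present \(L^\infty\) framework it is an independent axiom, and the obstruction is one of rank. Take \(\mathscr M=\mathscr N=L^\infty(\mm;\ell^2)\) with \(\mm\) atomless, a partition \((E_n)_{n\in\N}\subseteq\Sigma\) into sets of positive measure, and \(\alpha_n\coloneqq\frac{1}{n}\sum_{i=1}^n e_i\otimes e_i\), where \(e_i\) denotes the constant section equal to the \(i\)-th basis vector; then \(\|\1_{E_n}^\mm|\alpha_n|_\pi\|_{L^\infty(\mm)}\le 1\) for every \(n\). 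If some \(\alpha=\sum_{j=1}^N v_j\otimes w_j\in\mathscr M\otimes\mathscr N\) satisfied \(\1_{E_n}^\mm\cdot\alpha=\1_{E_n}^\mm\cdot\alpha_n\), then testing against the bounded \(L^\infty(\mm)\)-bilinear forms \((v,w)\mapsto\langle e_k,v\rangle\langle e_l,w\rangle\) would force, for a.e.\ \(x\in E_n\), the matrix \(\big(\sum_{j=1}^N\langle e_k,v_j(x)\rangle\langle e_l,w_j(x)\rangle\big)_{k,l}\) -- which has rank at most \(N\) -- to equal \(\frac{1}{n}\delta_{kl}\) for \(k,l\le n\), which has rank \(n\): a contradiction for \(n>N\). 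This is as much a wrinkle in the theorem's phrasing (the paper's definition of normed module includes glueing) as in your proof, and it is harmless for the sequel since glueing is restored for the completion \(\mathscr M\hat\otimes_\pi\mathscr N\); but an honest write-up should either record that \(|\cdot|_\pi\) satisfies only (i)--(iii) on \(\mathscr M\otimes\mathscr N\), or enlarge \(\mathscr M\otimes\mathscr N\) by formal glueings before completing, rather than claim (iv) follows verbatim from \cite{Pas23}.
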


It holds that \(|v\otimes w|_\pi=|v||w|\) for every \(v\in\mathscr M\) and \(w\in\mathscr N\).
\begin{definition}[Projective tensor products of \(L^\infty(\mm)\)-Banach \(L^\infty(\mm)\)-modules]
Let \((\X,\Sigma,\mm)\) be a \(\sigma\)-finite measure space. Then we define the \(L^\infty(\mm)\)-Banach
\(L^\infty(\mm)\)-module \(\mathscr M\hat\otimes_\pi\mathscr N\) as the module completion of
\(\mathscr M\otimes_\pi\mathscr N\). We say that \(\mathscr M\hat\otimes_\pi\mathscr N\)
is the \textbf{projective tensor product} of \(\mathscr M\) and \(\mathscr N\).
\end{definition}

Similarly to the above discussion for injective tensor products, we prefer to use the shorthand notation
\(\mathscr M\hat\otimes_\pi\mathscr N\coloneqq\mathscr M\hat\otimes_\pi^{L^\infty(\mm)}\mathscr N\) for the projective tensor product
in the sense of \(L^\infty(\mm)\)-Banach \(L^\infty(\mm)\)-modules. The following example shows that
\(\mathscr M\hat\otimes_\pi^{L^\infty(\mm)}\mathscr N\) and \(\mathscr M\hat\otimes_\pi^\R\mathscr N\) can be different.
\begin{example}\label{ex:diff_proj_prod}{\rm
It can be readily verified that \(L^\infty(\mm)\hat\otimes_\pi L^\infty(\mm)=L^\infty(\mm)\otimes_\pi L^\infty(\mm)\cong L^\infty(\mm)\)
in the sense of \(L^\infty(\mm)\)-Banach \(L^\infty(\mm)\)-modules, via the isomorphism
\[
L^\infty(\mm)\otimes_\pi L^\infty(\mm)\ni\sum_{i=1}^n f_i\otimes g_i\mapsto\sum_{i=1}^n f_i g_i\in L^\infty(\mm).
\]
Moreover, it is known that \(\ell^\infty\hat\otimes_\pi^\R\ell^\infty\) contains a complemented copy
of \(\ell^2\). On the other hand, \(\ell^\infty\) is a \textbf{prime}
Banach space (i.e.\ all its complemented infinite-dimensional subspaces are isomorphic to \(\ell^\infty\), cf.\ \cite[Definition 2.2.5]{AlbiacKalton}),
as it was proved in \cite{Lindenstrauss1967} (see also \cite[Theorem 5.6.5]{AlbiacKalton}). Hence, since the Banach spaces \(\ell^2\) and \(\ell^\infty\)
are not isomorphic (as the former is separable, while the latter is not), we conclude that \(\ell^\infty\hat\otimes_\pi^\R\ell^\infty\)
and \(\ell^\infty\hat\otimes_\pi^{\ell^\infty}\ell^\infty\cong\ell^\infty\) are not isomorphic as Banach spaces.
\fr}\end{example}
\begin{theorem}[Universal property of \(\mathscr M\hat\otimes_\pi\mathscr N\)]\label{thm:proj_tens_Ban_mod}
Let \((\X,\Sigma,\mm)\) be a \(\sigma\)-finite measure space. Let \(\mathscr M\), \(\mathscr N\), \(\mathscr Q\)
be \(L^\infty(\mm)\)-Banach \(L^\infty(\mm)\)-modules. Let \(b\in{\rm B}(\mathscr M,\mathscr N;\mathscr Q)\) be
given. Then there exists a unique operator \(\tilde b_\pi\in\textsc{Hom}(\mathscr M\hat\otimes_\pi\mathscr N;\mathscr Q)\)
such that
\[\begin{tikzcd}
\mathscr M\times\mathscr N \arrow[d,swap,"\otimes"] \arrow[r,"b"] & \mathscr Q \\
\mathscr M\hat\otimes_\pi\mathscr N \arrow[ur,swap,"\tilde b_\pi"] &
\end{tikzcd}\]
is a commutative diagram. Moreover, the resulting map
\[
{\rm B}(\mathscr M,\mathscr N;\mathscr Q)\ni b\mapsto\tilde b_\pi\in\textsc{Hom}(\mathscr M\hat\otimes_\pi\mathscr N;\mathscr Q)
\]
is an isomorphism of \(L^\infty(\mm)\)-Banach \(L^\infty(\mm)\)-modules.
\end{theorem}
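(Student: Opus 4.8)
The plan is to establish the universal property by reducing everything to the algebraic tensor product and then extending by density, using Proposition \ref{prop:extension_bdd_multilin} as the main tool. First I would note that $\otimes\colon\mathscr M\times\mathscr N\to\mathscr M\otimes\mathscr N\subseteq\mathscr M\hat\otimes_\pi\mathscr N$ is $L^\infty(\mm)$-bilinear, and that the image of $\otimes$ generates $\mathscr M\hat\otimes_\pi\mathscr N$ as an $L^\infty(\mm)$-Banach $L^\infty(\mm)$-module (indeed the elementary tensors span a dense subset of $\mathscr M\otimes_\pi\mathscr N$, which in turn is dense in the completion). Thus uniqueness of $\tilde b_\pi$ is immediate: any two $L^\infty(\mm)$-linear operators agreeing on $\{v\otimes w:v\in\mathscr M,\,w\in\mathscr N\}$ and satisfying a pointwise-boundedness bound must agree on the vector span of elementary tensors, hence on all of $\mathscr M\hat\otimes_\pi\mathscr N$ by continuity.

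For existence, given $b\in{\rm B}(\mathscr M,\mathscr N;\mathscr Q)$ with $|b(v,w)|\leq g\,|v||w|$ for some $g\in L^\infty(\mm)^+$, the universal property of the algebraic tensor product of $L^\infty(\mm)$-modules (recalled in the preliminaries) yields a unique $L^\infty(\mm)$-linear map $\bar b\colon\mathscr M\otimes\mathscr N\to\mathscr Q$ with $\bar b(v\otimes w)=b(v,w)$. The key estimate is that $\bar b$ is contractive up to $g$ with respect to the \emph{projective} pointwise norm: for any representation $\alpha=\sum_{i=1}^n v_i\otimes w_i$ one has $|\bar b(\alpha)|\leq\sum_{i=1}^n|b(v_i,w_i)|\leq g\sum_{i=1}^n|v_i||w_i|$, and taking the infimum over all representations gives $|\bar b(\alpha)|\leq g\,|\alpha|_\pi$. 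Hence $\bar b$ descends to an $L^\infty(\mm)$-linear map on $\mathscr M\otimes_\pi\mathscr N$ with $|\bar b(\alpha)|\leq g|\alpha|_\pi$, which by the universal property of the module completion extends uniquely to $\tilde b_\pi\in\textsc{Hom}(\mathscr M\hat\otimes_\pi\mathscr N;\mathscr Q)$ satisfying $|\tilde b_\pi(\alpha)|\leq g\,|\alpha|$ for all $\alpha$; by construction the required triangle commutes. This also shows $|\tilde b_\pi|\leq|b|$, taking $g=|b|$.

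It remains to prove that $\Psi\colon b\mapsto\tilde b_\pi$ is an isomorphism of $L^\infty(\mm)$-Banach $L^\infty(\mm)$-modules. Linearity and $L^\infty(\mm)$-linearity of $\Psi$ follow from uniqueness in the construction. For injectivity, if $\tilde b_\pi=0$ then $b(v,w)=\tilde b_\pi(v\otimes w)=0$ for all $v,w$, so $b=0$. For surjectivity, given $T\in\textsc{Hom}(\mathscr M\hat\otimes_\pi\mathscr N;\mathscr Q)$ define $b(v,w)\coloneqq T(v\otimes w)$; this is $L^\infty(\mm)$-bilinear, and since $|v\otimes w|_\pi=|v||w|$ together with $|T(\alpha)|\leq|T|\,|\alpha|$ one gets $|b(v,w)|\leq|T|\,|v||w|$, so $b\in{\rm B}(\mathscr M,\mathscr N;\mathscr Q)$, and $\Psi(b)$ agrees with $T$ on elementary tensors, hence $\Psi(b)=T$ by the uniqueness/density argument above. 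Finally, to see that $\Psi$ preserves the pointwise norm, we already have $|\tilde b_\pi|\leq|b|$; for the reverse inequality, $|b(v,w)|=|\tilde b_\pi(v\otimes w)|\leq|\tilde b_\pi|\,|v\otimes w|_\pi=|\tilde b_\pi|\,|v||w|$ for all $v\in\mathbb D_{\mathscr M}$, $w\in\mathbb D_{\mathscr N}$, whence $|b|\leq|\tilde b_\pi|$. The only genuinely delicate point is verifying the pointwise (as opposed to global-norm) contraction estimate $|\bar b(\alpha)|\leq g\,|\alpha|_\pi$: one must pass the essential supremum/infimum through the finite sums correctly and use that $|\cdot|_\pi$ is defined as a pointwise infimum over representations, which is legitimate because $L^\infty(\mm)$ has the countable infimum property; everything else is a routine density-and-extension argument in the spirit of \cite{Pas23}.
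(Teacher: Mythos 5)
Your proposal is correct and follows exactly the argument the paper intends (the paper gives no proof of this theorem, deferring to the arguments of \cite{Pas23}, which proceed in precisely this way: linearise $b$ via the universal property of the algebraic tensor product, verify the pointwise bound $|\bar b(\alpha)|\le g|\alpha|_\pi$ by passing to the lattice infimum over representations, extend via Proposition \ref{prop:extension_bdd_multilin}, and check bijectivity and norm preservation of $b\mapsto\tilde b_\pi$ on elementary tensors). You also correctly isolate the one delicate point, namely that pulling $g$ through the infimum is legitimate because the infimum defining $|\cdot|_\pi$ is attained along a countable subfamily; the only cosmetic quibble is that nothing ``descends'' -- $\mathscr M\otimes_\pi\mathscr N$ is the algebraic tensor product itself equipped with $|\cdot|_\pi$, so $\bar b$ is already defined there.
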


It follows from Theorem \ref{thm:proj_tens_Ban_mod} that the couple \((\mathscr M\hat\otimes_\pi\mathscr N,\otimes)\) is unique
up to a unique isomorphism, meaning that for any pair \((\mathscr Q,\mathfrak p)\) having the same properties there exists a unique
isomorphism of \(L^\infty(\mm)\)-Banach \(L^\infty(\mm)\)-modules \(\Phi\colon\mathscr M\hat\otimes_\pi\mathscr N\to\mathscr Q\) such that
\[\begin{tikzcd}
\mathscr M\times\mathscr N \arrow[r,"\otimes"] \arrow[dr,swap,"\mathfrak p"] & \mathscr M\hat\otimes_\pi\mathscr N \arrow[d,"\Phi"] \\
& \mathscr Q 
\end{tikzcd}\]
is a commutative diagram. In this case, we write \((\mathscr M\hat\otimes_\pi\mathscr N,\otimes)\cong(\mathscr Q,\mathfrak p)\).
\begin{theorem}[The dual of \(\mathscr M\hat\otimes_\pi\mathscr N\)]\label{thm:dual_proj_tens_prod}
Let \((\X,\Sigma,\mm)\) be a \(\sigma\)-finite measure space. Let \(\mathscr M\) and \(\mathscr N\)
be \(L^\infty(\mm)\)-Banach \(L^\infty(\mm)\)-modules. Then it holds that
\[
{\rm B}(\mathscr M,\mathscr N)\cong(\mathscr M\hat\otimes_\pi\mathscr N)^*
\]
via the isomorphism \({\rm B}(\mathscr M,\mathscr N)\ni b\mapsto\tilde b_\pi\in(\mathscr M\hat\otimes_\pi\mathscr N)^*\).
In particular, it holds that
\[
|\alpha|_\pi=\bigvee\left\{\tilde b_\pi(\alpha)\;\middle|\;b\in\mathbb D_{{\rm B}(\mathscr M,\mathscr N)}\right\}
\quad\text{ for every }\alpha\in\mathscr M\hat\otimes_\pi\mathscr N.
\]
\end{theorem}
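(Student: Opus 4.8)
The plan is to mirror the classical Banach-space identification $\mathcal B(\B,\mathbb V)\cong(\B\hat\otimes_\pi\mathbb V)'$ (cf.\ \cite[Theorem 2.9]{Ryan02}), working throughout with the $L^\infty(\mm)$-module structure rather than the Banach-space one. Fix a bounded $L^\infty(\mm)$-bilinear map $b\in{\rm B}(\mathscr M,\mathscr N)$. Since $L^\infty(\mm)$ is itself an $L^\infty(\mm)$-Banach $L^\infty(\mm)$-module, the universal property in Theorem \ref{thm:proj_tens_Ban_mod} applied with $\mathscr Q\coloneqq L^\infty(\mm)$ yields a unique $\tilde b_\pi\in\textsc{Hom}(\mathscr M\hat\otimes_\pi\mathscr N;L^\infty(\mm))=(\mathscr M\hat\otimes_\pi\mathscr N)^*$ with $\tilde b_\pi(v\otimes w)=b(v,w)$, and the assignment $b\mapsto\tilde b_\pi$ is linear. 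So essentially the whole statement is already a special case of Theorem \ref{thm:proj_tens_Ban_mod}; what remains is (i) to observe that the target module ${\rm B}(\mathscr M,\mathscr N;L^\infty(\mm))={\rm B}(\mathscr M,\mathscr N)$ is precisely the domain appearing here, and (ii) to extract the stated formula for $|\alpha|_\pi$ from the isomorphism being pointwise-norm-preserving.

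First I would record that $b\mapsto\tilde b_\pi$ is an isomorphism of $L^\infty(\mm)$-Banach $L^\infty(\mm)$-modules by invoking Theorem \ref{thm:proj_tens_Ban_mod} directly with $\mathscr Q=L^\infty(\mm)$; in particular $|\tilde b_\pi|=|b|$ in $L^\infty(\mm)^+$ for every $b$, where on the left the pointwise norm is that of the module dual $(\mathscr M\hat\otimes_\pi\mathscr N)^*$ and on the right that of ${\rm B}(\mathscr M,\mathscr N)$. Next, for the formula, I would use the characterisation of the dual pointwise norm recalled before Theorem \ref{thm:Hahn-Banach}: for any $\alpha\in\mathscr M\hat\otimes_\pi\mathscr N$,
\[
|\alpha|_\pi=\bigvee_{T\in\mathbb D_{(\mathscr M\hat\otimes_\pi\mathscr N)^*}}\langle T,\alpha\rangle,
\]
which is the definition of the pointwise norm on a module via its bidual, combined with the fact that $\alpha$ is represented in the module double-dual; more concretely, $|\alpha|_\pi\ge\langle T,\alpha\rangle$ for all $T\in\mathbb D_{(\mathscr M\hat\otimes_\pi\mathscr N)^*}$ trivially, and the reverse supremum inequality is exactly the conclusion of the Hahn--Banach theorem (Theorem \ref{thm:Hahn-Banach}): picking $T\in{\rm Dual}(\alpha)$ gives $\langle T,\alpha\rangle=|\alpha|_\pi$ on $\{|\alpha|_\pi>0\}$, while on the complement both sides vanish. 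Finally I would transport this supremum through the isomorphism $b\mapsto\tilde b_\pi$: since every $T\in\mathbb D_{(\mathscr M\hat\otimes_\pi\mathscr N)^*}$ is of the form $\tilde b_\pi$ for a unique $b\in{\rm B}(\mathscr M,\mathscr N)$, and $T\in\mathbb D_{(\mathscr M\hat\otimes_\pi\mathscr N)^*}\iff|\tilde b_\pi|\le 1\iff|b|\le1\iff b\in\mathbb D_{{\rm B}(\mathscr M,\mathscr N)}$, we obtain
\[
|\alpha|_\pi=\bigvee\bigl\{\,\tilde b_\pi(\alpha)\;\big|\;b\in\mathbb D_{{\rm B}(\mathscr M,\mathscr N)}\,\bigr\},
\]
as desired.

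The only genuine subtlety, and the step I expect to require the most care, is the supremum formula: one must be sure that the supremum on the right is actually attained (or at least approximated $\mm$-a.e.), and that the pointwise norm of an element of an $L^\infty(\mm)$-Banach $L^\infty(\mm)$-module equals the $\bigvee$ of its pairings against the unit disc of the module dual. This is the module analogue of the Hahn--Banach-based norming; I would get it from Theorem \ref{thm:Hahn-Banach} by taking $\omega\in{\rm Dual}(\alpha)\subseteq(\mathscr M\hat\otimes_\pi\mathscr N)^*$, which satisfies $|\omega|=\1_{\{|\alpha|_\pi>0\}}^\mm\le1$ and $\langle\omega,\alpha\rangle=|\alpha|_\pi$, so the supremum is in fact a maximum. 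Since everything else is a verbatim transcription of the $L^0$-module arguments of \cite{Pas23} into the $L^\infty$-setting, as already flagged in the paragraph preceding this block of results, I would keep the write-up short and point to Theorem \ref{thm:proj_tens_Ban_mod} and Theorem \ref{thm:Hahn-Banach} for the two substantive inputs.
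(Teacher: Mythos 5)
Your proposal is correct and matches the intended argument: the paper itself gives no written proof of this theorem (it defers to the verbatim adaptation of the $L^0$-module arguments of \cite{Pas23}), and your derivation — the isomorphism as the special case $\mathscr Q=L^\infty(\mm)$ of Theorem \ref{thm:proj_tens_Ban_mod}, plus the norming formula obtained by picking $\omega\in{\rm Dual}(\alpha)$ via Theorem \ref{thm:Hahn-Banach} and transporting the unit disc through the pointwise-norm-preserving bijection $b\mapsto\tilde b_\pi$ — is exactly the standard route. No gaps.
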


It is worth pointing out that the results of this section imply the corresponding ones for Banach spaces.
Indeed, by taking Remark \ref{rmk:when_Linfty_is_R} into account, it is easy to check that the above definitions and
results reduce to the classical ones for Banach spaces when the reference probability measure under consideration
consists of a single atom. In this sense, the theory of tensor products of \(L^\infty(\mm)\)-Banach \(L^\infty(\mm)\)-modules
is an actual extension of that of Banach spaces.
\subsection{Liftings of \texorpdfstring{\(L^\infty(\mm)\)}{Linfty(m)}-Banach \texorpdfstring{\(L^\infty(\mm)\)}{Linfty(m)}-modules}\label{sec:lifting}
Let us now discuss the theory of liftings of \(L^\infty(\mm)\)-Banach \(L^\infty(\mm)\)-modules,
which was first introduced in \cite{DMLP25}. As we have mentioned in the Introduction, our choice of working exactly 
with \(L^\infty(\mm)\)-Banach \(L^\infty(\mm)\)-modules (instead e.g.\ of  \(L^0(\mm)\)-Banach \(L^0(\mm)\)-modules)
is due to the fact that they are well suited to the lifting theory.
\medskip

Fix a complete probability space \((\X,\Sigma,\mm)\) and a lifting \(\ell\) of \(\mm\). Let \(\mathscr M\) be an \(L^\infty(\mm)\)-Banach \(L^\infty(\mm)\)-module.
Then there exists a unique couple \((\ell\mathscr M,\ell)\) such that the following conditions hold:
\begin{itemize}
\item \(\ell\mathscr M\) is an \(\mathcal L^\infty(\Sigma)\)-Banach \(\mathcal L^\infty(\Sigma)\)-module,
\item \(\ell\colon\mathscr M\to\ell\mathscr M\) is a linear operator such that \(|\ell v|=\ell|v|\) for every \(v\in\mathscr M\),
\item the image of \(\ell\colon\mathscr M\to\ell\mathscr M\) generates \(\ell\mathscr M\).
\end{itemize}
The uniqueness of \((\ell\mathscr M,\ell)\) is up to a unique isomorphism. More generally, the following universal
property holds: given any \(\mathcal L^\infty(\Sigma)\)-Banach \(\mathcal L^\infty(\Sigma)\)-module \(\mathscr N\)
and any linear operator \(t\colon\mathscr M\to\mathscr N\) such that \(|tv|\leq\ell|v|\) for every \(v\in\mathscr M\),
there exists a unique \(T\in\textsc{Hom}(\ell\mathscr M;\mathscr N)\) such that
\[\begin{tikzcd}
\mathscr M \arrow[r,"\ell"] \arrow[dr,swap,"t"] & \ell\mathscr M \arrow[d,"T"] \\
& \mathscr N
\end{tikzcd}\]
is a commutative diagram. We say that \((\ell\mathscr M,\ell)\) is the \textbf{lifting} of \(\mathscr M\). Liftings of \(L^\infty(\mm)\)-Banach
\(L^\infty(\mm)\)-modules were introduced in \cite[Theorem 3.5]{DMLP25}, where existence and uniqueness were proved, while the universal property
follows from \cite[Corollary 3.21 and Section 4.2.5]{LP24}. It holds that
\begin{equation}\label{eq:ell_inv_a.e.}
\mathscr M\cong\Pi_\mm(\ell\mathscr M),
\end{equation}
where the \(L^\infty(\mm)\)-Banach \(L^\infty(\mm)\)-module \(\Pi_\mm(\ell\mathscr M)\) is defined
as in \eqref{eq:quotient_mod}.
\begin{remark}\label{rmk:lift_Linfty(mm)}{\rm
We claim that
\[
\ell L^\infty(\mm)\cong\mathcal L^\infty(\Sigma).
\]
More precisely, we have that the pair \((\mathcal L^\infty(\Sigma),\ell)\) is the lifting of \(L^\infty(\mm)\), differently from what it was
erroneously stated in \cite[Example 4.8]{GLP22}. Indeed, \(\mathcal L^\infty(\Sigma)\) is an \(\mathcal L^\infty(\Sigma)\)-Banach
\(\mathcal L^\infty(\Sigma)\)-module, the lifting \(\ell\colon L^\infty(\mm)\to\mathcal L^\infty(\Sigma)\) is a linear map satisfying
\(|\ell f|=\ell|f|\) for every \(f\in L^\infty(\mm)\), and the image of \(\ell\colon L^\infty(\mm)\to\mathcal L^\infty(\Sigma)\)
contains the function \(\1_\X=\ell\1_\X^\mm\), which generates \(\mathcal L^\infty(\Sigma)\).
\fr}\end{remark}
\subsubsection*{Fibers of a lifted module}
Let \((\X,\Sigma,\mm)\) be a complete probability space and \(\ell\) a lifting of \(\mm\). Fix an \(L^\infty(\mm)\)-Banach
\(L^\infty(\mm)\)-module \(\mathscr M\) and a point \(x\in\X\). Then we define \(p_x\colon\ell\mathscr M\to[0,+\infty)\)
as follows: given any element \(v\in\ell\mathscr M\), we set
\begin{equation}\label{eq:def_p_x}
p_x(v)\coloneqq\left\{\begin{array}{ll}
|v|(x)\\
\ell(\pi_\mm(|v|))(x)
\end{array}\quad\begin{array}{ll}
\text{ if }x\in\X\setminus{\rm A}(\ell),\\
\text{ if }x\in{\rm A}(\ell).
\end{array}\right.
\end{equation}
It can be readily checked that \(p_x\) is a complete seminorm on the vector space \(\ell\mathscr M\).
\begin{definition}[Fibers of a lifted module]
We define the \textbf{fiber} of \(\ell\mathscr M\) at \(x\) as the Banach space
\[
\ell\mathscr M_x\coloneqq\ell\mathscr M/p_x.
\]
Given any \(v\in\mathscr M\), we define its \textbf{value}
\(\ell v_x\in\ell\mathscr M_x\) at \(x\) as the equivalence class of \(\ell v\) in \(\ell\mathscr M_x\).
\end{definition}

Observe that
\[
\|\ell v_x\|_{\ell\mathscr M_x}=\ell|v|(x)\quad\text{ for every }v\in\mathscr M\text{ and }x\in\X.
\]
Note also that \(\mathscr M\ni v\mapsto\ell v_x\in\ell\mathscr M_x\) is linear, thus
\(\{\ell v_x:v\in\mathscr M\}\) is a vector subspace of \(\ell\mathscr M_x\).
\begin{lemma}\label{lem:dens_in_fiber_of_lift_mod}
Fix any \(x\in\X\). Then \(\{\ell v_x:v\in\mathscr M\}\) is a dense vector subspace of \(\ell\mathscr M_x\).
\end{lemma}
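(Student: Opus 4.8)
The plan is to exploit the fact that the lifting map $\ell\colon\mathscr M\to\ell\mathscr M$ has image generating $\ell\mathscr M$ as an $\mathcal L^\infty(\Sigma)$-Banach $\mathcal L^\infty(\Sigma)$-module, and to push this generation statement down to the fiber $\ell\mathscr M_x$ via the quotient seminorm $p_x$. The key observation is that the subspace $\{\ell v_x : v\in\mathscr M\}$ is not merely the image of $\ell$ under the quotient; because $p_x$ kills a lot, we may also absorb the glueing operations and the $\mathcal L^\infty(\Sigma)$-scalars into it.

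First I would record that, since the image of $\ell$ generates $\ell\mathscr M$, the set
\[
D\coloneqq\bigg\{\sum_{n\in\N}\1_{E_n}\cdot\ell v_n\;\bigg|\;(E_n)_{n\in\N}\subseteq\Sigma\text{ partition of }\X,\;(v_n)_{n\in\N}\subseteq\mathscr M\text{ with }\sup_n\|\1_{E_n}|\ell v_n|\|_{\mathcal L^\infty(\Sigma)}<+\infty\bigg\}
\]
together with finite $\R$-linear combinations thereof is dense in $\ell\mathscr M$ for the norm $\|\cdot\|_{\ell\mathscr M}$. Since the quotient map $\ell\mathscr M\to\ell\mathscr M_x$ is $1$-Lipschitz (because $p_x(w)\le\|w\|_{\ell\mathscr M}$, which one checks directly from \eqref{eq:def_p_x}, using that $|w|(x)\le\|\,|w|\,\|_{\mathcal L^\infty(\Sigma)}$ in the non-atomic case and an analogous bound for $\ell(\pi_\mm(|w|))$ in the atomic case), the image of $D$ under this quotient is dense in $\ell\mathscr M_x$. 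So it suffices to show that every element of (the span of) $D$ has the same image in $\ell\mathscr M_x$ as some $\ell v_x$ with $v\in\mathscr M$.

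For this, fix a partition $(E_n)_{n\in\N}$ and elements $(v_n)_{n\in\N}$ as in the definition of $D$, and set $w\coloneqq\sum_{n\in\N}\1_{E_n}\cdot\ell v_n\in\ell\mathscr M$. I distinguish the two cases of \eqref{eq:def_p_x}. If $x\notin{\rm A}(\ell)$, then exactly one index $n_0$ satisfies $x\in E_n$ — here I would use that the $E_n$ are genuine (not just a.e.) disjoint sets covering $\X$ — and then $\1_{E_{n_0}}\cdot\ell v_{n_0}$ and $w$ differ by an element supported off $x$, hence have the same $p_x$-class; moreover $p_x(w-\ell v_{n_0})=|w-\ell v_{n_0}|(x)=0$ because $|w-\ell v_{n_0}|=\sum_{n\ne n_0}\1_{E_n}|\ell v_n|$ vanishes at $x$ (recalling $|\ell v_{n_0}| = \ell|v_{n_0}|$ and the glueing identities for pointwise norms). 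So the image of $w$ in $\ell\mathscr M_x$ equals $\ell (v_{n_0})_x$. If $x\in{\rm A}(\ell)$, say $x\in A$ for the (unique) lifted atom $A\in\mathscr A(\ell)$ containing it, then by Lemma \ref{lem:lifted_atoms}(ii) the atom $A$ is contained in exactly one $E_{n_0}$, and one checks from the atomic branch of \eqref{eq:def_p_x} together with Lemma \ref{lem:lifted_atoms}(iii) (constancy of lifted functions on atoms) that again $p_x(w-\ell v_{n_0})=0$, so the image of $w$ is $\ell(v_{n_0})_x$. Finite $\R$-linear combinations are then handled by linearity of $v\mapsto\ell v_x$. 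This shows the image of $D$ (and its span) lands inside $\{\ell v_x:v\in\mathscr M\}$, which combined with the density of that image proves the claim.

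The step I expect to require the most care is the reduction of a countable glueing $\sum_n\1_{E_n}\cdot\ell v_n$ to a single $\ell v$ at the level of the fiber; one must be careful that the point $x$ is honestly in one of the sets $E_n$ (true because $(E_n)$ is a partition of all of $\X$, not merely up to $\mm$-null sets), and that in the atomic case the relevant identities for $\ell(\pi_\mm(|{\cdot}|))$ are compatible with Lemma \ref{lem:lifted_atoms}. The rest — $1$-Lipschitz continuity of the quotient and density of $D$ — is routine.
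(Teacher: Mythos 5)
Your proof is correct and follows essentially the same route as the paper's: approximate an arbitrary element of \(\ell\mathscr M\) by a glueing \(\sum_{n\in\N}\1_{E_n}\cdot\ell v_n\) (using that the image of \(\ell\) generates \(\ell\mathscr M\)), then observe that at the level of the fiber such a glueing collapses to \(\ell(v_{n_0})_x\) for the unique \(n_0\) with \(x\in E_{n_0}\) (resp.\ \(A\subseteq\ell E_{n_0}\) in the atomic case). The only slip is cosmetic: \(|w-\ell v_{n_0}|\) equals \(\sum_{n\neq n_0}\1_{E_n}|\ell v_n-\ell v_{n_0}|\) rather than \(\sum_{n\neq n_0}\1_{E_n}|\ell v_n|\), but either expression vanishes at \(x\) (resp.\ its lift vanishes on \(\ell E_{n_0}\supseteq A\)), so the argument goes through.
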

\begin{proof}
Let \(w\in\ell\mathscr M_x\) and \(\varepsilon>0\) be given. Pick some representative \(\bar w\in\ell\mathscr M\) of \(w\).
We can find a partition \((E_n)_{n\in\N}\subseteq\Sigma\) of \(\X\) and a sequence \((v^n)_{n\in\N}\subseteq\mathscr M\)
such that \(\big\|\bar w-\sum_{n\in\N}\1_{E_n}\cdot\ell v^n\big\|_{\ell\mathscr M}\leq\varepsilon\).
We distinguish two cases. First, assume \(x\in\X\setminus{\rm A}(\ell)\). There exists (a unique) index \(n_x\in\N\)
such that \(x\in E_{n_x}\). Hence, we have that \(p_x\big(\ell v^{n_x}-\sum_{n\in\N}\1_{E_n}\cdot\ell v^n\big)=0\)
and thus accordingly
\[
\|w-\ell v^{n_x}_x\|_{\ell\mathscr M_x}=p_x(\bar w-\ell v^{n_x})=p_x\left(\bar w-\sum_{n\in\N}\1_{E_n}\cdot\ell v^n\right)
\leq\left\|\bar w-\sum_{n\in\N}\1_{E_n}\cdot\ell v^n\right\|_{\ell\mathscr M}\leq\varepsilon,
\]
which yields the statement in the case \(x\notin{\rm A}(\ell)\). Finally, assume \(x\in A\) for some \(A\in\mathscr A(\ell)\).
By Lemma \ref{lem:lifted_atoms} ii), there exists (a unique) \(n_x\in\N\) such that \(A\subseteq\ell E_{n_x}\).
By Lemma \ref{lem:lifted_atoms} iii), we get
\[\begin{split}
\|w-\ell v^{n_x}_x\|_{\ell\mathscr M_x}&=p_x(\bar w-\ell v^{n_x})=\ell(\pi_\mm(|\bar w-\ell v^{n_x}|))(x)
=\big\|\1_{A\cap E_{n_x}}^\mm\pi_\mm(|\bar w-\ell v^{n_x}|)\big\|_{L^\infty(\mm)}\\
&\leq\left\|\bar w-\sum_{n\in\N}\1_{E_n}\cdot\ell v^n\right\|_{\ell\mathscr M}\leq\varepsilon,
\end{split}\]
which yields the statement in the case \(x\in{\rm A}(\ell)\). Therefore, the proof is complete.
\end{proof}
\begin{remark}{\rm
Let \(A\in\mathscr A(\ell)\) be given. Then we claim that
\begin{subequations}\begin{align}
\label{eq:const_vect_on_atoms1}
\ell\mathscr M_x=\ell\mathscr M_y&\quad\text{ for every }x,y\in A,\\
\label{eq:const_vect_on_atoms2}
\ell v_x=\ell v_y&\quad\text{ for every }v\in\mathscr M\text{ and }x,y\in A.
\end{align}\end{subequations}
Indeed, for any \(v\in\mathscr M\) we deduce from Lemma \ref{lem:lifted_atoms} iii) that \(\ell(\pi_\mm(|v|))\) is constant on \(A\).
This means that \(p_x(v)=p_y(v)\) for all \(v\in\mathscr M\) and \(x,y\in A\), whence \eqref{eq:const_vect_on_atoms1} and \eqref{eq:const_vect_on_atoms2} follow.
\fr}\end{remark}
\begin{remark}\label{rmk:Hilbertian_lifting}{\rm
Assume that \(\mathscr H\) is a Hilbertian \(L^\infty(\mm)\)-Banach \(L^\infty(\mm)\)-module. Then it is easy to see that its lifting
\(\ell\mathscr H\) is a Hilbertian \(\mathcal L^\infty(\Sigma)\)-Banach \(\mathcal L^\infty(\Sigma)\)-module. In particular, we deduce
that the fiber \(\ell\mathscr H_x\) is a Hilbert space for every \(x\in\X\).
\fr}\end{remark}

The following result, which was proved in \cite[Proposition 3.10]{DMLP25}, states that the fibers of \(\ell(\mathscr M^*)\)
are subspaces of the duals of the corresponding fibers of \(\ell\mathscr M\).
\begin{proposition}\label{prop:def_j_x}
Fix any \(x\in\X\). Then there exists a unique bounded linear operator
\[
{\sf j}_x={\sf j}_x[\mathscr M,\ell]\colon\ell(\mathscr M^*)_x\hookrightarrow(\ell\mathscr M_x)'
\]
such that the following identity holds:
\[
\langle\,{\sf j}_x(\ell\omega_x),\ell v_x\rangle=\ell\langle\omega,v\rangle(x)\quad\text{ for every }\omega\in\mathscr M^*\text{ and }v\in\mathscr M.
\]
Moreover, we have that \({\sf j}_x\) is an isometric embedding.
\end{proposition}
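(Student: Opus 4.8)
The plan is to first construct the operator \({\sf j}_x\) on the dense subspace \(\{\ell\omega_x : \omega\in\mathscr M^*\}\subseteq\ell(\mathscr M^*)_x\) by the prescribed formula, then check that it is well-defined and isometric there, and finally extend by density. First I would fix \(\omega\in\mathscr M^*\) and try to define a linear functional on \(\ell\mathscr M_x\) by \(\ell v_x\mapsto\ell\langle\omega,v\rangle(x)\). For this to make sense I must check: (a) it does not depend on the choice of representative, i.e.\ if \(\ell v_x=\ell v'_x\) in \(\ell\mathscr M_x\) then \(\ell\langle\omega,v\rangle(x)=\ell\langle\omega,v'\rangle(x)\); (b) it is bounded on \(\{\ell v_x:v\in\mathscr M\}\) with the right norm. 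For (a) and (b) simultaneously, the key estimate is \(|\langle\omega,v\rangle|\le|\omega|\,|v|\) in \(L^\infty(\mm)\); applying \(\ell\) (which is positive and multiplicative, hence preserves such inequalities) gives \(\ell|\langle\omega,v\rangle|\le(\ell|\omega|)(\ell|v|)\) pointwise, and evaluating at \(x\) via \(p_x\) — distinguishing the cases \(x\notin{\rm A}(\ell)\) and \(x\in{\rm A}(\ell)\) exactly as in the definition \eqref{eq:def_p_x} — yields
\[
|\ell\langle\omega,v\rangle(x)|\le \ell|\omega|(x)\,\ell|v|(x)=\ell|\omega|(x)\,\|\ell v_x\|_{\ell\mathscr M_x}.
\]
This bound shows that the functional is bounded by \(\ell|\omega|(x)=\|\ell\omega_x\|_{\ell(\mathscr M^*)_x}\), and taking \(v'=v\) in the difference (or rather applying the bound to \(v-v'\), noting linearity of \(v\mapsto\ell\langle\omega,v\rangle(x)\)) also handles (a): if \(p_x(\ell v-\ell v')=0\) then \(\|\ell v_x-\ell v'_x\|=0\) so the functional values agree. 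So we get a well-defined element of \((\ell\mathscr M_x)'\) of norm \(\le\ell|\omega|(x)\), by Lemma \ref{lem:dens_in_fiber_of_lift_mod} extended uniquely from the dense subspace; call it \({\sf j}_x(\ell\omega_x)\).

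Next I would check that \(\omega\mapsto{\sf j}_x(\ell\omega_x)\) is itself well-defined on \(\{\ell\omega_x:\omega\in\mathscr M^*\}\) and isometric, then extend. Well-definedness in \(\omega\): if \(\ell\omega_x=\ell\omega'_x\), i.e.\ \(p_x(\ell\omega-\ell\omega')=0\), which by the definition of \(p_x\) applied to \(\mathscr M^*\) means \(\ell|\omega-\omega'|(x)=0\) (in either case of \eqref{eq:def_p_x}); then the above bound applied to \(\omega-\omega'\) gives \(\|{\sf j}_x(\ell\omega_x)-{\sf j}_x(\ell\omega'_x)\|_{(\ell\mathscr M_x)'}\le\ell|\omega-\omega'|(x)=0\). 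Linearity and the bound \(\|{\sf j}_x(\ell\omega_x)\|\le\|\ell\omega_x\|\) are then immediate, so \({\sf j}_x\) extends to a bounded linear operator \(\ell(\mathscr M^*)_x\to(\ell\mathscr M_x)'\) of norm \(\le1\); uniqueness follows from density (Lemma \ref{lem:dens_in_fiber_of_lift_mod} for \(\mathscr M^*\)).

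The main obstacle — and the part requiring real input — is the reverse inequality making \({\sf j}_x\) an \emph{isometric} embedding, i.e.\ \(\|{\sf j}_x(\ell\omega_x)\|_{(\ell\mathscr M_x)'}\ge\ell|\omega|(x)\). Here I would use the pointwise-norm characterization of \(|\omega|\) in \(\mathscr M^*\): \(|\omega|=\bigvee_{v\in\mathbb D_{\mathscr M}}\langle\omega,v\rangle\). By the countable supremum property of \(L^\infty(\mm)\) (Dedekind completeness), for each \(\delta>0\) one finds a partition \((E_k)_k\subseteq\Sigma\) of \(\X\) and \(v_k\in\mathbb D_{\mathscr M}\) with \(\1_{E_k}^\mm\langle\omega,v_k\rangle\ge\1_{E_k}^\mm(|\omega|-\delta)\), then glues to \(v\coloneqq\sum_k\1_{E_k}^\mm\cdot v_k\in\mathbb D_{\mathscr M}\) satisfying \(\langle\omega,v\rangle\ge|\omega|-\delta\) \(\mm\)-a.e. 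Applying \(\ell\) and evaluating via \(p_x\): in the case \(x\notin{\rm A}(\ell)\), \(\ell\langle\omega,v\rangle(x)\ge\ell(|\omega|-\delta)(x)=\ell|\omega|(x)-\delta\) while \(\|\ell v_x\|_{\ell\mathscr M_x}=\ell|v|(x)\le1\) since \(|v|\le1\); hence \({\sf j}_x(\ell\omega_x)\) attains value \(\ge\ell|\omega|(x)-\delta\) on a vector of norm \(\le1\), and letting \(\delta\downarrow0\) gives \(\|{\sf j}_x(\ell\omega_x)\|\ge\ell|\omega|(x)\). The case \(x\in{\rm A}(\ell)\) is analogous using \(p_x(w)=\ell(\pi_\mm(|w|))(x)\) together with part iii) of Lemma \ref{lem:lifted_atoms}, which makes \(\ell\) of any class constant on the atom \(A\ni x\); one may also invoke the Hahn--Banach Theorem \ref{thm:Hahn-Banach} (existence of \({\rm Dual}(v)\)) to produce near-optimal functionals if a more direct route is preferred. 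Combining both inequalities gives \(\|{\sf j}_x(\ell\omega_x)\|_{(\ell\mathscr M_x)'}=\ell|\omega|(x)=\|\ell\omega_x\|_{\ell(\mathscr M^*)_x}\) on the dense subspace, hence everywhere by continuity, so \({\sf j}_x\) is an isometric embedding, and in particular injective, as claimed.
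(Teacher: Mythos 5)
Your argument is correct and is the natural one: define \({\sf j}_x(\ell\omega_x)\) on the dense subspace \(\{\ell v_x\}\) via the prescribed formula, use \(|\langle\omega,v\rangle|\le|\omega||v|\) together with the multiplicativity and positivity of \(\ell\) for well-posedness and the upper bound, and use the countable supremum property plus glueing to produce a near-norming \(v\in\mathbb D_{\mathscr M}\) for the lower bound (note that for elements of the form \(\ell v\) with \(v\in\mathscr M\) the two cases in the definition of \(p_x\) both evaluate to \(\ell|v|(x)\), so the case distinction is harmless). The paper itself does not prove this proposition but cites \cite[Proposition 3.10]{DMLP25}; your proof fills that in correctly.
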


We do not know whether \({\sf j}_x\) is surjective, cf.\ \cite[Problem 3.11]{DMLP25}. However, in the following result we prove a weaker
statement, namely that the image of \({\sf j}_x\) is weakly\(^*\) dense in the dual of \(\ell\mathscr M_x\).
\begin{lemma}\label{lem:img_lift_dual_wstar_dense}
Fix any \(x\in\X\). Then it holds that \({\sf j}_x(\ell(\mathscr M^*)_x)\) is weakly\(^*\) dense in \((\ell\mathscr M_x)'\).
\end{lemma}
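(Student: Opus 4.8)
The plan is to combine the characterisation \eqref{eq:equiv_w*_dense} with Lemma \ref{lem:separating_points} and the density statement of Lemma \ref{lem:dens_in_fiber_of_lift_mod}. By \eqref{eq:equiv_w*_dense}, it suffices to show that the subspace \({\sf j}_x(\ell(\mathscr M^*)_x)\) of \((\ell\mathscr M_x)'\) separates the points of \(\ell\mathscr M_x\). Since \(\{\ell v_x : v\in\mathscr M\}\) is dense in \(\ell\mathscr M_x\) by Lemma \ref{lem:dens_in_fiber_of_lift_mod}, and since a set of functionals that separates the points of a dense subspace separates the points of the whole space (this is exactly what Lemma \ref{lem:separating_points} exploits once we have the right functionals), it is enough to produce, for each \(v\in\mathscr M\), an element of \(\ell(\mathscr M^*)_x\) whose image under \({\sf j}_x\) is a norming functional for \(\ell v_x\).

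First I would fix \(v\in\mathscr M\) and invoke Theorem \ref{thm:Hahn-Banach} to pick \(\omega\in{\rm Dual}(v)\subseteq\mathscr M^*\), so that \(|\omega|=\1_{\{|v|>0\}}^\mm\) and \(\langle\omega,v\rangle=|v|\). Then I would consider \(\ell\omega_x\in\ell(\mathscr M^*)_x\) and use the defining identity of \({\sf j}_x\) from Proposition \ref{prop:def_j_x}: \(\langle{\sf j}_x(\ell\omega_x),\ell v_x\rangle=\ell\langle\omega,v\rangle(x)=\ell|v|(x)=\|\ell v_x\|_{\ell\mathscr M_x}\). Moreover, since \({\sf j}_x\) is an isometric embedding (again Proposition \ref{prop:def_j_x}) and \(\|\ell\omega_x\|_{\ell(\mathscr M^*)_x}=\ell|\omega|(x)=\ell\1_{\{|v|>0\}}^\mm(x)\in\{0,1\}\), the functional \({\sf j}_x(\ell\omega_x)\) has norm \(\ell\1_{\{|v|>0\}}^\mm(x)\). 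So on the set of \(x\) where \(\ell\1_{\{|v|>0\}}^\mm(x)=1\) we obtain a unit functional that norms \(\ell v_x\); and when \(\ell\1_{\{|v|>0\}}^\mm(x)=0\), one checks that \(\ell v_x=0\) anyway (because \(\|\ell v_x\|=\ell|v|(x)\le\ell\1_{\{|v|>0\}}^\mm(x)\cdot\||v|\|_{L^\infty(\mm)}=0\)), so there is nothing to separate.

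To conclude, I would run the argument of Lemma \ref{lem:separating_points} abstractly: take a dense sequence (or family) \(\{\ell v^i_x\}_i\) in \(\ell\mathscr M_x\) coming from Lemma \ref{lem:dens_in_fiber_of_lift_mod}, and for each \(i\) produce as above an element \(\eta_i\coloneqq{\sf j}_x(\ell\omega^i_x)\in{\sf j}_x(\ell(\mathscr M^*)_x)\) with \(\|\eta_i\|_{(\ell\mathscr M_x)'}=1\) (discarding the indices with \(\ell v^i_x=0\)) and \(\langle\eta_i,\ell v^i_x\rangle=\|\ell v^i_x\|_{\ell\mathscr M_x}\). Lemma \ref{lem:separating_points} then gives that \(\{\eta_i\}_i\), hence a fortiori \({\sf j}_x(\ell(\mathscr M^*)_x)\), separates the points of \(\ell\mathscr M_x\), and \eqref{eq:equiv_w*_dense} upgrades this to weak\(^*\) density. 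The only mildly delicate point — the one I would be most careful about — is the bookkeeping with the case \(x\in{\rm A}(\ell)\): there the pointwise norm on \(\ell\mathscr M_x\) is \(\ell(\pi_\mm(|\cdot|))(x)\) rather than \(|\cdot|(x)\), so one must check that the identity \(\langle{\sf j}_x(\ell\omega_x),\ell v_x\rangle=\ell\langle\omega,v\rangle(x)\) together with Proposition \ref{prop:def_j_x}'s isometry still delivers a norming functional; but this is immediate from the definitions since \(\ell\langle\omega,v\rangle=\ell(\pi_\mm|v|)\) when \(\langle\omega,v\rangle=|v|\). No genuinely hard step arises; the proof is essentially an application of Hahn--Banach plus the functoriality built into \({\sf j}_x\).
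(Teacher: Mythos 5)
Your proposal is correct and follows essentially the same route as the paper: pick \(\omega^v\in{\rm Dual}(v)\) via Theorem \ref{thm:Hahn-Banach}, use the defining identity of \({\sf j}_x\) to get \(\langle{\sf j}_x(\ell\omega^v_x),\ell v_x\rangle=\ell|v|(x)=\|\ell v_x\|_{\ell\mathscr M_x}\), conclude that \({\sf j}_x(\ell(\mathscr M^*)_x)\) separates the points of \(\ell\mathscr M_x\), and invoke \eqref{eq:equiv_w*_dense}. Your extra bookkeeping (discarding the indices where \(\ell v_x=0\) and checking the atom case) only makes explicit what the paper leaves implicit.
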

\begin{proof}
For any \(v\in\mathscr M\), we pick an element \(\omega^v\in{\rm Dual}(v)\), which exists by Theorem \ref{thm:Hahn-Banach}. Then
\[
\langle\,{\sf j}_x(\ell\omega^v_x),\ell v_x\rangle=\ell\langle\omega^v,v\rangle(x)=\ell|v|(x)=\|\ell v_x\|_{\ell\mathscr M_x}.
\]
In particular, \({\sf j}_x(\ell(\mathscr M^*)_x)\) separates the points of \(\ell\mathscr M_x\).
Thanks to \eqref{eq:equiv_w*_dense}, the statement follows.
\end{proof}
\begin{remark}\label{rmk:fibers_lift_Linfty(mm)}{\rm
As proved in \cite[Example 4.8]{GLP22} (or as it follows from Remark \ref{rmk:lift_Linfty(mm)}), we have that
\[
\ell L^\infty(\mm)_x\cong\R\quad\text{ for every }x\in\X.
\]
Indeed, identifying \(\ell L^\infty(\mm)\) with \(\mathcal L^\infty(\Sigma)\) as in Remark \ref{rmk:lift_Linfty(mm)},
one can readily check that the map
\[
\ell L^\infty(\mm)_x\ni f_x\mapsto\left\{\begin{array}{ll}
f(x)\\
\ell(\pi_\mm(f))(x)
\end{array}\quad\begin{array}{ll}
\text{ if }x\in\X\setminus{\rm A}(\ell),\\
\text{ if }x\in{\rm A}(\ell)
\end{array}\right.
\]
provides an isomorphism of Banach spaces between \(\ell L^\infty(\mm)_x\) and \(\R\).
\fr}\end{remark}
\subsubsection*{Liftings of bounded \(L^\infty(\mm)\)-multilinear operators}
As we are going to see, it is possible to lift a bounded \(L^\infty(\mm)\)-multilinear operator to a
bounded \(\mathcal L^\infty(\Sigma)\)-multilinear operator. The proof of the following
result is inspired by \cite[Proposition 4.14]{GLP22}, where the particular case \(k=1\) is considered.
\begin{theorem}[Liftings of bounded \(L^\infty(\mm)\)-multilinear operators]\label{thm:lift_bdd_bilin}
Let \(\mathscr M^1,\ldots,\mathscr M^k,\mathscr N\) be \(L^\infty(\mm)\)-Banach \(L^\infty(\mm)\)-modules.
Let \(\Phi\in{\rm M}_k(\mathscr M^1,\ldots,\mathscr M^k;\mathscr N)\) be given. Then there exists a unique operator
\(\ell\Phi\in{\rm M}_k(\ell\mathscr M^1,\ldots,\ell\mathscr M^k;\ell\mathscr N)\) such that
\[\begin{tikzcd}
\mathscr M^1\times\ldots\times\mathscr M^k \arrow[d,swap,"\ell\times\ldots\times\ell"] \arrow[r,"\Phi"] & \mathscr N \arrow[d,"\ell"] \\
\ell\mathscr M^1\times\ldots\times\ell\mathscr M^k \arrow[r,swap,"\ell\Phi"] & \ell\mathscr N
\end{tikzcd}\]
is a commutative diagram, where we set
\[
(\ell\times\ldots\times\ell)(v_1,\ldots,v_k)\coloneqq(\ell v_1,\ldots,\ell v_k)\quad\text{ for every }(v_1,\ldots,v_k)\in\mathscr M^1\times\ldots\times\mathscr M^k.
\]
Moreover, it holds that \(|\ell\Phi|\in\mathcal L^\infty(\Sigma)\) and \(|\ell\Phi|=\ell|\Phi|\).
\end{theorem}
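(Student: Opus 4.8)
The plan is to deduce existence and uniqueness of \(\ell\Phi\) from the extension theorem for bounded \(L^\infty\)-multilinear operators, Proposition \ref{prop:extension_bdd_multilin}, and then to compute the pointwise norm \(|\ell\Phi|\) by an explicit, point-by-point argument.

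\emph{Existence and uniqueness.} First I would note that each \(\ell\colon\mathscr M^i\to\ell\mathscr M^i\) is injective: if \(\ell v=0\), then \(\ell|v|=|\ell v|=0\), and applying \(\pi_\mm\) gives \(|v|=0\), hence \(v=0\). Therefore
\[
\varphi(\ell v_1,\ldots,\ell v_k)\coloneqq\ell\big(\Phi(v_1,\ldots,v_k)\big)
\]
is a well-defined map on \(V_1\times\ldots\times V_k\), where \(V_i\coloneqq\ell(\mathscr M^i)\); it is \(\mathbb R\)-multilinear (in particular \(\mathbb Q\)-multilinear), because \(\ell\) is \(\mathbb R\)-linear and \(\Phi\) is \(L^\infty(\mm)\)-multilinear, and using that \(\ell\) is multiplicative and monotone one gets
\[
|\varphi(\ell v_1,\ldots,\ell v_k)|=\ell|\Phi(v_1,\ldots,v_k)|\leq\ell\big(|\Phi|\,|v_1|\cdots|v_k|\big)=\ell|\Phi|\cdot|\ell v_1|\cdots|\ell v_k|.
\]
Since \(V_i\) is a generating \(\mathbb Q\)-vector subspace of the \(\mathcal L^\infty(\Sigma)\)-Banach module \(\ell\mathscr M^i\) (this is precisely the third defining property of the lifting), Proposition \ref{prop:extension_bdd_multilin} applies with \(g\coloneqq\ell|\Phi|\in\mathcal L^\infty(\Sigma)^+\) and produces a unique \(\ell\Phi\in{\rm M}_k(\ell\mathscr M^1,\ldots,\ell\mathscr M^k;\ell\mathscr N)\) extending \(\varphi\), which moreover satisfies \(|\ell\Phi(w_1,\ldots,w_k)|\leq\ell|\Phi|\cdot|w_1|\cdots|w_k|\) for all \(w_i\); in particular \(|\ell\Phi|\leq\ell|\Phi|\). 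The diagram commutes by construction, and any other operator in \({\rm M}_k(\ell\mathscr M^1,\ldots,\ell\mathscr M^k;\ell\mathscr N)\) making it commute restricts to \(\varphi\) on \(V_1\times\ldots\times V_k\), hence coincides with \(\ell\Phi\) by the uniqueness part of Proposition \ref{prop:extension_bdd_multilin}.

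\emph{The identity \(|\ell\Phi|=\ell|\Phi|\).} Only the inequality \(|\ell\Phi|\geq\ell|\Phi|\) remains, and it must be checked everywhere on \(\X\); this is the heart of the matter, since \(\ell\) does not commute with the essential suprema defining \(|\Phi|\). I would fix \(x\in\X\), set \(c\coloneqq\ell|\Phi|(x)\), and it suffices to prove \(|\ell\Phi|(x)\geq c-\varepsilon\) for every \(\varepsilon>0\) (the case \(\varepsilon\geq c\) being trivial, as pointwise norms are non-negative), so assume \(0<\varepsilon<c\). By the countable supremum property of \(L^\infty(\mm)\) I can pick \((v^j)_{j\in\N}\), with \(v^j=(v_1^j,\ldots,v_k^j)\in\mathbb D_{\mathscr M^1}\times\ldots\times\mathbb D_{\mathscr M^k}\), such that \(\bigvee_j|\Phi(v^j)|=|\Phi|\). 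Put \(D\coloneqq\{|\Phi|>c-\varepsilon\}\in\Sigma\); since \(c-\varepsilon<\||\Phi|\|_{L^\infty(\mm)}=\|\ell|\Phi|\|_{\mathcal L^\infty(\Sigma)}\), we have \(\mm(D)>0\). Next I would check that \(x\in\ell D\): on \(\X\setminus D\) one has \(|\Phi|\leq c-\varepsilon\), so \(\1_{\X\setminus D}^\mm\,|\Phi|\leq(c-\varepsilon)\1_\X^\mm\), and applying \(\ell\) (multiplicative, monotone, and a Boolean homomorphism with \(\ell\X=\X\)) gives \(\1_{\X\setminus\ell D}\,\ell|\Phi|\leq(c-\varepsilon)\1_\X\); since \(\ell|\Phi|(x)=c>c-\varepsilon\), necessarily \(x\in\ell D\). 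Now I would split \(\X\) into a partition \((D_j)_{j\in\N}\subseteq\Sigma\) with \(D_0=\X\setminus D\) and, for \(j\geq1\), \(D_j\subseteq D\) and \(|\Phi(v^j)|>c-\varepsilon\) on \(D_j\) (up to an \(\mm\)-null set), set \(v_i^0\coloneqq 0\), and glue \(u_i\coloneqq\sum_{j\in\N}\1_{D_j}^\mm\cdot v_i^j\in\mathscr M^i\); then \(|u_i|\leq\1_\X^\mm\), so \(u_i\in\mathbb D_{\mathscr M^i}\) and \(\ell u_i\in\mathbb D_{\ell\mathscr M^i}\). Bounded \(L^\infty(\mm)\)-multilinear operators commute with glueings in each variable (a consequence of \(L^\infty(\mm)\)-multilinearity together with the bound \(|\Phi(v_1,\ldots,v_k)|\leq|\Phi|\,|v_1|\cdots|v_k|\), which guarantees the glued element on the target side exists), so \(\Phi(u_1,\ldots,u_k)=\sum_{j\in\N}\1_{D_j}^\mm\cdot\Phi(v^j)\) and hence \(|\Phi(u_1,\ldots,u_k)|\geq(c-\varepsilon)\1_D^\mm\). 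Applying \(\ell\) yields \(|\ell\Phi(\ell u_1,\ldots,\ell u_k)|=\ell|\Phi(u_1,\ldots,u_k)|\geq(c-\varepsilon)\1_{\ell D}\), and evaluating at \(x\in\ell D\) gives \(|\ell\Phi|(x)\geq|\ell\Phi(\ell u_1,\ldots,\ell u_k)|(x)\geq c-\varepsilon\). Letting \(\varepsilon\downarrow0\) we obtain \(|\ell\Phi|(x)\geq c=\ell|\Phi|(x)\). Together with the previous step this proves \(|\ell\Phi|=\ell|\Phi|\), and in particular \(|\ell\Phi|\in\mathcal L^\infty(\Sigma)\) since \(\ell|\Phi|\) is measurable.

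\emph{Main obstacle.} The construction of \(\ell\Phi\) is routine once Proposition \ref{prop:extension_bdd_multilin} is in hand. The genuinely delicate point is the \emph{pointwise} identity \(|\ell\Phi|=\ell|\Phi|\): one cannot simply transport essential suprema through \(\ell\), so the near-optimal test vectors must be assembled by a glueing \emph{tailored to the given point \(x\)}, and the argument hinges on the auxiliary fact \(x\in\ell(\{|\Phi|>c-\varepsilon\})\), which in turn uses that \(\ell\) is a Boolean homomorphism. A minor technical point to be dispatched carefully along the way is that \(\Phi\) commutes with countable glueings.
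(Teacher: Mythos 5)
Your proof is correct and follows essentially the same route as the paper: define \(\ell\Phi\) on the image of \(\ell\times\ldots\times\ell\) via \((\ell v_1,\ldots,\ell v_k)\mapsto\ell(\Phi(v_1,\ldots,v_k))\), extend it by Proposition \ref{prop:extension_bdd_multilin} to get \(|\ell\Phi|\leq\ell|\Phi|\), and obtain the reverse inequality by glueing near-optimal test vectors. The only (immaterial) difference is in that last step: the paper performs a single global glueing producing \((v_1,\ldots,v_k)\in\mathbb D_{\mathscr M^1}\times\ldots\times\mathbb D_{\mathscr M^k}\) with \(|\Phi(v_1,\ldots,v_k)|\geq|\Phi|-\varepsilon\) \(\mm\)-a.e.\ on all of \(\X\), so that applying \(\ell\) yields the bound at every point simultaneously, whereas you localise the same glueing at each fixed \(x\) through the auxiliary observation \(x\in\ell(\{|\Phi|>c-\varepsilon\})\) -- both arguments are valid.
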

\begin{proof}
For any \(i=1,\ldots,k\), we denote by \(V_i\) the image of \(\ell\colon\mathscr M^i\to\ell\mathscr M^i\). Let us define
\[
(\ell\Phi)(\ell v_1,\ldots,\ell v_k)\coloneqq\ell(\Phi(v_1,\ldots,v_k))\in\ell\mathscr N\quad\text{ for every }(v_1,\ldots,v_k)\in\mathscr M^1\times\ldots\times\mathscr M^k.
\]
The resulting map \(\ell\Phi\colon V_1\times\ldots\times V_k\to\ell\mathscr N\) is well defined, thanks to the following estimate:
\[
|\ell(\Phi(v_1,\ldots,v_k))|=\ell|\Phi(v_1,\ldots,v_k)|\leq\ell(|\Phi||v_1|\ldots|v_k|)=(\ell|\Phi|)|\ell v_1|\ldots|\ell v_k|.
\]
Clearly, the map \(\ell\Phi\colon V_1\times\ldots\times V_k\to\ell\mathscr N\) is multilinear by construction. Note also that
\[
|(\ell\Phi)(\bar v_1,\ldots,\bar v_k)|\leq(\ell|\Phi|)|\bar v_1|\ldots|\bar v_k|\quad\text{ for every }(\bar v_1,\ldots,\bar v_k)\in V_1\times\ldots\times V_k.
\]
Since \(V_i\) generates \(\ell\mathscr M_i\) for every \(i=1,\ldots,k\), we know from Proposition \ref{prop:extension_bdd_multilin} that \(\ell\Phi\) can be
uniquely extended to an operator \(\ell\Phi\in{\rm M}_k(\ell\mathscr M^1,\ldots,\ell\mathscr M^k;\ell\mathscr N)\), which satisfies \(|\ell\Phi|\leq\ell|\Phi|\).
It only remains to check that \(|\ell\Phi|\geq\ell|\Phi|\). To this aim, fix any \(\varepsilon>0\). By the glueing property, we find
\((v_1,\ldots,v_k)\in\mathbb D_{\mathscr M^1}\times\ldots\times\mathbb D_{\mathscr M^k}\) such that \(|\Phi(v_1,\ldots,v_k)|\geq|\Phi|-\varepsilon\) holds
\(\mm\)-a.e.\ on \(\X\), so that
\[
|\ell\Phi|\geq|(\ell\Phi)(\ell v_1,\ldots,\ell v_k)|=|\ell(\Phi(v_1,\ldots,v_k))|=\ell|\Phi(v_1,\ldots,v_k)|\geq\ell|\Phi|-\varepsilon\quad\text{ on }\X.
\]
By the arbitrariness of \(\varepsilon\), we can finally conclude that \(|\ell\Phi|\geq\ell|\Phi|\), so that \(|\ell\Phi|=\ell|\Phi|\in\mathcal L^\infty(\Sigma)\).
\end{proof}

The following result can be easily deduced from Theorem \ref{thm:lift_bdd_bilin}, taking also Remark \ref{rmk:fibers_lift_Linfty(mm)} 
and Lemma \ref{lem:dens_in_fiber_of_lift_mod} into account.
\begin{corollary}\label{cor:trace_lift_bdd_bilin}
Let \(\mathscr M^1,\ldots,\mathscr M^k,\mathscr N\) be \(L^\infty(\mm)\)-Banach \(L^\infty(\mm)\)-modules. Fix any \(x\in\X\). Let \(\Phi\in{\rm M}_k(\mathscr M^1,\ldots,\mathscr M^k;\mathscr N)\)
be given. Then there exists a unique \(\ell\Phi_x\in{\rm M}_k(\ell\mathscr M^1_x,\ldots,\ell\mathscr M^k_x;\ell\mathscr N_x)\) such that
\[
(\ell\Phi_x)(\ell v^1_x,\ldots,\ell v^k_x)=\ell(\Phi(v^1,\ldots,v^k))_x\quad\text{ for every }(v^1,\ldots,v^k)\in\mathscr M^1\times\ldots\times\mathscr M^k.
\]
Moreover, it holds that \(\|\ell\Phi_x\|_{{\rm M}_k(\ell\mathscr M^1_x,\ldots,\ell\mathscr M^k_x;\ell\mathscr N_x)}=\ell|\Phi|(x)\).
In particular, if \(\varphi\in{\rm M}_k(\mathscr M^1,\ldots,\mathscr M^k)\) is given, then there exists a unique  \(\ell\varphi_x\in{\rm M}_k(\ell\mathscr M^1_x,\ldots,\ell\mathscr M^k_x)\) such that
\[
(\ell\varphi_x)(\ell v^1_x,\ldots,\ell v^k_x)=\ell(\varphi(v^1,\ldots,v^k))(x)\quad\text{ for every }(v^1,\ldots,v^k)\in\mathscr M^1\times\ldots\times\mathscr M^k
\]
and it holds that \(\|\ell\varphi_x\|_{{\rm M}_k(\ell\mathscr M^1_x,\ldots,\ell\mathscr M^k_x)}=\ell|\varphi|(x)\).
\end{corollary}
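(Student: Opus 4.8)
The plan is to ``evaluate at $x$'' the lifted operator $\ell\Phi\in{\rm M}_k(\ell\mathscr M^1,\ldots,\ell\mathscr M^k;\ell\mathscr N)$ furnished by Theorem \ref{thm:lift_bdd_bilin}, using that each fiber $\ell\mathscr M^i_x$ is by definition the quotient $\ell\mathscr M^i/p_x$. First I would check that $\ell\Phi$ descends to these quotients: if $p_x(\bar v^i)=0$ for some index $i$, then $p_x\big(\ell\Phi(\bar v^1,\ldots,\bar v^k)\big)=0$. This follows from the estimate $|\ell\Phi(\bar v^1,\ldots,\bar v^k)|\leq(\ell|\Phi|)|\bar v^1|\cdots|\bar v^k|$ in $\mathcal L^\infty(\Sigma)$ established within the proof of Theorem \ref{thm:lift_bdd_bilin}: for $x\notin{\rm A}(\ell)$ one simply evaluates both sides at $x$; for $x\in{\rm A}(\ell)$ one first applies $\pi_\mm$ and then $\ell$---both monotone and multiplicative, with $\pi_\mm\circ\ell={\rm id}$---before evaluating at $x$. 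In either case one arrives at $p_x\big(\ell\Phi(\bar v^1,\ldots,\bar v^k)\big)\leq\ell|\Phi|(x)\,p_x(\bar v^1)\cdots p_x(\bar v^k)$. Hence $\ell\Phi$ induces a well-defined bounded multilinear map $\ell\Phi_x$ on $\ell\mathscr M^1_x\times\cdots\times\ell\mathscr M^k_x$ (the quotient maps $\ell\mathscr M^i\to\ell\mathscr M^i_x$ being surjective), the stated identity holds since $\ell\Phi(\ell v^1,\ldots,\ell v^k)=\ell(\Phi(v^1,\ldots,v^k))$ by the commutative diagram of Theorem \ref{thm:lift_bdd_bilin}, and the displayed estimate, read at $x$, gives $\|\ell\Phi_x\|\leq\ell|\Phi|(x)$.

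For the reverse norm inequality I would reuse the glueing argument from the proof of Theorem \ref{thm:lift_bdd_bilin}: given $\varepsilon>0$, choose $(v^1,\ldots,v^k)\in\mathbb D_{\mathscr M^1}\times\cdots\times\mathbb D_{\mathscr M^k}$ with $|\Phi(v^1,\ldots,v^k)|\geq|\Phi|-\varepsilon$ $\mm$-a.e. Applying the monotone map $\ell$ and using $|\ell w|=\ell|w|$ yields $\ell|\Phi(v^1,\ldots,v^k)|\geq\ell|\Phi|-\varepsilon$ everywhere on $\X$. Since $\|\ell v^i_x\|_{\ell\mathscr M^i_x}=\ell|v^i|(x)\leq 1$ for each $i$ and $\|\ell\Phi_x(\ell v^1_x,\ldots,\ell v^k_x)\|_{\ell\mathscr N_x}=\|\ell(\Phi(v^1,\ldots,v^k))_x\|_{\ell\mathscr N_x}=\ell|\Phi(v^1,\ldots,v^k)|(x)\geq\ell|\Phi|(x)-\varepsilon$, letting $\varepsilon\to 0$ gives $\|\ell\Phi_x\|\geq\ell|\Phi|(x)$. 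Uniqueness of $\ell\Phi_x$ is then immediate from Lemma \ref{lem:dens_in_fiber_of_lift_mod}: the identity determines $\ell\Phi_x$ on the product of the dense subspaces $\{\ell v^i_x:v^i\in\mathscr M^i\}$, and a bounded multilinear operator is determined by its restriction to such a product.

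Finally, the ``in particular'' part is the case $\mathscr N=L^\infty(\mm)$ of the above, combined with the canonical isometric identification $\ell L^\infty(\mm)_x\cong\R$ from Remark \ref{rmk:fibers_lift_Linfty(mm)}: under it, the value $\ell f_x$ of an $f\in L^\infty(\mm)$ corresponds to $\ell f(x)$ (both when $x\notin{\rm A}(\ell)$ and when $x\in{\rm A}(\ell)$, the latter using $\pi_\mm\circ\ell={\rm id}$), so that $\ell(\varphi(v^1,\ldots,v^k))_x$ becomes the real number $\ell(\varphi(v^1,\ldots,v^k))(x)$ and the norm is preserved. I expect the only mildly delicate point to be the bookkeeping in the atomic case $x\in{\rm A}(\ell)$, where $p_x$ is given by $\ell\circ\pi_\mm$ rather than by plain evaluation; but as sketched above this is dispatched by pushing the inequality $|\ell\Phi(\bar v^1,\ldots,\bar v^k)|\leq(\ell|\Phi|)|\bar v^1|\cdots|\bar v^k|$ through $\pi_\mm$ and $\ell$, using their monotonicity and multiplicativity together with $\pi_\mm\circ\ell={\rm id}$.
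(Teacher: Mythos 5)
Your proof is correct and follows exactly the route the paper intends: the paper gives no written proof, only the remark that the corollary follows from Theorem \ref{thm:lift_bdd_bilin} together with Remark \ref{rmk:fibers_lift_Linfty(mm)} and Lemma \ref{lem:dens_in_fiber_of_lift_mod}, and these are precisely the ingredients you use (passing $\ell\Phi$ to the quotient by $p_x$, reusing the glueing argument for the lower norm bound, and identifying $\ell L^\infty(\mm)_x\cong\R$ for the scalar-valued case). Your handling of the atomic case via $\pi_\mm\circ\ell=\mathrm{id}$ and the monotonicity/multiplicativity of $\pi_\mm$ and $\ell$ is exactly the right bookkeeping.
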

\section{Tensor products of measurable Banach bundles}
\subsection{Injective tensor products}\label{sec:inj_mod}
Throughout this section, the following objects will be fixed:
\begin{itemize}
\item \((\X,\Sigma,\mm)\): a complete probability space.
\item \(\ell\): a lifting of \(\mm\).
\item \(\mathscr M\), \(\mathscr N\): \(L^\infty(\mm)\)-Banach \(L^\infty(\mm)\)-modules.
\item \(\mathbb U\): a universal separable Banach space.
\item \({\sf e}\colon\mathbb U\hat\otimes_\varepsilon\mathbb U\hookrightarrow\mathbb U\): some fixed linear
isometric embedding (which exists by Remark \ref{rmk:tensor_Banach_separable}).
\item \({\bf E}\), \({\bf F}\): separable Banach \(\mathbb U\)-bundles over \((\X,\Sigma)\).
\item \({\sf i}_\star=\{{\sf i}_x\}_{x\in\X}\): the canonical embeddings
\({\sf i}_x\colon{\bf E}(x)\hat\otimes_\varepsilon{\bf F}(x)\hookrightarrow\mathbb U\hat\otimes_\varepsilon\mathbb U\),
cf.\ Remark \ref{rmk:inj_tensor_products_subspaces}.
\item \(\mathcal C\), \(\mathcal D\): fiberwise dense, countable \(\mathbb Q\)-vector subspaces of \(\bar\Gamma({\bf E})\)
and \(\bar\Gamma({\bf F})\), respectively (which exist due to Proposition \ref{prop:select_dense}).
\end{itemize}
\begin{lemma}\label{lem:aux_isometric_emb_injective_lift}
Fix any \(\alpha=\sum_{i=1}^n v^i\otimes w^i\in\mathscr M\otimes_\varepsilon\mathscr N\) and \(x\in\X\). Then it holds that
\begin{equation}\label{eq:aux_isometric_emb_injective_lift}
\|\ell\alpha_x\|_{\ell(\mathscr M\hat\otimes_\varepsilon\mathscr N)_x}
=\sup\bigg\{\sum_{i=1}^n\langle\,{\sf j}_x(\ell\omega_x),\ell v^i_x\rangle
\langle\,{\sf j}_x(\ell\eta_x),\ell w^i_x\rangle\;\bigg|\;
\omega\in\mathbb D_{\mathscr M^*},\,\eta\in\mathbb D_{\mathscr N^*}\bigg\},
\end{equation}
where \({\sf j}_x\coloneqq{\sf j}_x[\mathscr M,\ell]\colon\ell(\mathscr M^*)_x\hookrightarrow(\ell\mathscr M_x)'\)
is given by Proposition \ref{prop:def_j_x}.
\end{lemma}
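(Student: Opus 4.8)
The plan is to prove \eqref{eq:aux_isometric_emb_injective_lift} by showing that both of its sides are equal to the single real number $\ell|\alpha|_\varepsilon(x)$, where $|\alpha|_\varepsilon\in L^\infty(\mm)^+$ denotes the injective pointwise norm of $\alpha$ (viewed as an element of $\mathscr M\hat\otimes_\varepsilon\mathscr N$ through the canonical embedding of $\mathscr M\otimes_\varepsilon\mathscr N$ into its module completion, which preserves the pointwise norm).

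First I would dispose of the left-hand side. Since the pointwise norm of $\alpha$ computed in $\mathscr M\hat\otimes_\varepsilon\mathscr N$ is exactly $|\alpha|_\varepsilon$, the general identity $\|\ell v_x\|_{\ell\mathscr M_x}=\ell|v|(x)$, valid for any $L^\infty(\mm)$-Banach $L^\infty(\mm)$-module and any $x\in\X$ (and uniformly in the lifted-atom and non-atom cases, thanks to the definition of the fiber seminorm $p_x$), yields
\[
\|\ell\alpha_x\|_{\ell(\mathscr M\hat\otimes_\varepsilon\mathscr N)_x}=\ell|\alpha|_\varepsilon(x).
\]
So it only remains to show that the supremum on the right-hand side of \eqref{eq:aux_isometric_emb_injective_lift} equals $\ell|\alpha|_\varepsilon(x)$.

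Next I would record the computation underlying both remaining inequalities: for any $\omega\in\mathscr M^*$ and $\eta\in\mathscr N^*$, Proposition \ref{prop:def_j_x} gives $\langle{\sf j}_x(\ell\omega_x),\ell v^i_x\rangle=\ell\langle\omega,v^i\rangle(x)$ and $\langle{\sf j}_x(\ell\eta_x),\ell w^i_x\rangle=\ell\langle\eta,w^i\rangle(x)$, so that, using that $\ell$ is linear and multiplicative on $L^\infty(\mm)$,
\[
\sum_{i=1}^n\langle{\sf j}_x(\ell\omega_x),\ell v^i_x\rangle\langle{\sf j}_x(\ell\eta_x),\ell w^i_x\rangle=\ell\!\left(\sum_{i=1}^n\langle\omega,v^i\rangle\langle\eta,w^i\rangle\right)(x).
\]
For the inequality $\leq$, if $\omega\in\mathbb D_{\mathscr M^*}$ and $\eta\in\mathbb D_{\mathscr N^*}$ then the definition of $|\alpha|_\varepsilon$ as an essential supremum gives $\sum_{i=1}^n\langle\omega,v^i\rangle\langle\eta,w^i\rangle\leq|\alpha|_\varepsilon$ $\mm$-a.e., and since $\ell$ is order-preserving, evaluating the above identity at $x$ shows that each term in the supremum is $\leq\ell|\alpha|_\varepsilon(x)$; taking the supremum yields one inequality. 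For the reverse inequality $\geq$, I would fix $\delta>0$ and invoke Remark \ref{rmk:equiv_inj_tens_norm} to produce $\omega\in\mathbb D_{\mathscr M^*}$ and $\eta\in\mathbb D_{\mathscr N^*}$ with $\sum_{i=1}^n\langle\omega,v^i\rangle\langle\eta,w^i\rangle\geq|\alpha|_\varepsilon-\delta$ $\mm$-a.e.; applying the order-preserving $\ell$ and evaluating at $x$, the corresponding term in the supremum is $\geq\ell|\alpha|_\varepsilon(x)-\delta$, and letting $\delta\downarrow0$ concludes.

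The step doing the real work is the appeal to Remark \ref{rmk:equiv_inj_tens_norm}: it is needed precisely because $\ell$ does not in general commute with the (possibly uncountable) essential supremum defining $|\alpha|_\varepsilon$, so one must first collapse that supremum into a single near-optimal pair $(\omega,\eta)$ by a glueing argument before applying the lifting. Apart from that, the only points requiring mild care are the identification of $\alpha$ with its image in the completion (so that $|\alpha|_\varepsilon$ is unambiguously the pointwise norm there) and the uniform treatment of the fiber seminorm on lifted atoms versus the rest of $\X$ — both already subsumed by the identity $\|\ell v_x\|_{\ell\mathscr M_x}=\ell|v|(x)$ established earlier.
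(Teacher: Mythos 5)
Your proposal is correct and follows essentially the same route as the paper: both sides are compared to $\ell|\alpha|_\varepsilon(x)$, using Proposition \ref{prop:def_j_x} together with the linearity and multiplicativity of $\ell$ to rewrite each term of the supremum as $\ell\big(\sum_i\langle\omega,v^i\rangle\langle\eta,w^i\rangle\big)(x)$, the order-preservation of $\ell$ for one inequality, and Remark \ref{rmk:equiv_inj_tens_norm} to produce a near-optimal pair $(\omega,\eta)$ for the other. Your explanatory comment about why the glueing step in that remark is the crux (since $\ell$ need not commute with an uncountable supremum) is accurate and matches the role it plays in the paper's argument.
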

\begin{proof}
Given any \(k\in\N\), we know from Remark \ref{rmk:equiv_inj_tens_norm} that there exist
\(\omega^k\in\mathbb D_{\mathscr M^*}\) and \(\eta^k\in\mathbb D_{\mathscr N^*}\) such that
\(|\alpha|_\varepsilon\leq\sum_{i=1}^n\langle\omega^k,v^i\rangle\langle\eta^k,w^i\rangle+\frac{1}{k}\).
Denoting by \(R_x\) the right-hand side of \eqref{eq:aux_isometric_emb_injective_lift}, we get
\[\begin{split}
\|\ell\alpha_x\|_{\ell(\mathscr M\hat\otimes_\varepsilon\mathscr N)_x}&=\ell|\alpha|_\varepsilon(x)
\leq\ell\bigg(\sum_{i=1}^n\langle\omega^k,v^i\rangle\langle\eta^k,w^i\rangle\bigg)(x)+\frac{1}{k}\\
&=\sum_{i=1}^n\langle\,{\sf j}_x(\ell\omega^k_x),\ell v^i_x\rangle\langle\,{\sf j}_x(\ell\eta^k_x),\ell w^i_x\rangle+\frac{1}{k}
\leq R_x+\frac{1}{k}.
\end{split}\]
Letting \(k\to\infty\), we obtain that \(\|\ell\alpha_x\|_{\ell(\mathscr M\hat\otimes_\varepsilon\mathscr N)_x}\leq R_x\).
Conversely, \(\sum_{i=1}^n\langle\omega,v^i\rangle\langle\eta,w^i\rangle\leq|\alpha|_\varepsilon\) holds for every
\(\omega\in\mathbb D_{\mathscr M^*}\) and \(\eta\in\mathbb D_{\mathscr N^*}\), thus
\(\sum_{i=1}^n\langle\,{\sf j}_x(\ell\omega_x),\ell v^i_x\rangle\langle\,{\sf j}_x(\ell\eta_x),\ell w^i_x\rangle
\leq\|\ell\alpha_x\|_{\ell(\mathscr M\hat\otimes_\varepsilon\mathscr N)_x}\), whence it follows that
\(R_x\leq\|\ell\alpha_x\|_{\ell(\mathscr M\hat\otimes_\varepsilon\mathscr N)_x}\) by the arbitrariness of \(\omega\)
and \(\eta\). This gives \eqref{eq:aux_isometric_emb_injective_lift}.
\end{proof}
\begin{corollary}\label{cor:isometric_emb_injective_lift}
Fix any \(x\in\X\). Then there exists a unique bounded linear operator
\[
\phi_x=\phi_x[\mathscr M,\mathscr N,\ell]\colon\ell\mathscr M_x\hat\otimes_\varepsilon\ell\mathscr N_x\to\ell(\mathscr M\hat\otimes_\varepsilon\mathscr N)_x
\]
such that \(\phi_x(\ell v_x\otimes\ell w_x)=\ell(v\otimes w)_x\) for all \(v\in\mathscr M\) and \(w\in\mathscr N\).
Moreover, \(\phi_x\) is a linear isometry.
\end{corollary}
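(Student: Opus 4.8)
The plan is to obtain $\phi_x$ by first lifting the canonical bilinear map $\otimes\colon\mathscr M\times\mathscr N\to\mathscr M\hat\otimes_\varepsilon\mathscr N$ to a bounded bilinear operator between the fibers at $x$, and then checking that its linearisation extends to the \emph{injective} tensor product as an isometry; this last point is the only non-formal one, since the linearisation of a bounded bilinear map is a priori bounded only for the projective norm. Concretely, consider $b\colon\mathscr M\times\mathscr N\to\mathscr M\hat\otimes_\varepsilon\mathscr N$, $b(v,w)\coloneqq v\otimes w$. Since $|b(v,w)|=|v\otimes w|_\varepsilon=|v||w|$, we have $b\in{\rm B}(\mathscr M,\mathscr N;\mathscr M\hat\otimes_\varepsilon\mathscr N)$ with $|b|\leq\1_\X^\mm$, so Corollary \ref{cor:trace_lift_bdd_bilin} provides a unique $\ell b_x\in{\rm B}(\ell\mathscr M_x,\ell\mathscr N_x;\ell(\mathscr M\hat\otimes_\varepsilon\mathscr N)_x)$ with $\ell b_x(\ell v_x,\ell w_x)=\ell(v\otimes w)_x$ for all $v\in\mathscr M$, $w\in\mathscr N$. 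Let $\psi_x\colon\ell\mathscr M_x\otimes\ell\mathscr N_x\to\ell(\mathscr M\hat\otimes_\varepsilon\mathscr N)_x$ be the $\R$-linearisation of $\ell b_x$, so that $\psi_x(\ell v_x\otimes\ell w_x)=\ell(v\otimes w)_x$ and, more generally, $\psi_x\big(\sum_{i=1}^n\ell v^i_x\otimes\ell w^i_x\big)=\sum_{i=1}^n\ell(v^i\otimes w^i)_x=\ell\alpha_x$ for $\alpha=\sum_{i=1}^n v^i\otimes w^i\in\mathscr M\otimes\mathscr N$, where $\ell\alpha_x$ denotes the value at $x$ of the image of $\alpha$ under $\mathscr M\otimes_\varepsilon\mathscr N\hookrightarrow\mathscr M\hat\otimes_\varepsilon\mathscr N$.

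The core of the argument is the identity
\[
\Big\|\psi_x\Big(\sum_{i=1}^n\ell v^i_x\otimes\ell w^i_x\Big)\Big\|_{\ell(\mathscr M\hat\otimes_\varepsilon\mathscr N)_x}=\Big\|\sum_{i=1}^n\ell v^i_x\otimes\ell w^i_x\Big\|_{\ell\mathscr M_x\hat\otimes_\varepsilon\ell\mathscr N_x}
\]
for every finite family $\{(v^i,w^i)\}_{i=1}^n\subseteq\mathscr M\times\mathscr N$. By the previous paragraph the left-hand side equals $\|\ell\alpha_x\|_{\ell(\mathscr M\hat\otimes_\varepsilon\mathscr N)_x}$ with $\alpha=\sum_i v^i\otimes w^i$, which Lemma \ref{lem:aux_isometric_emb_injective_lift} writes as a supremum over $\omega\in\mathbb D_{\mathscr M^*}$, $\eta\in\mathbb D_{\mathscr N^*}$ of $\sum_i\langle{\sf j}_x(\ell\omega_x),\ell v^i_x\rangle\langle{\sf j}_x(\ell\eta_x),\ell w^i_x\rangle$; since $-\omega\in\mathbb D_{\mathscr M^*}$ whenever $\omega\in\mathbb D_{\mathscr M^*}$, this supremum coincides with that of the corresponding absolute values.

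For the right-hand side I would first observe that $S_{\ell\mathscr M_x}\coloneqq\{{\sf j}_x(\ell\omega_x):\omega\in\mathbb D_{\mathscr M^*}\}$ is a norming subset of $\mathbb D_{(\ell\mathscr M_x)'}$: it lies in that ball because ${\sf j}_x$ is an isometric embedding (Proposition \ref{prop:def_j_x}) and $|\omega|\leq\1_\X^\mm$ forces $\ell|\omega|(x)\leq 1$, while for each $v\in\mathscr M$, choosing $\omega^v\in{\rm Dual}(v)$ via Theorem \ref{thm:Hahn-Banach} gives $\langle{\sf j}_x(\ell\omega^v_x),\ell v_x\rangle=\ell\langle\omega^v,v\rangle(x)=\ell|v|(x)=\|\ell v_x\|_{\ell\mathscr M_x}$, so by density of $\{\ell v_x:v\in\mathscr M\}$ (Lemma \ref{lem:dens_in_fiber_of_lift_mod}) and a $1$-Lipschitz continuity argument the norming property holds on all of $\ell\mathscr M_x$; the analogous set $S_{\ell\mathscr N_x}\subseteq\mathbb D_{(\ell\mathscr N_x)'}$ is norming as well. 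A two-step supremum exchange — compute $\|\cdot\|_\varepsilon$ as $\sup_{\varphi\in\mathbb D_{(\ell\mathscr M_x)'}}\|\sum_i\langle\varphi,\ell v^i_x\rangle\,\ell w^i_x\|_{\ell\mathscr N_x}$, rewrite the inner norm against $S_{\ell\mathscr N_x}$, swap the two suprema, recognise the new inner supremum as $\|\sum_i\langle\psi,\ell w^i_x\rangle\,\ell v^i_x\|_{\ell\mathscr M_x}$ and rewrite it against $S_{\ell\mathscr M_x}$ — then identifies $\big\|\sum_i\ell v^i_x\otimes\ell w^i_x\big\|_{\ell\mathscr M_x\hat\otimes_\varepsilon\ell\mathscr N_x}$ with the supremum of $\big|\sum_i\langle{\sf j}_x(\ell\omega_x),\ell v^i_x\rangle\langle{\sf j}_x(\ell\eta_x),\ell w^i_x\rangle\big|$ over the same $\omega\in\mathbb D_{\mathscr M^*}$, $\eta\in\mathbb D_{\mathscr N^*}$. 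Matching the two expressions proves the identity.

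Granting the identity, the rest is routine. The restriction of $\psi_x$ to the linear span $W$ of the elementary tensors $\{\ell v_x\otimes\ell w_x:v\in\mathscr M,\,w\in\mathscr N\}$ is a linear isometry for the injective norm, and $W$ is dense in $\ell\mathscr M_x\hat\otimes_\varepsilon\ell\mathscr N_x$ because $\{\ell v_x:v\in\mathscr M\}$ and $\{\ell w_x:w\in\mathscr N\}$ are dense in $\ell\mathscr M_x$ and $\ell\mathscr N_x$ and elementary tensors depend continuously on their factors (cf.\ Remark \ref{rmk:tensor_Banach_separable}); hence $\psi_x|_W$ extends uniquely to a linear isometry $\phi_x\colon\ell\mathscr M_x\hat\otimes_\varepsilon\ell\mathscr N_x\to\ell(\mathscr M\hat\otimes_\varepsilon\mathscr N)_x$ with $\phi_x(\ell v_x\otimes\ell w_x)=\ell(v\otimes w)_x$, and uniqueness holds since any bounded linear operator with this property agrees with $\phi_x$ on $W$, hence on its closure. (One can moreover check that the image of $\phi_x$ is dense, since $\mathscr M\otimes_\varepsilon\mathscr N$ is dense in $\mathscr M\hat\otimes_\varepsilon\mathscr N$ and $\{\ell\xi_x:\xi\in\mathscr M\hat\otimes_\varepsilon\mathscr N\}$ is dense in the fiber by Lemma \ref{lem:dens_in_fiber_of_lift_mod}, so $\phi_x$ is in fact a surjective isometry; this is not needed for the present statement.) The genuinely delicate step, which I expect to absorb most of the work, is the norm identity above — more precisely, the fact that the liftings of module duals furnish, fiberwise, a norming subset of the dual of each fiber, which lets one compute the injective tensor norm on $\ell\mathscr M_x\hat\otimes_\varepsilon\ell\mathscr N_x$ exactly against the functionals appearing in Lemma \ref{lem:aux_isometric_emb_injective_lift}.
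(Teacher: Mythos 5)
Your proposal is correct and follows essentially the same route as the paper: define the map on the span of the elementary tensors \(\ell v_x\otimes\ell w_x\), identify both the fiber norm of \(\ell\alpha_x\) (via Lemma \ref{lem:aux_isometric_emb_injective_lift}) and the injective norm of \(\sum_i\ell v^i_x\otimes\ell w^i_x\) with the supremum against the functionals \({\sf j}_x(\ell\omega_x)\), \({\sf j}_x(\ell\eta_x)\), and extend by density using Lemma \ref{lem:dens_in_fiber_of_lift_mod}. Your derivation of well-definedness through Corollary \ref{cor:trace_lift_bdd_bilin} and your explicit norming-subset/supremum-exchange argument (where the paper invokes the weak\(^*\) density of Lemma \ref{lem:img_lift_dual_wstar_dense}) are only slightly more detailed variants of the same steps.
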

\begin{proof}
Note that \(\mathcal V_x\coloneqq\{\ell v_x:v\in\mathscr M\}\otimes\{\ell w_x:w\in\mathscr N\}\) is a vector subspace
of \(\ell\mathscr M_x\hat\otimes_\varepsilon\ell\mathscr N_x\). The requirements in the statement force us to define
\(\phi_x\colon\mathcal V_x\to\ell(\mathscr M\hat\otimes_\varepsilon\mathscr N)_x\) as
\[
\phi_x\bigg(\sum_{i=1}^n\ell v^i_x\otimes\ell w^i_x\bigg)\coloneqq\sum_{i=1}^n\ell(v^i\otimes w^i)_x
\quad\text{ for every }\sum_{i=1}^n v^i\otimes w^i\in\mathscr M\otimes\mathscr N.
\]
Note that \({\sf j}_x(\ell\omega_x)\in\mathbb D_{(\ell\mathscr M_x)'}\) and \({\sf j}_x(\ell\eta_x)\in\mathbb D_{(\ell\mathscr N_x)'}\)
for every \(\omega\in\mathbb D_{\mathscr M^*}\) and \(\eta\in\mathbb D_{\mathscr N^*}\).
Moreover, Lemma \ref{lem:img_lift_dual_wstar_dense} ensures that \(\{{\sf j}_x(\ell\omega_x):\omega\in\mathscr M\}\)
and \(\{{\sf j}_x(\ell\eta_x):\eta\in\mathscr N\}\) are weakly\(^*\) dense in \((\ell\mathscr M_x)'\) and 
\((\ell\mathscr N_x)'\), respectively. Using also Lemma \ref{lem:aux_isometric_emb_injective_lift}, we deduce that
\[\begin{split}
\bigg\|\sum_{i=1}^n\ell(v^i\otimes w^i)_x\bigg\|_{\ell(\mathscr M\hat\otimes_\varepsilon \mathscr N)_x} &=\sup\bigg\{\sum_{i=1}^n\langle\,{\sf j}_x(\ell\omega_x),\ell v^i_x\rangle
\langle\,{\sf j}_x(\ell\eta_x),\ell w^i_x\rangle\;\bigg|\;\omega\in\mathbb D_{\mathscr M^*},\,\eta\in\mathbb D_{\mathscr N^*}\bigg\}\\
&=\bigg\|\sum_{i=1}^n\ell v^i_x\otimes\ell w^i_x\bigg\|_{\ell\mathscr M_x\hat\otimes_\varepsilon\ell\mathscr N_x}.
\end{split}\]
This shows that \(\phi_x\) is well defined, as well as linear and isometric. Since \(\mathcal V_x\) is dense
in \(\ell\mathscr M_x\hat\otimes_\varepsilon\ell\mathscr N_x\) by Lemma \ref{lem:dens_in_fiber_of_lift_mod}
and Remark \ref{rmk:tensor_Banach_separable}, we finally conclude that \(\phi_x\) can be uniquely extended to
a linear isometry \(\phi_x\colon\ell\mathscr M_x\hat\otimes_\varepsilon\ell\mathscr N_x\hookrightarrow\ell(\mathscr M\hat\otimes_\varepsilon\mathscr N)_x\).
\end{proof}
\begin{proposition}
Let \(\phi_x\coloneqq\phi_x[\mathscr M,\mathscr N,\ell]\colon\ell\mathscr M_x\hat\otimes_\varepsilon\ell\mathscr N_x
\to\ell(\mathscr M\hat\otimes_\varepsilon\mathscr N)_x\) be as in Corollary \ref{cor:isometric_emb_injective_lift}.
Let us define the \(\mathcal L^\infty(\Sigma)\)-Banach \(\mathcal L^\infty(\Sigma)\)-submodule
\(\mathscr W=\mathscr W[\mathscr M,\mathscr N,\ell]\) of \(\ell(\mathscr M\hat\otimes_\varepsilon\mathscr N)\) as
\[
\mathscr W\coloneqq\Big\{\alpha\in\ell(\mathscr M\hat\otimes_\varepsilon\mathscr N)\;\Big|\;
\alpha_x\in\phi_x(\ell\mathscr M_x\hat\otimes_\varepsilon\ell\mathscr N_x)\text{ for }\mm\text{-a.e.\ }x\in\X\Big\}.
\]
Then \(\Pi_\mm(\mathscr W)=\Pi_\mm(\ell(\mathscr M\hat\otimes_\varepsilon\mathscr N))\).
In particular, \(\Pi_\mm(\mathscr W)\) is an \(L^\infty(\mm)\)-Banach \(L^\infty(\mm)\)-module and
\[
\mathscr M\hat\otimes_\varepsilon\mathscr N\cong\Pi_\mm(\mathscr W).
\]
\end{proposition}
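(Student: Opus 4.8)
The plan is to prove $\Pi_\mm(\mathscr W)=\Pi_\mm(\ell(\mathscr M\hat\otimes_\varepsilon\mathscr N))$ by checking the two inclusions separately. The inclusion ``$\subseteq$'' is trivial once one observes that $\mathscr W$ is a closed $\mathcal L^\infty(\Sigma)$-submodule of $\ell(\mathscr M\hat\otimes_\varepsilon\mathscr N)$ (closedness being a consequence of the domination $p_x\le\|\cdot\|_{\ell(\mathscr M\hat\otimes_\varepsilon\mathscr N)}$ together with the closedness of each subspace $\phi_x(\ell\mathscr M_x\hat\otimes_\varepsilon\ell\mathscr N_x)$ of the fiber $\ell(\mathscr M\hat\otimes_\varepsilon\mathscr N)_x$), and both quotients are induced by the same projection $\pi_\mm$. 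So the whole content lies in the reverse inclusion ``$\supseteq$''.

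For ``$\supseteq$'', I would fix $\gamma\in\ell(\mathscr M\hat\otimes_\varepsilon\mathscr N)$. Since the isomorphism of \eqref{eq:ell_inv_a.e.}, applied to the $L^\infty(\mm)$-Banach $L^\infty(\mm)$-module $\mathscr M\hat\otimes_\varepsilon\mathscr N$, is induced by $\ell$ and is in particular onto $\Pi_\mm(\ell(\mathscr M\hat\otimes_\varepsilon\mathscr N))$, there exists $\beta\in\mathscr M\hat\otimes_\varepsilon\mathscr N$ with $\pi_\mm(\ell\beta)=\pi_\mm(\gamma)$. It therefore suffices to show $\ell\beta\in\mathscr W$, since this yields $\pi_\mm(\gamma)=\pi_\mm(\ell\beta)\in\Pi_\mm(\mathscr W)$.

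To prove $\ell\beta\in\mathscr W$ — equivalently, that $(\ell\beta)_x\in\phi_x(\ell\mathscr M_x\hat\otimes_\varepsilon\ell\mathscr N_x)$ for $\mm$-a.e.\ $x\in\X$ (in fact, for every such $x$) — I would treat first an algebraic tensor $\beta=\sum_{i=1}^n v^i\otimes w^i\in\mathscr M\otimes\mathscr N$: the defining property of $\phi_x$ from Corollary \ref{cor:isometric_emb_injective_lift} gives $(\ell\beta)_x=\sum_{i=1}^n\ell(v^i\otimes w^i)_x=\phi_x\bigl(\sum_{i=1}^n\ell v^i_x\otimes\ell w^i_x\bigr)\in\phi_x(\ell\mathscr M_x\hat\otimes_\varepsilon\ell\mathscr N_x)$ for every $x\in\X$. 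For a general $\beta\in\mathscr M\hat\otimes_\varepsilon\mathscr N$, I would exploit that $\mathscr M\hat\otimes_\varepsilon\mathscr N$ is the module completion of $\mathscr M\otimes_\varepsilon\mathscr N$ to choose a sequence $(\beta_k)_{k\in\N}\subseteq\mathscr M\otimes\mathscr N$ with $\beta_k\to\beta$ in $\mathscr M\hat\otimes_\varepsilon\mathscr N$; since $\ell$ is norm-preserving, $\ell\beta_k\to\ell\beta$ in $\ell(\mathscr M\hat\otimes_\varepsilon\mathscr N)$, hence $\|(\ell\beta_k)_x-(\ell\beta)_x\|_{\ell(\mathscr M\hat\otimes_\varepsilon\mathscr N)_x}=p_x(\ell\beta_k-\ell\beta)\le\|\ell\beta_k-\ell\beta\|_{\ell(\mathscr M\hat\otimes_\varepsilon\mathscr N)}\to 0$ for every $x\in\X$. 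Because $\phi_x$ is a linear isometry whose domain is a Banach space (an injective tensor product being complete), its range $\phi_x(\ell\mathscr M_x\hat\otimes_\varepsilon\ell\mathscr N_x)$ is a closed subspace of $\ell(\mathscr M\hat\otimes_\varepsilon\mathscr N)_x$; it contains $(\ell\beta_k)_x$ for every $k$ by the previous case, hence also the limit $(\ell\beta)_x$. As this holds for every $x$, we conclude $\ell\beta\in\mathscr W$. The step I expect to be the crux — really the only non-formal point — is this passage from algebraic tensors to the completion: it relies on the domination $p_x\le\|\cdot\|_{\ell(\mathscr M\hat\otimes_\varepsilon\mathscr N)}$, which turns $\ell(\mathscr M\hat\otimes_\varepsilon\mathscr N)$-convergence into fiberwise convergence at every point, and on the closedness of the range of each isometry $\phi_x$.

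Finally, once $\Pi_\mm(\mathscr W)=\Pi_\mm(\ell(\mathscr M\hat\otimes_\varepsilon\mathscr N))$ is available, the remaining claims follow without further work: the right-hand side is an $L^\infty(\mm)$-Banach $L^\infty(\mm)$-module by the general quotient construction \eqref{eq:quotient_mod} (applied to the $\mathcal L^\infty(\Sigma)$-Banach $\mathcal L^\infty(\Sigma)$-module $\ell(\mathscr M\hat\otimes_\varepsilon\mathscr N)$), and the isomorphism of \eqref{eq:ell_inv_a.e.} then reads $\mathscr M\hat\otimes_\varepsilon\mathscr N\cong\Pi_\mm(\ell(\mathscr M\hat\otimes_\varepsilon\mathscr N))=\Pi_\mm(\mathscr W)$.
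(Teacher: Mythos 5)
Your proof is correct and follows essentially the same strategy as the paper: reduce to lifted algebraic tensors, where \((\ell\beta)_x=\phi_x\big(\sum_i\ell v^i_x\otimes\ell w^i_x\big)\) holds by the defining property of \(\phi_x\), and then pass to the limit using the domination \(p_x\le\|\cdot\|_{\ell(\mathscr M\hat\otimes_\varepsilon\mathscr N)}\) together with the closedness of the range of the isometry \(\phi_x\). The only (harmless) difference is organizational: the paper approximates a general \(\alpha\in\ell(\mathscr M\hat\otimes_\varepsilon\mathscr N)\) directly by countable glueings \(\sum_n\1_{E_{k,n}}\cdot\ell\alpha^{k,n}\) of lifted algebraic tensors and argues \(\mm\)-a.e., whereas you first invoke \eqref{eq:ell_inv_a.e.} to replace \(\alpha\) by an \(\mm\)-a.e.\ equal element of the form \(\ell\beta\) and then approximate \(\beta\) inside \(\mathscr M\hat\otimes_\varepsilon\mathscr N\), trading the glueing step for the surjectivity of \(\pi_\mm\circ\ell\).
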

\begin{proof}
Since \(\phi_x(\ell\mathscr M_x\hat\otimes_\varepsilon\ell\mathscr N_x)\) is a closed vector subspace of
\(\ell(\mathscr M\hat\otimes_\varepsilon\mathscr N)_x\) for every \(x\in\X\), it is easy to verify that \(\mathscr W\) is an
\(\mathcal L^\infty(\Sigma)\)-Banach \(\mathcal L^\infty(\Sigma)\)-submodule of \(\ell(\mathscr M\hat\otimes_\varepsilon\mathscr N)\).
Now, fix \(\alpha\in\ell(\mathscr M\hat\otimes_\varepsilon\mathscr N)\). Given any \(k\in\N\), we can find an element
\(\alpha^k\in\ell(\mathscr M\hat\otimes_\varepsilon\mathscr N)\) of the form \(\alpha^k=\sum_{n\in\N}\1_{E_{k,n}}\cdot\ell\alpha^{k,n}\),
where \((E_{k,n})_{n\in\N}\subseteq\Sigma\) is a partition of \(\X\) and \((\alpha^{k,n})_{n\in\N}\subseteq\mathscr M\otimes_\varepsilon\mathscr N\),
with \(\|\alpha-\alpha^k\|_{\ell(\mathscr M\hat\otimes_\varepsilon\mathscr N)}\leq\frac{1}{k}\).
Since for any \(x\in\X\) we have that
\[
\ell(v\otimes w)_x=\phi_x(\ell v_x\otimes\ell w_x)\in\phi_x(\ell\mathscr M_x\hat\otimes_\varepsilon\ell\mathscr N_x)
\quad\text{ for every }v\in\mathscr M\text{ and }w\in\mathscr N,
\]
we deduce that \(\alpha^{k,n}_x\in\phi_x(\ell\mathscr M_x\hat\otimes_\varepsilon\ell\mathscr N_x)\) for every
\(k,n\in\N\) and \(x\in\X\). Given that \(\ell\alpha^{k,n}_x=\alpha^k_x\) for \(\mm\)-a.e.\ \(x\in E_{k,n}\),
we obtain that \(\alpha^k_x\in\phi_x(\ell\mathscr M_x\hat\otimes_\varepsilon\ell\mathscr N_x)\) for \(\mm\)-a.e.\ \(x\in\X\)
and thus \((\alpha^k)_{k\in\N}\subseteq\mathscr W\). Using the fact that \(\alpha^k_x\to\alpha_x\) in
\(\ell(\mathscr M\hat\otimes_\varepsilon\mathscr N)_x\) for every \(x\in\X\), we can conclude that \(\alpha\in\mathscr W\),
so that \(\pi_\mm(\alpha)\in\Pi_\mm(\mathscr W)\). This shows that \(\Pi_\mm(\mathscr W)=\Pi_\mm(\ell(\mathscr M\hat\otimes_\varepsilon\mathscr N))\), as desired.
Taking \eqref{eq:ell_inv_a.e.} into account, we obtain in particular that \(\Pi_\mm(\mathscr W)\cong\mathscr M\hat\otimes_\varepsilon\mathscr N\). The proof is then complete.
\end{proof}

Hereafter, we focus only on the case \(\mathscr M=\Gamma({\bf E})\) and \(\mathscr N=\Gamma({\bf F})\).
\begin{remark}\label{rmk:measurability}{\rm 
Given any \(v\in\bar\Gamma({\bf E})\) and \(w\in\bar\Gamma({\bf F})\), we claim that 
\[
\X\ni x\mapsto({\sf e}\circ{\sf i}_x)(v(x)\otimes w(x))\in\mathbb U\quad\text{ is a measurable map.}
\]
To prove it, let us show that it can be expressed as a composition of measurable maps. Indeed:
\begin{itemize}
\item \(\X\ni x\mapsto(v(x),w(x))\in\mathbb U\times\mathbb U\) is measurable (as \(v\) and \(w\) are measurable),
\item \(\mathbb U\times\mathbb U\ni(\bar v,\bar w)\mapsto\bar v\otimes\bar w\in\mathbb U\hat\otimes_\varepsilon\mathbb U\)
is continuous,
\item \({\bf E}(x)\otimes_\varepsilon{\bf F}(x)\ni\bar v\otimes\bar w\mapsto{\sf i}_x(\bar v\otimes\bar w)\in\mathbb U\hat\otimes_\varepsilon\mathbb U\)
is the inclusion map,
\item \({\sf e}\colon\mathbb U\hat\otimes_\varepsilon\mathbb U\hookrightarrow\mathbb U\) is isometric.
\end{itemize}
All in all, the claim is proven.
\fr}\end{remark}
\begin{definition}[Injective tensor products of separable Banach bundles]\label{def:injective_tp}
Let us define
\[
({\bf E}\hat\otimes_\varepsilon{\bf F})(x)\coloneqq({\sf e}\circ{\sf i}_x)\big({\bf E}(x)\hat\otimes_\varepsilon{\bf F}(x)\big)
\quad\text{ for every }x\in\X.
\]
We call \({\bf E}\hat\otimes_\varepsilon{\bf F}\colon\X\twoheadrightarrow\mathbb U\) the \textbf{injective tensor product}
of \({\bf E}\) and \({\bf F}\).
\end{definition}

 Let us check that \({\bf E}\hat\otimes_\varepsilon{\bf F}\) is indeed a separable Banach \(\mathbb U\)-bundle.
Letting \(\mathcal F\) be the set of all finite subsets of \(\mathcal C\times\mathcal D\), for any \(F\in\mathcal F\) we define
the map \(\gamma_F\colon\X\to\mathbb U\) as
\begin{equation}\label{eq:def_gamma_F}
\gamma_F(x)\coloneqq\sum_{(v,w)\in F}({\sf e}\circ{\sf i}_x)(v(x)\otimes w(x))\quad\text{ for every }x\in\X.
\end{equation}
Given that \(\{v(x):v\in\mathcal C\}\) and \(\{w(x):w\in\mathcal D\}\) are dense \(\mathbb Q\)-vector subspaces of \({\bf E}(x)\) and \({\bf F}(x)\), respectively,
we know from Remark \ref{rmk:tensor_Banach_separable} that
\begin{equation}\label{eq:prop_gamma_F}
{\rm cl}_{\mathbb U}(\{\gamma_F(x)\;|\;F\in\mathcal F\})=({\bf E}\hat\otimes_\varepsilon{\bf F})(x)\quad\text{ for every }x\in\X.
\end{equation}
Applying Proposition \ref{prop:select_dense}, we conclude that \({\bf E}\hat\otimes_\varepsilon{\bf F}\) is a separable Banach \(\mathbb U\)-bundle, as desired.
\begin{theorem}[The identification \(\Gamma({\bf E}\hat\otimes_\varepsilon{\bf F})\cong\Gamma({\bf E})\hat\otimes_\varepsilon\Gamma({\bf F})\)]\label{thm:inj_prod_bundles}
Under the above assumptions, there exists a unique operator
\(\Psi=\Psi[{\bf E},{\bf F},{\sf i}_\star,{\sf e}]\in\textsc{Hom}(\Gamma({\bf E})\hat\otimes_\varepsilon\Gamma({\bf F});\Gamma({\bf E}\hat\otimes_\varepsilon{\bf F}))\) such that
\[
\Psi\big(\pi_\mm(v)\otimes\pi_\mm(w)\big)=\pi_\mm\big(({\sf e}\circ{\sf i}_\star)(v(\star)\otimes w(\star))\big)
\quad\text{ for every }v\in\bar\Gamma({\bf E})\text{ and }w\in\bar\Gamma({\bf F}).
\]
Moreover, \(\Psi\) is an isomorphism of \(L^\infty(\mm)\)-Banach \(L^\infty(\mm)\)-modules, thus in particular
\[
\Gamma({\bf E})\hat\otimes_\varepsilon\Gamma({\bf F})\cong\Gamma({\bf E}\hat\otimes_\varepsilon{\bf F}).
\]
\end{theorem}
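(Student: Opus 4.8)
The plan is to construct the map \(\Psi\) on elementary tensors and extend it by the universal property, then verify it is a norm-preserving bijection by passing through the lifting and working fiberwise. First I would define a bilinear map \(b\colon\Gamma({\bf E})\times\Gamma({\bf F})\to\Gamma({\bf E}\hat\otimes_\varepsilon{\bf F})\) by \(b(\pi_\mm(v),\pi_\mm(w))\coloneqq\pi_\mm\big(({\sf e}\circ{\sf i}_\star)(v(\star)\otimes w(\star))\big)\), using Remark \ref{rmk:measurability} to see that the right-hand side is a well-defined element of \(\Gamma({\bf E}\hat\otimes_\varepsilon{\bf F})\) and checking independence of the chosen representatives \(v,w\) (this uses that \({\sf e}\circ{\sf i}_x\) is isometric, so \(\mm\)-a.e.\ equality is preserved). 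One then checks \(b\) is \(L^\infty(\mm)\)-bilinear and that \(|b(\pi_\mm(v),\pi_\mm(w))|=\|v(\star)\otimes w(\star)\|_{{\bf E}(\star)\hat\otimes_\varepsilon{\bf F}(\star)}=|v|\,|w|\) \(\mm\)-a.e., so in particular \(b\in{\rm B}(\Gamma({\bf E}),\Gamma({\bf F});\Gamma({\bf E}\hat\otimes_\varepsilon{\bf F}))\). The universal property of the projective tensor product (Theorem \ref{thm:proj_tens_Ban_mod}) gives an \(L^\infty(\mm)\)-linear \(\tilde b_\pi\); but since \(b\) satisfies the sharper estimate \(|b(\pi_\mm(v),\pi_\mm(w))|=|v|\,|w|=|\pi_\mm(v)\otimes\pi_\mm(w)|_\varepsilon\) on elementary tensors, one argues that its linearisation actually factors through \(\Gamma({\bf E})\otimes_\varepsilon\Gamma({\bf F})\) with \(|\Psi(\alpha)|\le|\alpha|_\varepsilon\) for all \(\alpha\), hence extends uniquely (module completion's universal property) to \(\Psi\in\textsc{Hom}(\Gamma({\bf E})\hat\otimes_\varepsilon\Gamma({\bf F});\Gamma({\bf E}\hat\otimes_\varepsilon{\bf F}))\) with \(|\Psi|\le 1\). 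Uniqueness of \(\Psi\) is immediate because elementary tensors generate.

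Next I would show \(\Psi\) is an isometry, i.e.\ \(|\Psi(\alpha)|=|\alpha|_\varepsilon\) for \(\alpha\in\Gamma({\bf E})\otimes\Gamma({\bf F})\), and then that \(\Psi\) is surjective. For the isometry, the natural route is to lift everything: apply \(\ell\) and compare fiberwise. Concretely, for \(\alpha=\sum_i\pi_\mm(v^i)\otimes\pi_\mm(w^i)\) one has \(\ell|\Psi(\alpha)|(x)=\|\ell\Psi(\alpha)_x\|\), and one must identify \(\ell\Psi(\alpha)_x\) with an element of \(\ell\Gamma({\bf E})_x\hat\otimes_\varepsilon\ell\Gamma({\bf F})_x\) via \(\phi_x\) from Corollary \ref{cor:isometric_emb_injective_lift}. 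Here the key technical input is Lemma \ref{lem:fibers_bundle_embed_lift} (referenced in the introduction), which identifies \({\bf E}(x)\) and \({\bf F}(x)\) with closed subspaces \(\psi_x^{\bf E}({\bf E}(x))\subseteq\ell\Gamma({\bf E})_x\), \(\psi_x^{\bf F}({\bf F}(x))\subseteq\ell\Gamma({\bf F})_x\), together with the fact (Remark \ref{rmk:inj_tensor_products_subspaces}) that injective tensor products respect subspaces. This lets one show that the everywhere-defined representative \(({\sf e}\circ{\sf i}_\star)(v(\star)\otimes w(\star))\), when passed to the fiber \(\ell\Gamma({\bf E}\hat\otimes_\varepsilon{\bf F})_x\), is carried by \(\phi_x\) (for \(\mathscr M=\Gamma({\bf E})\), \(\mathscr N=\Gamma({\bf F})\)) precisely to \(\ell v_x\otimes\ell w_x\) under these subspace identifications. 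Since \(\phi_x\) is an isometric embedding and, by Remark \ref{rmk:inj_tensor_products_subspaces}, the norm of \(\sum_i\psi^{\bf E}_x(v^i(x))\otimes\psi^{\bf F}_x(w^i(x))\) in \(\ell\Gamma({\bf E})_x\hat\otimes_\varepsilon\ell\Gamma({\bf F})_x\) equals its norm in \({\bf E}(x)\hat\otimes_\varepsilon{\bf F}(x)\), which equals \(\|({\sf e}\circ{\sf i}_x)(\sum_i v^i(x)\otimes w^i(x))\|_{\mathbb U}\), one concludes that \(\ell|\Psi(\alpha)|(x)\) agrees \(\mm\)-a.e.\ with \(\ell|\alpha|_\varepsilon(x)\) (using Lemma \ref{lem:aux_isometric_emb_injective_lift} to compute the latter), hence \(|\Psi(\alpha)|=|\alpha|_\varepsilon\). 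By density and completeness, \(\Psi\) is a linear isometry on all of \(\Gamma({\bf E})\hat\otimes_\varepsilon\Gamma({\bf F})\).

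For surjectivity, I would use the explicit fiberwise description of \(\Gamma({\bf E}\hat\otimes_\varepsilon{\bf F})\): by \eqref{eq:prop_gamma_F}, the countable family \(\{\gamma_F:F\in\mathcal F\}\) is fiberwise dense in \(\bar\Gamma({\bf E}\hat\otimes_\varepsilon{\bf F})\), and each \(\gamma_F\) lies in the image of \(\Psi\) because \(\gamma_F=\sum_{(v,w)\in F}({\sf e}\circ{\sf i}_\star)(v(\star)\otimes w(\star))\) represents \(\Psi\big(\sum_{(v,w)\in F}\pi_\mm(v)\otimes\pi_\mm(w)\big)\). Since \(\Psi\) has closed image (being an isometry with complete domain) and is an \(L^\infty(\mm)\)-submodule containing a fiberwise dense — hence generating — family, the image is all of \(\Gamma({\bf E}\hat\otimes_\varepsilon{\bf F})\); here one invokes that a closed \(L^\infty(\mm)\)-submodule of \(\Gamma({\bf G})\) containing a generating set equals \(\Gamma({\bf G})\) (this follows from glueing together local approximations). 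Therefore \(\Psi\) is a surjective linear isometry preserving the module structure, i.e.\ an isomorphism of \(L^\infty(\mm)\)-Banach \(L^\infty(\mm)\)-modules, giving \(\Gamma({\bf E})\hat\otimes_\varepsilon\Gamma({\bf F})\cong\Gamma({\bf E}\hat\otimes_\varepsilon{\bf F})\).

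The main obstacle I expect is the fiberwise isometry step: carefully tracking the identifications \({\bf E}(x)\hookrightarrow\ell\Gamma({\bf E})_x\), the map \(\phi_x\), and the embedding \({\sf e}\circ{\sf i}_x\), and checking they are all compatible on elementary tensors so that the two computations of the pointwise norm — one via Lemma \ref{lem:aux_isometric_emb_injective_lift} in the lifted module, the other via Remark \ref{rmk:inj_tensor_products_subspaces} in the fiber \(\mathbb U\)-bundle — actually coincide \(\mm\)-a.e. Everything else is a routine application of universal properties, density, and the glueing property.
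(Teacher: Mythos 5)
Your construction of \(\Psi\) on elementary tensors, the uniqueness argument, and the surjectivity argument via the generating family \(\{\pi_\mm(\gamma_F):F\in\mathcal F\}\) all match the paper. Where you genuinely diverge is the key isometry step \(|\Psi(\alpha)|=|\alpha|_\varepsilon\): you route it through the lifting machinery (the fiber embeddings \(\psi_x^{\bf E},\psi_x^{\bf F}\) of Lemma \ref{lem:fibers_bundle_embed_lift}, the isometry \(\phi_x\) of Corollary \ref{cor:isometric_emb_injective_lift}, and Lemma \ref{lem:aux_isometric_emb_injective_lift}), whereas the paper never touches the lifting here. Instead it picks, via Theorems \ref{thm:dual_section_space} and \ref{thm:Hahn-Banach}, countable norming families \((\omega_v)_{v\in\mathcal C}\subseteq\bar\Gamma({\bf E}'_{w^*})\) and \((\eta_w)_{w\in\mathcal D}\subseteq\bar\Gamma({\bf F}'_{w^*})\) and uses Lemma \ref{lem:separating_points} twice: once fiberwise, to compute \(\|\sum_i v_i(x)\otimes w_i(x)\|_{{\bf E}(x)\hat\otimes_\varepsilon{\bf F}(x)}\) as a countable supremum, and once at the module level, to compute \(|\alpha|_\varepsilon\) as the same countable supremum; the countable supremum property of \(L^\infty(\mm)\) then identifies the two. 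The paper's route is lighter (it needs only module Hahn--Banach, not a von Neumann lifting) and is essentially forced if one wants the injective result to stand independently of Section \ref{sec:proj_mod}; your route buys a uniform treatment of the injective and projective cases through the same fiberwise lifting picture, at the cost of extra compatibility checks.

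Two caveats on your version. First, the inference in your opening paragraph that the linearisation of \(b\) ``factors through \(\Gamma({\bf E})\otimes_\varepsilon\Gamma({\bf F})\)'' because \(|b(v,w)|=|v||w|\) on elementary tensors is not valid as stated: a bounded bilinear map with that property need not be injectively bounded (the canonical map into the projective tensor product is the standard counterexample). Your proof survives only because you then establish \(|\Psi(\alpha)|=|\alpha|_\varepsilon\) directly for general tensors \(\alpha=\sum_i v_i\otimes w_i\); that computation is the actual content, and the detour through Theorem \ref{thm:proj_tens_Ban_mod} should be dropped. Second, Lemma \ref{lem:fibers_bundle_embed_lift} gives \(\psi_x^{\bf E}(v(x))=\ell(\pi_\mm(v))_x\) only for \(v\in\mathcal C\); to run your fiberwise comparison for arbitrary \(v\in\bar\Gamma({\bf E})\) you must extend this identity \(\mm\)-a.e.\ to general sections (approximating \(v\) by glueings of elements of \(\mathcal C\) over a partition and passing through \(\ell\) of the corresponding sets), or else first reduce to tensors with entries in \(\mathcal C\times\mathcal D\) by density. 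This is doable but is exactly the kind of bookkeeping the paper's choice of norming families sidesteps.
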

\begin{proof}
First, note that we are forced to define \(\Psi\colon\Gamma({\bf E})\otimes_\varepsilon\Gamma({\bf F})\to\Gamma({\bf E}\hat\otimes_\varepsilon{\bf F})\) as
\[
\Psi\bigg(\sum_{i=1}^n\pi_\mm(v_i)\otimes\pi_\mm(w_i)\bigg)=\sum_{i=1}^n\pi_\mm\big(({\sf e}\circ{\sf i}_\star)(v_i(\star)\otimes w_i(\star))\big)
\]
for every \(n\in\N\), \(v_1,\ldots,v_n\in\bar\Gamma({\bf E})\) and \(w_1,\ldots,w_n\in\bar\Gamma({\bf F})\). Let us now check that \(\Psi\) is well posed.
Thanks to Theorems \ref{thm:dual_section_space} and \ref{thm:Hahn-Banach}, we can find \((\omega_v)_{v\in\mathcal C}\subseteq\bar\Gamma({\bf E}'_{w^*})\) and
\((\eta_w)_{w\in\mathcal D}\subseteq\bar\Gamma({\bf F}'_{w^*})\) such that
\[\begin{split}
\|\omega_v(x)\|_{{\bf E}(x)'}=\1_{\{v(\star)\neq 0\}}(x),&\quad\langle\omega_v(x),v(x)\rangle=\|v(x)\|_{{\bf E}(x)}\quad\text{ for }\mm\text{-a.e.\ }x\in\X,\\
\|\eta_w(x)\|_{{\bf F}(x)'}=\1_{\{w(\star)\neq 0\}}(x),&\quad\langle\eta_w(x),w(x)\rangle=\|w(x)\|_{{\bf F}(x)}\quad\text{ for }\mm\text{-a.e.\ }x\in\X
\end{split}\]
for any given \(v\in\mathcal C\) and \(w\in\mathcal D\). Using the fact that \({\sf e}\circ{\sf i}_x\) is isometric and Lemma \ref{lem:separating_points}, we obtain
\[\begin{split}
\bigg\|\sum_{i=1}^n({\sf e}\circ{\sf i}_x)(v_i(x)\otimes w_i(x))\bigg\|_{({\bf E}\hat\otimes_\varepsilon{\bf F})(x)}
&=\bigg\|\sum_{i=1}^n v_i(x)\otimes w_i(x)\bigg\|_{{\bf E}(x)\hat\otimes_\varepsilon{\bf F}(x)}\\
&=\sup\bigg\{\sum_{i=1}^n\langle\omega_v(x),v_i(x)\rangle\langle\eta_w(x),w_i(x)\rangle\;\bigg|\;v\in\mathcal C,\,w\in\mathcal D\bigg\}
\end{split}\]
for \(\mm\)-a.e.\ \(x\in\X\), whence (using again Theorem \ref{thm:dual_section_space} and Lemma \ref{lem:separating_points}) it follows that
\[\begin{split}
\bigg|\sum_{i=1}^n\pi_\mm\big(({\sf e}\circ{\sf i}_\star)(v_i(\star)\otimes w_i(\star))\big)\bigg|
&=\bigvee\bigg\{\pi_\mm\bigg(\sum_{i=1}^n\langle\omega_v(\star),v_i(\star)\rangle\langle\eta_w(\star),w_i(\star)\rangle\bigg)\;\bigg|\;v\in\mathcal C,\,w\in\mathcal D\bigg\}\\
&=\bigg|\sum_{i=1}^n\pi_\mm(v_i)\otimes\pi_\mm(w_i)\bigg|_\varepsilon.
\end{split}\]
Therefore, the map \(\Psi\colon\Gamma({\bf E})\otimes_\varepsilon\Gamma({\bf F})\to\Gamma({\bf E}\hat\otimes_\varepsilon{\bf F})\) is well posed. Moreover, \(\Psi\) is linear
and satisfies \(|\Psi(\alpha)|=|\alpha|_\varepsilon\) for every \(\alpha\in\Gamma({\bf E})\otimes_\varepsilon\Gamma({\bf F})\), thus it can be uniquely extended to an operator
\(\Psi\in\textsc{Hom}(\Gamma({\bf E})\hat\otimes_\varepsilon\Gamma({\bf F});\Gamma({\bf E}\hat\otimes_\varepsilon{\bf F}))\) satisfying \(|\Psi(\alpha)|=|\alpha|_\varepsilon\) for every
\(\alpha\in\Gamma({\bf E})\hat\otimes_\varepsilon\Gamma({\bf F})\). To conclude, it remains to show that \(\Psi\colon\Gamma({\bf E})\hat\otimes_\varepsilon\Gamma({\bf F})\to\Gamma({\bf E}\hat\otimes_\varepsilon{\bf F})\)
is surjective. To this aim, let us take \(\{\gamma_F:F\in\mathcal F\}\subseteq\bar\Gamma({\bf E}\hat\otimes_\varepsilon{\bf F})\) as in \eqref{eq:def_gamma_F}.
As a consequence of \eqref{eq:prop_gamma_F} and Proposition \ref{prop:select_dense}, we have that \(\{\pi_\mm(\gamma_F):F\in\mathcal F\}\) generates \(\Gamma({\bf E}\hat\otimes_\varepsilon{\bf F})\).
Since \(\pi_\mm(\gamma_F)=\Psi\big(\sum_{(v,w)\in F}\pi_\mm(v)\otimes\pi_\mm(w)\big)\), we have that \(\{\pi_\mm(\gamma_F):F\in\mathcal F\}\subseteq\Psi(\Gamma({\bf E})\hat\otimes_\varepsilon\Gamma({\bf F}))\),
thus we finally conclude that \(\Psi\) is surjective.
\end{proof}
\subsection{Projective tensor products}\label{sec:proj_mod}
Throughout this section, the following objects will be fixed:
\begin{itemize}
\item \((\X,\Sigma,\mm)\): a complete probability space.
\item \(\ell\): a lifting of \(\mm\).
\item \(\mathscr M\), \(\mathscr N\): \(L^\infty(\mm)\)-Banach \(L^\infty(\mm)\)-modules.
\item \(\mathbb U\): a universal separable Banach space.
\item \({\bf E}\), \({\bf F}\): separable Banach \(\mathbb U\)-bundles over \((\X,\Sigma)\).
\item \(\mathcal C\), \(\mathcal D\): fiberwise dense, countable \(\mathbb Q\)-vector subspaces of \(\bar\Gamma({\bf E})\)
and \(\bar\Gamma({\bf F})\), respectively.
\end{itemize}
\begin{definition}\label{def:ell_pi^x}
Fix any \(x\in\X\).
Then we define \(\ell^x_\pi=\ell^x_\pi[\mathscr M,\mathscr N,\ell]\colon\mathscr M\otimes_\pi\mathscr N\to\ell\mathscr M_x\otimes_\pi\ell\mathscr N_x\) as
\[
\ell^x_\pi\alpha\coloneqq\sum_{i=1}^n\ell v^i_x\otimes\ell w^i_x\quad\text{ for every }\alpha=\sum_{i=1}^n v^i\otimes w^i\in\mathscr M\otimes_\pi\mathscr N.
\]
\end{definition}

Let us check that \(\ell^x_\pi\) is well defined. Fix any tensor
\(\sum_{i=1}^n v^i\otimes w^i=\sum_{j=1}^m\tilde v^j\otimes\tilde w^j\in\mathscr M\otimes_\pi\mathscr N\).
Let \({\sf j}_x^{\mathscr M}\coloneqq{\sf j}_x[\mathscr M,\ell]\) and \({\sf j}_x^{\mathscr N}\coloneqq{\sf j}_x[\mathscr N,\ell]\)
be given by Proposition \ref{prop:def_j_x}. Then for any \(\omega\in\mathscr M^*\) and \(\eta\in\mathscr N^*\) we have that
\[\begin{split}
\sum_{i=1}^n\langle\,{\sf j}_x^{\mathscr M}(\ell\omega_x),\ell v^i_x\rangle\langle\,{\sf j}_x^{\mathscr N}(\ell\eta_x),\ell w^i_x\rangle
&=\ell\left(\sum_{i=1}^n\langle\omega,v^i\rangle\langle\eta,w^i\rangle\right)(x)=\ell\left(\sum_{j=1}^m\langle\omega,\tilde v^j\rangle\langle\eta,\tilde w^j\rangle\right)(x)\\
&=\sum_{j=1}^m\langle\,{\sf j}_x^{\mathscr M}(\ell\omega_x),\ell\tilde v^j_x\rangle\langle\,{\sf j}_x^{\mathscr N}(\ell\eta_x),\ell\tilde w^j_x\rangle.
\end{split}\]
Taking Lemma \ref{lem:img_lift_dual_wstar_dense} and \ref{lem:null_tensor_Banach} into account,
we conclude that \(\sum_{i=1}^n\ell v^i_x\otimes\ell w^i_x=\sum_{j=1}^m\ell\tilde v^j_x\otimes\ell\tilde w^j_x\).
\begin{proposition}\label{prop:key_fibers_proj_tensor}
Fix any \(x\in\X\). Then there exists a unique linear \(1\)-Lipschitz operator
\[
\iota_x=\iota_x[\mathscr M,\mathscr N,\ell]\colon\ell\mathscr M_x\hat\otimes_\pi\ell\mathscr N_x\to\ell(\mathscr M\hat\otimes_\pi\mathscr N)_x
\]
such that \(\iota_x(\ell v_x\otimes\ell w_x)=\ell(v\otimes w)_x\) for every \(v\in\mathscr M\) and \(w\in\mathscr N\).
\end{proposition}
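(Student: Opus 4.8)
The plan is to obtain \(\iota_x\) by lifting the canonical bilinear map fiberwise and then linearising on the fibers. Write \(\tau\colon\mathscr M\times\mathscr N\to\mathscr M\hat\otimes_\pi\mathscr N\), \(\tau(v,w)\coloneqq v\otimes w\), for the canonical bilinear map; since \(|\tau(v,w)|=|v\otimes w|_\pi=|v||w|\le\1_\X^\mm|v||w|\) and \(\tau\) is \(L^\infty(\mm)\)-bilinear, we have \(\tau\in{\rm B}(\mathscr M,\mathscr N;\mathscr M\hat\otimes_\pi\mathscr N)\) with \(|\tau|\le\1_\X^\mm\), where \(\mathscr M\hat\otimes_\pi\mathscr N\) is regarded as an \(L^\infty(\mm)\)-Banach \(L^\infty(\mm)\)-module (which it is, being a module completion). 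First I would apply Corollary \ref{cor:trace_lift_bdd_bilin} (with \(k=2\)) to \(\Phi=\tau\): this produces a unique bounded bilinear operator \(\ell\tau_x\in{\rm B}(\ell\mathscr M_x,\ell\mathscr N_x;\ell(\mathscr M\hat\otimes_\pi\mathscr N)_x)\) such that \((\ell\tau_x)(\ell v_x,\ell w_x)=\ell(v\otimes w)_x\) for all \(v\in\mathscr M\), \(w\in\mathscr N\), together with the norm identity \(\|\ell\tau_x\|=\ell|\tau|(x)\le\ell\1_\X^\mm(x)=\1_\X(x)=1\), where we used monotonicity of the lifting \(\ell\) and \(\ell\1_\X^\mm=\1_\X\).

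Next I would linearise \(\ell\tau_x\) via the universal property of the projective tensor product of Banach spaces (the single-atom special case of Theorem \ref{thm:proj_tens_Ban_mod}, cf.\ Remark \ref{rmk:when_Linfty_is_R}; see also \cite{Ryan02}): this yields a bounded linear operator \(\iota_x\colon\ell\mathscr M_x\hat\otimes_\pi\ell\mathscr N_x\to\ell(\mathscr M\hat\otimes_\pi\mathscr N)_x\) with \(\iota_x(a\otimes b)=(\ell\tau_x)(a,b)\) for all \(a\in\ell\mathscr M_x\), \(b\in\ell\mathscr N_x\), and with operator norm equal to \(\|\ell\tau_x\|\le1\); in particular \(\iota_x\) is \(1\)-Lipschitz and \(\iota_x(\ell v_x\otimes\ell w_x)=\ell(v\otimes w)_x\), as claimed. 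For uniqueness, I would note that the linear span of \(\{\ell v_x\otimes\ell w_x:v\in\mathscr M,\ w\in\mathscr N\}\) is dense in \(\ell\mathscr M_x\hat\otimes_\pi\ell\mathscr N_x\): indeed \(\{\ell v_x:v\in\mathscr M\}\) is a dense (real, hence \(\mathbb Q\)-) vector subspace of \(\ell\mathscr M_x\) by Lemma \ref{lem:dens_in_fiber_of_lift_mod}, and likewise \(\{\ell w_x:w\in\mathscr N\}\) is dense in \(\ell\mathscr N_x\), so Remark \ref{rmk:tensor_Banach_separable} (applied with these as the distinguished dense subspaces) gives the density. Any bounded linear operator agreeing with \(\iota_x\) on all such elementary tensors then agrees with \(\iota_x\) on a dense subspace, hence everywhere.

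The step doing the real work is Corollary \ref{cor:trace_lift_bdd_bilin} (i.e.\ Theorem \ref{thm:lift_bdd_bilin}), which already absorbs the measurability and glueing subtleties; a hands-on definition of \(\iota_x\) on elementary tensors would instead force one to control \(|\alpha|_\pi\) fiberwise \emph{from below}, which is exactly what the projective norm makes awkward — note that well-definedness of \(\ell^x_\pi\) (Definition \ref{def:ell_pi^x}) already exploited Lemma \ref{lem:img_lift_dual_wstar_dense} and Lemma \ref{lem:null_tensor_Banach} for the same reason. It is also worth flagging explicitly that, unlike in the injective case (Corollary \ref{cor:isometric_emb_injective_lift}), \(\iota_x\) cannot be expected to be an isometric embedding, since projective tensor products do not respect subspaces (Remark \ref{rmk:proj_tensor_products_subspaces}); the finer description of \(({\bf E}\hat\otimes_\pi{\bf F})(x)\) is deferred to the subsequent results.
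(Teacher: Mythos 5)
Your proof is correct, and it takes a genuinely different route from the paper's. The paper defines \(\iota_x\) directly on \(V_x\otimes W_x=\ell^x_\pi(\mathscr M\otimes\mathscr N)\) by the formula \(\iota_x(\ell^x_\pi\alpha)\coloneqq\sum_i\ell(v^i\otimes w^i)_x\), and then the whole burden is to show this is well posed and \(1\)-Lipschitz: for that it invokes Theorem \ref{thm:dual_proj_tens_prod} together with Theorem \ref{thm:Hahn-Banach} to produce a norming form \(b^\alpha\in\mathbb D_{{\rm B}(\mathscr M,\mathscr N)}\) with \(\tilde b^\alpha_\pi(\alpha)=|\alpha|_\pi\), lifts the scalar-valued forms \(b\) fiberwise via Corollary \ref{cor:trace_lift_bdd_bilin}, and estimates \(\ell|\alpha|_\pi(x)=\ell(\tilde b^\alpha_\pi(\alpha))(x)\le\sup_b\sum_i(\ell b_x)(\ell v^i_x,\ell w^i_x)\le\|\ell^x_\pi\alpha\|_{\ell\mathscr M_x\hat\otimes_\pi\ell\mathscr N_x}\), before extending by density. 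You instead apply the same Corollary \ref{cor:trace_lift_bdd_bilin} once, but to the \emph{module-valued} canonical bilinear map \(\tau\in{\rm B}(\mathscr M,\mathscr N;\mathscr M\hat\otimes_\pi\mathscr N)\) with \(|\tau|\le\1_\X^\mm\), and then linearise \(\ell\tau_x\) through the classical universal property of the Banach-space projective tensor product; this sidesteps the well-definedness issue entirely (there is no formula on representatives to check) and avoids the duality theorem and Hahn--Banach. The only ingredients you need beyond the corollary are the density of \(V_x\otimes W_x\) in \(\ell\mathscr M_x\hat\otimes_\pi\ell\mathscr N_x\) (for uniqueness and for identifying \(\iota_x\) on elementary tensors), which the paper also uses without further comment, and the isometric identification \({\rm B}(A,B;C)\cong\textsc{Hom}(A\hat\otimes_\pi B;C)\) for Banach spaces, which is standard. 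Your closing remarks — that the hard direction of a fiberwise isometry is exactly what fails here, and that \(\iota_x\) need not be isometric — are consistent with how the paper proceeds (the reverse inequality is only recovered \(\mm\)-a.e.\ in Lemma \ref{lem:key_fibers_proj_tensor_isom_ae}).
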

\begin{proof}
Define \(V_x\coloneqq\{\ell v_x:v\in\mathscr M\}\) and \(W_x\coloneqq\{\ell w_x:w\in\mathscr N\}\). Recall
from Lemma \ref{lem:dens_in_fiber_of_lift_mod} that \(V_x\) and \(W_x\) are dense vector subspaces of
\(\ell\mathscr M_x\) and \(\ell\mathscr N_x\), respectively, thus in particular \(V_x\otimes W_x\) is a
dense vector subspace of \(\ell\mathscr M_x\hat\otimes_\pi\ell\mathscr N_x\). Letting
\(\ell^x_\pi\coloneqq\ell^x_\pi[\mathscr M,\mathscr N,\ell]\) be as in Definition \ref{def:ell_pi^x},
we point out that \(\ell^x_\pi(\mathscr M\otimes\mathscr N)=V_x\otimes W_x\).
Next, let us define the map \(\iota_x\colon V_x\otimes W_x\to\ell(\mathscr M\hat\otimes_\pi\mathscr N)_x\) as
\[
\iota_x(\ell^x_\pi\alpha)\coloneqq\sum_{i=1}^n\ell(v^i\otimes w^i)_x\quad\text{ for every }\alpha=\sum_{i=1}^n v^i\otimes w^i\in\mathscr M\otimes\mathscr N.
\]
We need to check that \(\iota_x\) is well defined. To this aim, fix \(\alpha=\sum_{i=1}^n v^i\otimes w^i\in\mathscr M\otimes\mathscr N\). Combining Theorems
\ref{thm:dual_proj_tens_prod} and \ref{thm:Hahn-Banach}, we can find an element \(b^\alpha\in{\rm B}(\mathscr M,\mathscr N)\) such that \(|b^\alpha|\leq 1\)
and \(\tilde b^\alpha_\pi(\alpha)=|\alpha|_\pi\). Moreover, observe that if an element \(b\in{\rm B}(\mathscr M,\mathscr N)\) with \(|b|\leq 1\) is given,
then for any \(x\in\X\) the element \(\ell b_x\in{\rm B}(\ell\mathscr M_x,\ell\mathscr N_x)\) provided by Corollary \ref{cor:trace_lift_bdd_bilin} satisfies
\(\|\ell b_x\|_{{\rm B}(\ell\mathscr M_x,\ell\mathscr N_x)}\leq 1\). Therefore,
\[\begin{split}
\left\|\sum_{i=1}^n\ell(v^i\otimes w^i)_x\right\|_{\ell(\mathscr M\hat\otimes_\pi\mathscr N)_x}&=\left|\sum_{i=1}^n\ell(v^i\otimes w^i)\right|(x)=\ell|\alpha|_\pi(x)
=\ell(\tilde b^\alpha_\pi(\alpha))(x)\\
&\leq\sup_{b\in\mathbb D_{{\rm B}(\mathscr M,\mathscr N)}}\ell(\tilde b_\pi(\alpha))(x)=\sup_{b\in\mathbb D_{{\rm B}(\mathscr M,\mathscr N)}}\ell\left(\sum_{i=1}^n b(v^i,w^i)\right)(x)\\
&=\sup_{b\in\mathbb D_{{\rm B}(\mathscr M,\mathscr N)}}\sum_{i=1}^n(\ell b_x)(\ell v^i_x,\ell w^i_x)\leq\|\ell^x_\pi\alpha\|_{\ell\mathscr M_x\hat\otimes_\pi\ell\mathscr N_x},
\end{split}\]
thanks to the last statement of Theorem \ref{thm:dual_proj_tens_prod}. These estimates show that \(\iota_x\) is well defined and
\[
\|\iota_x(\bar\alpha)\|_{\ell(\mathscr M\hat\otimes_\pi\mathscr N)_x}\leq\|\bar\alpha\|_{\ell\mathscr M_x\hat\otimes_\pi\ell\mathscr N_x}\quad\text{ for every }\bar\alpha\in V_x\otimes W_x.
\]
Since \(\iota_x\colon V_x\otimes W_x\to\ell(\mathscr M\hat\otimes_\pi\mathscr N)_x\) is linear by construction and \(V_x\otimes W_x\) is dense in
\(\ell\mathscr M_x\hat\otimes_\pi\ell\mathscr N_x\), it follows that \(\iota_x\colon V_x\otimes W_x\to\ell(\mathscr M\hat\otimes_\pi\mathscr N)_x\)
can be uniquely extended to a linear \(1\)-Lipschitz operator \(\iota_x\colon\ell\mathscr M_x\hat\otimes_\pi\ell\mathscr N_x\to\ell(\mathscr M\hat\otimes_\pi\mathscr N)_x\).
Consequently, the statement is achieved.
\end{proof}

By unwrapping the definitions, one can easily see that
\begin{equation}\label{eq:formula_iota_x_ell_pi_x}
\iota_x(\ell_\pi^x\alpha)=\ell\alpha_x\quad\text{ for every }\alpha\in\mathscr M\otimes_\pi\mathscr N\text{ and }x\in\X. 
\end{equation}
\begin{lemma}\label{lem:key_fibers_proj_tensor_isom_ae}
Fix any \(\alpha\in\mathscr M\otimes_\pi\mathscr N\). Then there exists a set
\({\rm N}_\alpha={\rm N}_\alpha[\mathscr M,\mathscr N,\ell]\in\mathcal N_\mm\) such that
\[
\|\ell\alpha_x\|_{\ell(\mathscr M\hat\otimes_\pi\mathscr N)_x}=\|\ell^x_\pi\alpha\|_{\ell\mathscr M_x\hat\otimes_\pi\ell\mathscr N_x}\quad\text{ for every }x\in\X\setminus{\rm N}_\alpha,
\]
where \(\ell^x_\pi\coloneqq\ell^x_\pi[\mathscr M,\mathscr N,\ell]\) and \(\iota_x\coloneqq\iota_x[\mathscr M,\mathscr N,\ell]\)
are given by Definition \ref{def:ell_pi^x} and Proposition \ref{prop:key_fibers_proj_tensor}, respectively.
\end{lemma}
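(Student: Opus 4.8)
The plan is to prove the two inequalities separately. The inequality ``$\leq$'' holds at \emph{every} point $x\in\X$ and is immediate: by Proposition \ref{prop:key_fibers_proj_tensor} the operator $\iota_x$ is linear and $1$-Lipschitz, and \eqref{eq:formula_iota_x_ell_pi_x} gives $\iota_x(\ell^x_\pi\alpha)=\ell\alpha_x$, so that $\|\ell\alpha_x\|_{\ell(\mathscr M\hat\otimes_\pi\mathscr N)_x}\leq\|\ell^x_\pi\alpha\|_{\ell\mathscr M_x\hat\otimes_\pi\ell\mathscr N_x}$. Since the pointwise norm of $\alpha$ in the completion $\mathscr M\hat\otimes_\pi\mathscr N$ equals $|\alpha|_\pi$, we also have $\|\ell\alpha_x\|_{\ell(\mathscr M\hat\otimes_\pi\mathscr N)_x}=\ell|\alpha|_\pi(x)$ for every $x\in\X$, and it therefore remains to produce a set ${\rm N}_\alpha\in\mathcal N_\mm$ with $\|\ell^x_\pi\alpha\|_{\ell\mathscr M_x\hat\otimes_\pi\ell\mathscr N_x}\leq\ell|\alpha|_\pi(x)$ for every $x\in\X\setminus{\rm N}_\alpha$.

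To this end, I would fix a representation $\alpha=\sum_{i=1}^n v^i\otimes w^i$ and put $g_0\coloneqq\sum_{i=1}^n|v^i||w^i|\in L^\infty(\mm)^+$. For \emph{any} representation $\alpha=\sum_j\tilde v^j\otimes\tilde w^j$, the well-definedness of $\ell^x_\pi$ checked after Definition \ref{def:ell_pi^x} gives $\ell^x_\pi\alpha=\sum_j\ell\tilde v^j_x\otimes\ell\tilde w^j_x$, hence, using $\|\ell v_x\|=\ell|v|(x)$ and the multiplicativity of $\ell$, $\|\ell^x_\pi\alpha\|_{\ell\mathscr M_x\hat\otimes_\pi\ell\mathscr N_x}\leq\sum_j\|\ell\tilde v^j_x\|\,\|\ell\tilde w^j_x\|=\ell\big(\sum_j|\tilde v^j||\tilde w^j|\big)(x)$ for every $x\in\X$. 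Now $|\alpha|_\pi$ is, by definition, the infimum over all representations of $\sum_j|\tilde v^j||\tilde w^j|$; after passing to the order-bounded subfamily obtained by replacing each such function by its meet with $g_0$ (which leaves the infimum equal to $|\alpha|_\pi$), the countable infimum property of $L^\infty(\mm)$ furnishes a sequence of representations, with associated functions $g_k$, such that $\bigwedge_{k\in\N}h_k=|\alpha|_\pi$, where $h_k\coloneqq g_k\wedge g_0$. Since $\ell$ is a lattice homomorphism, $\ell h_k=\ell g_k\wedge\ell g_0$, and combining the previous estimate for the $k$-th representation (when $\ell g_k(x)\leq\ell g_0(x)$) with the one for the fixed representation defining $g_0$ (otherwise) yields $\|\ell^x_\pi\alpha\|_{\ell\mathscr M_x\hat\otimes_\pi\ell\mathscr N_x}\leq\ell h_k(x)$ for every $k\in\N$ and $x\in\X$; thus $\|\ell^x_\pi\alpha\|_{\ell\mathscr M_x\hat\otimes_\pi\ell\mathscr N_x}\leq\inf_{k\in\N}\ell h_k(x)$ for all $x$.

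Finally I would compare $\inf_{k\in\N}\ell h_k$ with $\ell|\alpha|_\pi$. Set $H\coloneqq\inf_{k\in\N}\ell h_k$, which lies in $\mathcal L^\infty(\Sigma)$ since $0\leq\ell h_k\leq\ell g_0$. As $\ell$ is order-preserving and $|\alpha|_\pi=\bigwedge_k h_k\leq h_k$, we have $\ell|\alpha|_\pi\leq H$ pointwise on $\X$; on the other hand, $\pi_\mm(H)\leq\pi_\mm(\ell h_k)=h_k$ for every $k$, so $\pi_\mm(H)\leq\bigwedge_k h_k=|\alpha|_\pi=\pi_\mm(\ell|\alpha|_\pi)\leq\pi_\mm(H)$, whence $\pi_\mm(H)=\pi_\mm(\ell|\alpha|_\pi)$. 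Consequently ${\rm N}_\alpha\coloneqq\{H\neq\ell|\alpha|_\pi\}$ belongs to $\mathcal N_\mm$, and for $x\in\X\setminus{\rm N}_\alpha$ we conclude $\|\ell^x_\pi\alpha\|_{\ell\mathscr M_x\hat\otimes_\pi\ell\mathscr N_x}\leq H(x)=\ell|\alpha|_\pi(x)=\|\ell\alpha_x\|_{\ell(\mathscr M\hat\otimes_\pi\mathscr N)_x}$; together with the first paragraph, this is the asserted equality. The one genuinely delicate point is this last comparison: the identity $\ell\big(\bigwedge_k h_k\big)=\inf_k\ell h_k$ fails in general and holds only $\mm$-a.e., which is precisely what forces the exceptional null set ${\rm N}_\alpha$ into the statement.
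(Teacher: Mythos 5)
Your argument is correct, and while it shares the paper's skeleton (the inequality \(\|\ell\alpha_x\|_{\ell(\mathscr M\hat\otimes_\pi\mathscr N)_x}\le\|\ell^x_\pi\alpha\|_{\ell\mathscr M_x\hat\otimes_\pi\ell\mathscr N_x}\) at \emph{every} point via the \(1\)-Lipschitz map \(\iota_x\) and \eqref{eq:formula_iota_x_ell_pi_x}, plus near-optimal representations for the converse), the mechanism you use for the converse inequality is genuinely different. The paper fixes \(k\in\N\) and localises: it produces a partition \((E^k_j)_j\) of \(\X\) and, for each \(j\), a single representation of \(\alpha\) whose associated function is \(\le|\alpha|_\pi+\frac{1}{k}\) on \(E^k_j\); evaluating at points of \(\ell E^k_j\) gives the bound up to \(\frac{1}{k}\), and the exceptional set is \(\bigcup_k\big(\X\setminus\bigcup_j\ell E^k_j\big)\). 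You instead work globally: you extract a countable sequence of representations whose truncated associated functions \(h_k\) satisfy \(\bigwedge_k h_k=|\alpha|_\pi\), note that \(\|\ell^x_\pi\alpha\|\le\ell h_k(x)\) everywhere, and take \({\rm N}_\alpha\) to be the set where \(\inf_k\ell h_k\) and \(\ell|\alpha|_\pi\) disagree, which is null because both have \(\mm\)-class \(|\alpha|_\pi\). Your truncation by \(g_0\) correctly repairs the fact that the family of representation-functions need not be order-bounded above (a point needed to invoke the countable infimum property, and one the paper's proof also passes over silently), and your closing remark correctly identifies the failure of \(\ell\) to commute with countable infima as the sole source of the null set. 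The one fact you invoke without justification is that \(\ell\) is a lattice homomorphism, \(\ell(f\wedge g)=\ell f\wedge\ell g\); this is standard for von Neumann liftings (it follows from linearity and multiplicativity, since \(\ell|h|^2=\ell(h^2)=(\ell h)^2\) and \(\ell|h|\ge 0\) give \(\ell|h|=|\ell h|\)), but it is not stated in the preliminaries, so a one-line remark would make the proof self-contained. Net effect: your route avoids partitions and glueing entirely at the cost of this small lattice-theoretic lemma; the paper's route avoids the lemma at the cost of the partition bookkeeping.
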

\begin{proof}
For any \(k\in\N\), we can find \((n^k_j)_{j\in\N}\subseteq\N\), \((v^{k,j,i})_{i=1}^{n^k_j}\subseteq\mathscr M\) and \((w^{k,j,i})_{i=1}^{n^k_j}\subseteq\mathscr N\) for \(j\in\N\),
and a partition \((E^k_j)_{j\in\N}\subseteq\Sigma\) of \(\X\) such that \(\alpha=\sum_{i=1}^{n^k_j}v^{k,j,i}\otimes w^{k,j,i}\) for every \(j\in\N\) and
\[
\1_{E^k_j}^\mm\sum_{i=1}^{n^k_j}|v^{k,j,i}||w^{k,j,i}|\leq\1_{E^k_j}^\mm|\alpha|_\pi+\frac{1}{k}\quad\text{ for every }j\in\N.
\]
Therefore, for any \(j\in\N\) and \(x\in\ell E^k_j\) we have that
\[\begin{split}
\|\ell^x_\pi\alpha\|_{\ell\mathscr M_x\hat\otimes_\pi\ell\mathscr N_x}&=\left\|\sum_{i=1}^{n^k_j}\ell v^{k,j,i}_x\otimes\ell w^{k,j,i}_x\right\|_{\ell\mathscr M_x\hat\otimes_\pi\ell\mathscr N_x}
\leq\sum_{i=1}^{n^k_j}\|\ell v^{k,j,i}_x\|_{\ell\mathscr M_x}\|\ell w^{k,j,i}_x\|_{\ell\mathscr N_x}\\
&=\ell\left(\sum_{i=1}^{n^k_j}|v^{k,j,i}||w^{k,j,i}|\right)(x)\leq\ell|\alpha|_\pi(x)+\frac{1}{k}=\|\ell\alpha_x\|_{\ell(\mathscr M\hat\otimes_\pi\mathscr N)_x}+\frac{1}{k}\\
&=\|\iota_x(\ell^x_\pi\alpha)\|_{\ell(\mathscr M\hat\otimes_\pi\mathscr N)_x}+\frac{1}{k}.
\end{split}\]
Letting \({\rm N}_\alpha\) be the \(\mm\)-null set \(\bigcup_{k\in\N}\bigcap_{j\in\N}\X\setminus\ell E^k_j\) and recalling \eqref{eq:formula_iota_x_ell_pi_x},
we get the statement.
\end{proof}

Hereafter, we focus only on the case \(\mathscr M=\Gamma({\bf E})\) and \(\mathscr N=\Gamma({\bf F})\).
We use the shorthand notation
\[
\ell{\bf E}_x\coloneqq\ell\Gamma({\bf E})_x\quad\text{ for every }x\in\X
\]
and similarly \(\ell{\bf F}_x\coloneqq\ell\Gamma({\bf F})_x\) for every \(x\in\X\).
\begin{lemma}\label{lem:fibers_bundle_embed_lift}
There exists a set \(\tilde{\rm N}_{\bf E}=\tilde{\rm N}_{\bf E}[\ell,\mathcal C]\in\mathcal N_\mm\) such that the following
property holds: given any \(x\in\X\setminus\tilde{\rm N}_{\bf E}\), there exists a unique linear isometry
\(\psi^{\bf E}_x=\psi^{\bf E}_x[\ell,\mathcal C]\colon{\bf E}(x)\to\ell{\bf E}_x\) such that
\begin{equation}\label{eq:def_psi_x}
\psi^{\bf E}_x(v(x))=\ell(\pi_\mm(v))_x\quad\text{ for every }v\in\mathcal C.
\end{equation}
\end{lemma}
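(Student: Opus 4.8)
The plan is to construct $\psi^{\bf E}_x$ first on the dense $\mathbb Q$-vector subspace $\{v(x):v\in\mathcal C\}$ of ${\bf E}(x)$ via the prescribed formula \eqref{eq:def_psi_x}, checking it is a well-defined $\mathbb Q$-linear isometry there, and then extend it by continuity to all of ${\bf E}(x)$. The first step is to pin down the exceptional null set. For each $v\in\mathcal C\subseteq\bar\Gamma({\bf E})$, the bounded measurable function $x\mapsto\|v(x)\|_{{\bf E}(x)}$ is a representative of the class $|\pi_\mm(v)|\in L^\infty(\mm)^+$; since $\ell|\pi_\mm(v)|$ is also a representative of $|\pi_\mm(v)|$ (by the defining property $\pi_\mm(\ell f)=f$ of the lifting), the set ${\rm N}_v\coloneqq\big\{x\in\X:\ell|\pi_\mm(v)|(x)\neq\|v(x)\|_{{\bf E}(x)}\big\}$ lies in $\mathcal N_\mm$. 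As $\mathcal C$ is countable, $\tilde{\rm N}_{\bf E}\coloneqq\bigcup_{v\in\mathcal C}{\rm N}_v\in\mathcal N_\mm$. Combining this with the identity $\|\ell u_x\|_{\ell\mathscr M_x}=\ell|u|(x)$ (stated right after the definition of fibers of a lifted module), applied with $\mathscr M=\Gamma({\bf E})$ and $u=\pi_\mm(v)$, yields
\[
\big\|\ell(\pi_\mm(v))_x\big\|_{\ell{\bf E}_x}=\ell|\pi_\mm(v)|(x)=\|v(x)\|_{{\bf E}(x)}\qquad\text{for all }x\in\X\setminus\tilde{\rm N}_{\bf E}\text{ and }v\in\mathcal C.
\]

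Next, fixing $x\in\X\setminus\tilde{\rm N}_{\bf E}$, I would set $\psi^{\bf E}_x(v(x))\coloneqq\ell(\pi_\mm(v))_x$ for $v\in\mathcal C$. This is well posed: if $v,w\in\mathcal C$ satisfy $v(x)=w(x)$, then $v-w\in\mathcal C$ (since $\mathcal C$ is a $\mathbb Q$-vector space) and $(v-w)(x)=0$, so by the displayed identity $\|\ell(\pi_\mm(v-w))_x\|_{\ell{\bf E}_x}=0$, i.e.\ $\ell(\pi_\mm(v))_x=\ell(\pi_\mm(w))_x$, using the linearity of $\pi_\mm$, of the module lifting $\ell$, and of the value map $u\mapsto\ell u_x$. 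The same linearity makes the map $\mathbb Q$-linear, and the displayed identity makes it isometric on $\{v(x):v\in\mathcal C\}$. Since $\mathcal C$ is fiberwise dense in $\bar\Gamma({\bf E})$, this $\mathbb Q$-vector space is dense in ${\bf E}(x)$; being isometric (hence $1$-Lipschitz) with target the Banach space $\ell{\bf E}_x$, $\psi^{\bf E}_x$ extends uniquely to a $1$-Lipschitz map ${\bf E}(x)\to\ell{\bf E}_x$, which inherits the isometry property (the identity $\|\psi^{\bf E}_x(u)\|_{\ell{\bf E}_x}=\|u\|_{{\bf E}(x)}$ passes to limits) and is $\mathbb R$-linear (a continuous $\mathbb Q$-linear map between real normed spaces is automatically $\mathbb R$-linear). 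Uniqueness is immediate, as any two continuous maps satisfying \eqref{eq:def_psi_x} agree on the dense set $\{v(x):v\in\mathcal C\}$.

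I do not expect a genuine obstacle: this is a routine "extend an isometry off a dense subspace" argument. The two points requiring attention are (i) that the exceptional set $\tilde{\rm N}_{\bf E}$ can be chosen uniformly in $v\in\mathcal C$, which works precisely because $\mathcal C$ is countable, and (ii) recognising that the statement asks only for an isometric embedding, not a surjection onto $\ell{\bf E}_x$ — consistent with the fact, recalled in the introduction, that ${\bf E}(x)$ and $\ell{\bf E}_x$ need not coincide. One should also keep track of the dependence of the construction on the fixed data $\ell$ and $\mathcal C$, matching the notation $\tilde{\rm N}_{\bf E}=\tilde{\rm N}_{\bf E}[\ell,\mathcal C]$ and $\psi^{\bf E}_x=\psi^{\bf E}_x[\ell,\mathcal C]$ in the statement.
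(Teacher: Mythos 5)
Your proposal is correct and follows essentially the same route as the paper: the same exceptional set $\tilde{\rm N}_{\bf E}=\bigcup_{v\in\mathcal C}\{x:\ell|\pi_\mm(v)|(x)\neq\|v(x)\|_{{\bf E}(x)}\}$, the same identity $\|\ell(\pi_\mm(v))_x\|_{\ell{\bf E}_x}=\ell|\pi_\mm(v)|(x)=\|v(x)\|_{{\bf E}(x)}$ off that set, and the same extension of the resulting $\mathbb Q$-linear isometry from the fiberwise dense set $\{v(x):v\in\mathcal C\}$ to all of ${\bf E}(x)$. Your write-up is in fact a little more explicit than the paper's on why the set is $\mm$-null and on the well-posedness check via $v-w\in\mathcal C$.
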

\begin{proof}
We define the \(\mm\)-negligible set \(\tilde{\rm N}_{\bf E}\in\Sigma\) as
\[
\tilde{\rm N}_{\bf E}\coloneqq\bigcup_{v\in\mathcal C}\big\{x\in\X\;\big|\;\ell|\pi_\mm(v)|(x)\neq|v|(x)\big\}.
\]
Now, fix any \(x\in\X\setminus\tilde{\rm N}_{\bf E}\). Given any \(v\in\mathcal C\), we have that
\[
\|\ell(\pi_\mm(v))_x\|_{\ell{\bf E}_x}=\ell|\pi_\mm(v)|(x)=|v|(x)=\|v(x)\|_{{\bf E}(x)},
\]
which guarantees that the map \(\psi^{\bf E}_x\colon\{v(x):v\in\mathcal C\}\to\ell{\bf E}_x\) defined
as in \eqref{eq:def_psi_x} is well posed. It also follows that \(\psi^{\bf E}_x\) is a \(\mathbb Q\)-linear
isometry. Since \(\{v(x):v\in\mathcal C\}\) is a dense \(\mathbb Q\)-vector subspace of \({\bf E}(x)\), we conclude
that \(\psi^{\bf E}_x\) can be uniquely extended to a linear isometry \(\psi^{\bf E}_x\colon{\bf E}(x)\to\ell{\bf E}_x\).
\end{proof}
\begin{remark}\label{rmk:E_and_ellE}{\rm
Lemma \ref{lem:fibers_bundle_embed_lift} gives that \({\bf E}(x)\) is (isomorphic to) a subspace of \(\ell{\bf E}_x\) for
\(\mm\)-a.e.\ \(x\in\X\). However, one should not expect \({\bf E}(x)\cong\ell{\bf E}_x\), since -- in general --
the embeddings \(\psi^{\bf E}_x\colon{\bf E}(x)\hookrightarrow\ell{\bf E}_x\) can fail to be surjective for \emph{every} point
\(x\in\X\setminus\tilde{\rm N}_{\bf E}\), as we shall see in Proposition \ref{prop:psi_x_not_surj}.
\fr}\end{remark}

Our next goal is to define the projective tensor product \({\bf E}\hat\otimes_\pi{\bf F}\) of \({\bf E}\) and \({\bf F}\).
To this aim, we first need to introduce several auxiliary objects:
\begin{itemize}
\item We define the index set \(\mathcal I=\mathcal I[\mathcal C,\mathcal D]\) as the collection of all finite subsets of
\[
\big\{(\pi_\mm(v),\pi_\mm(w))\;\big|\;v\in\mathcal C,\,w\in\mathcal D\big\}.
\]
Note that \(\mathcal I\) is a countable family whose elements are finite subsets of \(\Gamma({\bf E})\times\Gamma({\bf F})\).
\item Given any element \(G\in\mathcal I\), we define the tensor \(\alpha^G\) as
\[
\alpha^G\coloneqq\sum_{(v,w)\in G}v\otimes w\in\Gamma({\bf E})\otimes_\pi\Gamma({\bf F}).
\]
\item We define the set \({\rm N}={\rm N}[\ell,\mathcal C,\mathcal D]\in\mathcal N_\mm\) as
\[
{\rm N}\coloneqq\tilde{\rm N}_{\bf E}\cup\tilde{\rm N}_{\bf F}\cup\bigcup_{G\in\mathcal I}{\rm N}_{\alpha^G},
\]
where \({\rm N}_{\alpha^G}\coloneqq{\rm N}_{\alpha^G}[\Gamma({\bf E}),\Gamma({\bf F}),\ell]\) is given by Lemma
\ref{lem:key_fibers_proj_tensor_isom_ae}, while the sets \(\tilde{\rm N}_{\bf E}\coloneqq\tilde{\rm N}_{\bf E}[\ell,\mathcal C]\)
and \(\tilde{\rm N}_{\bf F}\coloneqq\tilde{\rm N}_{\bf F}[\ell,\mathcal D]\) are given by Lemma \ref{lem:fibers_bundle_embed_lift}.
\item Letting \(\ell_\pi^x\coloneqq\ell_\pi^x[\Gamma({\bf E}),\Gamma({\bf F}),\ell]\) be as in Definition \ref{def:ell_pi^x}, we define
\[
\Lambda_x=\Lambda_x[\ell,\mathcal C,\mathcal D]\coloneqq
\left\{\begin{array}{ll}
{\rm cl}_{\ell{\bf E}_x\hat\otimes_\pi\ell{\bf F}_x}(\{\ell_\pi^x\alpha^G\;|\;G\in\mathcal I\}),\\
\{0\}
\end{array}\quad\begin{array}{ll}
\text{ for every }x\in\X\setminus{\rm N},\\
\text{ for every }x\in{\rm N}.
\end{array}\right.
\]
Since \(\{\ell_\pi^x\alpha^G:G\in\mathcal I\}\) is a \(\mathbb Q\)-vector subspace of \(\ell{\bf E}_x\hat\otimes_\pi\ell{\bf F}_x\), we deduce that
\(\Lambda_x\) is a closed vector subspace of \(\ell{\bf E}_x\hat\otimes_\pi\ell{\bf F}_x\), thus in particular it is a separable Banach space.
\item Given any \(G\in\mathcal I\), we define the element \(\beta_G\in\prod_{x\in\X}\Lambda_x\) as
\[
\beta_G(x)\coloneqq\left\{\begin{array}{ll}
\ell_\pi^x\alpha^G\\
0
\end{array}\quad\begin{array}{ll}
\text{ for every }x\in\X\setminus{\rm N},\\
\text{ for every }x\in{\rm N}.
\end{array}\right.
\]
\item Given any \(G\in\mathcal I\), we fix an element \(\omega^G\in{\rm Dual}(\alpha^G)\subseteq(\Gamma({\bf E})\hat\otimes_\pi\Gamma({\bf F}))^*\),
whose existence follows from Theorem \ref{thm:Hahn-Banach}. Fix \(x\in\X\setminus{\rm N}\). Taking
\(\iota_x\coloneqq\iota_x[\Gamma({\bf E}),\Gamma({\bf F}),\ell]\) as in Proposition \ref{prop:key_fibers_proj_tensor}
and \({\sf j}_x\coloneqq{\sf j}_x[\Gamma({\bf E})\hat\otimes_\pi\Gamma({\bf F}),\ell]\) as in Proposition \ref{prop:def_j_x},
we define \(\theta_G(x)\in\Lambda'_x\) as
\[
\langle\theta_G(x),\bar\alpha\rangle\coloneqq\langle\,{\sf j}_x(\ell\omega^G_x),\iota_x(\bar\alpha)\rangle
\quad\text{ for every }\bar\alpha\in\Lambda_x.
\]
Finally, we define \(\theta_G(x)\coloneqq 0\in\Lambda'_x\) for every \(x\in{\rm N}\).
\end{itemize}
\begin{lemma}\label{lem:frakE_bundle_aux}
Under the above assumptions, let us define
\[
\mathfrak E=\mathfrak E[\ell,\mathcal C,\mathcal D]\coloneqq
\big(\{\Lambda_x\}_{x\in\X},\{\beta_G\}_{G\in\mathcal I},\{\theta_G\}_{G\in\mathcal I}\big).
\]
Then \(\mathfrak E\) is a measurable collection of separable Banach spaces.
\end{lemma}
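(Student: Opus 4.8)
The plan is to verify, in turn, the four axioms in the definition of a measurable collection of separable Banach spaces for the triple $\mathfrak E=\bigl(\{\Lambda_x\}_{x\in\X},\{\beta_G\}_{G\in\mathcal I},\{\theta_G\}_{G\in\mathcal I}\bigr)$. Axioms~i) and~ii) are essentially built into the construction. For~i): if $x\in\X\setminus{\rm N}$ then $\Lambda_x$ is the closure in the separable Banach space $\ell{\bf E}_x\hat\otimes_\pi\ell{\bf F}_x$ of the countable $\mathbb Q$-vector subspace $\{\ell^x_\pi\alpha^G:G\in\mathcal I\}$, hence itself a separable Banach space, while $\Lambda_x=\{0\}$ for $x\in{\rm N}$. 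For~ii): if $x\notin{\rm N}$ then $\beta_G(x)=\ell^x_\pi\alpha^G$, so $\{\beta_G(x):G\in\mathcal I\}$ is dense in $\Lambda_x$ by the very definition of $\Lambda_x$, whereas if $x\in{\rm N}$ then $\Lambda_x=\{0\}$ and every $\beta_G(x)$ vanishes.

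The substance of the statement is the verification of axioms~iii) and~iv), which concern the functionals $\theta_G$. Fix $G\in\mathcal I$; since $\theta_G(x)=0$ and $\beta_G(x)=0$ whenever $x\in{\rm N}$, both axioms hold trivially there, so fix $x\in\X\setminus{\rm N}$ and recall that ${\rm N}_{\alpha^G}\subseteq{\rm N}$, which makes Lemma~\ref{lem:key_fibers_proj_tensor_isom_ae} available at $x$. For axiom~iii) I would first compute the pairing $\langle\theta_G(x),\beta_G(x)\rangle$ by combining the identity $\iota_x(\ell^x_\pi\alpha^G)=\ell\alpha^G_x$ of~\eqref{eq:formula_iota_x_ell_pi_x}, the defining property of ${\sf j}_x$ from Proposition~\ref{prop:def_j_x}, the membership $\omega^G\in{\rm Dual}(\alpha^G)$, and Lemma~\ref{lem:key_fibers_proj_tensor_isom_ae}:
\[
\begin{aligned}
\langle\theta_G(x),\beta_G(x)\rangle
&=\langle\,{\sf j}_x(\ell\omega^G_x),\ell\alpha^G_x\rangle
=\ell\langle\omega^G,\alpha^G\rangle(x)
=\ell|\alpha^G|_\pi(x)\\
&=\|\ell\alpha^G_x\|_{\ell(\Gamma({\bf E})\hat\otimes_\pi\Gamma({\bf F}))_x}
=\|\ell^x_\pi\alpha^G\|_{\ell{\bf E}_x\hat\otimes_\pi\ell{\bf F}_x}
=\|\beta_G(x)\|_{\Lambda_x}.
\end{aligned}
\]
This is precisely the pairing identity demanded by~iii), and when $\beta_G(x)\neq 0$ it forces $\|\theta_G(x)\|_{\Lambda'_x}\geq\langle\theta_G(x),\beta_G(x)\rangle/\|\beta_G(x)\|_{\Lambda_x}=1$, that is $\|\theta_G(x)\|_{\Lambda'_x}\geq\1_{\{\beta_G(\star)\neq 0\}}(x)$ for every $x\notin{\rm N}$. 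For the reverse inequality I would use that ${\sf j}_x$ is an isometric embedding (Proposition~\ref{prop:def_j_x}), that $\|\ell\omega^G_x\|=\ell|\omega^G|(x)=\ell\bigl(\1^\mm_{\{|\alpha^G|_\pi>0\}}\bigr)(x)\leq 1$ since $\omega^G\in{\rm Dual}(\alpha^G)$, and that $\iota_x$ is $1$-Lipschitz (Proposition~\ref{prop:key_fibers_proj_tensor}): for $\bar\alpha\in\Lambda_x$ one then has $|\langle\theta_G(x),\bar\alpha\rangle|=|\langle\,{\sf j}_x(\ell\omega^G_x),\iota_x(\bar\alpha)\rangle|\leq\|\bar\alpha\|_{\Lambda_x}$, hence $\|\theta_G(x)\|_{\Lambda'_x}\leq 1$; combined with the lower bound this yields the normalisation identity $\|\theta_G(x)\|_{\Lambda'_x}=\1_{\{\beta_G(\star)\neq 0\}}(x)$.

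Axiom~iv) is then quick: for $x\notin{\rm N}$ the same manipulations (via~\eqref{eq:formula_iota_x_ell_pi_x} and Proposition~\ref{prop:def_j_x}) give $\langle\theta_G(x),\beta_{G'}(x)\rangle=\langle\,{\sf j}_x(\ell\omega^G_x),\ell\alpha^{G'}_x\rangle=\ell\langle\omega^G,\alpha^{G'}\rangle(x)$ for every $G'\in\mathcal I$, while this quantity equals $0$ for $x\in{\rm N}$; hence $x\mapsto\langle\theta_G(x),\beta_{G'}(x)\rangle$ coincides with $\1_{\X\setminus{\rm N}}\cdot\ell\langle\omega^G,\alpha^{G'}\rangle$, which lies in $\mathcal L^\infty(\Sigma)$ because $\langle\omega^G,\alpha^{G'}\rangle\in L^\infty(\mm)$ and ${\rm N}\in\Sigma$ is a countable union of $\mm$-null sets; in particular it is measurable. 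The point I expect to require the most care is the \emph{exact} equality in the normalisation identity of~iii), which hinges on keeping track of precisely where $\ell\omega^G_x$ (and hence $\theta_G(x)$) vanishes; this is exactly why the exceptional set ${\rm N}$ was designed to absorb ${\rm N}_{\alpha^G}$ for \emph{every} $G\in\mathcal I$ together with the sets $\tilde{\rm N}_{\bf E},\tilde{\rm N}_{\bf F}$ of Lemma~\ref{lem:fibers_bundle_embed_lift}, so that Lemma~\ref{lem:key_fibers_proj_tensor_isom_ae} and the embeddings $\psi^{\bf E}_x,\psi^{\bf F}_x$ are all simultaneously available off ${\rm N}$. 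Everything else reduces to routine bookkeeping with the lifting operations.
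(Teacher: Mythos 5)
Your overall route is the same as the paper's: axioms i) and ii) are built into the construction, axiom iv) follows from the identity \(\langle\theta_H(x),\beta_G(x)\rangle=\1_{\X\setminus{\rm N}}(x)\,\ell\langle\omega^H,\alpha^G\rangle(x)\), and the heart of the matter is the normalisation in axiom iii), for which your pairing computation \(\langle\theta_G(x),\beta_G(x)\rangle=\|\beta_G(x)\|_{\Lambda_x}\) via \eqref{eq:formula_iota_x_ell_pi_x}, Proposition \ref{prop:def_j_x} and Lemma \ref{lem:key_fibers_proj_tensor_isom_ae} is exactly the paper's.

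There is, however, a genuine gap precisely at the point you yourself flag as delicate. Your two bounds are \(\|\theta_G(x)\|_{\Lambda'_x}\geq\1_{\{\beta_G(\star)\neq 0\}}(x)\) and \(\|\theta_G(x)\|_{\Lambda'_x}\leq 1\); together these give \(\|\theta_G(x)\|_{\Lambda'_x}=1\) where \(\beta_G(x)\neq 0\), but on the set \(\{\beta_G(\star)=0\}\setminus{\rm N}\) (which need not be empty, e.g.\ wherever \(\ell|\alpha^G|_\pi\) vanishes) they only yield \(0\leq\|\theta_G(x)\|_{\Lambda'_x}\leq 1\), whereas axiom iii) demands \(\theta_G(x)=0\) there. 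The exceptional set \({\rm N}\) does \emph{not} absorb this case: \({\rm N}\) only guarantees that Lemma \ref{lem:key_fibers_proj_tensor_isom_ae} and the embeddings \(\psi^{\bf E}_x,\psi^{\bf F}_x\) are available, not that \(\beta_G\) is nonvanishing off \({\rm N}\). The repair is short and you already have the ingredient in hand: do not discard the exact value of \(\|\ell\omega^G_x\|\). Since \(\omega^G\in{\rm Dual}(\alpha^G)\), one has
\[
\|\theta_G(x)\|_{\Lambda'_x}\leq\|\,{\sf j}_x(\ell\omega^G_x)\|_{(\ell(\Gamma({\bf E})\hat\otimes_\pi\Gamma({\bf F}))_x)'}
=\ell|\omega^G|(x)=\ell\1^\mm_{\{|\alpha^G|_\pi>0\}}(x)=\1_{\ell\{|\alpha^G|_\pi>0\}}(x),
\]
and the right-hand side is identified with \(\1_{\{\beta_G(\star)\neq 0\}}(x)\) because, off \({\rm N}\), \(\|\beta_G(x)\|_{\Lambda_x}=\|\ell\alpha^G_x\|_{\ell(\Gamma({\bf E})\hat\otimes_\pi\Gamma({\bf F}))_x}=\ell|\alpha^G|_\pi(x)\) by Lemma \ref{lem:key_fibers_proj_tensor_isom_ae}. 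This sharper upper bound, which is exactly the chain the paper runs, kills \(\theta_G(x)\) where \(\beta_G(x)=0\) and, combined with your lower bound, gives the full identity \(\|\theta_G(x)\|_{\Lambda'_x}=\1_{\{\beta_G(\star)\neq 0\}}(x)\). Everything else in your proposal is correct and matches the paper.
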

\begin{proof}
First, observe that \(\Lambda_x={\rm cl}_{\Lambda_x}(\{\beta_G(x):G\in\mathcal I\})\) for every \(x\in\X\)
by construction. Moreover, given any \(H,G\in\mathcal I\) and \(x\in\X\), we can compute
\begin{equation}\label{eq:frakE_formula_aux}\begin{split}
\langle\theta_H(x),\beta_G(x)\rangle&=\1_{\X\setminus{\rm N}}(x)\langle\,{\sf j}_x(\ell\omega^H_x),\iota_x(\ell_\pi^x\alpha^G)\rangle
=\1_{\X\setminus{\rm N}}(x)\sum_{(v,w)\in G}\langle\,{\sf j}_x(\ell\omega^H_x),\iota_x(\ell v_x\otimes\ell w_x)\rangle\\
&=\1_{\X\setminus{\rm N}}(x)\sum_{(v,w)\in G}\langle\,{\sf j}_x(\ell\omega^H_x),\ell(v\otimes w)_x\rangle
=\1_{\X\setminus{\rm N}}(x)\sum_{(v,w)\in G}\ell\langle\omega^H,v\otimes w\rangle(x)\\
&=\1_{\X\setminus{\rm N}}(x)\,\ell\langle\omega^H,\alpha^G\rangle(x),
\end{split}\end{equation}
thus in particular \(\X\ni x\mapsto\langle\theta_H(x),\beta_G(x)\rangle\in\R\) is measurable. Finally, if
\(x\in{\rm N}\) then we have \(0=\|\beta_G(x)\|_{\Lambda_x}=\|\theta_G(x)\|_{\Lambda'_x}=\langle\theta_G(x),\beta_G(x)\rangle\),
while if \(x\in\X\setminus{\rm N}\) then \eqref{eq:frakE_formula_aux} and Lemma \ref{lem:key_fibers_proj_tensor_isom_ae} give
\[\begin{split}
\langle\theta_G(x),\beta_G(x)\rangle&=\ell\langle\omega^G,\alpha^G\rangle(x)=\ell|\alpha^G|_\pi(x)=\|\ell\alpha^G_x\|_{\ell(\Gamma({\bf E})\hat\otimes_\pi\Gamma({\bf F}))_x}
=\|\ell_\pi^x\alpha^G\|_{\ell{\bf E}_x\hat\otimes_\pi\ell{\bf F}_x}\\
&=\|\beta_G(x)\|_{\Lambda_x},\\
\|\theta_G(x)\|_{\Lambda'_x}&=\|\,{\sf j}_x(\ell\omega^G_x)\circ\iota_x\|_{\Lambda'_x}
\leq\|\,{\sf j}_x(\ell\omega^G_x)\|_{(\ell(\Gamma({\bf E})\hat\otimes_\pi\Gamma({\bf F}))_x)'}
=\|\ell\omega^G_x\|_{\ell((\Gamma({\bf E})\hat\otimes_\pi\Gamma({\bf F}))^*)_x}\\
&=\ell|\omega^G|(x)=\ell\1_{\{|\alpha^G|_\pi>0\}}^\mm(x)=\1_{\ell\{|\alpha^G|_\pi>0\}}(x)
=\1_{\{\ell\alpha^G_\star\neq 0\}}(x)=\1_{\{\beta_G(\star)\neq 0\}}(x).
\end{split}\]
All in all, we have shown that \(\mathfrak E\) is a measurable collection of separable Banach spaces.
\end{proof}

In view of Lemma \ref{lem:frakE_bundle_aux}, we can now give the following definition.
\begin{definition}[Projective tensor products of separable Banach bundles]
Let \(\mathfrak E\coloneqq\mathfrak E[\ell,\mathcal C,\mathcal D]\) be as in Lemma \ref{lem:frakE_bundle_aux}.
Fix a measurable collection of linear isometric embeddings \({\rm I}_\star=\{{\rm I}_x\}_{x\in\X}\) associated
with \(\mathfrak E\), whose existence is guaranteed by Theorem \ref{thm:embed_meas_coll}. Then we define
\[
{\bf E}\hat\otimes_\pi{\bf F}\colon\X\twoheadrightarrow\mathbb U
\]
as the separable Banach \(\mathbb U\)-bundle induced by \({\rm I}_\star\). We call \({\bf E}\hat\otimes_\pi{\bf F}\)
the \textbf{projective tensor product} of \({\bf E}\) and \({\bf F}\).
\end{definition}
\begin{remark}[Identification of the fibers of \({\bf E}\hat\otimes_\pi{\bf F}\)]\label{rmk:fibers_proj_bundle}{\rm
Under the above assumptions, we claim that
\begin{equation}\label{eq:char_fibers_proj_bundle}
\Lambda_x={\rm cl}_{\ell{\bf E}_x\hat\otimes_\pi\ell{\bf F}_x}\big(\psi_x^{\bf E}({\bf E}(x))\otimes\psi_x^{\bf F}({\bf F}(x))\big)
\quad\text{ for every }x\in\X\setminus{\rm N},
\end{equation}
where \(\psi_x^{\bf E}\coloneqq\psi_x^{\bf E}[\ell,\mathcal C]\colon{\bf E}(x)\to\ell{\bf E}_x\) and
\(\psi_x^{\bf F}\coloneqq\psi_x^{\bf F}[\ell,\mathcal D]\colon{\bf F}(x)\to\ell{\bf F}_x\) are given by Lemma
\ref{lem:fibers_bundle_embed_lift}. Indeed, for any finite subset \(\bar G\) of \(\mathcal C\times\mathcal D\)
we have that \(G\coloneqq\{(\pi_\mm(v),\pi_\mm(w)):(v,w)\in\bar G\}\in\mathcal I\) and
\[
\beta_G(x)=\ell_\pi^x\alpha^G=\sum_{(v,w)\in\bar G}\ell(\pi_\mm(v))_x\otimes\ell(\pi_\mm(w))_x
=\sum_{(v,w)\in\bar G}\psi_x^{\bf E}(v(x))\otimes\psi_x^{\bf F}(w(x)).
\]
Since \(\mathcal C\) and \(\mathcal D\) are fiberwise dense in \(\bar\Gamma({\bf E})\) and \(\bar\Gamma({\bf F})\),
respectively, and \(\psi_x^{\bf E}\), \(\psi_x^{\bf F}\) are linear isometries by Lemma \ref{lem:fibers_bundle_embed_lift},
we deduce that \(\{\psi_x^{\bf E}(v(x)):v\in\mathcal C\}\) and \(\{\psi_x^{\bf F}(w(x)):w\in\mathcal D\}\) are dense
\(\mathbb Q\)-vector subspaces of \(\psi_x^{\bf E}({\bf E}(x))\) and \(\psi_x^{\bf F}({\bf F}(x))\), respectively.
Recalling Remark \ref{rmk:tensor_Banach_separable}, we thus obtain \eqref{eq:char_fibers_proj_bundle}.
Since \(({\bf E}\hat\otimes_\pi{\bf F})(x)={\rm I}_x(\Lambda_x)\cong\Lambda_x\) for every \(x\in\X\) by Theorem
\ref{thm:embed_meas_coll}, it follows from \eqref{eq:char_fibers_proj_bundle} that
\begin{equation}\label{eq:char_fibers_proj_bundle_final}
({\bf E}\hat\otimes_\pi{\bf F})(x)\cong{\rm cl}_{\ell{\bf E}_x\hat\otimes_\pi\ell{\bf F}_x}(\psi_x^{\bf E}\big({\bf E}(x))\otimes\psi_x^{\bf F}({\bf F}(x))\big)\quad\text{ for }\mm\text{-a.e.\ }x\in\X.
\end{equation}
In light of Remark \ref{rmk:proj_tensor_products_subspaces} and Proposition \ref{prop:psi_x_not_surj}, it might happen that
\(({\bf E}\hat\otimes_\pi{\bf F})(x)\ncong{\bf E}(x)\hat\otimes_\pi{\bf F}(x)\) for every \(x\) belonging to some set
of positive \(\mm\)-measure. However, sufficient conditions ensuring that \(({\bf E}\hat\otimes_\pi{\bf F})(x)\cong{\bf E}(x)\hat\otimes_\pi{\bf F}(x)\)
for \(\mm\)-a.e.\ \(x\) will be given in Theorem \ref{thm:suff_ident_proj_fibers} and Corollary \ref{cor:proj_Leb-Boch}.
\fr}\end{remark}
\begin{theorem}[The identification \(\Gamma({\bf E}\hat\otimes_\pi{\bf F})\cong\Gamma({\bf E})\hat\otimes_\pi\Gamma({\bf F})\)]
\label{thm:proj_mod}
Under the above assumptions, there exists a unique bounded \(L^\infty(\mm)\)-bilinear operator
\(\mathfrak p\colon\Gamma({\bf E})\times\Gamma({\bf F})\to\Gamma({\bf E}\hat\otimes_\pi{\bf F})\) such that
\begin{equation}\label{eq:def_frakp}
\mathfrak p(\pi_\mm(v),\pi_\mm(w))=\pi_\mm\big({\rm I}_\star(\beta_{v,w}(\star))\big)\quad\text{ for every }v\in\mathcal C\text{ and }w\in\mathcal D.
\end{equation}
where we set \(\beta_{v,w}\coloneqq\beta_{\{(\pi_\mm(v),\pi_\mm(w))\}}\) for brevity. Moreover, it holds that
\begin{equation}\label{eq:char_frakp}
(\Gamma({\bf E}\hat\otimes_\pi{\bf F}),\mathfrak p)\cong(\Gamma({\bf E})\hat\otimes_\pi\Gamma({\bf F}),\otimes).
\end{equation}
\end{theorem}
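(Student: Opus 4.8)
The plan is to produce an isomorphism of $L^\infty(\mm)$-Banach $L^\infty(\mm)$-modules $\tilde{\mathfrak p}_\pi\colon\Gamma({\bf E})\hat\otimes_\pi\Gamma({\bf F})\to\Gamma({\bf E}\hat\otimes_\pi{\bf F})$ with $\tilde{\mathfrak p}_\pi\circ\otimes=\mathfrak p$, which by Theorem \ref{thm:proj_tens_Ban_mod} and the remark following it is exactly what \eqref{eq:char_frakp} asserts. First I would construct $\mathfrak p$. Put $V_1\coloneqq\{\pi_\mm(v):v\in\mathcal C\}$ and $V_2\coloneqq\{\pi_\mm(w):w\in\mathcal D\}$; since $\mathcal C$, $\mathcal D$ are fiberwise dense these are generating $\mathbb Q$-vector subspaces of $\Gamma({\bf E})$, $\Gamma({\bf F})$ (Proposition \ref{prop:select_dense}). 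For $(a,b)\in V_1\times V_2$ write $\beta_{a,b}\coloneqq\beta_{\{(a,b)\}}$ and $\varphi(a,b)\coloneqq\pi_\mm\big({\rm I}_\star(\beta_{a,b}(\star))\big)\in\Gamma({\bf E}\hat\otimes_\pi{\bf F})$, which is well defined because $x\mapsto{\rm I}_x(\beta_{a,b}(x))$ is bounded and measurable by Theorem \ref{thm:embed_meas_coll}. Using that $\beta_{a,b}(x)=\ell a_x\otimes\ell b_x$ for $x\in\X\setminus{\rm N}$ (and $0$ on ${\rm N}$), that $v\mapsto\ell v_x$ and ${\rm I}_x$ are linear, that each ${\rm I}_x$ is an isometric embedding with $({\bf E}\hat\otimes_\pi{\bf F})(x)={\rm I}_x(\Lambda_x)$, and that the projective norm of an elementary tensor is the product of the norms, one checks that $\varphi$ is $\mathbb Q$-bilinear and $|\varphi(a,b)|=|a||b|$. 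By Proposition \ref{prop:extension_bdd_multilin} there is a unique $\mathfrak p\in{\rm B}(\Gamma({\bf E}),\Gamma({\bf F});\Gamma({\bf E}\hat\otimes_\pi{\bf F}))$ extending $\varphi$, with $|\mathfrak p|\le 1$; the same uniqueness shows that $\mathfrak p$ is the only bounded $L^\infty(\mm)$-bilinear operator satisfying \eqref{eq:def_frakp}, which gives the first part of the statement.

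Next, by Theorem \ref{thm:proj_tens_Ban_mod} factor $\mathfrak p=\tilde{\mathfrak p}_\pi\circ\otimes$ with $\tilde{\mathfrak p}_\pi\in\textsc{Hom}(\Gamma({\bf E})\hat\otimes_\pi\Gamma({\bf F});\Gamma({\bf E}\hat\otimes_\pi{\bf F}))$; as $b\mapsto\tilde b_\pi$ preserves the pointwise norm, $|\tilde{\mathfrak p}_\pi|=|\mathfrak p|\le 1$, so $|\tilde{\mathfrak p}_\pi(\alpha)|\le|\alpha|$ for every $\alpha$. Setting $\gamma_G\coloneqq{\rm I}_\star(\beta_G(\star))\in\bar\Gamma({\bf E}\hat\otimes_\pi{\bf F})$ for $G\in\mathcal I$ and using $\sum_{(a,b)\in G}\beta_{a,b}(x)=\ell^x_\pi\alpha^G=\beta_G(x)$, one gets $\tilde{\mathfrak p}_\pi(\alpha^G)=\pi_\mm(\gamma_G)$. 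Since $\Lambda_x={\rm cl}(\{\beta_G(x):G\in\mathcal I\})$ for $x\in\X\setminus{\rm N}$ and ${\rm I}_x$ is an isometric embedding with $({\bf E}\hat\otimes_\pi{\bf F})(x)={\rm I}_x(\Lambda_x)$, the family $\{\gamma_G\}_{G\in\mathcal I}$ is fiberwise dense in $\bar\Gamma({\bf E}\hat\otimes_\pi{\bf F})$, hence $\{\pi_\mm(\gamma_G)\}_{G\in\mathcal I}$ generates $\Gamma({\bf E}\hat\otimes_\pi{\bf F})$; it is moreover a $\mathbb Q$-vector subspace (one verifies $\gamma_{G_1}+\gamma_{G_2}=\gamma_{G_3}$ and $q\gamma_G=\gamma_{G'}$, where $\alpha^{G_3}=\alpha^{G_1}+\alpha^{G_2}$, $\alpha^{G'}=q\alpha^G$). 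Likewise $\{\alpha^G\}_{G\in\mathcal I}$ coincides with the $\mathbb Q$-span of $\{a\otimes b:a\in V_1,\,b\in V_2\}$, hence is a generating $\mathbb Q$-vector subspace of $\Gamma({\bf E})\hat\otimes_\pi\Gamma({\bf F})$.

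To produce the inverse, define $\psi\colon\{\pi_\mm(\gamma_G):G\in\mathcal I\}\to\Gamma({\bf E})\hat\otimes_\pi\Gamma({\bf F})$ by $\psi(\pi_\mm(\gamma_G))\coloneqq\alpha^G$. This is well posed: if $\pi_\mm(\gamma_{G_1})=\pi_\mm(\gamma_{G_2})$, then ${\rm I}_x$ being injective forces $\ell^x_\pi(\alpha^{G_1}-\alpha^{G_2})=0$ for $\mm$-a.e.\ $x$, whence $\ell|\alpha^{G_1}-\alpha^{G_2}|_\pi(x)=\|\ell(\alpha^{G_1}-\alpha^{G_2})_x\|=\|\ell^x_\pi(\alpha^{G_1}-\alpha^{G_2})\|=0$ for $\mm$-a.e.\ $x$ by Lemma \ref{lem:key_fibers_proj_tensor_isom_ae}, so $|\alpha^{G_1}-\alpha^{G_2}|_\pi=0$ and $\alpha^{G_1}=\alpha^{G_2}$. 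The same lemma yields $\|\gamma_G(x)\|_{({\bf E}\hat\otimes_\pi{\bf F})(x)}=\|\beta_G(x)\|_{\Lambda_x}=\|\ell^x_\pi\alpha^G\|=\ell|\alpha^G|_\pi(x)$ for $\mm$-a.e.\ $x$, i.e.\ $|\pi_\mm(\gamma_G)|=|\alpha^G|_\pi=|\psi(\pi_\mm(\gamma_G))|$; in particular $\psi$ is $\mathbb Q$-linear and $|\psi(\cdot)|\le|\cdot|$ on the generating $\mathbb Q$-subspace $\{\pi_\mm(\gamma_G)\}$. By Proposition \ref{prop:extension_bdd_multilin} (with $k=1$), $\psi$ extends to a unique $\Psi\in\textsc{Hom}(\Gamma({\bf E}\hat\otimes_\pi{\bf F});\Gamma({\bf E})\hat\otimes_\pi\Gamma({\bf F}))$ with $|\Psi(\beta)|\le|\beta|$ for all $\beta$.

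Finally, the bounded $L^\infty(\mm)$-linear endomorphisms $\Psi\circ\tilde{\mathfrak p}_\pi$ and $\mathrm{id}$ of $\Gamma({\bf E})\hat\otimes_\pi\Gamma({\bf F})$ agree on the generating $\mathbb Q$-subspace $\{\alpha^G\}_{G\in\mathcal I}$, since $\Psi(\tilde{\mathfrak p}_\pi(\alpha^G))=\Psi(\pi_\mm(\gamma_G))=\alpha^G$; hence they coincide by the uniqueness in Proposition \ref{prop:extension_bdd_multilin}. Symmetrically $\tilde{\mathfrak p}_\pi\circ\Psi=\mathrm{id}$ on $\Gamma({\bf E}\hat\otimes_\pi{\bf F})$, as both sides agree on $\{\pi_\mm(\gamma_G)\}_{G\in\mathcal I}$. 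Thus $\tilde{\mathfrak p}_\pi$ is bijective with inverse $\Psi$, and combining $|\tilde{\mathfrak p}_\pi(\alpha)|\le|\alpha|$ with $|\Psi(\beta)|\le|\beta|$ gives $|\alpha|=|\Psi(\tilde{\mathfrak p}_\pi(\alpha))|\le|\tilde{\mathfrak p}_\pi(\alpha)|\le|\alpha|$, so $\tilde{\mathfrak p}_\pi$ preserves the pointwise norm; being an $L^\infty(\mm)$-linear bijection, it is an isomorphism of $L^\infty(\mm)$-Banach $L^\infty(\mm)$-modules, and since $\tilde{\mathfrak p}_\pi\circ\otimes=\mathfrak p$ this is precisely \eqref{eq:char_frakp}. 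The delicate point, on which I would focus, is the well-posedness and pointwise-norm preservation of $\psi$: both rest on Lemma \ref{lem:key_fibers_proj_tensor_isom_ae}, which identifies for $\mm$-a.e.\ $x$ the norm of $\Lambda_x\subseteq\ell{\bf E}_x\hat\otimes_\pi\ell{\bf F}_x$ with the norm of $\ell(\Gamma({\bf E})\hat\otimes_\pi\Gamma({\bf F}))_x$, combined with the isometry of the embeddings ${\rm I}_x$.
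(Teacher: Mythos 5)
Your construction of \(\mathfrak p\) is essentially the paper's: define it on \(\{\pi_\mm(v):v\in\mathcal C\}\times\{\pi_\mm(w):w\in\mathcal D\}\), check \(|\mathfrak p(\pi_\mm(v),\pi_\mm(w))|=|\pi_\mm(v)||\pi_\mm(w)|\) via the isometry of \({\rm I}_x\) and the norm identity on \(\Lambda_x\), and extend by Proposition \ref{prop:extension_bdd_multilin}. For the identification \eqref{eq:char_frakp}, however, you take a genuinely different route, and it is correct. The paper verifies the universal property of Theorem \ref{thm:proj_tens_Ban_mod} directly: for an arbitrary \(L^\infty(\mm)\)-Banach module \(\mathscr Q\) and an arbitrary \(b\in{\rm B}(\Gamma({\bf E}),\Gamma({\bf F});\mathscr Q)\) it defines \(T(\pi_\mm(\bar\beta_G))\coloneqq\sum_{(v,w)\in G}b(\pi_\mm(v),\pi_\mm(w))\) and proves the key estimate \(\big|\sum_{(v,w)\in G}b(\pi_\mm(v),\pi_\mm(w))\big|\leq|b||\pi_\mm(\bar\beta_G)|\) by lifting \(b\) to the fibers via Corollary \ref{cor:trace_lift_bdd_bilin} and using \(\|\widetilde{(\ell b_x)}_\pi(\ell^x_\pi\alpha^G)\|\leq\|\ell b_x\|\,\|\ell^x_\pi\alpha^G\|\). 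You instead build an explicit two-sided inverse: the linearisation \(\tilde{\mathfrak p}_\pi\) is \(1\)-Lipschitz, and the map \(\psi(\pi_\mm(\gamma_G))\coloneqq\alpha^G\) is well posed and isometric on the generating \(\mathbb Q\)-subspace because \(\{\alpha^G\}_{G\in\mathcal I}\) is closed under \(\mathbb Q\)-linear combinations and Lemma \ref{lem:key_fibers_proj_tensor_isom_ae} gives \(|\pi_\mm(\gamma_G)|=|\alpha^G|_\pi\); extending by Proposition \ref{prop:extension_bdd_multilin} and comparing the two compositions on generators yields the isometric isomorphism, and the universal property for \((\Gamma({\bf E}\hat\otimes_\pi{\bf F}),\mathfrak p)\) then transports along it. Your argument avoids quantifying over all \((\mathscr Q,b)\) and leans only on the norm identity of Lemma \ref{lem:key_fibers_proj_tensor_isom_ae} (which the paper uses anyway inside Lemma \ref{lem:frakE_bundle_aux}), at the modest cost of the extra bookkeeping needed to show that \(\{\alpha^G\}_{G\in\mathcal I}\) and \(\{\pi_\mm(\gamma_G)\}_{G\in\mathcal I}\) are generating \(\mathbb Q\)-vector subspaces and that \(\psi\) is well defined; the paper's route produces the linearisation of an arbitrary bilinear map explicitly, which is slightly more information. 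You correctly isolate the delicate point, namely that well-posedness and isometry of \(\psi\) rest on Lemma \ref{lem:key_fibers_proj_tensor_isom_ae} combined with the injectivity and isometry of the embeddings \({\rm I}_x\).
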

\begin{proof}
First, we define the map \(\mathfrak p\colon\{\pi_\mm(v):v\in\mathcal C\}\times\{\pi_\mm(w):w\in\mathcal D\}\to\Gamma({\bf E}\hat\otimes_\pi{\bf F})\) as in
\eqref{eq:def_frakp}. Given any \((v,w)\in\mathcal C\times\mathcal D\) and \(x\in\X\setminus{\rm N}\), we deduce from Lemma \ref{lem:key_fibers_proj_tensor_isom_ae} that
\[\begin{split}
\|{\rm I}_x(\beta_{v,w}(x))\|_{({\bf E}\hat\otimes_\pi{\bf F})(x)}&=\|\beta_{v,w}(x)\|_{\Lambda_x}
=\big\|\ell(\pi_\mm(v))_x\otimes\ell(\pi_\mm(w))_x\big\|_{\ell{\bf E}_x\hat\otimes_\pi\ell{\bf F}_x}\\
&=\big\|\ell(\pi_\mm(v)\otimes\pi_\mm(w))_x\big\|_{\ell(\Gamma({\bf E})\hat\otimes_\pi\Gamma({\bf F}))_x}
=\ell|\pi_\mm(v)\otimes\pi_\mm(w)|_\pi(x)\\
&=\ell\big(|\pi_\mm(v)||\pi_\mm(w)|\big)(x)=\ell|\pi_\mm(v)|(x)\,\ell|\pi_\mm(w)|(x),
\end{split}\]
whence it follows that
\[
\big|\pi_\mm\big({\rm I}_\star(\beta_{v,w}(\star))\big)\big|=|\pi_\mm(v)||\pi_\mm(w)|\quad\text{ for every }(v,w)\in\mathcal C\times\mathcal D.
\]
This shows that the map \(\mathfrak p\) is well defined. The fact that \(\mathcal C\) and \(\mathcal D\) are fiberwise dense \(\mathbb Q\)-vector subspaces
of \(\bar\Gamma({\bf E})\) and \(\bar\Gamma({\bf F})\), respectively, implies that \(\{\pi_\mm(v):v\in\mathcal C\}\) and \(\{\pi_\mm(w):w\in\mathcal D\}\)
are generating \(\mathbb Q\)-vector subspaces of \(\Gamma({\bf E})\) and \(\Gamma({\bf F})\), respectively. Since we have that \(\mathfrak p\) is
\(\mathbb Q\)-bilinear by construction, we know from Proposition \ref{prop:extension_bdd_multilin} that \(\mathfrak p\) can be uniquely extended to an operator
\[
\mathfrak p\in{\rm B}(\Gamma({\bf E}),\Gamma({\bf F});\Gamma({\bf E}\hat\otimes_\pi{\bf F})).
\]
Let us now pass to the verification of \eqref{eq:char_frakp}. Our goal is to show that the pair \((\Gamma({\bf E}\hat\otimes_\pi{\bf F}),\mathfrak p)\) satisfies the universal
property that is stated in Theorem \ref{thm:proj_tens_Ban_mod}. To this aim, fix an \(L^\infty(\mm)\)-Banach \(L^\infty(\mm)\)-module \(\mathscr Q\) and an operator
\(b\in{\rm B}(\Gamma({\bf E}),\Gamma({\bf F});\mathscr Q)\). We claim that there exists a unique operator \(T\in\textsc{Hom}(\Gamma({\bf E}\hat\otimes_\pi{\bf F});\mathscr Q)\)
such that the diagram
\begin{equation}\label{eq:diagram_for_T}\begin{tikzcd}
\Gamma({\bf E})\times\Gamma({\bf F}) \arrow[d,swap,"\mathfrak p"] \arrow[r,"b"] & \mathscr Q \\
\Gamma({\bf E}\hat\otimes_\pi{\bf F}) \arrow[ur,swap,"T"]
\end{tikzcd}\end{equation}
commutes. Observe that, given any \(G\in\mathcal I\) and letting \(\bar\beta_G\coloneqq{\rm I}_\star(\beta_G(\star))\in\bar\Gamma({\bf E}\hat\otimes_\pi{\bf F})\) for brevity,
we are forced to define
\begin{equation}\label{eq:def_univ_morph_T}
T(\pi_\mm(\bar\beta_G))\coloneqq\sum_{(v,w)\in G}b(\pi_\mm(v),\pi_\mm(w)).
\end{equation}
Let us check that the above definition is well posed. For any \(x\in\X\setminus{\rm N}\), Corollary \ref{cor:trace_lift_bdd_bilin} yields
\[\begin{split}
\bigg\|\ell\bigg(\sum_{(v,w)\in G}b(\pi_\mm(v),\pi_\mm(w))\bigg)_x\bigg\|_{\ell\mathscr Q_x}&=\bigg\|\sum_{(v,w)\in G}\ell\big(b(\pi_\mm(v),\pi_\mm(w))\big)_x\bigg\|_{\ell\mathscr Q_x}\\
&=\bigg\|\sum_{(v,w)\in G}(\ell b_x)\big(\ell(\pi_\mm(v))_x,\ell(\pi_\mm(w))_x\big)\bigg\|_{\ell\mathscr Q_x}\\
&=\bigg\|\sum_{(v,w)\in G}\widetilde{(\ell b_x)}_\pi\big(\ell(\pi_\mm(v))_x\otimes\ell(\pi_\mm(w))_x\big)\bigg\|_{\ell\mathscr Q_x}\\
&=\big\|\widetilde{(\ell b_x)}_\pi(\ell_\pi^x\alpha^G)\big\|_{\ell\mathscr Q_x}\\
&\leq\big\|\widetilde{(\ell b_x)}_\pi\big\|_{\textsc{Hom}(\ell{\bf E}_x\hat\otimes_\pi\ell{\bf F}_x;\ell\mathscr Q_x)}\|\ell_\pi^x\alpha^G\|_{\ell{\bf E}_x\hat\otimes_\pi\ell{\bf F}_x}\\
&=\|\ell b_x\|_{{\rm B}(\ell{\bf E}_x,\ell{\bf F}_x;\ell\mathscr Q_x)}\|\beta_G(x)\|_{\Lambda_x}=\ell|b|(x)\,\|\bar\beta_G(x)\|_{({\bf E}\hat\otimes_\pi{\bf F})(x)}.
\end{split}\]
It follows that
\[
\bigg|\sum_{(v,w)\in G}b(\pi_\mm(v),\pi_\mm(w))\bigg|\leq|b||\pi_\mm(\bar\beta_G)|\quad\text{ for every }G\in\mathcal I,
\]
thus in particular the definition in \eqref{eq:def_univ_morph_T} is well posed. Note that \(\{\pi_\mm(\bar\beta_G):G\in\mathcal I\}\) is a \(\mathbb Q\)-vector subspace
of \(\Gamma({\bf E}\hat\otimes_\pi{\bf F})\) and the map \(T\colon\{\pi_\mm(\bar\beta_G):G\in\mathcal I\}\to\mathscr Q\) is \(\mathbb Q\)-linear by construction.
Since it can also be easily shown that \(\{\pi_\mm(\bar\beta_G):G\in\mathcal I\}\) generates \(\Gamma({\bf E}\hat\otimes_\pi{\bf F})\), we know from Proposition
\ref{prop:extension_bdd_multilin} that \(T\) can be uniquely extended to an operator \(T\in\textsc{Hom}(\Gamma({\bf E}\hat\otimes_\pi{\bf F});\mathscr Q)\).
Clearly, \(T\) is the unique element of \(\textsc{Hom}(\Gamma({\bf E}\hat\otimes_\pi{\bf F});\mathscr Q)\) for which \eqref{eq:diagram_for_T} is a commutative diagram.
Due to the arbitrariness of \(\mathscr Q\) and \(b\), it follows that \eqref{eq:char_frakp} holds, thus the statement is finally achieved.
\end{proof}
\begin{theorem}\label{thm:suff_ident_proj_fibers}
Assume \({\bf E}(x)\) and \({\bf F}(x)\) are Hilbert spaces for every \(x\in\X\). Then it holds that
\[
({\bf E}\hat\otimes_\pi{\bf F})(x)\cong{\bf E}(x)\hat\otimes_\pi{\bf F}(x)\quad\text{ for }\mm\text{-a.e.\ }x\in\X.
\]
\end{theorem}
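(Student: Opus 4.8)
The starting point is the fiberwise description of $\mathbf{E}\hat\otimes_\pi\mathbf{F}$ recorded in Remark \ref{rmk:fibers_proj_bundle}: by \eqref{eq:char_fibers_proj_bundle_final}, it suffices to show that for $\mm$-a.e.\ $x\in\X$ the closed subspace
\[
\mathrm{cl}_{\ell\mathbf{E}_x\hat\otimes_\pi\ell\mathbf{F}_x}\big(\psi_x^{\mathbf{E}}(\mathbf{E}(x))\otimes\psi_x^{\mathbf{F}}(\mathbf{F}(x))\big)
\]
is isometrically isomorphic to $\mathbf{E}(x)\hat\otimes_\pi\mathbf{F}(x)$. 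The plan is to prove that, under the Hilbertianity hypothesis, this closure coincides isometrically with the \emph{abstract} projective tensor product $\psi_x^{\mathbf{E}}(\mathbf{E}(x))\hat\otimes_\pi\psi_x^{\mathbf{F}}(\mathbf{F}(x))$, and then to transport everything back through the isometries $\psi_x^{\mathbf{E}}$, $\psi_x^{\mathbf{F}}$ of Lemma \ref{lem:fibers_bundle_embed_lift}.

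First I would note that, since every fiber $\mathbf{E}(x)$ and $\mathbf{F}(x)$ is a Hilbert space, the section spaces $\Gamma(\mathbf{E})$ and $\Gamma(\mathbf{F})$ are Hilbertian $L^\infty(\mm)$-Banach $L^\infty(\mm)$-modules, because the pointwise parallelogram law \eqref{eq:ptwse_parall_law} holds after evaluation at $x$ by the parallelogram law in the Hilbert spaces $\mathbf{E}(x)$ and $\mathbf{F}(x)$. Hence, by Remark \ref{rmk:Hilbertian_lifting}, the fibers $\ell\mathbf{E}_x=\ell\Gamma(\mathbf{E})_x$ and $\ell\mathbf{F}_x=\ell\Gamma(\mathbf{F})_x$ are Hilbert spaces for \emph{every} $x\in\X$. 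For $x\in\X\setminus\mathrm{N}$ (so that, in particular, $\psi_x^{\mathbf{E}}$ and $\psi_x^{\mathbf{F}}$ are defined and \eqref{eq:char_fibers_proj_bundle} holds), the images $\psi_x^{\mathbf{E}}(\mathbf{E}(x))$ and $\psi_x^{\mathbf{F}}(\mathbf{F}(x))$ are then closed vector subspaces of the Hilbert spaces $\ell\mathbf{E}_x$ and $\ell\mathbf{F}_x$, hence $1$-complemented via the corresponding orthogonal projections. Invoking the result recalled in Remark \ref{rmk:proj_tensor_products_subspaces}, it follows that $\psi_x^{\mathbf{E}}(\mathbf{E}(x))\hat\otimes_\pi\psi_x^{\mathbf{F}}(\mathbf{F}(x))$ is a subspace of $\ell\mathbf{E}_x\hat\otimes_\pi\ell\mathbf{F}_x$, i.e.\ the projective norm of any tensor in $\psi_x^{\mathbf{E}}(\mathbf{E}(x))\otimes\psi_x^{\mathbf{F}}(\mathbf{F}(x))$ is unchanged whether computed in the small or in the large projective tensor product; consequently the displayed closure is isometrically the completion of $\psi_x^{\mathbf{E}}(\mathbf{E}(x))\otimes_\pi\psi_x^{\mathbf{F}}(\mathbf{F}(x))$, namely $\psi_x^{\mathbf{E}}(\mathbf{E}(x))\hat\otimes_\pi\psi_x^{\mathbf{F}}(\mathbf{F}(x))$. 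Finally, since $\psi_x^{\mathbf{E}}$ and $\psi_x^{\mathbf{F}}$ are surjective linear isometries onto their images, the assignment $v\otimes w\mapsto\psi_x^{\mathbf{E}}(v)\otimes\psi_x^{\mathbf{F}}(w)$ extends to an isometric isomorphism $\mathbf{E}(x)\hat\otimes_\pi\mathbf{F}(x)\cong\psi_x^{\mathbf{E}}(\mathbf{E}(x))\hat\otimes_\pi\psi_x^{\mathbf{F}}(\mathbf{F}(x))$. Composing these identifications with \eqref{eq:char_fibers_proj_bundle_final} yields the claim.

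The step that genuinely uses Hilbertianity is the passage through Remark \ref{rmk:proj_tensor_products_subspaces}: for a general Banach space a closed subspace need not be $1$-complemented (indeed, this property characterises Hilbert spaces, cf.\ \cite[Theorem 3.1]{Ran:01}), and without $1$-complementation the inclusion of the small projective tensor product into the large one can fail to be isometric, so the closure in \eqref{eq:char_fibers_proj_bundle_final} could be a proper isometric quotient of $\mathbf{E}(x)\hat\otimes_\pi\mathbf{F}(x)$ rather than a copy of it. Everything else amounts to bookkeeping: verifying the pointwise parallelogram law fiberwise, and keeping track of the $\mm$-null set $\mathrm{N}$ (which contains $\tilde{\mathrm{N}}_{\mathbf{E}}$ and $\tilde{\mathrm{N}}_{\mathbf{F}}$) so that all the pointwise isometric identifications hold simultaneously off it.
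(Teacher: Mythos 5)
Your proof is correct and follows essentially the same route as the paper's: deduce that \(\Gamma({\bf E})\) and \(\Gamma({\bf F})\) are Hilbertian, hence by Remark \ref{rmk:Hilbertian_lifting} that \(\ell{\bf E}_x\) and \(\ell{\bf F}_x\) are Hilbert spaces, then combine \eqref{eq:char_fibers_proj_bundle_final} with the \(1\)-complementation criterion of Remark \ref{rmk:proj_tensor_products_subspaces}. Your write-up simply makes explicit the intermediate identification of the closure with \(\psi_x^{\bf E}({\bf E}(x))\hat\otimes_\pi\psi_x^{\bf F}({\bf F}(x))\), which the paper leaves implicit.
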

\begin{proof}
Since \({\bf E}(x)\) and \({\bf F}(x)\) are Hilbert spaces for every \(x\in\X\), we have that the
\(L^\infty(\mm)\)-Banach \(L^\infty(\mm)\)-modules \(\Gamma({\bf E})\) and \(\Gamma({\bf F})\) are Hilbertian
(see e.g.\ \cite[Theorem 3.1]{LPV22}). Taking Remark \ref{rmk:Hilbertian_lifting} into account, we deduce
that \(\ell{\bf E}_x\) and \(\ell{\bf F}_x\) are Hilbert spaces for every \(x\in\X\). Therefore,
\eqref{eq:char_fibers_proj_bundle_final} and the last part of Remark \ref{rmk:proj_tensor_products_subspaces}
guarantee that
\[
({\bf E}\hat\otimes_\pi{\bf F})(x)\cong
{\rm cl}_{\ell{\bf E}_x\hat\otimes_\pi\ell{\bf F}_x}\big(\psi_x^{\bf E}({\bf E}(x))\otimes\psi_x^{\bf F}({\bf F}(x))\big)
\cong{\bf E}(x)\hat\otimes_\pi{\bf F}(x)\quad\text{ for }\mm\text{-a.e.\ }x\in\X,
\]
thus proving the statement.
\end{proof}

Let us now focus on the special case of constant bundles. Namely, we additionally assume that
\begin{equation}\label{eq:hp_const_bundles}
{\bf E}(x)=\B,\qquad{\bf F}(x)=\mathbb V\quad\text{ for every }x\in\X,
\end{equation}
for some separable Banach spaces \(\B\) and \(\mathbb V\). In particular, \(\Gamma({\bf E})=L^\infty(\mm;\B)\)
and \(\Gamma({\bf F})=L^\infty(\mm;\mathbb V)\).
\begin{proposition}\label{prop:psi_x_not_surj}
Assume \eqref{eq:hp_const_bundles} holds and \(\mathcal C=\{\1_\X\bar v:\bar v\in Q\}\) for
some \(\mathbb Q\)-vector subspace \(Q\) of \(\B\). Let \(\tilde{\rm N}_{\bf E}=\tilde{\rm N}_{\bf E}[\ell,\mathcal C]\)
and \(\big\{\psi_x^{\bf E}=\psi_x^{\bf E}[\ell,\mathcal C]\big\}_{x\in\X\setminus\tilde{\rm N}_{\bf E}}\) be as in Lemma
\ref{lem:fibers_bundle_embed_lift}. If the measure \(\mm\) is atomless and the Banach space \(\B\) is infinite dimensional,
then it holds that
\[
\psi_x^{\bf E}(\B)\neq\ell\B_x\quad\text{ for every }x\in\X\setminus\tilde{\rm N}_{\bf E},
\]
where we denote \(\ell\B_x\coloneqq\ell{\bf E}_x\) for every \(x\in\X\).
\end{proposition}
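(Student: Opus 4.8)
Fix $x\in\X\setminus\tilde{\rm N}_{\bf E}$; the goal is to exhibit an element of $\ell\B_x$ lying outside $\psi^{\bf E}_x(\B)$. The guiding idea is that $\psi^{\bf E}_x(\B)$ is exactly the set of values at $x$ of the \emph{constant} sections of $\bf E$, so one should look for a genuinely non-constant section whose value at $x$ stays uniformly far from all of them. First I would identify $\psi^{\bf E}_x(\B)$ explicitly: since $\mm$ is atomless we have ${\rm A}(\ell)=\varnothing$, so $p_x=|\cdot|(x)$ on $\ell\Gamma({\bf E})$, and using $\ell(c\1_\X^\mm)=c\1_\X$ one checks that $\B\ni\bar w\mapsto\ell(\pi_\mm(\1_\X\bar w))_x\in\ell\B_x$ is a linear isometry which coincides with $\psi^{\bf E}_x$ on the dense $\mathbb Q$-vector subspace $Q$ (cf.\ \eqref{eq:def_psi_x}); hence $\psi^{\bf E}_x(\B)=\{\ell(\pi_\mm(\1_\X\bar w))_x:\bar w\in\B\}$.

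Next I would construct the non-constant section. Since $\B$ is infinite-dimensional, iterating Riesz's lemma yields a sequence $(\bar v_n)_{n\in\N}$ in the unit sphere of $\B$ with $\|\bar v_n-\bar v_m\|_\B\ge\tfrac12$ for all $n\ne m$. Since $\mm$ is atomless, Lemma \ref{lem:point_missing_union_liftings} provides a partition $(E_n)_{n\in\N}\subseteq\Sigma\setminus\mathcal N_\mm$ of $\X$ such that $x\notin\bigcup_{n\in\N}\ell E_n$. I set $v\coloneqq\sum_{n\in\N}\1_{E_n}\bar v_n$, which is a bounded measurable $\B$-valued map, hence $v\in\bar\Gamma({\bf E})$, and I claim that $\ell(\pi_\mm(v))_x\notin\psi^{\bf E}_x(\B)$.

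To prove the claim, fix $\bar w\in\B$. By the $\tfrac12$-separation, at most one index $n_0$ can satisfy $\|\bar v_{n_0}-\bar w\|_\B<\tfrac14$; consequently $\|v(y)-\bar w\|_\B\ge\tfrac14$ for every $y\in\X\setminus E_{n_0}$, that is,
\[
\big|\pi_\mm(v-\1_\X\bar w)\big|\;\ge\;\tfrac14\big(\1_\X^\mm-\1_{E_{n_0}}^\mm\big)\quad\text{ in }L^\infty(\mm)
\]
(with $\1_{E_{n_0}}^\mm$ replaced by $0$ if no such $n_0$ exists). Applying the order-preserving lifting $\ell$ and evaluating at $x$, and using $\1_{\ell E_{n_0}}(x)=0$ because $x\notin\bigcup_n\ell E_n$, I obtain $\ell|\pi_\mm(v-\1_\X\bar w)|(x)\ge\tfrac14$. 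Since $\psi^{\bf E}_x(\bar w)=\ell(\pi_\mm(\1_\X\bar w))_x$, since $u\mapsto\ell u_x$ is linear, and since $\|\ell u_x\|_{\ell\B_x}=\ell|u|(x)$, this says $\|\ell(\pi_\mm(v))_x-\psi^{\bf E}_x(\bar w)\|_{\ell\B_x}\ge\tfrac14$. As $\bar w\in\B$ was arbitrary, $\ell(\pi_\mm(v))_x\notin\psi^{\bf E}_x(\B)$, so $\psi^{\bf E}_x(\B)\ne\ell\B_x$.

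The only delicate point — and precisely where Lemma \ref{lem:point_missing_union_liftings}, hence the atomlessness of $\mm$, is used — is the handling of the exceptional index $n_0$ in the last step: if $\bar w$ is close to some $\bar v_n$, the pointwise-norm estimate degenerates on the piece $E_n$, and one needs the evaluation point $x$ to avoid $\ell E_n$ so that the lifting still detects the separation holding on the complement. The remaining steps are routine manipulations with the defining properties of the lifting.
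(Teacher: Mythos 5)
Your proof is correct and follows essentially the same route as the paper's: the same \(\tfrac12\)-separated sequence in the unit sphere glued along a partition produced by Lemma \ref{lem:point_missing_union_liftings} so that \(x\) avoids every lifted piece \(\ell E_n\). The only difference is presentational: you obtain a uniform lower bound \(\tfrac14\) on the distance from \(\ell(\pi_\mm(v))_x\) to the whole of \(\psi_x^{\bf E}(\B)\) directly (after identifying \(\psi_x^{\bf E}(\B)\) with the values at \(x\) of constant sections), whereas the paper argues by contradiction and splits into two cases according to whether the putative preimage is close to one of the \(\bar v_n\); your direct formulation is slightly cleaner but not a different argument.
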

\begin{proof}
The next argument is inspired by the proof of \cite[Theorem C.2]{LP23} and by results of \cite{Gutman93}.
Fix \(x\in\X\setminus\tilde{\rm N}_{\bf E}\). We argue by contradiction: suppose
\(\psi_x^{\bf E}(\B)=\ell\B_x\). Fix a sequence \(\{\bar v_n\}_{n\in\N}\subseteq Q\) such that
\(\|\bar v_n\|_{\B}\leq 1\) and \(\|\bar v_n-\bar v_m\|_{\B}\geq\frac{1}{2}\) for every \(n,m\in\N\) with
\(n\neq m\). Denote \(\bar w_n\coloneqq\psi_x^{\bf E}(\bar v_n)\in\ell\B_x\) for every \(n\in\N\). Using Lemma
\ref{lem:point_missing_union_liftings}, we can find a sequence \(\{E_n\}_{n\in\N}\subseteq\Sigma\) of pairwise disjoint
sets with \(\ell E_n=E_n\) for every \(n\in\N\) such that \(\mm\big(\X\setminus\bigcup_{n\in\N}E_n\big)=0\) and
\(x\notin\bigcup_{n\in\N}E_n\). Define
\[
v\coloneqq\sum_{n\in\N}\1_{E_n}^\mm\bar v_n\in\Gamma({\bf E}),\qquad\bar w\coloneqq\ell v_x\in\ell\B_x.
\]
Since we assumed that \(\psi_x^{\bf E}(\B)=\ell\B_x\), there exists \(\bar v\in\B\) such that
\(\psi_x^{\bf E}(\bar v)=\bar w\). Now, take a sequence \(\{\bar u_k\}_{k\in\N}\subseteq Q\) such that
\(\bar u_k\to\bar v\) in \(\B\). It clearly follows that \(\1_\X^\mm\bar u_k\to\1_\X^\mm\bar v\) in \(L^\infty(\mm;\B)\),
so that \(\ell(\1_\X^\mm\bar u_k)\to\ell(\1_\X^\mm\bar v)\) in \(\ell L^\infty(\mm;\B)\) and thus accordingly
\[
\ell(\1_\X^\mm\bar v)_x=\lim_{k\to\infty}\ell(\1_\X^\mm\bar u_k)_x=\lim_{k\to\infty}\psi_x^{\bf E}(\bar u_k)
=\psi_x^{\bf E}(\bar v)=\bar w.
\]
Next, observe that \(\{\bar w_n:n\in\N\}\) is a discrete subset of \(\ell\B_x\), thus either
\(\bar w=\bar w_{n_0}\) for some \(n_0\in\N\), or there exists \(\varepsilon>0\) such that
\(\|\bar w-\bar w_n\|_{\ell\B_x}\geq\varepsilon\) for every \(n\in\N\). Let us show that the second
condition cannot occur. If \(\|\bar v-\bar v_n\|_{\B}=\|\bar w-\bar w_n\|_{\ell\B_x}\geq\varepsilon\)
for every \(n\in\N\), then we have that
\[
0=\|\bar w-\ell(\1_\X^\mm\bar v)_x\|_{\ell\B_x}=\|\ell(v-\1_\X^\mm\bar v)_x\|_{\ell\B_x}
=\ell\bigg(\sum_{n\in\N}\1_{E_n}^\mm\|\bar v_n-\bar v\|_\B\bigg)(x)\geq\ell(\varepsilon\1_\X^\mm)(x)=\varepsilon,
\]
which is impossible. Hence, there exists \(n_0\in\N\) such that \(\bar w=\bar w_{n_0}\). Since
\(x\in\X\setminus E_{n_0}=\ell(\X\setminus E_{n_0})\), we can finally conclude that
\[\begin{split}
0&=\|\bar w-\ell v_x\|_{\ell\B_x}=\|\bar w_{n_0}-\ell v_x\|_{\ell\B_x}
=\big\|\1_{\ell(\X\setminus E_{n_0})}(x)\,\ell(\1_\X^\mm\bar v_{n_0}-v)_x\big\|_{\ell\B_x}\\
&=\ell\big|\1_{\X\setminus E_{n_0}}^\mm\bar v_{n_0}-\1_{\X\setminus E_{n_0}}^\mm\cdot v\big|(x)
=\ell\bigg(\sum_{n\neq n_0}\1_{E_n}^\mm\|\bar v_{n_0}-\bar v_n\|_\B\bigg)(x)
\geq\ell\bigg(\frac{1}{2}\1_{\X\setminus E_{n_0}}^\mm\bigg)(x)=\frac{1}{2},
\end{split}\]
which leads to a contradiction. Therefore, we have proved that \(\psi_x^{\bf E}(\B)\neq\ell\B_x\).
\end{proof}

Nonetheless, even if the fibers \(\ell\B_x\) and \(\ell\mathbb V_x\) are typically non-isomorphic to \(\B\)
and \(\mathbb V\), respectively, the following result still holds.
\begin{lemma}\label{lem:proj_prod_const_bundles}
Assume \eqref{eq:hp_const_bundles}. Then \(\psi_x^{\bf E}(\B)\hat\otimes_\pi\psi_x^{\bf F}(\mathbb V)\)
is a subspace of \(\ell\B_x\hat\otimes_\pi\ell\mathbb V_x\) for every \(x\in\X\).
\end{lemma}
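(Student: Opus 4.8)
\emph{Strategy.} The plan is to verify, for each fixed $x\in\X$, the criterion recalled in Remark \ref{rmk:proj_tensor_products_subspaces} (that is, \cite[Proposition 2.11]{Ryan02}): every bounded bilinear form on $\psi_x^{\bf E}(\B)\times\psi_x^{\bf F}(\mathbb V)$ admits a norm-preserving extension to $\ell\B_x\times\ell\mathbb V_x$. The extension will be obtained by \emph{lifting} the canonical fiberwise bilinear form induced by $b$; this is the only real idea in the proof, the rest being bookkeeping.

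\emph{Reduction and identification of $\psi_x^{\bf E}$.} Since ${\bf E}$ and ${\bf F}$ are constant, I would first note that $\mathcal C$ and $\mathcal D$ may be taken to consist of constant sections (as in Proposition \ref{prop:psi_x_not_surj}), so that $\tilde{\rm N}_{\bf E}=\tilde{\rm N}_{\bf F}=\varnothing$ and, by Lemma \ref{lem:fibers_bundle_embed_lift} together with density,
\[
\psi_x^{\bf E}(\bar v)=\ell(\1_\X\bar v)_x,\qquad\psi_x^{\bf F}(\bar w)=\ell(\1_\X\bar w)_x\qquad\text{for all }\bar v\in\B,\ \bar w\in\mathbb V,\ x\in\X,
\]
where $\1_\X\bar v\in L^\infty(\mm;\B)=\Gamma({\bf E})$ denotes the constant section with value $\bar v$ (and similarly for $\mathbb V$). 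This identification of the image $\psi_x^{\bf E}(\B)$ with the ``constant part'' of the fiber is the point that needs the most care, and it is why the reduction to constant $\mathcal C$, $\mathcal D$ is convenient.

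\emph{Construction of the extension.} Fix a bounded bilinear form on $\psi_x^{\bf E}(\B)\times\psi_x^{\bf F}(\mathbb V)$; via the isometries $\psi_x^{\bf E}$, $\psi_x^{\bf F}$ regard it as $b\in{\rm B}(\B,\mathbb V;\R)$. Define $\hat b\colon L^\infty(\mm;\B)\times L^\infty(\mm;\mathbb V)\to L^\infty(\mm)$ by $\hat b(v,w)\coloneqq\pi_\mm\big(x\mapsto b(v(x),w(x))\big)$: the integrand is measurable (a continuous function of measurable maps) and satisfies $|b(v(x),w(x))|\le\|b\|\,|v|(x)\,|w|(x)$, so $\hat b\in{\rm B}(L^\infty(\mm;\B),L^\infty(\mm;\mathbb V))$ is a well-defined bounded $L^\infty(\mm)$-bilinear operator with $|\hat b|=\|b\|\,\1_\X^\mm$ (the bound just noted gives ``$\le$'', and testing on constant sections nearly attaining $\|b\|$ gives ``$\ge$''). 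By Corollary \ref{cor:trace_lift_bdd_bilin} applied to $\varphi=\hat b$, there is a bounded bilinear form $\ell\hat b_x$ on $\ell\B_x\times\ell\mathbb V_x$ with $\|\ell\hat b_x\|=\ell|\hat b|(x)=\|b\|$ and $\ell\hat b_x(\ell v_x,\ell w_x)=\ell(\hat b(v,w))(x)$ for all $v,w$. Evaluating on constant sections, $\hat b(\1_\X\bar v,\1_\X\bar w)=b(\bar v,\bar w)\,\1_\X^\mm$, whence $\ell(\hat b(\1_\X\bar v,\1_\X\bar w))(x)=b(\bar v,\bar w)$ since $\ell(c\1_\X^\mm)=c\1_\X$; combined with the identification above, this yields $\ell\hat b_x(\psi_x^{\bf E}(\bar v),\psi_x^{\bf F}(\bar w))=b(\bar v,\bar w)$ for all $\bar v\in\B$, $\bar w\in\mathbb V$. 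Thus $\ell\hat b_x$ restricts to $b$ on $\psi_x^{\bf E}(\B)\times\psi_x^{\bf F}(\mathbb V)$; as it restricts to $b$ its operator norm is also $\ge\|b\|$, hence equal to $\|b\|$, so $\ell\hat b_x$ is the desired norm-preserving extension. By \cite[Proposition 2.11]{Ryan02} it follows that $\psi_x^{\bf E}(\B)\hat\otimes_\pi\psi_x^{\bf F}(\mathbb V)$ is a subspace of $\ell\B_x\hat\otimes_\pi\ell\mathbb V_x$, for every $x\in\X$.
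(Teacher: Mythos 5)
Your proof is correct and follows essentially the same route as the paper's: both verify the extension criterion of \cite[Proposition 2.11]{Ryan02} by promoting $b$ to a bounded $L^\infty(\mm)$-bilinear operator on $L^\infty(\mm;\B)\times L^\infty(\mm;\mathbb V)$ with constant pointwise norm $\|b\|\1_\X^\mm$, and then applying Corollary \ref{cor:trace_lift_bdd_bilin} to restrict its lifting to the fiber at $x$. The only (immaterial) differences are that you define the intermediate operator directly by pointwise composition rather than by extension from simple maps, and that you make explicit the identification $\psi_x^{\bf E}(\bar v)=\ell(\1_\X\bar v)_x$ and the vanishing of $\tilde{\rm N}_{\bf E}$ for constant generating sections, which the paper leaves implicit.
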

\begin{proof}
Fix \(x\in\X\). The goal is to show that every bounded bilinear form \(b\colon\psi_x^{\bf E}(\B)\times\psi_x^{\bf F}(\mathbb V)\to\R\)
can be extended to a bounded bilinear form \(\bar b\colon\ell\B_x\times\ell\mathbb V_x\to\R\) having the same operator
norm, whence the statement follows thanks to Remark \ref{rmk:proj_tensor_products_subspaces}. Given two simple maps
\(v=\sum_{i\in\N}\1_{E_i}^\mm\bar v_i\in L^\infty(\mm;\B)\) and \(w=\sum_{j\in\N}\1_{F_j}^\mm\bar w_j\in L^\infty(\mm;\mathbb V)\),
meaning that \((E_i)_{i\in\N},(F_j)_{j\in\N}\subseteq\Sigma\) are two partitions of \(\X\) and
\((v_i)_{i\in\N}\subseteq\B\), \((w_j)_{j\in\N}\subseteq\mathbb V\) are two bounded sequences,
we define the function \(\tilde b(v,w)\) as
\[
\tilde b(v,w)\coloneqq\sum_{i,j\in\N}b(\psi_x^{\bf E}(\bar v_i),\psi_x^{\bf F}(\bar w_j))\1_{E_i\cap F_j}^\mm\in L^\infty(\mm).
\]
Given that simple maps are dense in \(L^\infty(\mm;\B)\) and \(L^\infty(\mm;\mathbb V)\), one can check
that the above-defined map \((v,w)\mapsto\tilde b(v,w)\in L^\infty(\mm)\) uniquely extends to an operator
\(\tilde b\in{\rm B}(L^\infty(\mm;\B),L^\infty(\mm;\mathbb V))\) that satisfies
\(|\tilde b|=\|b\|_{{\rm B}(\psi_x^{\bf E}(\B),\psi_x^{\bf F}(\mathbb V))}\1_\X^\mm\). By Corollary
\ref{cor:trace_lift_bdd_bilin}, we have that \(\bar b\coloneqq\ell\tilde b_x\in{\rm B}(\ell\B_x,\ell\mathbb V_x)\) satisfies
\[
\|\bar b\|_{{\rm B}(\ell\B_x,\ell\mathbb V_x)}=\ell|\tilde b|(x)=\|b\|_{{\rm B}(\psi_x^{\bf E}(\B),\psi_x^{\bf F}(\mathbb V))}.
\]
Finally, observe that for any \(\bar v\in\B\) and \(\bar w\in\mathbb V\) we have that
\[\begin{split}
\bar b(\psi_x^{\bf E}(\bar v),\psi_x^{\bf F}(\bar w))&=\ell\tilde b_x\big(\ell(\1_\X^\mm\bar v)_x,\ell(\1_\X^\mm\bar w)_x\big)
=\ell(\tilde b(\1_\X^\mm\bar v,\1_\X^\mm\bar w))(x)=\ell\big(b(\psi_x^{\bf E}(\bar v),\psi_x^{\bf F}(\bar w))\1_\X^\mm\big)(x)\\
&=b(\psi_x^{\bf E}(\bar v),\psi_x^{\bf F}(\bar w)).
\end{split}\]
This shows that \(\bar b\) is an extension of \(b\), whence the statement follows.
\end{proof}

As a consequence of Lemma \ref{lem:proj_prod_const_bundles}, we obtain the following result on projective
tensor products (in the sense of \(L^\infty(\mm)\)-Banach \(L^\infty(\mm)\)-modules) of Lebesgue--Bochner spaces
of exponent \(\infty\).
\begin{corollary}\label{cor:proj_Leb-Boch}
Let \((\X,\Sigma,\mm)\) be a complete probability space and \(\B\), \(\mathbb V\) separable Banach spaces.
Then there exists a unique bounded \(L^\infty(\mm)\)-bilinear map
\(\mathfrak p\colon L^\infty(\mm;\B)\times L^\infty(\mm;\mathbb V)\to L^\infty(\mm;\B\hat\otimes_\pi\mathbb V)\)
such that for any \(v\in L^\infty(\mm;\B)\) and \(w\in L^\infty(\mm;\mathbb V)\) we have that
\[
\mathfrak p(v,w)(x)=v(x)\otimes w(x)\quad\text{ for }\mm\text{-a.e.\ }x\in\X.
\]
Moreover, it holds that
\[
(L^\infty(\mm;\B\hat\otimes_\pi\mathbb V),\mathfrak p)\cong\big(L^\infty(\mm;\B)\hat\otimes_\pi L^\infty(\mm;\mathbb V),\otimes\big).
\]
\end{corollary}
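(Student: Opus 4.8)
The plan is to deduce the statement from Theorem~\ref{thm:proj_mod} combined with Lemma~\ref{lem:proj_prod_const_bundles}. First I would fix a universal separable Banach space \(\mathbb U\), isometric embeddings of \(\B\) and \(\mathbb V\) into \(\mathbb U\), and let \({\bf E},{\bf F}\) be the constant bundles \({\bf E}(x)\coloneqq\B\), \({\bf F}(x)\coloneqq\mathbb V\), so that \(\Gamma({\bf E})=L^\infty(\mm;\B)\) and \(\Gamma({\bf F})=L^\infty(\mm;\mathbb V)\). Choosing countable dense \(\mathbb Q\)-vector subspaces \(Q_\B\subseteq\B\), \(Q_{\mathbb V}\subseteq\mathbb V\) and setting \(\mathcal C\coloneqq\{\1_\X\bar v:\bar v\in Q_\B\}\), \(\mathcal D\coloneqq\{\1_\X\bar w:\bar w\in Q_{\mathbb V}\}\) (fiberwise dense, countable, \(\mathbb Q\)-linear), Theorem~\ref{thm:proj_mod} provides a bounded \(L^\infty(\mm)\)-bilinear operator \(\mathfrak p_0\colon\Gamma({\bf E})\times\Gamma({\bf F})\to\Gamma({\bf E}\hat\otimes_\pi{\bf F})\) with \((\Gamma({\bf E}\hat\otimes_\pi{\bf F}),\mathfrak p_0)\cong(L^\infty(\mm;\B)\hat\otimes_\pi L^\infty(\mm;\mathbb V),\otimes)\). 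It therefore suffices to construct the map \(\mathfrak p\) of the statement and an isomorphism of \(L^\infty(\mm)\)-Banach \(L^\infty(\mm)\)-modules \(\Phi\colon L^\infty(\mm;\B\hat\otimes_\pi\mathbb V)\to\Gamma({\bf E}\hat\otimes_\pi{\bf F})\) with \(\Phi\circ\mathfrak p=\mathfrak p_0\); the two displayed isomorphisms then follow by composition.

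To build \(\mathfrak p\), I would first declare, on the \(\mathbb Q\)-vector subspace of \(Q_\B\)-valued simple functions in \(L^\infty(\mm;\B)\) and the analogous one in \(L^\infty(\mm;\mathbb V)\) — both of which generate their modules — that \(\mathfrak p\big(\sum_i\1_{E_i}^\mm\bar v_i,\sum_j\1_{F_j}^\mm\bar w_j\big)\coloneqq\sum_{i,j}\1_{E_i\cap F_j}^\mm(\bar v_i\otimes\bar w_j)\); since \(\|\bar v\otimes\bar w\|_{\B\hat\otimes_\pi\mathbb V}=\|\bar v\|_\B\|\bar w\|_{\mathbb V}\), one has \(|\mathfrak p(v,w)|=|v||w|\) there, so Proposition~\ref{prop:extension_bdd_multilin} extends \(\mathfrak p\) uniquely to an element of \({\rm B}(L^\infty(\mm;\B),L^\infty(\mm;\mathbb V);L^\infty(\mm;\B\hat\otimes_\pi\mathbb V))\). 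Approximating arbitrary \(v,w\) by simple functions in the sup-norm and passing to an \(\mm\)-a.e.\ convergent subsequence of \(\mathfrak p(v_k,w_k)\) yields \(\mathfrak p(v,w)(x)=v(x)\otimes w(x)\) for \(\mm\)-a.e.\ \(x\), and uniqueness of such a \(\mathfrak p\) is immediate since this formula prescribes its values \(\mm\)-a.e.

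For the identification, let \({\rm N}\) be the \(\mm\)-null set of the construction in Section~\ref{sec:proj_mod} and fix \(x\in\X\setminus{\rm N}\). By Lemma~\ref{lem:fibers_bundle_embed_lift} the maps \(\psi^{\bf E}_x\colon\B\to\ell{\bf E}_x\) and \(\psi^{\bf F}_x\colon\mathbb V\to\ell{\bf F}_x\) are linear isometries, and by Lemma~\ref{lem:proj_prod_const_bundles} the space \(\psi^{\bf E}_x(\B)\hat\otimes_\pi\psi^{\bf F}_x(\mathbb V)\) is a subspace of \(\ell{\bf E}_x\hat\otimes_\pi\ell{\bf F}_x\); together with \eqref{eq:char_fibers_proj_bundle} this shows that the linearisation \(\Theta_x\) of \((\bar v,\bar w)\mapsto\psi^{\bf E}_x(\bar v)\otimes\psi^{\bf F}_x(\bar w)\) is an \emph{isometric isomorphism} of \(\B\hat\otimes_\pi\mathbb V\) onto \(\Lambda_x\). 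Setting \(\widehat\Theta_x\coloneqq{\rm I}_x\circ\Theta_x\colon\B\hat\otimes_\pi\mathbb V\to({\bf E}\hat\otimes_\pi{\bf F})(x)\) (and \(\widehat\Theta_x\coloneqq 0\) for \(x\in{\rm N}\)), I note that \(\widehat\Theta_x(\bar v\otimes\bar w)={\rm I}_x(\beta_{v,w}(x))\) for \(v=\1_\X\bar v\in\mathcal C\), \(w=\1_\X\bar w\in\mathcal D\), so \(x\mapsto\widehat\Theta_x(\theta)\) is a measurable \(\mathbb U\)-valued map for \(\theta\in Q_\B\otimes Q_{\mathbb V}\) by Theorem~\ref{thm:embed_meas_coll}, and hence — using that each \(\widehat\Theta_x\) is an isometry and Remark~\ref{rmk:tensor_Banach_separable} — for every \(\theta\in\B\hat\otimes_\pi\mathbb V\). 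I then define \(\Phi\big(\sum_k\1_{E_k}^\mm\theta_k\big)\coloneqq\pi_\mm\big(x\mapsto\sum_k\1_{E_k}(x)\,\widehat\Theta_x(\theta_k)\big)\) on \(\B\hat\otimes_\pi\mathbb V\)-valued simple functions; since each \(\widehat\Theta_x\) is a linear isometry this is well defined, \(L^\infty(\mm)\)-linear, and \(|\Phi(\sigma)|=|\sigma|\), so it extends to a linear isometry \(\Phi\colon L^\infty(\mm;\B\hat\otimes_\pi\mathbb V)\to\Gamma({\bf E}\hat\otimes_\pi{\bf F})\). Its range is a closed submodule containing every \(\pi_\mm(\bar\beta_G)\) with \(G\in\mathcal I\) (indeed \(\bar\beta_G(x)=\widehat\Theta_x(\theta^G)\) for the obvious \(\theta^G\in Q_\B\otimes Q_{\mathbb V}\)), and those generate \(\Gamma({\bf E}\hat\otimes_\pi{\bf F})\) as in the proof of Theorem~\ref{thm:proj_mod}; hence \(\Phi\) is surjective, thus an isomorphism. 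Finally \(\Phi\circ\mathfrak p\) and \(\mathfrak p_0\) agree on \(\{\pi_\mm(v):v\in\mathcal C\}\times\{\pi_\mm(w):w\in\mathcal D\}\) by \eqref{eq:def_frakp} and the pointwise formula for \(\mathfrak p\), hence everywhere by \(L^\infty(\mm)\)-bilinearity and density, which completes the argument.

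The hard part is the identification step; everything else (the construction of \(\mathfrak p\), the density and glueing arguments, the appeal to Theorem~\ref{thm:proj_mod}) is routine within the developed framework. The crucial input is Lemma~\ref{lem:proj_prod_const_bundles}: it is precisely what forces \(\Theta_x\) to be \emph{isometric} rather than merely \(1\)-Lipschitz — equivalently, that the projective norm does not shrink when \(\B\hat\otimes_\pi\mathbb V\) is replaced by the isometric copies \(\psi^{\bf E}_x(\B)\otimes\psi^{\bf F}_x(\mathbb V)\) living inside the (in general strictly larger) lifted fibers \(\ell{\bf E}_x\), \(\ell{\bf F}_x\) — and hence that \(\Lambda_x\cong\B\hat\otimes_\pi\mathbb V\). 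Without it the identification would fail, since projective tensor products do not respect subspaces in general (Remark~\ref{rmk:proj_tensor_products_subspaces}). A secondary point to be careful about is the measurability of \(x\mapsto{\rm I}_x(\beta_{v,w}(x))\), which is what allows the fiberwise isometries \(\widehat\Theta_x\) to be glued into an isomorphism at the level of section spaces.
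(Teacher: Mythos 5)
Your proposal is correct and follows essentially the same route as the paper: the paper's proof is a one-sentence appeal to Theorem \ref{thm:proj_mod} together with the fact that \(({\bf E}\hat\otimes_\pi{\bf F})(x)\cong\B\hat\otimes_\pi\mathbb V\) for \(\mm\)-a.e.\ \(x\), obtained from Lemma \ref{lem:proj_prod_const_bundles} and Remark \ref{rmk:fibers_proj_bundle}, which is exactly the skeleton you use. You merely make explicit the details the paper leaves to the reader (the construction of \(\mathfrak p\) on simple functions, the measurable glueing of the fiberwise isometries \(\widehat\Theta_x\) into the isomorphism \(\Phi\), and the verification \(\Phi\circ\mathfrak p=\mathfrak p_0\)), and you correctly identify Lemma \ref{lem:proj_prod_const_bundles} as the input that makes \(\Theta_x\) isometric rather than just \(1\)-Lipschitz.
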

\begin{proof}
It follows from Theorem \ref{thm:proj_mod}, by taking into account the fact that
\(({\bf E}\hat\otimes_\pi{\bf F})(x)\cong\B\hat\otimes_\pi\mathbb V\) holds for \(\mm\)-a.e.\ \(x\in\X\)
by Lemma \ref{lem:proj_prod_const_bundles} and Remark \ref{rmk:fibers_proj_bundle}, where \({\bf E}\) and
\({\bf F}\) are as in \eqref{eq:hp_const_bundles}.
\end{proof}
\subsection{Some comments on Hilbert--Schmidt tensor products}
We conclude the paper with a brief discussion on Hilbert--Schmidt tensor products of Hilbertian separable Banach
\(\mathbb U\)-bundles and of their section spaces. To avoid overloading the presentation further, we only state
results, without proofs; their verification can be obtained by adapting the arguments in Section \ref{sec:inj_mod}.
\medskip

Let \((\X,\Sigma,\mm)\) be a complete probability space and \(\ell\) a lifting of \(\mm\). Let \(\mathscr H\),
\(\mathscr K\) be Hilbertian \(L^\infty(\mm)\)-Banach \(L^\infty(\mm)\)-modules. Following \cite[Section 1.5]{Gigli14},
we define the \textbf{Hilbert--Schmidt tensor product} \(\mathscr H\hat\otimes_{\rm HS}\mathscr K\) of \(\mathscr H\)
and \(\mathscr K\) as the module completion of the \(L^\infty(\mm)\)-normed \(L^\infty(\mm)\)-module
\(\mathscr H\otimes_{\rm HS}\mathscr K=(\mathscr H\otimes\mathscr K,|\cdot|_{\rm HS})\), where the pointwise norm
\(|\cdot|_{\rm HS}\) is defined as
\[
|\alpha|_{\rm HS}\coloneqq\sqrt{\sum_{i,j=1}^n(v_i\cdot v_j)(w_i\cdot w_j)}
\quad\text{ for every }\alpha=\sum_{i=1}^n v_i\otimes w_i\in\mathscr H\otimes\mathscr K.
\]
It holds that the space \(\mathscr H\hat\otimes_{\rm HS}\mathscr K\) is a Hilbertian \(L^\infty(\mm)\)-Banach \(L^\infty(\mm)\)-module.
\medskip

Given any point \(x\in\X\), the map sending \(\ell v_x\otimes\ell w_x\) (with \(v\in\mathscr H\) and
\(w\in\mathscr K\)) to \(\ell(v\otimes w)_x\) can be uniquely extended to a linear isometric embedding
\(\ell\mathscr H_x\hat\otimes_{\rm HS}\ell\mathscr K_x\hookrightarrow\ell(\mathscr H\hat\otimes_{\rm HS}\mathscr K)_x\).
Identifying each \(\ell\mathscr H_x\hat\otimes_{\rm HS}\ell\mathscr K_x\) with its image in \(\ell(\mathscr H\hat\otimes_{\rm HS}\mathscr K)_x\)
under the above embedding, we have that
\[
\mathscr Z\coloneqq\Big\{\alpha\in\ell(\mathscr H\hat\otimes_{\rm HS}\mathscr K)\;\Big|\;
\alpha_x\in\ell\mathscr H_x\hat\otimes_{\rm HS}\ell\mathscr K_x\text{ for }\mm\text{-a.e.\ }x\in\X\Big\}.
\]
is an \(\mathcal L^\infty(\Sigma)\)-Banach \(\mathcal L^\infty(\Sigma)\)-submodule of
\(\ell(\mathscr H\hat\otimes_{\rm HS}\mathscr K)\) and that \(\mathscr H\hat\otimes_{\rm HS}\mathscr K\cong\Pi_\mm(\mathscr Z)\).
\medskip

Now, assume that \({\bf H}\) and \({\bf K}\) are separable Banach \(\mathbb U\)-bundles over \((\X,\Sigma)\),
for some fixed universal separable Banach space \(\mathbb U\), such that \({\bf H}(x)\) and \({\bf K}(x)\) are
Hilbert spaces for all \(x\in\X\). Then the section spaces \(\Gamma({\bf H})\) and \(\Gamma({\bf K})\) are
Hilbertian \(L^\infty(\mm)\)-Banach \(L^\infty(\mm)\)-modules \cite[Theorem 3.1]{LPV22}. For any \(x\in\X\), we have a canonical
embedding \({\sf i}_x\colon{\bf H}(x)\hat\otimes_{\rm HS}{\bf K}(x)\hookrightarrow\mathbb U\hat\otimes_{\rm HS}\mathbb U\).
Fix also some linear isometry \({\sf e}\colon\mathbb U\hat\otimes_{\rm HS}\mathbb U\hookrightarrow\mathbb U\), whose
existence is guaranteed by the separability of \(\mathbb U\hat\otimes_{\rm HS}\mathbb U\). Then we define the
\textbf{Hilbert--Schmidt tensor product} \({\bf H}\hat\otimes_{\rm HS}{\bf K}\colon\X\twoheadrightarrow\mathbb U\)
of \({\bf H}\) and \({\bf K}\) as
\[
({\bf H}\hat\otimes_{\rm HS}{\bf K})(x)\coloneqq({\bf e}\circ{\sf i}_x)\big({\bf H}(x)\hat\otimes_{\rm HS}{\bf K}(x)\big)
\quad\text{ for every }x\in\X.
\]
Then \({\bf H}\hat\otimes_{\rm HS}{\bf K}\) is a separable Banach \(\mathbb U\)-bundle. Moreover, it holds that
\[
\Gamma({\bf H})\hat\otimes_{\rm HS}\Gamma({\bf K})\cong\Gamma({\bf H}\hat\otimes_{\rm HS}{\bf K}).
\]
An isomorphism of \(L^\infty(\mm)\)-Banach \(L^\infty(\mm)\)-modules between \(\Gamma({\bf H})\hat\otimes_{\rm HS}\Gamma({\bf K})\)
and \(\Gamma({\bf H}\hat\otimes_{\rm HS}{\bf K})\) is given by the unique operator
\(\Upsilon\in\textsc{Hom}(\Gamma({\bf H})\hat\otimes_{\rm HS}\Gamma({\bf K});\Gamma({\bf H}\hat\otimes_{\rm HS}{\bf K}))\)
satisfying the identity
\[
\Upsilon\big(\pi_\mm(v)\otimes\pi_\mm(w)\big)=\pi_\mm\big(({\sf e}\circ{\sf i}_\star)(v(\star)\otimes w(\star))\big)
\]
for every \(v\in\bar\Gamma({\bf H})\) and \(w\in\bar\Gamma({\bf K})\).
\def\cprime{$'$} \def\cprime{$'$}

\end{document}